\documentclass[12pt,reqno]{amsart}
\usepackage{amsmath,amsfonts,amssymb,amscd,amsthm,amsbsy, color}
\usepackage{graphicx}
\usepackage{comment}
\usepackage[normalem]{ulem}

\textwidth=6.7truein
\textheight=8.5truein
\hoffset=-.75truein
\voffset=-.75truein
\footskip=18pt

\DeclareMathOperator{\arcsinh}{arcsinh}
\DeclareMathOperator{\arccot}{arccot}

\theoremstyle{plain} 
\newtheorem{thm}{Theorem}[section]
 
\newtheorem{lemma}[thm]{Lemma} 
\newtheorem{prop}[thm]{Proposition}
\newtheorem{defn}[thm]{Definition}
\newtheorem{defns}[thm]{Definitions}
\newtheorem{thmv}[thm]{Theorem}
\newtheorem{lemmav}[thm]{Lemma}

\newtheorem{rem}[thm]{Remark}
\newtheorem{rems}[thm]{Remarks}
\theoremstyle{definition}

\newtheorem{notaz}[thm]{Notation}
\numberwithin{equation}{section}

\newcommand\E{\mathcal{E}}
\newcommand\om{\omega^2}
\newcommand\R{\mathbb{R}}
\newcommand\Rpi{\mathbb{R}_{/2\pi\mathbb{Z}}}
\newcommand\Eh{\E+h}
\newcommand\ep{\epsilon}
\newcommand\gep{\gamma_{\epsilon}}
\newcommand{\x}[1]{\xi_{#1}}

\newcommand\xt{\tilde{\xi}}
\newcommand\F{\mathcal{F}}
\newcommand{\stkout}[1]{\ifmmode\text{\sout{\ensuremath{#1}}}\else\sout{#1}\fi}

\title{On some refraction billiards}
\author{Irene De Blasi and Susanna Terracini}

\address{Dipartimento di Matematica ``G. Peano''
	\newline\indent
	Universit\`a degli Studi di Torino
	\newline\indent
	Via Carlo Alberto 10, 10123 Torino, Italy\\}
\email{irene.deblasi@unito.it}
\email{susanna.terracini@unito.it}

\date{\today} 

\keywords{refraction, black holes, KAM theory, Aubry Mather invariant sets}

\subjclass[2020] {
	70H08, 
	70H12, 
	37N05, 
	(70G75, 
	70F16, 
	70F10) 
}

\begin{document}

\begin{abstract}
The aim of this work is to continue the analysis, started in \cite{deblasiterraciniellissi}, of the dynamics of a point-mass particle $P$ moving in a galaxy with an harmonic biaxial core, in whose center sits a Keplerian attractive center (e.g. a Black Hole).  Accordingly, the plane $\mathbb{R}^2$ is divided into two complementary domains, depending on whether the gravitational effects of the galaxy's mass distribution or of the Black Hole prevail. Thus, solutions alternate arcs of Keplerian hyperbol\ae~ with harmonic ellipses; at the 
interface, the trajectory is refracted according to Snell's law. The model was introduced in \cite{Delis20152448}, in view of applications to astrodynamics.

In this paper we address the general issue of periodic and quasi-periodic orbits and associated caustics when the domain is a perturbation of the circle, taking advantage of  KAM and Aubry-Mather theories. 
\end{abstract}

	\maketitle


\section{Introduction and statement of the results}	\label{sec: intro}
 
 In this paper we deal with the dynamics of a point-mass particle $P$ moving in a galaxy with an harmonic biaxial core, in whose center  sits a Keplerian attractive center (e.g. a Black Hole).  In our setting, the plane $\mathbb{R}^2$ is divided into two complementary domains, depending on whether the gravitational effects of the galaxy's mass distribution or of the Black Hole prevail. We set the domain of influence of the the Black Hole's to be a generic regular domain $\boldsymbol{0}\in D\subset\mathbb{R}^2$, so that the particle moves on the plane governed by  the potential 
\begin{equation}\label{potential}
	V(z)=
	\begin{cases}
		V_I(z)=\mathcal{E}+h+\frac{\mu}{\vert z\vert} \quad &\text{if }z\in D\\
		V_E(z)=\mathcal{E}-\frac{\omega^2}{2}\vert z\vert^2 &\text{if } z\notin D, 
	\end{cases}
\end{equation}   
whith $\mathcal{E}+h,\mu, \omega>0$,  while the behaviour of the solution, when crossing the boundary $\partial D$, is ruled by a generalization of Snell's law (see  \S\ref{subs:snell}) of refraction. Thus, as sketched in Figure \ref{fig: archi},  trajectories alternate arcs of Keplerian hyperbol\ae~ with harmonic ellipses, with a refraction at the boundary $\partial D$. It is worthwhile noticing that, when also the domain $D$ is radially symmetrical the system is integrable, resulting in a shift of the polar angle only depending on the shooting one.  This paper deals with anisotropic perturbations of the circular domain, and possibly of the potentials.

\begin{figure}
	\centering
	\includegraphics[width=0.7\linewidth]{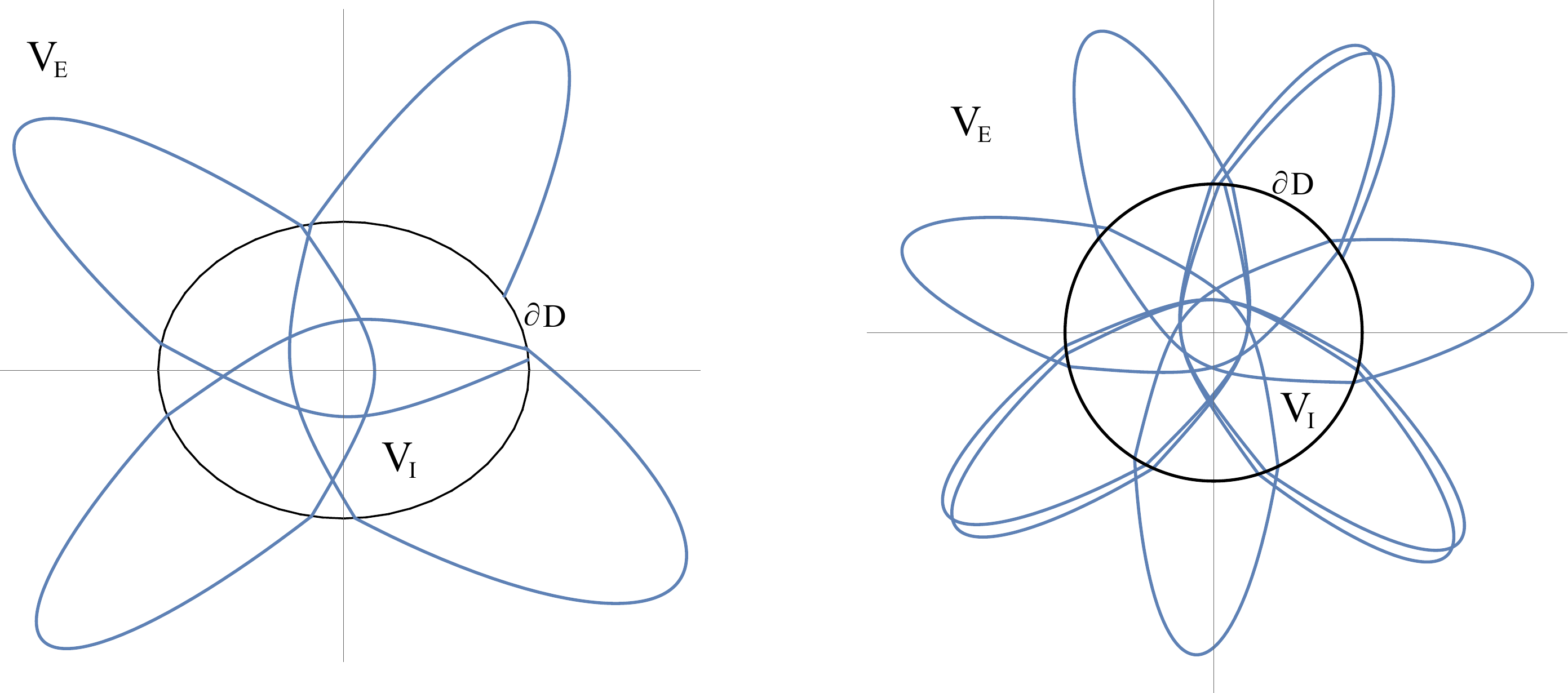}
	\caption{Examples of trajectories for $\E=2.5$, $\omega=1$, $h=2$ and $\mu=2$. Left: general trajectory for an elliptic domain with eccentricity $e=0.6$. Right: quasi-periodic trajectory for a circular domain.}
	\label{fig: archi}
\end{figure}

The model was introduced in \cite{Delis20152448}, for specific applications to astrodynamical problems; besides the motivations coming from Celestial Mechanics, as mentioned in  \cite{deblasiterraciniellissi}, our class of systems are of interests in view of possible applications in engineering artificial optical devices \cite{Krishnamoorthy2012205}.  Another model which presents strong analogies with the one described in this work is studied in \cite{cavallari2020sun}, where the complex dynamics deriving by patching orbits coming from pure Kepler's and Stark's problem (the so-called \textit{Sun-shadow dynamics}) is considered. \\
In \cite{deblasiterraciniellissi}, the authors addressed the problem of the stability of homotetic and brake solutions, depending on the values of the physical parameters $\mathcal{E},h,\mu, \omega$, the curvature of the boundary and its distance from the Keplerian center. The analysis carried there shows an interesting and varied landscape of bifurcations of homothetic and brake periodic orbits. \\
The aim of this work is to continue the analysis started in \cite{deblasiterraciniellissi}, addressing the general issue of periodic and quasi-periodic orbits and associated caustics when the domain is a perturbation of the circle, taking advantage of both  KAM and Aubry-Mather theories. \\
In order to tackle the problem, we shall consider the first\footnote{The phrase \emph{first return} may be confusing, since it is in fact a second crossing of the boundary; however, such apparent ambiguity will be clarified in \S\ref{sec: first return}. }  return map at the interface $\partial D$, after two consecutive (outer and inner) excursions, working at the zero energy level for the potential $V$ as above. Shooting from boundary points,  after performing a Levi-Civita regularization (cfr. \cite{Levi-Civita}) of Kepler potential, our system looks like a billiard, with  reflection at the boundary replaced by an outer excursion in between two refractions. We shall exploit the Lagrangian structure of the problem, building (locally) a generating function as the sum of an inner and outer contribution. Here comes a first problem, as these can not be globally defined. In addition, major difficulties arise from the return map not being globally defined,  from the singularity of the attraction center and from a lack of twist condition (it can be in shown that, in certain regimes of the parameters, even the completely intergrable circular case admits at least a twist change, see Remark \ref{oss cambio twist}). Here are our main results.

\begin{thm} [Circular domains] When $D$ is the unit circle, there are action-angle coordinates $(\xi, I)\in\Rpi\times(-I_c,I_c)$, where $\xi$ is the polar angle, such that we can express the first return map as a shift

\begin{equation}\label{mappa xi i 1}
	\begin{aligned}
	\mathcal{F}:&\text{ }\Rpi\times(-I_c,I_c)\to\Rpi\times(-I_c,I_c),
	\\&(\x0,I_0)\mapsto(\x1, I_1)=(\xi_0+\bar\theta(I_0), I_0), 
\end{aligned}
\end{equation}
where $\bar\theta(I)=f(I)+g(I)$ and
\begin{equation*}
	f(I)=
	\begin{cases}
		\arccot\left(\frac{\E-2I^2}{I\sqrt{4\E-2(2I^2+\om)}}\right)\quad &\text{if }I\in(0,I_c)\\
		0 &\text{if } I=0\\
		\arccot\left(\frac{\E-2I^2}{I\sqrt{4\E-2(2I^2+\om)}}\right)-\pi\quad &\text{if }I\in(-I_c,0)
	\end{cases}
\end{equation*}
and 

\begin{equation*}
	g(I)=
	\begin{cases}
		2\arccos\left(\frac{2I^2-\mu}{\sqrt{4(\Eh)I^2+\mu^2}}\right)-2\pi\quad &\text{if }I\in(0,I_c)\\
		0 &\text{if } I=0\\
		-2\arccos\left(\frac{2I^2-\mu}{\sqrt{4(\Eh)I^2+\mu^2}}\right)+2\pi\quad &\text{if }I\in(-I_c,0)
	\end{cases}
\end{equation*}
are real analytic functions in $(-I_c,I_c)$. 
	For every $I\in(-I_c, I_c)$, except for a finite number (at most ten) of points, there holds $\bar\theta'(I)\neq 0$.
\end{thm}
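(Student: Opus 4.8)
The plan is to reduce the twist equation $\bar\theta'(I)=f'(I)+g'(I)=0$ to a polynomial equation in the single variable $x=I^2$, and then bound the number of non-twist points by the degree of that polynomial. Since the theorem already grants that $f,g$ are real analytic, I may differentiate the closed forms directly. Using $\arccot'(s)=-1/(1+s^2)$ and $\arccos'(s)=-1/\sqrt{1-s^2}$, a direct computation on the branch $I\in(0,I_c)$ produces, after the substantial cancellations,
\[
f'(I)=-\frac{2\bigl(2\om I^2+\E\om-2\E^2\bigr)}{(\E^2-2\om I^2)\sqrt{4\E-4I^2-2\om}},\qquad
g'(I)=-\frac{4\bigl(2\,\Eh\,I^2+\mu(\mu+\Eh)\bigr)}{(4\,\Eh\,I^2+\mu^2)\sqrt{\Eh+\mu-I^2}}.
\]
Both expressions depend on $I$ only through $I^2$; this matches the fact that the piecewise definitions make $f$ and $g$ odd (so $\bar\theta$ is odd and $\bar\theta'$ even). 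It therefore suffices to study $x=I^2\in(0,I_c^2)$, remembering that each admissible root $x>0$ yields the two points $I=\pm\sqrt{x}$. On the interval of definition the radicands and the denominators are strictly positive --- indeed this is exactly how $I_c$ is chosen in the action--angle construction --- so $f',g'$ are finite and the algebra below is legitimate.

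Next I would clear the two square roots. Writing $f'=-N_f/(D_f\sqrt{R_f})$ and $g'=-N_g/(D_g\sqrt{R_g})$, where $R_f=4\E-4I^2-2\om$, $R_g=\Eh+\mu-I^2$ and $N_\bullet,D_\bullet$ are the affine (degree-one in $x$) factors read off above, the equation $\bar\theta'=0$ becomes $N_fD_g\sqrt{R_g}=-N_gD_f\sqrt{R_f}$. Squaring once removes both radicals and yields the polynomial identity
\[
P(x):=N_f^2D_g^2R_g-N_g^2D_f^2R_f=0 .
\]
Because $\deg N_f^2=\deg D_g^2=2$ and $\deg R_g=1$ (and symmetrically for the other term), $P$ has degree at most $5$. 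Squaring can only enlarge the solution set --- it also captures the spurious branch $f'=g'$ --- so the zeros of $\bar\theta'$ in $x$ lie inside the zero set of $P$; this is harmless, since I only want an upper bound.

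It remains to verify that $P$ is not the zero polynomial, which I would do by computing its top coefficient. The $x^5$-terms of the two products are $-256\,\omega^4\,\Eh^2$ and $-1024\,\omega^4\,\Eh^2$, so the coefficient of $x^5$ in $P$ equals $768\,\omega^4\,\Eh^2$, which is nonzero because $\omega>0$ and $\Eh>0$. Hence $P$ has degree exactly $5$ and at most $5$ real roots; consequently $\bar\theta'$ vanishes at most $5$ times in $(0,I_c^2)$, i.e. at most $10$ times in $(-I_c,I_c)\setminus\{0\}$, the value $I=0$ (if a zero) only lowering the count. This yields the bound of ten exceptional points, and simultaneously shows $P\not\equiv0$, hence $\bar\theta'\not\equiv0$, so $\mathcal{F}$ is a genuine twist map away from these finitely many points.

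The delicate part is precisely the simplification of $f'$ and $g'$: the quotients collapse only after non-obvious cancellations (for instance $1+\cot^2 f$ reduces to $(\E^2-2\om I^2)/(I^2R_f)$), and one must track these carefully to be sure the final polynomial has degree \emph{exactly} five rather than a larger degree with an accidental top-order cancellation, or a smaller one. Equally, one must confirm that $R_f,R_g>0$ and $D_f,D_g\neq0$ throughout $(0,I_c^2)$, so that no zero of $\bar\theta'$ is lost and no pole of $f'$ or $g'$ is mistaken for one; this is where the precise definition of $I_c$ enters the argument.
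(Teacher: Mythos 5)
Your proposal proves only the final sentence of the theorem and assumes everything else. The main content of the statement is that, for the unit circle, there exist action-angle coordinates $(\xi,I)$ in which the first return map \emph{is} the shift $(\xi_0,I_0)\mapsto(\xi_0+f(I_0)+g(I_0),I_0)$ with precisely those expressions for $f$ and $g$, real analytic across the gluing point $I=0$ where the two branches meet. You declare that "the theorem already grants" these closed forms and their regularity and proceed to differentiate them; but those formulas are exactly what has to be derived. In the paper this derivation occupies \S\ref{sec: cerchio} together with Appendix \ref{appA}: the explicit integration of the outer harmonic fixed-ends problem giving the outer shift \eqref{thetaE}, the explicit integration of the inner Kepler problem (conservation of angular momentum, pericenter formula) giving the inner shift \eqref{thetaIf}, the Snell relation \eqref{snell 2} converting the inner angle $\beta_0$ into the outer angle $\alpha_0$, and the identification \eqref{I alpha} of the canonical action $I_0=\sqrt{V_E(\gamma(\xi_0))}\sin\alpha_0$ via the generating function, with conservation of $I$ at both refraction points \eqref{cons azioni} — which is what makes the map a shift in these variables and makes $I_c=\sqrt{\E-\om/2}$ the natural bound \eqref{eq:Ic}. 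None of this appears in your argument, so as a proof of the stated theorem it has a genuine gap: the hard, constructive part is taken as a hypothesis.

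The part you do treat — the bound on the non-twist set — is correct and follows essentially the paper's own lemma: your expressions for $f'$ and $g'$ agree exactly with \eqref{derivata f+g}, and the reduction to a degree-five polynomial in $x=I^2$ by eliminating the two radicals is the same device (the paper writes $f'+g'=A/B-C/D$ with $A,B,C,D>0$ and passes to $A^2D^2-B^2C^2$; you square directly and correctly observe that possible spurious roots from the branch $f'=g'$ only help an upper bound). Your genuine additions are worthwhile: you compute the top coefficient $768\,\omega^4(\Eh)^2\neq 0$, so the polynomial has degree exactly five and is not identically zero — a point the paper asserts without verification, and without which "at most ten points" would not follow — and you check that $R_f,R_g>0$ and $D_f,D_g\neq 0$ on $(0,I_c^2)$, so that no zero of $\bar\theta'$ is lost and no pole is miscounted, which the paper leaves implicit in the unproved claim $A,B,C,D>0$. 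In short: what you wrote would stand as a sharpened proof of the paper's critical-points lemma, but it is not a proof of the theorem.
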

The critical value $I_c$ corresponds to the action associated with the \emph{total reflection} of the trajectory at the boundary, i.e. when the outgoing refracted trajectory becomes tangent to the boundary (see Remark \ref{oss riflessione totale}) and is given in \eqref{eq:Ic}. In the circular case, the \emph{rotation number} of the orbit is $\rho_I=\bar\theta(I)=f(I)+g(I)$. Depending whether this value is rational with $2\pi$ or not, the corresponding orbits are periodic or quasi-periodic. In both cases they determine an invariant curve and a pair of caustics, that is,  smooth closed curves such that every trajectory which starts tangent to remains tangent after every passage in and out the domain $D$. Let us point out that caustics come in pair, respectively  in $D$ and  in its complement. The proof of this Theorem is performed in \S\ref{sec: cerchio}. 

Next we consider a perturbation  $D_\ep$ of the domain whose boundary $\partial D_\ep=supp(\gep)$ is given by a radial deformation of the circle of the form 
\begin{equation}\label{def dominio perturbato 1}
\gep:\Rpi\to \R^2\quad \gep(\xi)=\left(1+\ep f(\xi; \ep)\right)e^{i\xi}, 
\end{equation}
where $f(\xi; \ep)$ is a suitably smooth function of $\Rpi\times [-C, C]$ and  $\xi$ is the polar angle. Then, the first return map on the perturbed boundary can be extended as $\mathcal F(\x, I; \ep)$ (see definition  \ref{pert func}), where $(\xi,I)$ are canonical variables defined in suitable neighbourhoods of such invariant curves. The following theorem resumes the results stated in Theorems \ref{teorema curve invarianti}, here considering a single invariant curve, and \ref{teorema caustiche pert}.   

\begin{thm}[Persistence of invariant curves (KAM)]\label{teorema curve invarianti intro}
	Let $f\in \mathcal C^k$, with $k>5$. Let us suppose that $\bar\theta'(I_0)\neq0$, and assume $\rho_0=\bar\theta(I_0)$ has a Diophantine ratio with $2\pi$ (see Definition \ref{defKAM}). Then there exists $\bar\ep_{\rho_0}$ such that for every $\ep\in\R$, $|\ep|<\bar\ep_{\rho_0}$ the map $\mathcal F(\x, I; \ep)$  admits a closed invariant curve of class $C^1$ with rotation numbers $\rho_0$.  Each of these invariant curves generates a pair of regular caustics. 
\end{thm}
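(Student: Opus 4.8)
The plan is to recast the problem as a perturbation of an integrable twist map and then invoke a quantitative KAM theorem for area-preserving maps of the cylinder. Since Theorem~1.1 gives the unperturbed first return map as the shift $\mathcal F_0(\x_0,I_0)=(\x_0+\bar\theta(I_0),I_0)$, the annulus foliates into invariant circles $\{I=\mathrm{const}\}$ carrying the linear rotation $\bar\theta(I)$. Near the distinguished circle $I=I_0$, the hypothesis $\bar\theta'(I_0)\neq0$ is exactly the nondegeneracy (twist) condition, so after restricting to a neighbourhood $\Rpi\times(I_0-\delta,I_0+\delta)$ the map $\mathcal F_0$ is a genuine twist map. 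First I would verify that the perturbed return map $\mathcal F(\x,I;\ep)$ is exact symplectic: because the dynamics is Lagrangian and the return map is built by summing an inner and outer generating function, the existence of a global generating function on the covering cylinder yields exactness, which is the structural input KAM requires. The regularity $f\in\mathcal C^k$ with $k>5$ is tuned precisely to the differentiability threshold of Moser's invariant curve theorem (Moser needs finitely many derivatives, the classical count being slightly above $3$ or $5$ depending on the norm), and the Diophantine condition on $\rho_0/2\pi$ guarantees the small-divisor series converge.

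Next I would set up the normal form of the perturbation. Writing $\mathcal F(\x,I;\ep)=\mathcal F_0\circ(\mathrm{Id}+\ep\,R_\ep)$ with $R_\ep$ a $\mathcal C^{k-1}$ vector field, I would fix the rotation number $\rho_0=\bar\theta(I_0)$ and look for a $\mathcal C^1$ embedding $u\mapsto(\x,I)=(u+U(u;\ep),\,I_0+W(u;\ep))$ conjugating the restriction of $\mathcal F$ to this curve to the rigid rotation $u\mapsto u+\rho_0$. The conjugacy equation is the standard cohomological equation $U(u+\rho_0)-U(u)=\dots$, whose solvability rests on the Diophantine estimate $|\rho_0-2\pi p/q|\geq \gamma q^{-\tau}$. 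I would invoke the version of the KAM/Moser theorem for monotone twist maps (for instance via the approach of Salamon--Zehnder or Rüssmann in finite differentiability), extracting the explicit threshold $\bar\ep_{\rho_0}$ from the twist constant $|\bar\theta'(I_0)|$, the Diophantine constants $(\gamma,\tau)$, and the $\mathcal C^k$ norm of $f$; this dependence is what forces $\bar\ep_{\rho_0}$ to be $\rho_0$-dependent. The invariant curve so produced is a graph over $\Rpi$ of class $\mathcal C^1$, and preservation of the rotation number $\rho_0$ is automatic by construction.

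The main obstacle, and the step deserving the most care, is the verification of the hypotheses that are \emph{not} present in the textbook statement but are dictated by this specific model: the return map is not globally defined (it degenerates at $I=\pm I_c$, where total reflection occurs), and the phase space has the Keplerian singularity at the center. I would address locality by confining all estimates to the $\delta$-neighbourhood of $I_0$ where $\bar\theta'\neq0$ and the map is smooth and exact, which is legitimate because KAM is a local statement about a single Diophantine circle and the Levi-Civita regularization already removes the collision singularity from the relevant charts. One must check that, after regularization and restriction, the generating function and hence $\mathcal F(\x,I;\ep)$ genuinely depends $\mathcal C^k$-smoothly on $\ep$ up to $\ep=0$ and that $\mathcal F_0$ is recovered at $\ep=0$; this continuity in $\ep$ is precisely the content encoded in Definition~\ref{pert func} and in the smoothness assumption on $f(\xi;\ep)$.

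Finally, having the invariant curve, I would construct the associated pair of caustics. Each point of the invariant curve prescribes a shooting angle at $\partial D_\ep$, hence an inner Keplerian arc and an outer harmonic arc; the one-parameter family of inner (respectively outer) arcs indexed by the curve envelopes a smooth closed curve, and tangency is preserved passage after passage precisely because the curve is invariant under $\mathcal F$. I would identify the inner caustic as the envelope of the family of Keplerian hyperbola arcs and the outer caustic as the envelope of harmonic ellipse arcs, and verify their regularity from the $\mathcal C^1$ regularity of the invariant curve together with the analytic dependence of each arc on its boundary data; this is the content of Theorem~\ref{teorema caustiche pert}, and it transfers the geometric caustic picture of the circular case (Theorem~1.1) to the perturbed setting.
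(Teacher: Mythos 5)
Your proposal is correct and follows essentially the same route as the paper: restrict to an annulus around $I_0$ where the twist $\bar\theta'\neq0$ holds, show via the (locally defined) generating function that $\mathcal F(\x,I;\ep)$ is a small, area-preserving, twist perturbation of the integrable shift, apply Moser's finite-differentiability invariant curve theorem at the Diophantine rotation number $\rho_0$ (this is the paper's Proposition \ref{prop mappa perturbata} plus Theorem \ref{teorema curve invarianti}), and then obtain the two caustics as envelopes of the inner and outer arc families, with regularity and global definition coming from the nondegeneracy condition verified in the circular case (Theorem \ref{teorema caustiche pert}). The only differences are technical details the paper spells out and you gloss over — the covering/compactness argument needed because the generating function is only locally defined, and the rescaling of the annulus to meet the width and Diophantine hypotheses in Moser's statement — but these lie within the same strategy rather than constituting a different one.
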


Two invariant curves with Diophantine rotation numbers border an invariant region for the map $\mathcal F(\x, I; \ep)$, subject to the application of Poincar\'e-Birkhoff theorem and Aubry-Mather theory (see \cite{aubry1983discrete, mather58084existence, moser1962invariant}). As the map is area-preserving, we only need to verify the twist condition. This is a nontrivial issue, as the function $\bar\theta(I)$ may indeed change its monotonicity. This  fact poses some technical difficulties but also gives rise to a richer phenomenology. We have the following result.

\begin{thm}[Existence of Aubry-Mather invariant sets]\label{thm intro aubry}
Let $\bar\rho_-<\bar\rho_+$ be Diophantine rotation numbers, such that there are no critical values of the function $\bar\theta$ in the range $[\bar\rho_-,\bar\rho_+]$. Then there exists $\bar\ep>0$ such that for every $\ep\in\R$ with $|\ep|<\bar\ep$ and for every $\rho\in[\bar\rho_-, \bar\rho_+]$ the map $\mathcal F(\x, I_; \ep)$ admits at least one orbit with rotation number $\rho$. When  $\rho=2\pi\frac{m}{n}$ then for $\ep$ sufficiently small there are at least $2$ $(m,n)$-orbits, namely, such that, denoted with $\left\{(\xi_k, I_k)\right\}_{k\in\mathbb N}=\left\{\mathcal F(\xi_0, I_0)\right\}_{k\in\mathbb N}$ the orbit generated by the initial point $(\xi_0, I_0)$, one has 
\begin{equation*}
	 \forall k\in\mathbb N\quad (\xi_{k+n}, I_{k+n})=(\xi_k+2\pi m, I_k)\equiv_{2\pi}(\xi_k, I_k).
\end{equation*}
\end{thm}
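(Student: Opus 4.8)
The plan is to realize the perturbed first return map $\F(\cdot\,;\ep)$ as an exact, area-preserving monotone twist map on a suitable invariant annulus, and then to invoke the classical variational machinery of Aubry--Mather and Poincar\'e--Birkhoff. Since area-preservation has already been established through the Lagrangian generating-function structure, the whole difficulty is concentrated in producing an invariant region on which a \emph{uniform} twist condition holds, despite the fact that $\bar\theta$ need not be globally monotone.

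First I would isolate the relevant twist region at the unperturbed level. Since by hypothesis the interval $[\bar\rho_-,\bar\rho_+]$ contains no critical \emph{value} of $\bar\theta$, every $I$ with $\bar\theta(I)\in[\bar\rho_-,\bar\rho_+]$ is a regular point, i.e. $\bar\theta'(I)\neq 0$. Choosing $I_0$ with $\bar\theta(I_0)=\tfrac12(\bar\rho_-+\bar\rho_+)$ and taking the maximal interval $[I_-,I_+]\ni I_0$ on which $\bar\theta$ stays in $[\bar\rho_-,\bar\rho_+]$, one checks that $\bar\theta$ is strictly monotone there (a failure of monotonicity would force an interior critical point, whose value would then lie in the forbidden range) and that $\bar\theta([I_-,I_+])=[\bar\rho_-,\bar\rho_+]$, with $\bar\theta(I_\pm)=\bar\rho_\pm$. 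Hence $\bar\theta'$ has constant sign and is bounded away from zero on the compact interval $[I_-,I_+]$, which is precisely the uniform twist condition for $\F(\cdot\,;0)$.

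Next, since $\bar\rho_\pm$ are Diophantine and $\bar\theta'(I_\pm)\neq0$, Theorem \ref{teorema curve invarianti intro} provides, for $|\ep|$ small, two $C^1$ invariant curves of $\F(\cdot\,;\ep)$ carrying rotation numbers $\bar\rho_\pm$; these bound a compact invariant annulus $A_\ep$. On $A_\ep$ the twist $\partial_{I_0}\x1$ equals $\bar\theta'(I_0)+O(\ep)$ in the $C^0$ sense (using that $f\in\mathcal C^k$ with $k>5$), so the uniform lower bound obtained above persists and $\F(\cdot\,;\ep)$ remains a monotone twist diffeomorphism of $A_\ep$ for $\ep$ small enough, with twist interval still covering $[\bar\rho_-,\bar\rho_+]$. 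With these ingredients the conclusion follows from the standard theory (see \cite{aubry1983discrete, mather58084existence, moser1962invariant}): for every $\rho\in[\bar\rho_-,\bar\rho_+]$ there exists a minimal Aubry--Mather orbit of rotation number $\rho$, and for $\rho=2\pi\frac{m}{n}$ the Poincar\'e--Birkhoff theorem yields at least two distinct $(m,n)$-periodic orbits (variationally, an action-minimizing one and a minimax one).

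The main obstacle I anticipate is the twist degeneracy itself, already flagged in the discussion preceding the statement: because $\bar\theta$ may change monotonicity, the naive twist condition fails at its (finitely many) critical points, and the corresponding critical \emph{values} would obstruct both the KAM construction of the boundary curves and the Aubry--Mather framework. The hypothesis that $[\bar\rho_-,\bar\rho_+]$ avoids all critical values is exactly what rescues this, guaranteeing a single monotone branch of $\bar\theta$ that sweeps out the whole target interval and a twist bound uniform in $\ep$. A secondary, more technical point to be handled with care is that $\F$ is not globally defined and its generating function is only a locally patched sum of an inner and an outer contribution; one must therefore check that on the invariant annulus $A_\ep$, trapped between the two KAM curves, a genuine single-valued generating function with the correct monotone twist and coercivity properties is available, so that the variational arguments underlying both theorems apply verbatim.
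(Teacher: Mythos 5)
Your proposal is correct and matches the paper's own route: the paper deduces this theorem from Theorem \ref{thm:korbits}, which is proved in exactly the way you outline --- fix compact intervals on which $\bar\theta$ is monotone (the $[a_i,b_i]$ of (\ref{def ai, bi})), obtain KAM invariant curves with Diophantine rotation numbers $\bar\rho_\pm$ (Theorem \ref{teorema curve invarianti}), observe that between them $\mathcal F(\cdot,\cdot;\ep)$ remains area-preserving and twist (Proposition \ref{prop mappa perturbata}), and then apply the Poincar\'e--Birkhoff and Aubry--Mather theorems extended to the invariant annulus bounded by the two curves. The only difference is presentational: you extract the monotone branch directly from the no-critical-values hypothesis on $[\bar\rho_-,\bar\rho_+]$, whereas the paper organizes the same information through the decomposition of $\mathcal I\backslash\bar{\mathcal I}$ into intervals of monotonicity.
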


This statement can be easily deduced from Theorem \ref{thm:korbits};  as we shall see there,  however, the actual number of solutions can be larger, depending on the number of monotonicity changes of the twist $\bar\theta(I)$.

To conclude this preamble, let us remark that, although the computations are not explicitely performed here, with means of the same analytical tools and techniques other variations of the considered model, more similar to the one described in \cite{Delis20152448}, can be investigated. \\
As an example, let us consider $\epsilon\in\R$ and a non-isotropic perturbation of the outer potential given by 
\begin{equation*}
	\tilde V_E(z)=\E-\frac{\omega^2}{2}x^2-\frac{(\omega+\epsilon)^2}{2}y^2, 
\end{equation*} 
where $z=(x,y)\in\R^2$. Taking, if necessary, $h\in\R$ instead of $h>0$, one has that the dynamics induced by the potential  
\begin{equation}\label{potential pert}
	\tilde V(z)=
	\begin{cases}
		V_I(z) \quad &\text{if }z\in D\\
		\tilde V_E(z) &\text{if } z\notin D 
	\end{cases}
\end{equation}
 with the usual Snell's refraction rule on the boundary is conservative if $\partial D$ is the set of points in $\R^2$ such that the potentials differ by the same constant $A\in\R$, that is, 
 \begin{equation*}
 	\partial D=\{z\in\R^2\text{ }|\text{ }V_I(z)=\tilde V_E(z)+A\}; 
 \end{equation*}
 note that, if $A=0$, the potential is continous and the refraction reduces to a conservation of both the position and velocity on $\partial D$, leading to a $C^1$ junction between inner and outer arcs. For $\ep\in\R$ small, this system be considered again as a small perturbation of the circular case. 

\subsection{Analogies and differences with Birkhoff billiards}

In some sense, the model investigated in this work falls into the category of billiards and it is interesting to look for analogies. There are, however, fundamental differences: first of all the rays are curved by the gravitational force inside $D$;  moreover, reflection at the boundary is replaced by an excursion in the outer region in between two refractions.  Similarly to billiards, our model can be described by an  area preserving map of the cylinder, but, as we shall show, the twist condition may be violated even in the simplest case of a circular domain. It should be noted that refraction imposes a new constraint, because the interface can only be crossed outwards when the inner arc is transverse enough to the boundary (cfr Remark \ref{oss buona definizione F}).\\
There is a wide literature on Birkhoff billiards, with recent relevant advances (see the book \cite{MR2168892} and papers \cite{MR3868417,MR3815464,MR3788206,MR4085879}), including some cases of composite billiard  with reflections and refractions \cite{Baryakhtar2006292}, also in the case of a periodic inhomogenous lattice \cite{Glendinning2016}. Special mention should be paid to the work on magnetic billiards, where the trajectories of a charged particle in this setting are straight lines concatenated with circular arcs of a given Larmor radius \cite{GasiorekThesis,gasiorek2019dynamics}. Let us add that, compared with the cases quoted above, additional difficulties arise because the corresponding return map is not globally well defined and from the singularity of the Kepler potential.\\
As a concrete example on how the considered model presents intrinsic analogies with classical billiards, as well as important differences, let us consider the case of an elliptic domain. In classical elliptic billiards of every eccentricity (see for example \S4 in \cite{MR2168892}), the straigh orbit segment corresponding to the major axis determines always a saddle point for the associated billiard map, while the one coinciding with the minor axis is a center. A similar behaviour can be observed in our refractive model when the domain is an ellipse with small eccetricity, but with a fundamental difference: while this model admits the same equilibrium orbits of the classical billiard, their stabilities depend on the value of the physical parameter $\E, h, \omega, \mu$. This bifurcation phenomenon is derived in \cite{deblasiterraciniellissi}, and can be stated as follows: 
\begin{prop}[\cite{deblasiterraciniellissi}, Corollary 1.3]\label{eccpiccola}
	Let us consider an elliptic domain, with horizontal unitary major semiaxis and eccentricity $e$, and let $\bar{z}_0$ and $\bar{z}_{\pi/2}$ denote the horizontal and vertical homotetic periodic solutions of collision-reflection type. 
	If $\frac{\sqrt{\E+h+\mu}}{\mu}<\frac{\sqrt{2\E-\om}}{2\sqrt{2}\E}$, then, for small eccentricities, 
	$\bar{z}_0$ is stable and $\bar{z}_{\pi/2}$ is unstable.  Symmetrically, if $\frac{\sqrt{\E+h+\mu}}{\mu}>\frac{\sqrt{2\E-\om}}{2\sqrt{2}\E}$,  then, for small eccentricities, 
	$\bar{z}_0$ is unstable and $\bar{z}_{\pi/2}$ is stable.  
\end{prop}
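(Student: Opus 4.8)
The plan is to read off the stability of each homothetic orbit from the trace of the linearized (monodromy) first-return map, and then to track how this trace splits away from the degenerate circular case as the eccentricity $e$ grows. First I would exploit the two reflection symmetries of the ellipse: along each principal axis the orbit $\bar z_0$, $\bar z_{\pi/2}$ meets $\partial D$ orthogonally, so by Snell's law there is no bending at the interface and the trajectory is a genuine straight homothetic segment, colliding with the Keplerian center at the origin and braking at the outer turning point. Both the spatial reflection across the axis and time reversal leave such an orbit invariant, so the monodromy matrix factors as a product of a half-period propagator with its reverse; this reduces the computation of $\operatorname{tr} M$ to the linearized transport over a single inner–outer excursion, from collision to brake.

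Next I would assemble that half-period propagator as a product of elementary symplectic $2\times2$ blocks acting on the transverse displacement and its conjugate momentum: the Jacobi-field propagator along the inner Kepler arc (from the collision to the vertex), the refraction matrix at the vertex — which encodes the boundary curvature together with the Snell ratio between the inner and outer normal velocities — and the propagator along the outer harmonic arc out to the brake point. Since both the Kepler and the harmonic problems are integrable, each propagator can be written explicitly along $\bar z_i$; the only genuinely delicate block is the inner one, since passage through the Keplerian singularity at the origin makes the transverse variational equation singular. Here I would pass to Levi-Civita coordinates, exactly as the paper already does for the map itself, to regularize the Jacobi field across the collision and keep the linearization well defined.

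The stability dichotomy is then $|\operatorname{tr} M|<2$ (elliptic center, stable) versus $|\operatorname{tr} M|>2$ (hyperbolic saddle, unstable). At $e=0$ the domain is the circle, where the integrable-shift structure forces both axial orbits to be parabolic with $\operatorname{tr} M=2$; the content of the Proposition is precisely the first splitting at order $e^2$. I would therefore expand $\operatorname{tr} M-2$ to leading order, using that the ellipse curvatures at the two vertices, $\kappa_0=a/b^2$ and $\kappa_{\pi/2}=b/a^2$ with $a=1$ and $b=\sqrt{1-e^2}$, deviate from $1$ with \emph{opposite} signs ($\kappa_0\sim 1+e^2$, $\kappa_{\pi/2}\sim 1-\tfrac12 e^2$). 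The sign of the resulting leading coefficient separates stable from unstable, and after simplification it is governed by the competition between the inner datum $\frac{\sqrt{\E+h+\mu}}{\mu}$ — the Kepler speed $\sqrt{2(\E+h+\mu)}$ at the interface — and the outer datum $\frac{\sqrt{2\E-\om}}{2\sqrt2\,\E}$, built from the harmonic speed $\sqrt{2\E-\om}$ there. Because the two axes enter with curvature deviations of opposite sign, exactly one of $\bar z_0$, $\bar z_{\pi/2}$ is stabilized and the other destabilized, with their roles exchanged across the threshold where the two sides coincide.

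The main obstacle I anticipate is the inner block: making the regularized transverse dynamics through the collision consistent with the zero-energy normalization and with the Snell matching at the curved interface, so that the refraction matrix and the two integrable propagators combine into the correct symplectic monodromy. Once that is in place, isolating the sign of the $O(e^2)$ coefficient of $\operatorname{tr} M-2$ is an explicit, if bookkeeping-heavy, calculation, and the stated inequality emerges as the locus where that sign vanishes.
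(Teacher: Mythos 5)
This Proposition is not proved in the present paper at all: it is imported verbatim from \cite{deblasiterraciniellissi} (Corollary 1.3 there), so there is no internal proof to compare your attempt against; it must stand on its own. Your overall architecture is reasonable and is in the spirit of the cited work: linearize the return map at the two homothetic fixed points, regularize the transverse variational equation through the Keplerian collision via Levi--Civita, use the brake/collision symmetries to reduce to a half-period propagator, note that at $e=0$ both orbits are parabolic (the circular map is the integrable shift $(\xi,I)\mapsto(\xi+\bar\theta(I),I)$, whose differential at $I=0$ is a unit shear, so $\operatorname{tr}M=2$), and read the dichotomy from the sign of the $O(e^2)$ part of $\operatorname{tr}M-2$. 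You also identified the correct threshold: by Remark \ref{oss cambio twist}, $\partial_{I_0}\x1(\x0,0)=\frac{2\sqrt{\E-\om/2}}{\E}-\frac{4\sqrt{\Eh+\mu}}{\mu}$, and this twist at zero action vanishes exactly on the locus $\frac{\sqrt{\E+h+\mu}}{\mu}=\frac{\sqrt{2\E-\om}}{2\sqrt{2}\,\E}$, which is consistent with a splitting of the form $\operatorname{tr}M-2=(\text{twist factor})\times(\text{perturbation factor})\,e^2+O(e^4)$.

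The genuine gap is in the step where the dichotomy is actually decided. First, the decisive computation --- the sign of the $e^2$ coefficient of $\operatorname{tr}M-2$ at each vertex --- is never carried out; that is precisely where the stated inequality must come from, so asserting that it ``emerges after simplification'' assumes the conclusion. Second, and more substantively, your mechanism (``the two vertices have curvature deviations of opposite sign, hence opposite stability'') is incomplete: for the ellipse with unit major semiaxis, the orbit $\bar z_{\pi/2}$ meets the interface at distance $b=\sqrt{1-e^2}=1-\tfrac12 e^2+O(e^4)$ from the center, so at the same order $e^2$ one also perturbs the values $V_I$, $V_E$ at the refraction point (hence the Snell ratio), the length of the inner arc, and the outer arc to the brake set, whereas for $\bar z_0$ only the curvature moves while the distance stays $1$. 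All of these enter $\operatorname{tr}M-2$ at order $e^2$, which is exactly why the criterion in \cite{deblasiterraciniellissi} (as recalled in the Introduction here) depends on \emph{both} the curvature of the boundary \emph{and} its distance from the Keplerian center. Curvature alone therefore cannot justify the clean antisymmetric conclusion or the $e$-independent threshold: you must either compute all $O(e^2)$ blocks (inner Jacobi field, refraction matrix, outer propagator) and verify that the net coefficients at the two vertices are opposite multiples of the same twist factor $\bar\theta'(0)$, or first prove a general stability criterion in terms of curvature radius and distance of the orthogonal boundary point and then specialize it to the two vertices. A minor final caveat: $|\operatorname{tr}M|<2$ yields only linear (elliptic) stability; if ``stable'' is meant in the Lyapunov sense, an additional nondegeneracy (Moser twist) argument is required.
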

Figure \ref{fig: poincare} shows the orbits of $\mathcal F$ in the two cases of Proposition \ref{eccpiccola}; one can notice that, far from the homotetic solutions, the system shows evidences of integrability. 
\begin{figure}
	\centering
	\includegraphics[width=0.7\linewidth]{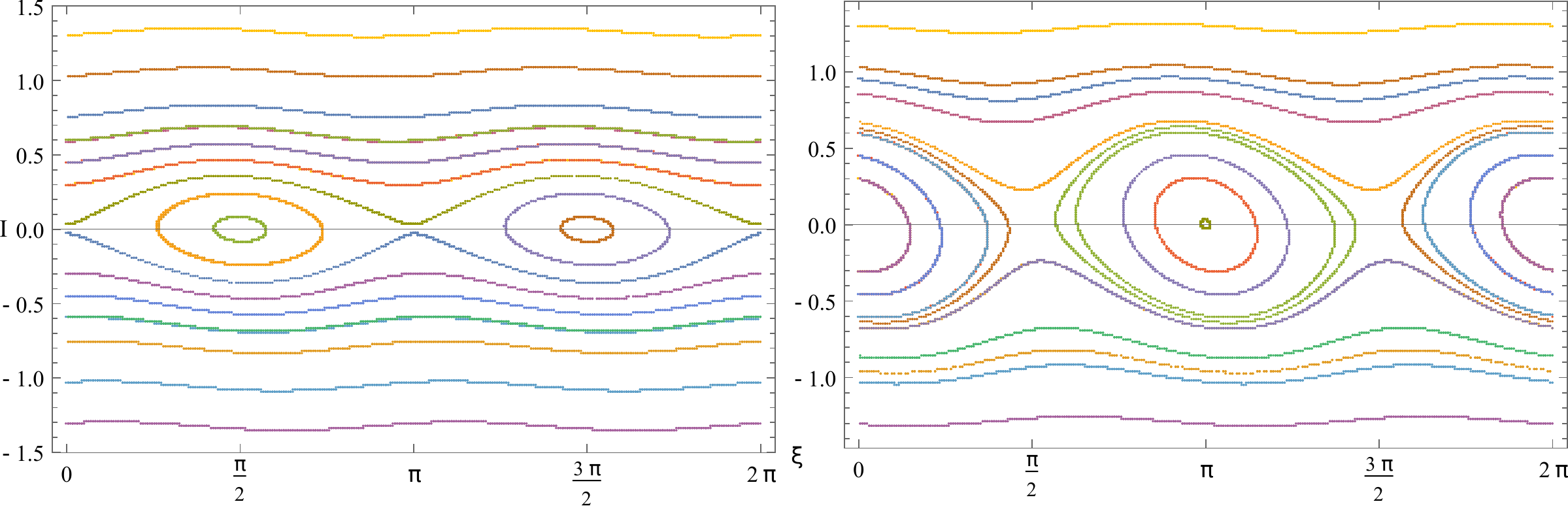}
	\caption{Orbits of the first return map $F$ for $\E=10$, $\omega=1$, $h=3$, $\mu=44$ (left) and $\mu=55$ (right). The shift in the stability of the homotetic equilibrium is evident, as well as the presence of invariant curves far from the $\xi$-axis in both cases. }
	\label{fig: poincare}
\end{figure}

As a final remark, let us highlight one more connection between classical Birkhoff billiards and our refraction model, related to the methods used to study the respective dynamics: in this work, the fundamental argument to prove Theorem \ref{thm intro aubry} is the extension to small perturbations of the existence results obtained for a circular domain, which is completely integrable. Analogous reasonings are used for example in \cite{avila2016integrable} and \cite{MR3815464} to prove a local version of Birkhoff conjecture starting from small deformations respectively of a circular and an elliptic domain, both sharing the complete integrability property in the classic case.   

\section{Preliminaries and notation}\label{sec: preliminaries}
Let us consider a general fixed-ends differential problem of the type 
\begin{equation}\label{problema preliminari}
	\begin{cases}
		z''(s)=\nabla V(z(s))\quad &s\in[0, T]\\
		\frac{1}{2}|z'(s)|-V(z(s))=0 &s\in[0, T]\\
		z(0)=z_0, \text{ }z(T)=z_1
	\end{cases}
\end{equation}
where $z_0, z_1\in\R^2$ or some subset and $V(z)$ is regular and greater than zero almost everywhere. The solutions of problems of type (\ref{problema preliminari}) can be studied by
taking advantage of the variational structure, which also incorporates the  junction rule between the inner and outer geodesics. \\
This Section aims to provide the basic definitions and properties of the main analytical tools needed to study the dynamics described in \S 1, along with a derivation of the generalized Snell's law which connects the geodesic arcs. The latter, as well as a more detailed description of the results presented, can be found in Appendix A of \cite{deblasiterraciniellissi}.

\subsection{Maupertuis functional and Jacobi length}

\begin{defn}
	Given $z_0, z_1\in \R^2$, we denote with $M([0,1],z(t))$ the Maupertuis functional
	
	\begin{equation*}
		M(z)=M([0,1],z(t))=\int_0^1|\dot{z}(t)|^2V(z(t))dt
	\end{equation*}
	which is defined on the set
	\begin{equation*}
		H_{z_0,z_1}=\{z(t)\in H([0,1],\R^2)\text{ }|\text{ }z(0)=z_0, z(1)=z_1\}.  
	\end{equation*}
	Furthermore, the Jacobi length is given by 
	\begin{equation*}
		L(z)=L([0,1],z(t))=  \int_0^1|\dot{z}(t)|\sqrt{V(z(t))}dt, 
	\end{equation*}
	and is defined on the closure of $H_0=\{z(t)\in H_{z_0,z_1}\text{ }| \text{ }|\dot{z}(t)|>0, V((z(t)))>0 \text{ }\forall t\in[0,1] \}$ in the weak topology of $H^1([0,1],\R^2)$. 
	
\end{defn}

It can be shown (see for example \cite{deblasiterraciniellissi, soave2014symbolic}) that the critical points at positive levels of $M(z)$ are reparametrizations of classical solutions of (\ref{problema preliminari}); moreover, the H\"older inequality, along with the energy conservations law, allow to prove that, if $z(t)$ is a critical point of $M$, then $L^2(z)=2M(z)$: the search for solutions of (\ref{problema preliminari}) is hence equivalent to the study of the critical points of the Jacobi length, taking into account the different time scales of the problems. 

\begin{defn}
	We define: 
	\begin{itemize}
		\item the \textbf{geodesic time} $t\in[0,1]$ as the time parameter to be used in $M(z(t))$ and $L(z(t))$; 
		\item the \textbf{cinetic time} $s\in [0,T]$  as the physical time parameter through which $z(s)=z(t(s))$ solves (\ref{problema preliminari}).  
	\end{itemize}
	We denote with $\dot{ }=d/dt$ and $'=d/ds$ respectively the derivatives with respect to the geodesic and the cinetic time.
\end{defn}  
The relation between the geodesic and the cinetic time is proved in \cite{deblasiterraciniellissi} and is given by 
\begin{equation}\label{times}
	\frac{d}{dt}=\frac{L}{\sqrt{2}V(z(t(s)))}\frac{d}{ds}, 
\end{equation}
where $L=|\dot{z}(t)|\sqrt{V(z(t))}$ is constant along the solutions of (\ref{problema preliminari}). 
\subsection{Generalized Snell's law}\label{subs:snell}

Besides the analysis of the inner and outer dynamics taken separately, the study of the dynamical system described in \S1 is heavily influenced by the choice of a suitable junction law between two consecutive arcs through the interface $\partial D$. In our case, a local minimization argument leads to the determination of a connection rule which turns out to be a generalization for curved interfaces and non-Euclidean metrics of the classical Snell's refraction law. \\
In general, let us consider an inner potential $V_I$ and an outer one $V_E$, along with their associated distances $d_E$ and $d_I$, obtained by taking the metrics $\left(g_{E\backslash I}\right)_{ij}(z)=V_{E\backslash I}(z)\delta_{ij}$. Fixed $z_I^0\in D$ and $z_E^0\notin D$ in a suitable neighborood of $\partial D$, one can search for the point $\bar z$ on the interface such that the sum of the distances $d_E(z_E^0, z)+d_I(z, z_I^0)$ is minimal. Denoted with $T(z)$ the unit tangent vector to $\partial D$ in $z$, this extremality condition turns out to be equivalent to require
\begin{equation}\label{snell 1}
	\sqrt{V_E(\bar z)}\frac{\dot{z}_E(1)}{|\dot{z}_E(1)|}\cdot T(\bar z)=\sqrt{V_I(\bar z)}\frac{\dot{z}_I(0)}{|\dot{z}_I(0)|}\cdot T(\bar z), 
\end{equation}   
where $z_E(t)$ and $z_I(t)$ are respectively the geodesic arcs connecting $\bar z$ with $z_E^0$ and $z_I^0$ parametrised by the geodesic time $t\in[0,1]$. Equivalently, taking into account the relation between the geodesic and the cinetic time stated in(\ref{times}), Equation \eqref{snell 1} can be expressed as
\begin{equation*}
\frac{1}{\sqrt{2}}z_E'(T_E)\cdot T(z)=\frac{1}{\sqrt{2}}z_I'(0)\cdot T(z),  
\end{equation*}
where $T_E=s(1)$ is the cinetic ime value associated to $t=1$. 
Geometrically, equation (\ref{snell 1}) represents the conservation of the tangent component of the velocity vector through the interface, and, if $\alpha_E$ and $\alpha_I$ are respectively the angles of $\dot{z}_E(1)$ and $\dot{z}_I(0)$ with the normal vector to $\partial D$ in $\bar z$, it can be rephrased as 
\begin{equation}\label{snell 2}
	\sqrt{V_E(\bar z)}\sin\alpha_E=\sqrt{V_I(\bar z)}\sin\alpha_I; 
\end{equation}
clearly, the transition from the inside of the domain $D$ can be treated in analogous way obtaining again (\ref{snell 2}).
\begin{rem}\label{oss riflessione totale}
	Since for every $z\in\R^2\backslash\{0\}$ $V_E(z)<V_I(z)$, one has that the equation 
	\begin{equation*}
		\alpha_I=\arcsin\left(\sqrt{\frac{V_E(z)}{V_I(z)}}\sin\alpha_E\right)
	\end{equation*}
is always solvable, while 
\begin{equation*}
\alpha_E=\arcsin\left(\sqrt{\frac{V_I(z)}{V_E(z)}}\sin\alpha_I\right)
\end{equation*}
admits a solution if and only if $|\sqrt{V_I(z)/V_E(z)}\sin\alpha_I|\leq1$, that is, iff 
\begin{equation}\label{alphacrit}
	|\alpha_I|\leq\alpha_{I, crit}=\arcsin\left(\sqrt{\frac{V_E(z)}{V_I(z)}}\right).
\end{equation}
Whenever condition (\ref{alphacrit}) is satisfied, the passage from the inner to the outer arc is possible. Note that condition (\ref{alphacrit}) depend globally on the physical parameters $\E, h, \mu, \omega$ and, locally, by $z\in\partial D$. 
\end{rem}

\section{First return map}\label{sec: first return}

The dynamics described in Section \ref{sec: intro} will be studied by considering the corresponding discrete dynamical system which keeps track of the crossings of the total orbit, obtained by the juxtaposition of outer and inner arcs, and the domain's boundary $\partial D$. Such a map  is named \textit{first return map} and is the analogous of the billiard map in the classical theory of Birkhoff billiards, see \cite{MR1328336}: this Section is devoted to its construction, leaving to \S \ref{sec: cerchio} and \ref{sec: perturbazioni} the analysis of its good definition for particular domains. \\
\begin{notaz}\label{notazione}
	From now on we will denote the differential problems connected to the outer and inner dynamics with the equivalent notations
	\begin{equation}\label{problema generale}
		\begin{cases}
			z''(s)=\nabla V_{E\backslash I}(z(s)) \quad &s\in[0,T_{E\backslash I}]\\
			\frac{1}{2}|z'(s)|^2-V_{E\backslash I}(z(s))=0 &s\in[0,T_{E\backslash I}]\\
			\text{boundary conditions}
		\end{cases}
	\Leftrightarrow
	\begin{cases}
		(HS_{E\backslash I})[z(s)]\quad &s\in[0,T_{E\backslash I}]\\
		\text{boundary conditions}
	\end{cases}
	\end{equation}
where $(HS_{E\backslash I})[z]$ denote the differential problem $z''=\nabla V_{E\backslash I}(z(s))$,  along with the corresponding energy conservation law. According to the scope of problem (\ref{problema generale}), the boundary conditions will prescribe either the initial position and velocity or the endpoints of $z(s)$. 
\end{notaz}
Let us suppose that the boundary of the regular domain $D$ defined in Section \ref{sec: intro} can be parametrized by a regular closed curve $\gamma:\Rpi\rightarrow\mathbb{R}^2$. Given some initial conditions $p_0^{(I)}, v_0^{(I)}, p_0^{(E)}$ and $v_0^{(E)}$, let us consider the solutions  $z_I(s)$ and $z_E(s)$ of the two systems 
\begin{equation}\label{EI problem}
	\begin{cases}
		(HS_I)[z(s)] &s\in[0,T_I]\\
		z_I(0)=p_0^{(I)}, z'_I(0)=v_0^{(I)}
	\end{cases}
\quad 
	\begin{cases}
		(HS_E)[z(s)] &s\in[0,T_E]\\
		z_E(0)=p_0^{(E)}, z'_E(0)=v_0^{(E)}
	\end{cases}
\end{equation}
for some $T_I$, $T_E>0$. \\
Fixed $p_0\in\partial D$, $v_0\in\mathbb{R}^2$ such that it points towards the exterior of $D$, we want to describe (supposing that it exists) the orbit obtained by the juxtaposition of an exterior orbit $z_E$ and the subsequent inner orbit $z_I$, namely, the function  $z_{EI}(s): \left[0,T_E+T_I\right]\to\R^2$ defined by
\begin{equation}\label{zEI def}
	z_{EI}(s)=
	\begin{cases}
		z_E(s)\quad &s\in[0,T_E)\\
		z_I(s) &s\in[T_E,T_E+T_I)\\
		z_E^{(1)}(s) &s=T_E+T_I,
	\end{cases}
\end{equation}
where, defined 
\begin{equation*}
	\begin{split}
		&\tilde p_1=z_E(T_E), \quad \tilde p_2=z_I(T_E+T_I), \\
		&u_{1}=\frac{z'_E(T_E)}{|z'_E(T_E)|}, \quad u'_{1}=\frac{z_I'(T_E)}{|z_I'(T_E)|}, \quad u_2=\frac{z'_I(T_E+T_I)}{|z'_I(T_E+T_I)|}, \quad u_2'=\frac{{z'}^{(1)}_E(T_E+T_I)}{|{z'}^{(1)}_E(T_E+T_I)|}, 
	\end{split}
\end{equation*}
the arcs $z_E, z_I, z_E^{(1)}$ are solutions of either the outer or inner problem and the the initial conditions of each branch satisfy the Snell's law. More precisely,  
\begin{equation}\label{extprob}
	\begin{cases}
		(HS_E)[z_E(s)] &s\in[0,T_E]\\
		z_E(s)\notin D, \tilde p_1\in\partial D &s\in(0,T_E)\\
		z_E(0)=p_0, z'_E(0)=v_0 
	\end{cases}
\end{equation}
\begin{equation}\label{intprob}
	\begin{cases}
		(HS_I)[z_I(s)] &s\in[T_E,T_E+T_I]\\
		z_I(s)\in D, \tilde p_2\in\partial D &s\in(T_E,T_E+T_I)\\
		 z_I(T_E)=\tilde p_1, \sqrt{V_E(\tilde p_1)}u_1\cdot e_1= 
		\sqrt{V_I(\tilde p_1)}u_1'\cdot e_1
	\end{cases}
\end{equation}
\begin{equation}
	\begin{cases}
		(HS_E)[z_E^{(1)}(s)] &s\in[T_E+T_I,T_E+T_I+\tilde{T}]\\
		z_E^{(1)}(s)\notin D &s\in(T_E+T_I,T_E+T_I+\tilde{T}]\\
		z_E^{(1)}(T_E+T_I)=\tilde p_2, \sqrt{V_E(\tilde p_2)}u_2'\cdot e_2= 
		\sqrt{V_I(\tilde p_2)}u_2\cdot e_2,  
	\end{cases}
\end{equation}
for some $T_E,T_I,\tilde{T}>0$ and where $e_1$ and $e_2$ are the unit vectors tangent to $\partial D$ respectively in $\tilde p_1$ and $\tilde p_2$. \\
Once defined the concatenation of arcs $z_{EI}(s)$, the first return map is given by the iteration map which expresses $(p_1,v_1)=(z_{EI}(T_E+T_I),z'_{EI}(T_E+T_I))$ as a function of $(p_0,v_0)$ in a suitable set of coordinates. \\
Recalling that we set $\partial D=supp(\gamma)$, with $\gamma$ regular closed curve in $\Rpi$, let us suppose that $p_0=z_{EI}(0)=\gamma(\xi_0)\in\partial D$ is the starting point of the first outer branch of $z_{EI}(s)$: then, denoting with $t(\xi_0)$ and $n(\xi_0)$ respectively the tangent and the outward-pointing normal unit vectors to $\gamma$ in $p_0$, the initial velocity $v_0$ can be expressed as $v_0=\sqrt{2V_E(\gamma(\xi_0))}(\cos{\alpha_0}~ n(\xi_0)+\sin{\alpha_0}~ t(\xi_0))$, where $\alpha_0\in[{-\pi/2,\pi/2}]$ is the angle between $v_0$ and $n(\xi_0)$, positive if $v\cdot t(\xi_0)\geq0$ and negative otherwise. Then, once $\xi_0$ is fixed, the vector $v_0$ is completely determined by $\alpha_0$. We can then consider the map 
\begin{equation}
	F:\Rpi\times[-\pi/2,\pi/2]\rightarrow\Rpi\times[-\pi/2, \pi/2], \quad (\xi_0,\alpha_0)\mapsto(\xi_1,\alpha_1)=(\xi_1(\xi_0,\alpha_0),\alpha_1(\xi_0,\alpha_0)),
\end{equation} 
where the pair $(\xi_1,\alpha_1)$ completely determines $(z_{EI}(T_E+T_I),z'_{EI}(T_E+T_I))$. 
\begin{rem}\label{oss buona definizione F}
	In general, one can not guarantee that $F$ is well defined, as the existence of $\x1$ an $\alpha_1$ as described depends strongly on the interaction between the outer and inner arcs with the domain's boundary. The conditions  for $F$ to be well defined are essentially two: 
	\begin{itemize}
		\item[(i)] the existence and uniqueness of the outer and inner arcs for fixed endpoints on $\partial D$;
		\item[(ii)] the good definition of the refraction from the inside to the outside of $D$. As a matter of fact, from Remark \ref{oss riflessione totale} one has that, while the refraction exterior-interior is always possible, an inner arc can be refracted in an outer one if and only if,  denoted with $\beta_1$ the angle between $z_I'(T_E+T_I)$ and the inward-pointing normal unit vector of $\gamma$ in $\gamma(\x1)$, $|\beta_1|<\beta_{crit}=\arcsin\left(\sqrt{V_E(\gamma(\x1))/V_I(\gamma(\x1))}\right)$. 
	\end{itemize}
In other words, condition $(ii)$ is verified if the inner Keplerian arcs $z_I$ are transverse enough to the domain's boundary $\partial D$. This obstruction to the good definition of $F$ may be cicumvented by considering a suitable prolonging according to which, whenever $|\beta_1|>\beta_{crit}$, the test particle returns back in the interior of the domain with an angle $\beta=\beta_1$. This extension, corresponding to the so-called \textit{total reflection}, is somehow suggested by physical intuition and by analogy with the classical Birkhoff billiards, as well as the traditional Snell's law for light rays. However, as this approach leads to technical difficulties due to the passage to the tangent case for $|\beta_1|=\beta_{crit}$ and the consequent loss in regularity, in this work such extension is not considered, and, by definition, the map $F$ will be well defined only when  $|\beta_1|<\beta_{crit}$ for every inner arc. \\ 
 As we will see in Sections \ref{sec: esistenza archi cerchio} and \ref{sec: esistenza archi perturbati}, this is true if $D$ is a disk or a perturbation of the latter, provided $\alpha_0\neq\pm\pi/2$.
\end{rem}
%

\subsection{Variational approach}\label{sec: variational general}

Choosing the right set of canonical variables, the first return map can be expressed in a variational form, which allows to take advantage of a wide range of powerful theoretical instruments, coming from KAM and Aubry-Mather theories, which allow to tackle the case of small perturbations of a circular domain $D_0$ (the latter is discussed in \S \ref{sec: cerchio},  while the perturbational approach is the subject of \S \ref{sec: perturbazioni}). Following e.g. \cite{gole2001symplectic}, we can consider the \textit{generating function}
\begin{equation}
	S: \Rpi\times\Rpi\to \mathbb{R}, \quad S(\x0, \x1)=S_E(\x0, \tilde{\xi})+S_I(\tilde{\xi}, \x1)=d_E(\gamma(\x0), \gamma(\tilde{\xi}))+d_I(\gamma(\tilde{\xi}), \gamma(\x1)), 
\end{equation}
where $d_E$ and $d_I$ are defined as in \S \ref{sec: preliminaries} and, according to Snell's law, the intermediate point $\tilde p=\gamma(\tilde \xi)$ is such that $\tilde{\xi}$ is a critical point for the function $f(\x0, \xi, \x1)=d_E(\gamma(\x0),\gamma(\xi))+d_I(\gamma(\xi), \gamma(\x1))$ with $\x0$ and $\x1$ fixed. In other words, $\tilde{\xi}$ is a solution of 
\begin{equation}\label{snell equation}
	\partial_bS_E(\xi_0, \tilde{\xi})+\partial_aS_I(\tilde{\xi}, \x1)=0, 
\end{equation}
where $\partial_a$ and $\partial_b$ denote respectively the partial derivatives with respect to the first and second variable.
In general, one can not guarantee   the global good definition of the generating function $S(\x0, \x1)$, which depends strongly on the specific geometry of the domain and on the values of the physical parameters $\E, h, \mu, \omega$. On the other hand, the local defiinition of $S(\x0, \x1)$ needs a nondegeneration condition: under the assumption of existence and uniqueness of the inner and outer arcs, which ensures the differentiability of $d_E(p_0,p_1)$ and $d_I(p_0, p_1)$ separately, let us consider $(\x0,\xt, \x1)\in\left(\Rpi\right)^3$ such that (\ref{snell equation}) is satisfied. If 
\begin{equation}\label{nondeg S}
	\partial_{\xt}(\partial_bS_E(\x0, \xt)+\partial_aS_I(\xt, \x1))=\partial^2_bS_E(\x0, \xt)+\partial^2_aS_I(\xt, \x1)\neq 0, 
\end{equation}
then locally around $\x0, \x1$ one can express $\xt=\xt(\x0, \x1)$ as a function of the endpoints. Moreover, one has 
\begin{equation}\label{dxitilde}
	\begin{split}
	\partial_{\x0}\xt(\x0, \x1)=-\frac{\partial_{ab}S_E(\x0, \xt(\x0, \x1))}{\partial^2_bS_E(\x0, \xt(\x0, \x1))+\partial^2_aS_I(\xt(\x0, \x1), \x1)},\\
		\partial_{\x1}\xt(\x0, \x1)=-\frac{\partial_{ab}S_I(\xt(\x0, \x1), \x1)}{\partial^2_bS_E(\x0, \xt(\x0, \x1))+\partial^2_aS_I(\xt(\x0, \x1), \x1)}. 
	\end{split} 
\end{equation}
If (\ref{nondeg S}) holds, the generating function is well defined locally around $\x0$ and $\x1$, an one can define the \textit{canonical actions} associated to the system and to the above coordinates by the relations 
\begin{equation}\label{def I}
I_0=-\partial_{\x0}S(\x0, \x1), \quad I_1=\partial_{\x1}S(\x0, \x1); 
\end{equation} 
note that, when $S(\x0, \x1)$ is well defined, the same is true also for the actions as functions of the angles $\x0$ and $\x1$. \\
In order to define a first return map in the new canonical action-angle variables, one needs to express $\x1$ and $I_1$ as functions of $\x0$ and $I_0$:  this is possible if a second nondegeneracy condition holds. Let us consider $\x0,\x1, \xt(\x0, \x1)$ such that (\ref{nondeg S}) holds, and define $I_0$ as in (\ref{def I}): if   
\begin{equation}\label{nondeg I0 1}
	\partial_{\x1}(I_0+\partial_{\x0}S(\x0, \x1))\neq0,
\end{equation}
 one can find $\x1=\x1(\x0, I_0)$ as a function of the initial action-angle variables. In particular, making use of (\ref{snell equation}) and (\ref{dxitilde}), condition (\ref{nondeg I0 1}) translates in 
\begin{equation}\label{nondeg I0 2}
	\frac{\partial_{ab}S_E(\x0, \xt(\x0, \x1))\partial_{ab}S_I(\xt(\x0, \x1), \x1)}{\partial_b^2S_E(\x0, \xt(\x0, \x1))+\partial^2_aS_I(\xt(\x0, \x1), \x1)}\neq0, 
\end{equation}
which is well defined in view of (\ref{nondeg S}). If (\ref{nondeg I0 2}) holds, one can then find two neighborhoods $[\x0-\lambda_{\x0}, \x0+\lambda_{\x0}]$ and  $[I_0-\lambda_{I_0}, I_0+\lambda_{I_0}]$ such that the new \textit{local first return map}
\begin{equation}\label{first return azione-angolo}
	\begin{aligned}
	\mathcal\F:[\x0-\lambda_{\x0}, \x0+&\lambda_{\x0}]\times[I_0-\lambda_{I_0}, I_0+\lambda_{I_0}]\to \Rpi\to\R, \\ &(\x0, I_0)\mapsto(\x1(\x0, I_0), I_1(\x0, I_0)),
	\end{aligned} 
\end{equation}
where $I_1(\x0, I_0)=\partial_{\x1}S(\x0, \x1(\x0, I_0))$, is well defined. \\
The switch from a \textit{Lagrangian} approach adopted by using the original first return map $F$ and an \textit{Hamiltonian} one involving the generating function and the canonical action-angle variables is crucial as it will allow, in Section \ref{sec: perturbazioni},  to take advantage of the results coming from KAM and Aubry-Mather theories to derive the existence of orbits with prescribed rotation numbers in the case of small perturbations of a circula domain (see \S \ref{sec: perturbazioni}). \\
In the circular case, which will be analyzed in details in \S \ref{sec: cerchio}, both the potentials and the domain are centrally symmetric: a consequence of the subsequent invariance under rotations is that the nondegeneracy conditions (\ref{nondeg S}) and (\ref{nondeg I0 2}) will result in fact equivalent, and $S$,  denoted in this case with $S_0$, will be well defined almost everywhere. \\
 As a final remark, one can observe that (\ref{snell 2}), along with (\ref{def I}) and (\ref{snell equation}), provides a general relation between the actions $I_0, I_1$ and the angles $\alpha_0, \alpha_1$  defined above: in particular, 
 \begin{equation}\label{I alpha}
 	\begin{aligned}
 		I_0(\x0, \x1)&=-\partial_{\x0}(S_E(\x0, \xt)+S_I(\xt, \x1)))=-\partial_aS_E(\x0, \xt)=\\
 		&=\frac{1}{\sqrt{2}}z'_E(0)\cdot\frac{\dot\gamma(\x0)}{|\dot\gamma(\x0)|}=\sqrt{V_E(\gamma(\x0))}\sin\alpha_0\\I_1(\x0, \x1)&=\partial_{\x1}(S_E(\x0, \xt)+S_I(\xt, \x1)))=\partial_bS_I(\xt, \x1)=\\
 		&=\frac{1}{\sqrt{2}}z'_I(T_E+T_I)\cdot\frac{\dot\gamma(\x1)}{|\dot\gamma(\x1)|}=\sqrt{V_I(\gamma(\x0))}\sin\alpha'_1=\sqrt{V_E(\gamma(\x1))}\sin\alpha_1.
 	\end{aligned}
 \end{equation}
Eqs.(\ref{I alpha}) provides a natural boundary for the values that the actions can assume, which, in a inhomogeneous case, depend on $\x0$ and $\x1$: in particular
\begin{equation*}
	|I_0|\leq \sqrt{V_E(\gamma(\x0))}\text{ and }|I_1|\leq\sqrt{V_E(\gamma(\x1))}, 
\end{equation*}
where the equalities correspond to the tangent case $\alpha_{0\backslash 1}=\pm \pi/2$. \\
In the case of a circular domain, the bound on the actions is uniform and given by 
\begin{equation}\label{eq:Ic}
	I_0, I_1\in\left[-\sqrt{\E-\frac{\omega^2}{2}},\sqrt{\E-\frac{\omega^2}{2}} ~\right]=\left[-I_c, I_c\right]. 
\end{equation}
For reasons which will be clear in \S 4, related to the good definition of the map $\mathcal F$, we will not consider the tangent case, restricting ourselves to the open interval $(-I_c, I_c)$.  


\section{The unperurbed case: circular domain}\label{sec: cerchio}
Let us suppose that $D$ is a disk of radius $1$, and denote it with $D_0$: in this particular case, both the potentials and the domain are centrally symmetric, and, as a consequence, the system is integrable. In particular, it is possible to find the explicit expression of the first return map in action-angle variables, which in this case is denoted with $\mathcal F_0$, as it will be done in Section \ref{sec: mappa cerchio esplicita}. \\

\subsection{Good definition of $\boldsymbol{d_E}$ and $\boldsymbol{d_I}$.}\label{sec: esistenza archi cerchio}
As already pointed out in Remark \ref{oss buona definizione F}, the preliminary condition for the generating function $S(\x0, \x1)$ to be well defined is that the inner and outer distances $d_E(p_0, p_1)$ and $d_I(p_0, p_1)$ are differentiable with respect to their variables. This is true when the inner and outer arcs connecting two points on $\partial D$ are (locally) unique, as Theorems \ref{thm ex globale esterna unp} and \ref{thm ex globale interna unp} yield for the circular domain. The proofs of these theorems are given, along with some preliminary results leading to them,  in Appendix \ref{appA}.

\begin{thmv}\label{thm ex globale esterna unp}
	If $\E>\om$, for every $p_0, p_1\in \partial D_0$ such that $|p_0-p_1|<2$, there exists $T>0$ and a unique $z(s;p_0,p_1):[0,T]\to\mathbb{R}^2$ solution of the fixed ends problem
	\begin{equation}\label{problema esterno testo}
		\begin{cases}
			(HS_E)[z(s)] &s\in[0,T]\\
			|z(s)|>1 &s\in(0,T)\\
			z(0)=p_0, \quad z(T)=p_1. 
		\end{cases}
	\end{equation} 
	Moreover, $z(s;p_0,p_1)$ is of class $C^1$ with respect to variations of the endpoints. 
\end{thmv}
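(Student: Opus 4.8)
The plan is to exploit the fact that $V_E(z)=\E-\frac{\om}{2}|z|^2$ generates the \emph{linear} field $z''=-\omega^2 z$, whose solutions are harmonic ellipses centered at the origin that are completely explicit. For any $T$ with $\omega T\notin\pi\mathbb Z$, the fixed-ends problem $z(0)=p_0,\ z(T)=p_1$ admits the unique interpolation solution
\begin{equation*}
z(s)=\frac{\sin\omega(T-s)}{\sin\omega T}\,p_0+\frac{\sin\omega s}{\sin\omega T}\,p_1 .
\end{equation*}
Thus a solution of \eqref{problema esterno testo} is entirely determined by its traversal time $T$, and the whole problem reduces to selecting the correct values of $\tau:=\omega T$. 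First I would impose the energy constraint of \eqref{problema esterno testo}, which (being conserved) need only be checked at $s=0$: using $|p_0|=|p_1|=1$ and writing $p_0\cdot p_1=\cos\theta$, where $\theta\in(0,\pi)$ is the angular separation of the (non-antipodal, since $|p_0-p_1|<2$) endpoints, the condition $\tfrac12|z'(0)|^2=\E-\tfrac{\om}{2}$ becomes a quadratic in $\cos\tau$,
\begin{equation*}
(1+E)\cos^2\tau-2\cos\theta\,\cos\tau+(1-E)=0,\qquad E:=\frac{2\E-\om}{\om},
\end{equation*}
with roots $\cos\tau_\pm=\dfrac{\cos\theta\pm\sqrt{E^2-\sin^2\theta}}{1+E}$. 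Here the hypothesis $\E>\om$, equivalent to $E>1$, enters for the first time: it forces the discriminant $E^2-\sin^2\theta\ge E^2-1>0$, so real roots exist for \emph{every} non-antipodal pair, and one checks $\cos\tau_\pm\in(-1,1)$, hence $\tau_\pm\in(0,\pi)$.

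Next I would single out the admissible root. Differentiating the interpolation formula gives the radial velocity at departure, $z'(0)\cdot p_0=\frac{\omega}{\sin\tau}(\cos\theta-\cos\tau)$, so the arc leaves $\partial D_0$ outward exactly when $\cos\tau<\cos\theta$. Using $E>1$ one shows $\cos\tau_-<\cos\theta<\cos\tau_+$, i.e. the root $\tau_-$ starts strictly outward while $\tau_+$ starts strictly inward; the latter therefore violates $|z|>1$ immediately and is discarded. To see that the $\tau_-$-arc stays outside throughout, I would use that along any harmonic orbit $z(s)=P\cos\omega s+Q\sin\omega s$ one has $|z(s)|^2=\frac{a^2+b^2}{2}+\frac{a^2-b^2}{2}\cos(2\omega s-\phi)$, oscillating between the squared semi-axes $b^2$ and $a^2$; since $p_0$ lies on the orbit with $|p_0|=1$ we automatically get $b\le1\le a$, strict by $E>1$ and by the transversal (outward) departure. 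Hence $|z(\cdot)|^2-1$ is a sinusoid of period $\pi/\omega$ with exactly two transversal zeros per period; the zeros at $s=0$ and $s=T$ are consecutive, so $|z|^2-1$ keeps constant sign on $(0,T)$, which is positive because the motion starts outward. This simultaneously proves $|z(s)|>1$ on $(0,T)$ and that $T$ is the first return.

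For uniqueness I would argue that any exterior solution must have $\omega T\in(0,\pi)$, since an arc remaining outside cannot span more than a single outer lobe (of angular width $<\pi$ by the same sinusoidal description); consequently its traversal time solves the quadratic above, and as $\tau_+$ departs inward, the solution must be $\tau_-$, which is unique. Finally, $C^1$ (indeed real-analytic) dependence on the endpoints follows by composition: the selected $\cos\tau_-$ is analytic in $\cos\theta$ because the radicand stays bounded below by $E^2-1>0$, and $\theta$ depends analytically on $(p_0,p_1)$ away from the antipodal configuration; since $\sin\tau_-\ne0$, substituting $T=\tau_-/\omega$ into the interpolation formula yields the claimed regularity of $z(s;p_0,p_1)$.

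The main obstacle I anticipate is the bookkeeping in the third step: rigorously matching the algebraic root $\tau_-$ with the geometric ``first outer lobe'', and cleanly controlling the degenerate limits $b=1$ or $a=1$ (equivalently the tangential shooting $\alpha_0=\pm\pi/2$, or the boundary case $\E=\om$) where the two zeros of $|z|^2-1$ merge and transversality is lost. It is precisely there that the strict inequality $\E>\om$ is indispensable, both to keep the discriminant positive at the worst separation $\theta=\pi/2$ and to guarantee $a>1$ strictly, so that the connecting arc is a genuine interior geodesic rather than one constrained to $\partial D_0$.
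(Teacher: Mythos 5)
Your proposal is correct, but it takes a genuinely different route from the paper's. The paper argues by shooting in the angle variable: it solves the Cauchy problem from $p_0$ with shooting angle $\alpha\in(-\pi/2,\pi/2)$, computes in closed form the polar-angle shift $\theta_E(\alpha)$ at the first return to $\partial D_0$ (formula \eqref{thetaE}), proves $\theta_E'(\alpha)>0$ with image $(-\pi,\pi)$, and then inverts; existence, uniqueness and $C^1$ dependence on the endpoints all come from this monotone bijection via the inverse function theorem. You instead invert in the time variable: the interpolation formula reduces the two-point problem to a quadratic in $\cos\tau$, the outward-departure condition selects the root $\tau_-$, and the sinusoidal behaviour of $|z(s)|^2$ yields both the exterior condition and the bound $\omega T\in(0,\pi)$ on which uniqueness rests. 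Your route makes uniqueness purely algebraic (two roots, one discarded) and gives analytic dependence on $(p_0,p_1)$ directly through $\cos\theta=p_0\cdot p_1$; what the paper's route buys in exchange is the explicit shift $\theta_E(\alpha)$, which is exactly what Section \ref{sec: mappa cerchio esplicita} reuses to write the circular first return map, and which your argument would have to recompute separately. Two details to tighten: your restriction $\theta\in(0,\pi)$ silently drops the case $p_0=p_1$, which the statement allows (there $\cos\tau_+=1$ is the degenerate root and $\tau_-$ produces the radial brake arc, so your scheme extends verbatim); and in the uniqueness step the tangential departure $b=1$ must be excluded explicitly, since there the lobe argument degenerates --- but in that case the orbit meets the unit circle only at two antipodal pericenters, forcing either the excluded endpoint $p_1=-p_0$ or a full-period arc passing through $-p_0$, which violates $|z(s)|>1$ on $(0,T)$.
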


The analogous of Theorem \ref{thm ex globale esterna unp} for the inner dynamics states the existence and uniqueness of the Keplerian arc connecting two points with an additional topological constraint. To define it, it is necessary to recall and adapt the classical concept of winding number to open arcs which connects points on the circle; to do that, we use a definition which recalls the one given in \cite{soave2014symbolic}. 
In general, let $p_0,p_1\in \partial D_0$ two non-antipodal points on the circle, that is, $|p_0-p_1|<2$, and a regular, simple curve $\alpha:[a,b]\to\R^2$ such that $\alpha(a)=p_0$ and $\alpha(b)=p_1$ and $\alpha(s)\neq\boldsymbol{0}$ for every $s\in[a,b]$. Let us then consider a closed curve $\Gamma_{\alpha}$ which is equal to $\alpha(s)$ for $s\in[a,b]$ and then follows the shortest arc of $\partial D_0$ which connects $p_1$ to $p_0$.  We can then define the winding number associated to the curve $\alpha$ with respect to $0\in\R^2$ as
\begin{equation}\label{wind unp}
	I(\alpha([a,b]),0)=\frac{1}{2\pi i}\int_{\Gamma_{\alpha}}\frac{dz}{z}.
\end{equation}

\begin{thmv}\label{thm ex globale interna unp}
	For every $p_0,p_1\in\partial D_0$, $|p_0-p_1|<2$, there is a unique $T>0$ and a unique solution $z(s;p_0,p_1)$ of  	
	\begin{equation}\label{PB2}
		\begin{cases}
			(HS_I)[z(s)] &s\in[0,T]\\
			|z(s)|<1 &s\in(0,T)\\
			z(0)=p_0, \text{ }z(T)=p_1
		\end{cases}
	\end{equation}
	such that $z(s;p_0,p_1)$ is of class $C^1$ with respect to $p_0$ and $p_1$ and: 
	\begin{itemize}
		\item if $p_0=p_1$, $z(s;p_0,p_0)$ is an ejection-collision solution; 
		\item if $p_0\neq p_1$, $z(s;p_0,p_1)$ is a classical solution of (\ref{PB2}) such that $|Ind(z([0,T]),0)|=1$. If $p_1\to p_0$, $z(s;p_0,p_1)$ tends to the ejection-collision solution $z(s;p_0,p_0)$. 
	\end{itemize} 
	Moreover, there is $0<C<1$ such that, for every $p_0,p_1$ as above, \begin{equation}\label{transv testo}
		-p_0\cdot\frac{z_1'(0;p_0, p_1)}{|z_1'(0;p_0, p_1)|}>C\quad\text{ and }\quad 
		p_1\cdot\frac{z_1'(T_1;p_0, p_1)}{|z_1'(T_1;p_0, p_1)|}>C. 
	\end{equation}
The quantity $C$ depends on the physical parameters $\Eh, \mu$ and on $|p_0-p_1|$. In particular, it tends to $1$ when $\E\to \infty$ or $|p_0-p_1|\to 0$
\end{thmv}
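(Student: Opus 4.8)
\emph{Proof plan.} The plan is to reduce \eqref{PB2} to the completely integrable inner Kepler flow and solve a single scalar transcendental equation. Since $V_I(z)=\Eh+\mu/|z|$, the inner problem is the Kepler problem $z''=-\mu z/|z|^3$ run at the fixed \emph{positive} energy $\tfrac12|z'|^2-\mu/|z|=\Eh$, so every inner orbit is an arc of a Keplerian hyperbola with focus at the origin. Using the rotational invariance of both $V_I$ and $\partial D_0$, I would first rotate so that $p_0=e^{-i\Delta/2}$ and $p_1=e^{i\Delta/2}$, where $\Delta\in(0,\pi)$ is the angular separation of the two points (observe that $|p_0-p_1|=2\sin(\Delta/2)<2$ is exactly $\Delta<\pi$). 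Writing the orbit in focal polar form $r(\theta)=(\ell^2/\mu)/\big(1+e\cos(\theta-\theta_p)\big)$, with angular momentum $\ell$ and eccentricity $e=\sqrt{1+2\Eh\ell^2/\mu^2}>1$, the requirement that the conic pass through both $p_0$ and $p_1$ at $r=1$ forces the pericenter direction $\theta_p\in\{0,\pi\}$: the value $\theta_p=\pi$ (pericenter opposite to the chord) is the one whose interior arc sweeps an angle $2\pi-\Delta>\pi$ and therefore winds around the origin, giving $|\mathrm{Ind}|=1$, whereas $\theta_p=0$ yields the index-zero arc which I would discard. Because a hyperbolic branch has a single pericenter and total angular sweep $<2\pi$, these are the only topological possibilities, which is what makes the uniqueness statement clean.

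For the $\theta_p=\pi$ branch the passage condition $r=1$ reads $\ell^2/\mu=1-e\cos(\Delta/2)$, so I would introduce
$G(\ell)=\ell^2/\mu-1+\cos(\Delta/2)\sqrt{1+2\Eh\ell^2/\mu^2}$
and note that $G(0)=\cos(\Delta/2)-1<0$, that $G(\ell)\to+\infty$, and that $G'(\ell)>0$ for $\ell>0$ since $\cos(\Delta/2)>0$ when $\Delta<\pi$. Hence there is a unique $\ell^{*}=\ell^{*}(\Delta)>0$ with $G(\ell^{*})=0$, producing the unique inner arc with $|\mathrm{Ind}|=1$; the sign of $\ell$ fixes the sense of winding, determined by the order of the endpoints. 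A short verification that the pericenter distance $r_{\min}=(\ell^{*2}/\mu)/(1+e)$ lies in $(0,1)$, together with the fact that $r(\theta)$ is strictly increasing away from pericenter, confirms that $|z(s)|<1$ on $(0,T)$ and that the interior arc is connected (no re-entry). As $\Delta\to0$ one reads off $\ell^{*}\to0$ and $r_{\min}\to0$, so the arc degenerates to the radial ejection--collision orbit, which is precisely the collision case $p_0=p_1$.

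Real-analytic dependence of $\ell^{*}$ on $\Delta$ follows from the implicit function theorem (as $G'\neq0$), and away from the diagonal the whole arc depends analytically on $(p_0,p_1)$, since the Kepler flow is smooth off collisions and $r_{\min}>0$ there. For the transversality bounds I would evaluate energy conservation at $r=1$: there $|z'|^2=2(\Eh+\mu)$ and the tangential speed equals $\ell^{*}$, so by the reflection symmetry of the arc about the perpendicular bisector of the chord one gets $-p_0\cdot z_1'(0)/|z_1'(0)|=p_1\cdot z_1'(T_1)/|z_1'(T_1)|=\sqrt{1-\ell^{*2}/\big(2(\Eh+\mu)\big)}$; since $\ell^{*2}=\mu\big(1-e\cos(\Delta/2)\big)<\mu$, this quantity exceeds the uniform bound $\sqrt{1-\mu/\big(2(\Eh+\mu)\big)}>0$ and tends to $1$ as $\Delta\to0$ or $\E\to\infty$, yielding the claimed $C=C(\Eh,\mu,|p_0-p_1|)$. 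I expect the genuinely delicate step to be the $C^1$ dependence \emph{across} the collision diagonal $p_0=p_1$, where the limiting arc passes through the singularity: there one must pass to Levi--Civita coordinates and check that the regularized two-point solution, hence $d_I$, extends in a $C^1$ manner, relying on the preliminary regularization lemmas of Appendix \ref{appA}. The winding-number bookkeeping needed to isolate the $|\mathrm{Ind}|=1$ branch and rule out the index-zero competitor is the other point that requires care.
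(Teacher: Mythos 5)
Your proposal is correct in substance, but it reaches the result by a genuinely different route than the paper. You work entirely in the physical plane: rotational symmetry plus the focal form of the conic forces the pericenter onto the axis of the chord ($\theta_p\in\{0,\pi\}$), the winding requirement selects (under your normalization $\Delta\in(0,\pi)$) the branch with $\theta_p=\pi$, and existence and uniqueness of that arc reduce to the strictly monotone scalar equation $G(\ell)=0$ — an elementary shooting argument. The paper instead first performs the Levi-Civita conjugation (Lemma \ref{lem levi civita}), reads the regularized two-point problem as the geodesic problem of a conformal metric of negative sectional curvature, proves completeness via a growth criterion, and invokes Cartan--Hadamard to obtain a unique geodesic between any two points of the Levi-Civita plane (Proposition \ref{propLC}); the double covering $z=w^2$ then yields exactly two physical arcs, and only afterwards does the paper carry out the explicit conic computations (Proposition \ref{hip}, Remark \ref{osship}, Proposition \ref{proptra}) to sort out winding numbers and transversality — computations essentially identical to yours; in particular your constant $\sqrt{1-\mu/\bigl(2(\Eh+\mu)\bigr)}$ is exactly the paper's $C=\sqrt{(1+a)/(1+2a)}$ with $a=\mu/\bigl(2(\Eh)\bigr)$. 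What your route buys is economy: away from the diagonal no Riemannian or regularization machinery is needed at all. What the paper's route buys is precisely the step you defer: the case $p_0=p_1$ and the $C^1$ dependence \emph{across} the collision diagonal, where the limiting arc hits the singularity; in Levi-Civita coordinates the ejection--collision orbit is a perfectly regular solution, so uniqueness and smooth dependence there come for free, whereas in your scheme this must be patched in separately (as you acknowledge) — a viable hybrid, but it means the paper's central tool cannot be dispensed with entirely, only localized near the diagonal. Finally, a point in your favour: your identification of the winding branch (pericenter opposite the short boundary arc, eccentricity tending to $1$ and pericenter distance tending to $0$ as $p_1\to p_0$) is the geometrically correct one, even though it appears to contradict Remark \ref{osship}; with the eccentricities of (\ref{ecc}) it is $e_0\to1$, not $e_1\to1$, as $p_1\to p_0$, so the labels $z_0,z_1$ (equivalently $\mathcal{H}_0,\mathcal{H}_1$) in Remark \ref{osship} and Proposition \ref{proptra} are internally swapped relative to Proposition \ref{hip}; do not let that apparent mismatch shake your (correct) bookkeeping.
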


The estimates given by (\ref{transv testo}) are crucial  to ensure that the inner arcs are transversal  to $\partial D_0$ as much as needed: this is necessary for the first return map to be well defined, since,  according to Remark  \ref{oss buona definizione F}, it is clear that these arcs can not be tangent to the interface.

\subsection{Study of the map $\mathcal F_0$}\label{sec: studio F0}

Once the good definition and differentiability of the distances $d_I(p_0, p_1)$ and $d_E(p_0,p_1)$ is ensured, one can eventually consider the generating function introduced in Section \ref{sec: variational general} and given by
\begin{equation*}
	S_0(\x0, \x1)=S_{E, 0}(\x0, \xt)+S_{I,0}(\xt, \x1)=d_E(\gamma_0(\x0), \gamma_0(\xt))+d_I(\gamma_0(\xt), \gamma_0(\x1)), 
\end{equation*} 
where $\gamma_0:\Rpi\to\R^2$ denotes the circle of radius $1$, and investigate the associated nondegeneracy conditions (\ref{nondeg S}) and (\ref{nondeg I0 2}). Although the complete analysis on its good definition will be done after the derivation of the explicit formulation of the  associated first return map $\mathcal F_0$ in Section \ref{sec: mappa cerchio esplicita}, the central symmetry of the circular case allows to give some preliminary informations.  As both the outer and inner systems are invariant under rotations, the associated generating functions can be expressed as univariate functions depending on the angle spanned by the arc; more precisely, 
\begin{equation*}
S_0(\x0,\x1)=\tilde S_0(\x1-\x0)=\tilde S_{E, 0}(\xt-\x0)+\tilde S_{I, 0}(\x1-\xt). 
\end{equation*}
Going through the same analysis described in general in Section \ref{sec: variational general}, the intermediate coordinate $\xt$ is implicitely determined  as a function of $\x0$ and $\x1$ by the relation 
\begin{equation*}
	\partial_{\xt}( \tilde S_{E, 0}(\xt-\x0)+\tilde S_{I, 0}(\xi-\xt))=0
\end{equation*} 
if (\ref{nondeg S}) is verified. In the circular case, the latter translates in 
\begin{equation}\label{nondeg cond S cerchio}
	\tilde S_{E, 0}''(\xt-\x0)+\tilde S_{I, 0}''(\x1-\xt)\neq0. 
\end{equation}
If (\ref{nondeg cond S cerchio}) holds, one has 
\begin{equation*}
	\partial_{\x0}\xt=\frac{\tilde S''_{E, 0}(\xt-\x0)}{\tilde S_{E, 0}''(\xt-\x0)+\tilde S_{I, 0}''(\x1-\xt)}, \quad \partial_{\x1}\xt=\frac{\tilde S''_{I, 0}(\x1-\xt)}{\tilde S_{E, 0}''(\xt-\x0)+\tilde S_{I, 0}''(\x1-\xt)},
\end{equation*}
and the canonical actions are defined by 
\begin{equation}\label{azioni cerchio}
	\begin{aligned}
	I_0=-\partial_{\x0}\tilde S_0(\x1-\x0)=\tilde S'_{E,0}(\xt(\x0, \x1)-\x0), \\ I_1=\partial_{\x1}\tilde S_0(\x1-\x0)=\tilde S'_{I, 0}(\x1-\xt(\x0,\x1)). 
	\end{aligned}
\end{equation}
The first equation in (\ref{azioni cerchio}) defines implicitely $\x1=\x1(\x0, I_0)$, and, as a consequence, the map $\mathcal F_0$, if (\ref{nondeg I0 2}) holds, that is, if 
\begin{equation}\label{nondeg I cerchio}
	\partial_{\x1}(I_0-\tilde S'_{E, 0}(\xt(\x0,\x1)-\x0))=-\frac{\tilde S''_{E, 0}(\xt(\x0,\x1)-\x0)\tilde S''_{I, 0}(\x1-\xt(\x0,\x1))}{	\tilde S_{E, 0}''(\xt(\x0, \x1)-\x0)+\tilde S_{I, 0}''(\x1-\xt(\x0, \x1))}\neq 0. 
\end{equation}
As a final remark, note that the validity of conditions (\ref{nondeg cond S cerchio}) and (\ref{nondeg I cerchio}) are strongly related to the \textit{twist condition} (see \cite{jurgen1986recent, gole2001symplectic, aubry1983discrete}) associated to the map $\mathcal F_0$, defined as $\partial_{I_0}\x1\neq0$. As a matter of fact, one has 
\begin{equation*}
	\partial_{I_0}\x1=\frac{\tilde S_{E, 0}''(\xt(\x1-\x0)-\x0)+\tilde S_{I, 0}''(\x1-\xt(\x1-\x0))}{\tilde S''_{E, 0}(\xt(\x0,\x1)-\x0)\tilde S''_{I, 0}(\x1-\xt(\x1-\x0))}: 
\end{equation*}
one can then say that, if $\mathcal F_0$ is given, the twist condition is equivalent to require the nondegenerations (\ref{nondeg cond S cerchio}) and (\ref{nondeg I cerchio}) to be true. \\

\subsection{Explicit formulation of $\mathcal{F}_0$}\label{sec: mappa cerchio esplicita}

When the domain $D$ is circular, the first return map $F:(\xi_0,\alpha_0)\mapsto(\xi_1,\alpha_1)$ can be explicitely determined: in this case, the nondegeneracy given through (\ref{nondeg cond S cerchio}) and (\ref{nondeg I cerchio}) can be investigated in the equivalent form given by the twist condition. The boundary $\partial D_0$ can be parametrized as $\gamma(\xi)=(\cos{\xi},\sin{\xi})$, with $\xi\in\Rpi$, and the symmetry properties of the potentials $V_E$ and $V_I$ and the isotropy of the Snell's law on a circular domain imply that $F$ is of the form 
\begin{equation}\label{mappa cerchio}
	F(\xi_0,\alpha_0)=(\xi_1(\xi_0,\alpha_0),\alpha_1(\xi_0,\alpha_0))=(\xi_0+\bar{\theta}(\alpha_0),\alpha_0); 
\end{equation} 
in other words, the first return map on the circle reduces to a conservation of the velocity variable $\alpha$ and a shift in the angle $\xi$ of a suitable quantity $\bar{\theta}$ which depends only on the physical parameters of the problem and on $\alpha_0$. The Jacobian matrix $DF((\xi_0,\alpha_0))$ can be then expressed for every pair $(\xi_0,\alpha_0)\in\Rpi\times(-\pi/2,\pi/2)$ as 
\begin{equation*}
	DF(\xi_0,\alpha_0)=
	\begin{pmatrix}
		1&\frac{\partial\bar{\theta}}{\partial\alpha_0}(\alpha_0)\\0&1
	\end{pmatrix}.
\end{equation*}
From the above considerations, we have that $\bar{\theta}(\alpha_0)=\bar{\theta}_E(\alpha_0)+\bar{\theta}_I(\alpha_0)$, where $\bar{\theta}_E(\alpha_0)$ and $\bar{\theta}_I(\alpha_0)$ represent the excursions in the angles due respectively to the outer and the inner arcs of the orbit $z_{EI}(s)$.

\paragraph{\textbf{Outer shift}}

The outer shift has been already computed as an additional result in the proof of Theorem \ref{thm ex globale esterna unp} in Appendix \ref{appA}, and is equal to 
\begin{equation}\label{thetaE}
	\theta_E(\alpha)=
	\begin{cases}
		\theta_E^+(\alpha)=\text{arccot}\left(\frac{\om}{(2\E-\om)\sin{(2\alpha)}}+\cot{(2\alpha)}\right) &\text{ if }\alpha>0, \\
		0 &\text{ if }\alpha=0, \\
		\theta_E^-(\alpha)=\text{arccot}\left(\frac{\om}{(2\E-\om)\sin{(2\alpha)}}+\cot{(2\alpha)}\right)-	\pi &\text{ if }\alpha<0.  
	\end{cases}
\end{equation}

\paragraph{\textbf{Inner shift}}

As the system is invariant under rotations, without loss of generality let $p_0=(1,0)$ to be the initial point of the inner orbit and, denoted by $v_0$ its initial velocity, let $\beta_0\in(-\pi/2,\pi/2)$ the angle between $v_0$ and the inward-pointing radial unit vector, namely, $-p_0$; then $v_0=\sqrt{2(\Eh)+2\mu}(-\cos\beta_0,\sin{\beta_0})=(v_x,v_y)$. The inner Cauchy problem is then given by 
\begin{equation}\label{inner}
	\begin{cases}
		z''(s)=-\frac{\mu}{|z(s)|^3}z(s), \quad s\in[0,T]\\
		z(0)=p_0, z'(0)=v_0. 
	\end{cases}
\end{equation} 
Unlike the outer case, for the inner Keplerian orbit it is not possible to decouple the $2-$dimensional system into two one-dimensional systems in the variables $(x,y)$; we shall rely on other classical techniques (see \cite{celletti2010stability}) which require the passage in polar coordinates: consider the functions $r(s)\in\R^+$ and $\theta(s)\in\Rpi$ such that $z(s)=r(s)e^{i\theta(s)}$. From the conservation of the angular momentum, we have that 
\begin{equation}\label{k}
	r(s)^2\theta'(s)=const=k=|p_0||v_0|\sin{\beta_0}=\sqrt{2\E+2h+2\mu}\sin{\beta_0}, 
\end{equation}
while the energy conservation law implies 
\begin{equation}\label{rdot}
	\Eh=\frac{1}{2}\left(r'(s)^2+r(s)^2\theta'(s)^2\right)-\frac{\mu}{r(s)}\Rightarrow r'(s)=-\sqrt{2(\Eh)-\frac{k^2}{r^2(s)}+\frac{2\mu}{r(s)}},  
\end{equation}
where the sign depend by the fact that, according to the chosen initial conditions, $r(s)$ is decreasing. 
Taking together (\ref{k}) and (\ref{rdot}), one has then 
\begin{equation}\label{dtheta}
	d\theta=-\frac{k}{r^2\sqrt{2(\Eh)-\frac{k^2}{r^2}+\frac{2\mu}{r}}}dr.
\end{equation}
The classical results for the two-body problem ensure that, for positive energies, the Kepler problem is unbounded, and $r(t)$ reaches its unique minimum $r_p$ at a time $s_p>0$. The value of $r_p$ is given by (see \cite{celletti2010stability})
\begin{equation}\label{rp}
	r_p=\frac{k^2}{\mu}\left(1+\sqrt{1+\frac{2(\Eh)k^2}{\mu^2}}\right)^{-1}.
\end{equation}
If we denote with $\theta_p$ the polar angle of the pericenter and consider the initial conditions given by (\ref{inner}), taking into account the simmetry of $r(s)$ with respect to $s_p$, we have that the inner shift angle is given by 
\begin{equation}\label{thetaI}
	\bar{\theta}_I=2\theta_p,
\end{equation}
where $\theta_p$ can be obtained by integration from (\ref{dtheta}): 
\begin{equation*}
	\theta_p=\int_0^{\theta_p}d\theta=-k\int_1^{r_p}\frac{dr}{r^2\sqrt{2(\Eh)-\frac{k^2}{r^2}+\frac{2\mu}{r}}}=\frac{k}{|k|}\int_1^{\frac{1}{r_p}}\frac{du}{\sqrt{\frac{2(\Eh)}{k^2}-u^2+\frac{2\mu}{k^2}u}}
\end{equation*}
setting $x=u-\mu/k^2$, $x_0=1-\mu/k^2$ and $x_1=\frac{\mu}{k^2}\sqrt{1+\frac{2\left(\Eh\right)k^2}{\mu^2}}$: 
\begin{equation*}
	\theta_p=\frac{k}{|k|}\int_{x_0}^{x_1}\frac{dx}{\sqrt{\frac{2(\Eh)}{k^2}+\frac{\mu^2}{k^4}-x^2}}.
\end{equation*}
Finally, defining $y=x\left(\frac{2(\Eh)}{k^2}+\frac{\mu^2}{k^4}\right)^{-1/2}$, $y_0=(k^2-\mu)\left(2(\Eh)k^2+\mu^2\right)^{-1/2}$ and $y_1=1$:
\begin{equation}\label{thetapf}
	\theta_p=\frac{k}{|k|}\int_{y_0}^{y_1}\frac{dy}{\sqrt{1-y^2}}=\frac{k}{|k|}\arccos{y_0}=\frac{k}{|k|}\arccos\left(\frac{k^2-\mu}{\sqrt{2(\Eh)k^2+\mu^2}}\right). 
\end{equation} 
Casting together (\ref{k}), (\ref{thetaI}) and  (\ref{thetapf}), one obtains
\begin{equation}\label{thetaIf}
	\bar{\theta}_{I}(\beta_0)=
	\begin{cases}
		\bar{\theta}^+_I(\beta_0)=2\arccos\left(\frac{(2\E+2h+2\mu)\sin{\beta_0}^2-\mu}{\sqrt{4(\Eh)(\Eh+\mu)\sin{\beta_0}^2+\mu^2}}\right)-2\pi  &\text{ if } \beta_0\geq 0\\
		\bar{\theta}^-_I(\beta_0)=-2\arccos\left(\frac{(2\E+2h+2\mu)\sin{\beta_0}^2-\mu}{\sqrt{4(\Eh)(\Eh+\mu)\sin{\beta_0}^2+\mu^2}}\right)+2\pi  &\text{ if } \beta_0< 0, 
	\end{cases}
\end{equation}
where the shift is such that $\bar{\theta}_I(\beta_0)\in(-\pi, \pi)$ for every $\beta_0\in(-\pi/2, \pi/2)$.  Note that 
\begin{equation*}
	\begin{aligned}
		&\lim_{\beta_0\to0^+}\bar{\theta}_I^+(\beta_0)=0=\lim_{\beta_0\to0^-}\bar{\theta}_I^-(\beta_0),\\
		\lim_{\beta_0\to0^+}\frac{d}{d\beta_0}&\bar{\theta}_I^+(\beta_0)=-\frac{4(\Eh+\mu)}{\mu}=\lim_{\beta_0\to0^-}\frac{d}{d\beta_0}\bar{\theta}_I^-(\beta_0)
	\end{aligned}
\end{equation*}
and then $\bar{\theta}_I\in C^1(-\pi/2,\pi/2)$. 

\paragraph{\textbf{Total shift and properties of the overall trajectories}}

The total shift angle $\bar{\theta}(\alpha_0)$ is computed by taking the sum of the outer and the inner shifts and taking into account the transition laws for the velocities across the interface $\partial D_0$. In particular, if $\tilde\alpha$ and $\tilde\beta$ denote respectively the angles wit the normal unit vector of the outer and the inner velocities of an orbit crossing the interface in a point $\tilde p\in\partial D_0$, from (\ref{snell 2}) one has
\begin{equation*}
	\sqrt{\E-\frac{\om}{2}|\tilde p|^2}\sin{\tilde\alpha}=\sqrt{\Eh+\frac{\mu}{|\tilde p|}}\sin{\tilde\beta}; 
\end{equation*}
in the particular case of a circular domain, the Snell's law is uniform over all the points of $\partial D_0$, and the initial and final angles with the radial direction are equal for every brach of the orbit. Performing in (\ref{thetaIf}) the substitution $\sin{\beta_0}=\sqrt{(2\E-\om)/(2(\Eh+\mu))}\sin{\alpha_0}$, one obtains the total shift 
\begin{equation*}
	\bar{\theta}(\alpha_0)=
	\begin{cases}
		\bar{\theta}_E^+(\alpha_0)+\bar{\theta}_I^+(\alpha_0) &\text{ if }\alpha_0>0\\
		0 &\text{ if }\alpha_0=0\\
		\bar{\theta}_E^-(\alpha_0)+\bar{\theta}_I^-{(\alpha_0)} &\text{ if }\alpha_0<0,
	\end{cases}
\end{equation*}
where $\bar{\theta}_E^+(\alpha_0)$ and $\bar{\theta}_E^-(\alpha_0)$ are given by (\ref{thetaE}) and 
\begin{equation*}
	\bar{\theta}^+_I(\alpha_0)=2\arccos\left(\frac{(2\E-\om)\sin{\alpha_0}^2-\mu}{\sqrt{2(\Eh)(2\E-\om)\sin{\alpha_0}^2+\mu^2}}\right)=-\bar{\theta}_I^-(\alpha_0). 
\end{equation*}
The map is continuous and differentiable with respect to $\alpha_0$, and 
\begin{equation*}
	\begin{aligned}
		\frac{d}{d\alpha_0}\bar{\theta}(0)=&\lim_{\alpha_0\to0^+}\frac{d}{d\alpha_0}\bar{\theta}(\alpha_0)=\lim_{\alpha_0\to0^-}\frac{d}{d\alpha_0}\bar{\theta}(\alpha_0)=\\&=\frac{2\E-\om}{\E}-\frac{2\sqrt{2}\sqrt{\Eh+\mu}\sqrt{2\E-\om}}{\mu}. 
	\end{aligned}
\end{equation*}
Passing to the canonical coordinates $(\xi, I)$, the axisymmetry of the potentials and the isotropy of Snell's law on the circle translates in the conservation of the quantity $I$ both in the endpoints and the transition point $\tilde{\xi}$. The first claim is a straightforward consequence of Eq.(\ref{mappa cerchio}), while to prove the conservation of the action across the intermediate point one needs to consider the actions $I^E$ and $I^I$ associated to $S_{E, 0}$ and $S_{I, 0}$ separately:   
	\begin{equation}\label{az cerchio}
		\begin{aligned}
			&I^E_1(\x0,\x1)=\partial_bS_{E, 0}(\x0,\tilde{\xi})=\sqrt{V_E(\gamma(\tilde{\xi}))}\sin{\beta_0}=\sqrt{V_I(\gamma(\tilde{\xi}))}\sin{\beta_1}\\ 
			\quad &I^I_0(\x0,\x1)=-\partial_aS_{I, 0}(\tilde{\xi},\x1)=\sqrt{V_I(\gamma(\tilde{\xi}))}\sin{\beta_1}.
		\end{aligned} 
	\end{equation}
	Since on the circle $\alpha_0=\beta_0$ and $\beta_1=\alpha_1'$, we have that for every $\x0,\x1\in\Rpi$
	\begin{equation}\label{cons azioni}
		I_0(\x0,\x1)=I^E_1(\x0,\x1)=I^I_0(\x0,\x1)=I_1(\x0,\x1)\equiv I(\x0,\x1).
	\end{equation}
Moreover, from (\ref{I alpha}) one has that in the circular case the global domain of definition of the actions does not depend on the points $\x0, \x1$, that is 
\begin{equation*}
	I_0, I_1\in\left(-\sqrt{\E-\frac{\om}{2}},\sqrt{\E-\frac{\om}{2}} \right)=(-I_c, I_c)=\mathcal I. 
\end{equation*} 
Taking into account Eq.(\ref{mappa cerchio}), the definitions of $\bar{\theta}_E$ and $\bar{\theta}_I$ and the relations (\ref{I alpha}), (\ref{cons azioni}), in the new set of canonical coordinates $(\xi, I)\in\Rpi\times\mathcal I$ we can express the first return map as 
\begin{equation}\label{mappa xi i}
	\begin{aligned}
	&\mathcal{F}:\Rpi\times\mathcal I\to\Rpi\times\mathcal I,
	\\(\x0,I_0)\mapsto&(\x1, I_1)=(\xi_0+\bar\theta(I_0), I_0)=(\xi_0+f(I_0)+g(I_0), I_0), 
\end{aligned}
\end{equation}
where 
\begin{equation*}
	f(I)=
	\begin{cases}
		\arccot\left(\frac{\E-2I^2}{I\sqrt{4\E-2(2I^2+\om)}}\right)\quad &\text{if }I\in(0,I_c)\\
		0 &\text{if } I=0\\
		\arccot\left(\frac{\E-2I^2}{I\sqrt{4\E-2(2I^2+\om)}}\right)-\pi\quad &\text{if }I\in(-I_c,0)
	\end{cases}
\end{equation*}
and 

\begin{equation*}
	g(I)=
	\begin{cases}
		2\arccos\left(\frac{2I^2-\mu}{\sqrt{4(\Eh)I^2+\mu^2}}\right)-2\pi\quad &\text{if }I\in(0,I_c)\\
		0 &\text{if } I=0\\
		-2\arccos\left(\frac{2I^2-\mu}{\sqrt{4(\Eh)I^2+\mu^2}}\right)+2\pi\quad &\text{if }I\in(-I_c,0)
	\end{cases}
\end{equation*}
are $C^1$ functions in $\mathcal I$. 
\begin{rem}
	Direct computations show that for every $\E>\om, h>0, \mu>0$ and for every $I\in\mathcal I$ one has $f'(I)>0$ and $g'(I)<0$: the outer and inner shifts are then invertible in $\mathcal I$, and one can define the inverse functions $\tilde{f}(\theta)=f^{-1}(I)_{|I=I(\theta)}$ and $\tilde{g}(\theta)=g^{-1}(I)_{|I=I(\theta)}$. From the regularity of both $f$ and $g$, we have that $\tilde f$ and $\tilde g$ are of class $C^1$ in the respective domains. In particular,
	\begin{equation*}
		f\left(\mathcal I\right)=(-\pi, \pi), \quad g\left(\mathcal I\right)=(-\bar{\theta}, \bar{\theta}), 
	\end{equation*}
\begin{equation}\label{bartheta}
 \bar{\theta}=2\pi-2\arccos\left(\frac{2\E-\om-\mu}{\sqrt{2(\Eh)(2\E-\om)+\mu^2}}\right).
 \end{equation} 
Moreover, 
	\begin{equation}\label{ftilde gtilde}
		\begin{aligned}
			\tilde{\xi}=\xi_0+f(I)\Leftrightarrow I=\tilde{f}(\tilde{\xi}-\x0)\equiv I^E_1(\x0,\tilde{\xi}), \\
			\x1=\tilde{\xi}+g(I)\Leftrightarrow I=\tilde{g}(\x1-\tilde{\xi})\equiv I^I_0(\tilde{\xi}, \x1). 
		\end{aligned}
	\end{equation}
\end{rem}
\begin{lemma}
	For every $I\in\mathcal I$, except for a finite number if points, $f'(I)+g'(I)\neq 0$.
\end{lemma}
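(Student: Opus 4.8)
The plan is to reduce the transcendental equation $f'(I)+g'(I)=0$ to a polynomial equation in the single variable $t=I^2$, whose degree then bounds the number of solutions. First I recall that, by the circular–domain theorem, $f$ and $g$ — hence $\bar\theta'=f'+g'$ — are real analytic on $(-I_c,I_c)$, and that the preceding Remark gives $f'(I)>0>g'(I)$ throughout the interval. Consequently $f'(I)+g'(I)=0$ holds \emph{if and only if} $f'(I)^2=g'(I)^2$, and squaring introduces no spurious roots precisely because both $f'$ and $-g'$ are strictly positive.

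Next I would differentiate the explicit expressions for $f$ and $g$. Setting $R_1(I)=4\E-4I^2-2\om$ and $R_2(I)=\Eh+\mu-I^2$ for the two radicands, a direct computation — resting on the two cancellations $I^2R_1+(\E-2I^2)^2=\E^2-2\om I^2$ and $(4(\Eh)I^2+\mu^2)-(2I^2-\mu)^2=4I^2(\Eh+\mu-I^2)$ — yields
\[
f'(I)=\frac{2\,(2\E^2-\om\E-2\om I^2)}{(\E^2-2\om I^2)\sqrt{4\E-4I^2-2\om}},\qquad
g'(I)=-\frac{4\,\bigl(2(\Eh)I^2+\mu(\Eh+\mu)\bigr)}{\bigl(4(\Eh)I^2+\mu^2\bigr)\sqrt{\Eh+\mu-I^2}}.
\]
Writing $P_1,Q_1$ for the numerator and denominator of $f'$ (without the radical) and $P_2,Q_2$ for those of $g'$, note that on the \emph{open} interval the four factors $Q_1=\E^2-2\om I^2$, $Q_2=4(\Eh)I^2+\mu^2$, $R_1$, $R_2$ are all strictly positive: indeed $R_1>0$ for $|I|<I_c$ and vanishes only at the endpoints, while the others are bounded away from $0$. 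Hence $f'^2=P_1^2/(Q_1^2R_1)$ and $g'^2=P_2^2/(Q_2^2R_2)$ are well defined there.

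The condition $f'^2=g'^2$ then becomes $P_1^2Q_2^2R_2=P_2^2Q_1^2R_1$, i.e. $\Phi(I^2)=0$, where $\Phi(t)=P_1(t)^2Q_2(t)^2R_2(t)-P_2(t)^2Q_1(t)^2R_1(t)$ is a polynomial in $t=I^2$ of degree at most $5$, since each of $P_i,Q_i,R_i$ is affine in $t$. The decisive point is that $\Phi\not\equiv0$: extracting the coefficient of $t^5$ I obtain $768(\Eh)^2\om^2>0$, so $\Phi$ has degree exactly $5$. Therefore $\Phi$ has at most $5$ real roots $t$, each producing at most the two values $I=\pm\sqrt t$; hence $\bar\theta'$ vanishes at no more than ten points of $(-I_c,I_c)$, and in particular at finitely many, which is the assertion (and it recovers the sharper count of at most ten claimed in the circular–domain theorem).

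The main obstacle is purely computational: performing the differentiation and, above all, the algebraic collapses that reduce $f'$ and $g'$ to the compact forms displayed above, together with the bookkeeping needed to check that the top coefficient of $\Phi$ is nonzero. If one wishes to avoid computing that leading coefficient, finiteness alone follows more softly: as $I\to I_c^-$ one has $R_1\to0^+$ with strictly positive numerator, so $f'(I)\to+\infty$ while $g'(I)$ tends to a finite negative limit; thus $\bar\theta'>0$ near $\pm I_c$, so $\bar\theta'\not\equiv0$, and the zero set of a real-analytic function that is nonzero near both endpoints cannot accumulate there, forcing it to be finite.
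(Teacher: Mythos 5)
Your proposal is correct and takes essentially the same route as the paper's proof: both use the positivity of all numerators and denominators (so that cross-multiplying and squaring to eliminate the radicals introduces no spurious roots) to reduce $f'(I)+g'(I)=0$ to a real polynomial equation of degree $5$ in $X=I^2$, yielding at most ten zeros in $\mathcal I$. Your explicit verification that the leading coefficient $768(\Eh)^2\om^2$ is nonzero, and the real-analyticity fallback argument, are refinements that the paper leaves implicit, but the core argument is identical.
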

\begin{proof}
	Direct computations lead to 
	\begin{equation}\label{derivata f+g}
		\begin{aligned}
			f'(I)+g'(I)&=\frac{\sqrt{2}(2\E^2-(\E+2I^2)\om)}{\sqrt{2\E-\om-2I^2}(\E^2-2\om I^2)}-\frac{8(\Eh)I^2+4(\Eh)\mu+4\mu^2}{\sqrt{\Eh+\mu-I^2}(4(\Eh)I^2+\mu^2)}=\\
			&=\frac{A(I^2)}{B(I^2)}-\frac{C(I^2)}{D(I^2)}. 
		\end{aligned}
	\end{equation}
	Since for every $I\in\mathcal I$ we have that $A(I^2),B(I^2),C(I^2),D(I^2)>0,$  
	\begin{equation*}
		f'(I)+g'(I)=0\Leftrightarrow X=I^2\in\left[0,I_c^2\right) \text{ is a solution of }p(x)=0, 
	\end{equation*}
	where $p(x)=A^2(x)D^2(x)-B^2(x)C^2(x)$. 
	As $p(x)$ is a real polynomial of degree $5$ in $X$, one can have at most ten values of $I\in\mathcal I$ such that $f'(I)+g'(I)=0$. 
\end{proof}
We define $\bar{\mathcal{I}}=\{I\in\mathcal I\text{ }|\text{ }f'(I)+g'(I)=0\}$ as the set of the critical points of the function $f+g$. 
\begin{prop}\label{buona definizione gen unp}
	The generating function $S_0(\x0,\x1)$ is well defined in $\Rpi\times\Rpi$ except for a finite number of pairs $(\x0,\x1)$ in the quotient space $(\Rpi\times\Rpi)/\sim$, where $(\x0,\x1)\sim(\xi_0', \x1') \Leftrightarrow \x1-\x0=\x1'-\x0'$. 
\end{prop}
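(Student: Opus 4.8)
The plan is to reduce the good definition of $S_0(\x0,\x1)$ to the two nondegeneracy conditions \eqref{nondeg cond S cerchio} and \eqref{nondeg I cerchio}, and then to rewrite those conditions purely in terms of the scalar function $\bar\theta=f+g$, whose critical set $\bar{\mathcal{I}}$ has just been shown to be finite. Since in the circular case $S_0(\x0,\x1)=\tilde S_0(\x1-\x0)$ depends only on the difference $\delta=\x1-\x0$, the whole construction is constant along each equivalence class of $\sim$, and it will suffice to exhibit a finite set of ``bad'' differences.

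First I would express the second derivatives $\tilde S_{E,0}''$ and $\tilde S_{I,0}''$ through $f'$ and $g'$. By \eqref{ftilde gtilde} the inverse of $f$ is $\tilde f$, with $I=\tilde f(\xt-\x0)\equiv I^E_1(\x0,\xt)$; comparing this with the first identity in \eqref{azioni cerchio}, $I=\tilde S_{E,0}'(\xt-\x0)$, shows that $\tilde S_{E,0}'=\tilde f=f^{-1}$ as functions of their argument, whence $\tilde S_{E,0}''(\xt-\x0)=1/f'(I)$. The same reasoning applied to the inner contribution, using $\tilde S_{I,0}'=\tilde g=g^{-1}$, gives $\tilde S_{I,0}''(\x1-\xt)=1/g'(I)$. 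The Remark above, asserting $f'(I)>0$ and $g'(I)<0$ on all of $\mathcal I$, then guarantees that both second derivatives are finite and nonvanishing everywhere, with $\tilde S_{E,0}''>0$ and $\tilde S_{I,0}''<0$.

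With this identification the two nondegeneracy conditions collapse into one. Condition \eqref{nondeg cond S cerchio} reads $\tilde S_{E,0}''+\tilde S_{I,0}''=\bigl(f'(I)+g'(I)\bigr)/\bigl(f'(I)g'(I)\bigr)\neq0$, which, the product being nonzero, holds precisely when $f'(I)+g'(I)\neq0$. Condition \eqref{nondeg I cerchio} has numerator $\tilde S_{E,0}''\tilde S_{I,0}''$, already nonzero by the previous step, and the very same sum in its denominator; hence it is a well-defined nonzero quantity exactly when $\tilde S_{E,0}''+\tilde S_{I,0}''\neq0$, i.e. again when $f'(I)+g'(I)\neq0$. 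Both conditions therefore hold if and only if $I\notin\bar{\mathcal{I}}$, and by the previous Lemma this exceptional set contains at most ten points.

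It remains to transfer the conclusion from the actions to the pairs $(\x0,\x1)$. The action attached to an intermediate point is recovered from $\delta=\x1-\x0=\bar\theta(I)=f(I)+g(I)$, so the generating function is well defined for every class whose representative difference avoids $\bar\theta(\bar{\mathcal{I}})=\{f(I^\ast)+g(I^\ast):I^\ast\in\bar{\mathcal{I}}\}$. As the image of a finite set, $\bar\theta(\bar{\mathcal{I}})$ is finite, which yields the stated finite number of exceptional classes in $(\Rpi\times\Rpi)/\sim$. The main point requiring care is the possible non-monotonicity of $\bar\theta$: a single difference $\delta$ may be realized by several actions, some degenerate and some not, so one must argue at the level of the intermediate point rather than of the difference. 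This does not affect the count, however, since the exceptional set of differences is contained in the image $\bar\theta(\bar{\mathcal{I}})$ of a finite set and is therefore finite in any case.
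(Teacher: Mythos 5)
Your proof is correct and follows essentially the same route as the paper's: both reduce well-definedness to the nondegeneracy condition \eqref{nondeg cond S cerchio}, identify $\tilde S_{E,0}''=1/f'(I)$ and $\tilde S_{I,0}''=1/g'(I)$ (the paper does this by differentiating the action identities $\partial_b S_{E,0}=\tilde f$, $\partial_a S_{I,0}=-\tilde g$ with respect to $\tilde\xi$), and conclude that degeneracy forces $\x1-\x0\in(f+g)(\bar{\mathcal{I}})$, which is finite. Your additional verification of \eqref{nondeg I cerchio} and the closing remark on the non-monotonicity of $\bar\theta$ are harmless extras; the paper's proof checks only \eqref{nondeg S}, which is all the statement requires.
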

\begin{proof}
	For $S_0(\x0,\x1)=S_{E, 0}(\x0,\tilde{\xi})+S_{I,0}(\tilde{\xi}, \x1)$ to be well defined, one needs to verify condition (\ref{nondeg S}). From the definition of the actions
	\begin{equation*}
		\begin{aligned}
			\partial_{\tilde{\xi}}(\partial_bS_{E, 0}(\x0, \tilde{\xi})+\partial_aS_{I, 0}(\tilde{\xi},\x1))&=\partial_{\tilde{\xi}}(I_1^E(\x0, \tilde{\xi})-I_0^I(\tilde{\xi}, \x1))=\partial_{\tilde{\xi}}I_1^E(\x0, \tilde{\xi})-\partial_{\tilde{\xi}}I_0^I(\tilde{\xi}, \x1)=\\
			&=\partial_{\tilde{\xi}}\tilde{f}(\tilde{\xi}-\x0)-\partial_{\tilde{\xi}}\tilde{g}(\x1-\tilde{\xi})=\left(\frac{1}{f'(I)}+\frac{1}{g'(I)}\right)_{I=I(\x0,\x1)}=\\
			&=\left(\frac{f'(I)+g'(I)}{f'(I)g'(I)}\right)_{I=I(\x0,\x1)}  
		\end{aligned}
	\end{equation*}
	which is zero if and only if $\x1-\x0\in(f+g)(\bar{\mathcal{I}})$. 
\end{proof}

\begin{prop}\label{prop mappa unp}
	 For every $(\x0, I_0)\in\Rpi\times\left(\mathcal I\backslash\bar{\mathcal{I}}\right)$ the first return map  $\mathcal F_0$
	\begin{enumerate}
		\item is conservative; 
		\item satisfies the \textit{twist condition}
		\begin{equation*}
			\frac{\partial \xi_1}{\partial I_0}(\x0, I_0)\neq 0. 
		\end{equation*} 
	\end{enumerate}
\end{prop}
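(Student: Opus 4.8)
The plan is to read off both properties directly from the explicit shift representation (\ref{mappa xi i}) of $\F_0$, invoking the generating-function structure for the first item and the monotonicity Lemma stated just above for the second.

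For the conservativeness (item 1), I would argue at the level of the generating function. By construction $\F_0$ is induced by $S_0(\x0,\x1)$ through the canonical relations (\ref{def I}), which read $\partial_{\x0}S_0=-I_0$ and $\partial_{\x1}S_0=I_1$; hence, regarding $I_0,I_1$ as functions of $(\x0,\x1)$, one has the one-form identity $dS_0=-I_0\,d\x0+I_1\,d\x1$. Taking the exterior derivative and using $d^2S_0=0$ gives $d\x0\wedge dI_0=d\x1\wedge dI_1$, which is exactly the statement that $\F_0$ preserves the canonical area form $d\xi\wedge dI$. By Proposition \ref{buona definizione gen unp} the function $S_0$ is well defined on the region corresponding to $I_0\in\mathcal I\setminus\bar{\mathcal I}$ (the excluded classes are precisely those with $\x1-\x0\in(f+g)(\bar{\mathcal I})$), so this computation is legitimate there. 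Equivalently and more concretely, one may simply differentiate (\ref{mappa xi i}): since $\x1=\x0+\bar\theta(I_0)$ and $I_1=I_0$, the Jacobian is
\begin{equation*}
	D\F_0(\x0,I_0)=\begin{pmatrix} 1 & \bar\theta'(I_0)\\ 0 & 1\end{pmatrix},
\end{equation*}
whose determinant equals $1$, giving area preservation at once.

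For the twist condition (item 2), the same representation (\ref{mappa xi i}) yields $\partial_{I_0}\x1=\bar\theta'(I_0)=f'(I_0)+g'(I_0)$. By the Lemma immediately preceding this proposition, the quantity $f'(I)+g'(I)$ vanishes only at the finitely many points collected in $\bar{\mathcal I}$; therefore, for every $(\x0,I_0)$ with $I_0\in\mathcal I\setminus\bar{\mathcal I}$ one has $\partial_{I_0}\x1\neq0$, which is the claimed twist inequality.

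In this integrable circular setting there is no genuine analytical obstacle: both assertions are elementary consequences of the closed form of $\bar\theta=f+g$ and of the polynomial count of its critical points carried out in the Lemma. The only point deserving care is to confine the argument to the region where $S_0$ is nondegenerate, i.e. to exclude $\bar{\mathcal I}$; this is precisely the domain stipulated in the statement, and it is exactly there that the identification of the twist with the nondegeneracy conditions (\ref{nondeg cond S cerchio}) and (\ref{nondeg I cerchio}), already noted at the close of \S\ref{sec: studio F0}, holds as a true equivalence rather than a merely formal one.
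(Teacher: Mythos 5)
Your proof is correct and takes essentially the same route as the paper: the paper establishes conservativity by computing the Jacobian entries of $\F_0$ from the second derivatives of $S_0$ (legitimate precisely because $\partial_{ab}S_0\neq 0$ for $I_0\in\mathcal I\backslash\bar{\mathcal{I}}$) and checking that the determinant equals $1$ --- your identity $d^2S_0=0\Rightarrow d\x0\wedge dI_0=d\x1\wedge dI_1$ is the coordinate-free form of exactly that computation --- and its twist argument coincides verbatim with yours, namely $\partial_{I_0}\x1=f'(I_0)+g'(I_0)\neq 0$ off $\bar{\mathcal{I}}$ by the preceding lemma. Your ``more concrete'' alternative for item (1), the unipotent Jacobian of the explicit shift \eqref{mappa xi i}, is also sound and is the same observation the paper makes for $DF$ in the $(\xi_0,\alpha_0)$ variables just after \eqref{mappa cerchio}, so nothing is missing.
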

\begin{proof}
	The conservativity of $\mathcal F_0$ is a direct consequence of the variational formulation of the problem: when $\tilde{\xi}$ is well defined, we have (expressing $\x1=\x1(\x0,I_0)$)
	\begin{equation*}
		\begin{aligned}
			&\partial_{\x0}\x1=-\frac{\partial_a^2S_0(\x0,\x1)}{\partial_{ab}S_0(\x0,\x1)}, \quad &&\partial_{I_0}\x1=-\frac{1}{\partial_{ab}S_0(\x0,\x1)}\\
			&\partial_{\x0}I_1=\partial_{ab}S_0(\x0,\x1)-\frac{\partial_b^2S_0(\x0,\x1)\partial_a^2S_0(\x0,\x1)}{\partial_{ab}S_0(\x0,\x1)},\quad &&\partial_{I_0}I_1=-\frac{\partial_b^2S_0(\x0,\x1)}{\partial_{ab}S_0(\x0,\x1)}, 
		\end{aligned}
	\end{equation*}	
	where, from (\ref{ftilde gtilde}),  
	\begin{equation*}
		\partial_{ab}S(\x0,\x1)=\partial_{\x0\x1}S(\x0,\x1)=\partial_{ab} S_I(\xt,\x1)\partial_{\x0}\tilde{\xi}=-\frac{\tilde{g}'(\x1-\tilde{\xi})\tilde{f}'(\tilde{\xi}-\x0)}{\tilde{f}'(\tilde{\xi}-\x0)+\tilde{g}'(\x1-\tilde{\xi})}
	\end{equation*}
	is well defined and different from zero for every $(\x0, I_0)\in\Rpi\times	\mathcal I\backslash\bar{\mathcal{I}}$. 
	Whenever $\mathcal{F}_0$ is well defined, the determinant of its Jacobian matrix is 
	\begin{equation*}
		D_{(\x0,I_0)}\mathcal{F}_0=\partial_{\x0}\x1\partial_{I_0}I_1-\partial_{I_0}\x1\partial_{\x0}I_1=1, 
	\end{equation*}
	thus $\mathcal{F}_0$ is conservative. \\
	As for the twist condition, we have $\partial_{I_0}\x1=f'(I_0)+g'(I_0)$, which is nonzero whenever $I_0\notin\bar{\mathcal{I}}$.
\end{proof}
Summarizing the previous results, we can then conclude that the set $\mathcal I\backslash\bar{\mathcal{I}}$ is the finite union of open intervals (at most eleven, but possibly the whole $(-I_c, I_c)$ if $\bar{\mathcal{I}}=\emptyset$\footnote{Numerical investigations shows that this case is consistent, in the sense that there are values of the parameters $\E, h, \mu$ and $\omega$ such that the sign of $f'+g'$ is constant (for example $\E=2.5, \omega=2, \mu=2$ and $h=2$).}), in which $\mathcal{F}_0$ is well defined, conservative and satisfies the twist condition with constant sign. 
\begin{rem}\label{oss cambio twist}
	Locally around $\pm I_c$ and $0$ the sign of $\partial_{I_0}\x1$ can be easily determined as a function of the physical parameters $\E, \omega, h, \mu$: as a matter of fact, one has 
	\begin{equation*}
		\lim_{I\to I_c^-}\partial_{I_0}\x1=\lim_{I\to-I_c^+}\partial_{I_0}\x1=+\infty
	\end{equation*}
	and 
	\begin{equation*}
		\partial_{I_0}\x1(\x0,0)=\frac{2\sqrt{\E-\frac{\om}{2}}}{\E}-\frac{4\sqrt{\Eh+\mu}}{\mu}, 
	\end{equation*}
	then, for every $\E>\om, h>0, \mu>0$
	\begin{itemize}
		\item $\exists\bar{I}\in(0,I_c)$ such that for every $I \in\mathcal I$ with $|I|>\bar{I}$ it results $\partial_{I_0}\x1>0$; 
		\item if $\frac{2\sqrt{\E-\frac{\om}{2}}}{\E}>\frac{4\sqrt{\Eh+\mu}}{\mu}$ (resp.  $\frac{2\sqrt{\E-\frac{\om}{2}}}{\E}<\frac{4\sqrt{\Eh+\mu}}{\mu}$),  $\exists\bar{\bar{I}}\in(0,I_c)$ such that for every $I \in(-I_c,I_c)$ with $|I|<\bar{\bar{I}}$ one has $\partial_{I_0}\x1>0$ (resp. $\partial_{I_0}\x1>0$); 
		\item additionally, if $\frac{2\sqrt{\mathcal E-\omega^2/2}}{\E}<\frac{4\sqrt{\Eh+\mu}}{\mu}$, the derivative $\partial_{I_0}\x1$ admits at least a change of sign, which corresponds to a change of twist for the map $\mathcal F_0$. 
	\end{itemize}
\end{rem}

\subsection{Periodic solutions on the circle}\label{sec: cerchio orbite}

Once the general properties of $\mathcal{F}_0$ on the circle are defined, we can pass to the study of its orbits. To this end, given $(\x0, I_0)\in \Rpi\times\mathcal{I}\backslash\tilde{\mathcal{I}}$, let us define the orbit of $(\x0, I_0)$ as the sequence of the iterates  $\{(\xi_k,I_k)\}_{k\in\mathbb{Z}}=\{\mathcal{F}_0^k(\x0,I_0)\}_{|_{k\in\mathbb{Z}}}$. 
\begin{defn}
	The \textit{rotation number}\footnote{\textcolor{black}{With an abuse of notation, in Section \ref{sec: kam e mather} we will use the same definition to identify the rotation number of the \textit{lift} of a map of the annulus $\Rpi\times[a,b]$, that is, its periodic extension to $\R\times[a,b]$. }} associated to $(\x0,I_0)$ through $\F_0$ is given by 
	\begin{equation}\label{def rotation number}
		\rho(\x0,I_0)=\lim_{k\to\infty}\frac{\xi_k-\x0}{k}. 
	\end{equation}
\end{defn}
In the circular case, one can easily see that for every $(\x0,I_0)$ for which $\F_0$ is well defined one has $\rho(\x0,I_0)=\bar\theta(I_0)$. \\
As the action $I_0$ is preserved on the circle, we have that, taking into account the phase space $(\xi,I)\in\Rpi\times\mathcal I $, the straight lines $\Rpi\times \{I_0\}$ are invariant for the dynamics induced by $\F_0$. We can then distinguish between two types of orbits:  
\begin{itemize}
	\item if $\bar\theta(I_0)/2\pi=p/q\in \mathbb{Q}$, then 
	\begin{equation*}
		(\xi_{q},I_q)=(\xi_0+2\pi p, I_0)\equiv_{2\pi}(\x0, I_0);  
	\end{equation*}
	in this case, we say that the point $(\x0, I_0)$ and the associated orbit are \textit{(p,q)-periodic}; 
	\item if $\bar\theta(I_0)/2\pi\notin\mathbb{Q}$, then for all $\x0\in\Rpi$ the orbit with initial point $(\x0,I_0)$ is dense in $\Rpi\times\{I_0\}$. 
\end{itemize}
A particular class of fixed points for $\F_0$ is given by the ejection-collision solutions, which form an invariant line of periodic points of period one  defined on $\Rpi\times \{0\}$. 
Taking advantage of the continuity of the function $f+g$ on $\mathcal I$, one can state the following existence result. 
 
\begin{prop}
	Given $C=\bar\theta-\pi$, where $\bar\theta$, as in (\ref{bartheta}), depends only on the physical parameters $\E, h, \mu, \omega$, for every $\rho\in(-C,C)$ there are two values $I^{\pm}\in(-I_c, I_c)$ of the actions such that, for every $\x0\in\Rpi, $ $\rho(\x0, I_0^\pm)=\rho$.\\
	In particular, for every $p,q\in\mathbb{Z}$ such that $-C<2\pi p/q<C$, there are $I^{(p,q)}_\pm\in(-I_c,I_c)$ such that for every $\x0\in\Rpi$ the points $(\x0,I_+^{(p,q)})$ and $(\x0,I_-^{(p,q)})$ are $(p,q)$-periodic.  
\end{prop}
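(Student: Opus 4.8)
The plan is to reduce everything to a one–variable analysis of the shift function, using the identity $\rho(\x0,I_0)=\bar\theta(I_0)=f(I_0)+g(I_0)$ that holds on the circle (established just after the derivation of \eqref{mappa xi i}). Since $I$ is conserved by $\mathcal F_0$ and $\mathcal F_0$ acts on each line $\Rpi\times\{I\}$ as the rigid rotation $\xi\mapsto\xi+\bar\theta(I)$, producing an orbit with prescribed rotation number $\rho$ is equivalent to solving $\bar\theta(I)=\rho$. So first I would record the structural properties of $\bar\theta$ on $\mathcal I=(-I_c,I_c)$: it is continuous (indeed $C^1$, since $f,g$ are), it vanishes at $I=0$, and it is \emph{odd}. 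Oddness is read directly off the explicit formulae: for $I>0$ the argument of $\arccot$ in $f$ is an odd function of $I$ and the argument of $\arccos$ in $g$ is even, and the additive constants $-\pi$ (in $f$) and $+2\pi$ (in $g$) prescribed for $I<0$, together with $\arccot(-x)=\pi-\arccot(x)$, are exactly what is needed to yield $f(-I)=-f(I)$ and $g(-I)=-g(I)$, hence $\bar\theta(-I)=-\bar\theta(I)$.

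Next I would compute the boundary behaviour at $\pm I_c$. From the fact that $f$ is increasing with $f(\mathcal I)=(-\pi,\pi)$ and $g$ is decreasing with $g(\mathcal I)=(-\bar\theta,\bar\theta)$, where $\bar\theta$ here denotes the \emph{constant} of \eqref{bartheta}, one gets $\lim_{I\to I_c^-}\bar\theta(I)=\pi-\bar\theta=-C$ and, by oddness, $\lim_{I\to -I_c^+}\bar\theta(I)=C$. This is precisely where the constant $C=\bar\theta-\pi$ enters the statement: it is the (signed) limiting value of the rotation function at the total–reflection threshold. With these limits and $\bar\theta(0)=0$ in hand, the existence half of the statement is immediate from the intermediate value theorem: the image of the continuous function $\bar\theta$ on $(-I_c,I_c)$ is an interval containing both $0$ and the endpoint limits $\mp C$, hence it contains the whole of $(-C,C)$, so $\bar\theta(I)=\rho$ is solvable for every $\rho\in(-C,C)$.

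To produce the \emph{second} action level $I^{-}$ with the same rotation number I would exploit oddness together with the non-monotone shape of $\bar\theta$ near the endpoints. For $\rho=0$ the ejection–collision line $I=0$ already gives the relevant fixed points. For $\rho\neq0$ one preimage is located by the IVT on the half-interval whose endpoint value straddles $\rho$, and the symmetry $I\mapsto -I$ (which sends the level $\rho$ to the level $-\rho$) organises the solutions into the two senses of traversal, matching the pair $I_\pm^{(p,q)}$ in the ``In particular'' clause. Specialising $\rho=2\pi p/q$ with $-C<2\pi p/q<C$ then turns each such $I$ into a $(p,q)$-periodic point, since on the invariant line $\mathcal F_0$ is the rotation by $\bar\theta(I)$ and the condition $\bar\theta(I)/2\pi=p/q$ is exactly $(\xi_q,I_q)\equiv_{2\pi}(\x0,I_0)$, uniformly in $\x0$. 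The step I expect to be the main obstacle is precisely the \emph{counting} of solutions: because $\bar\theta$ may change monotonicity (Remark \ref{oss cambio twist}), it cannot be inverted globally, and guaranteeing that each level in $(-C,C)$ is met at (at least) two points rather than one requires combining the endpoint asymptotics $\bar\theta'\to+\infty$ (which forces $\bar\theta$ to fold back near $\pm I_c$) with the oddness symmetry; this is the part I would write out in full detail, carefully tracking on which side of $\pm C$ the folded branch contributes its preimages.
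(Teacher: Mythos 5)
Your first two paragraphs are exactly the paper's own argument: the paper disposes of this proposition without a formal proof, in the single sentence following it (``the existence of two orbits of all the rotation numbers is a simple consequence of the continuity of the total shift function''), which is precisely your reduction to the scalar equation $\bar\theta(I)=\rho$, the value $\bar\theta(0)=0$, the endpoint limits, and the intermediate value theorem. Your computation of those limits is the correct one: $\bar\theta(I)\to\pi-\bar\theta=-C$ as $I\to I_c^-$ and, by oddness, $\bar\theta(I)\to C$ as $I\to -I_c^+$ (the quantity ``$\bar\theta+\pi$'' written in the proof of Proposition \ref{prop: non homotetic} has to be read as $\pi-\bar\theta$, as the equivalence with $\mu>2\E-\om$ stated there confirms). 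Your middle paragraph also gives the only tenable reading of the pair $I^{\pm}$: the second solution is the mirror action $I^{-}=-I^{+}$, whose orbit is the same closed trajectory traversed in the opposite sense, with rotation number $-\rho$; for $\rho=2\pi p/q$ it is a $(-p,q)$-orbit. Up to this sign abuse, that is what the statement asserts and all that the continuity argument (i.e.\ the paper's proof) delivers.

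The step that would fail is the program of your last paragraph: producing, for \emph{every} $\rho\in(-C,C)$, two preimages of the same signed level by combining the blow-up $\bar\theta'\to+\infty$ at $\pm I_c$ with oddness. The blow-up does force the graph to fold back, but it folds to the wrong side: since $\bar\theta'(I)\to+\infty$ while $\bar\theta(I)\to-C$ as $I\to I_c^-$, the function approaches $-C$ \emph{from below}, so the returning branch duplicates only the levels in $\left(\inf_{(0,I_c)}\bar\theta,\,-C\right)$, all of which lie \emph{outside} $(-C,C)$; and oddness cannot supply the missing preimage, since it carries the level set of $\rho$ onto that of $-\rho$. Note moreover that $(-C,C)\neq\emptyset$ forces $\mu<2\E-\om$, which (using the formula for $\partial_{I_0}\x1(\x0,0)$ in Remark \ref{oss cambio twist}) implies $\bar\theta'(0)<0$; hence in the simplest admissible shape --- $\bar\theta$ decreasing on $(0,I^*)$ and increasing on $(I^*,I_c)$ --- every level $\rho\in(-C,0)$ is attained exactly once on $(0,I_c)$ and never on $(-I_c,0]$, so the literal ``two preimages of the same $\rho$'' claim is false there and cannot be proved in general. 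Do not invest effort in that step: state the count as in your middle paragraph (one preimage of $\rho$, its mirror carrying $-\rho$), which is the intended content of $I^{\pm}$ and of the pair $I^{(p,q)}_{\pm}$.
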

In the circular case, the existence of two orbits of all the rotation numbers is a simple consequence of the continuity of the total shift function. As it will be analysed in \S 5, a deformation of the boundary $\partial D_0$ breaks the symmetry of the system: in general, the first return map will be not integrable anymore and more sophisticated tools should be used to retrieve, at least partially, analogous existence results. In this framework, the persistence of the twist condition under small perturbations of the boundary will play a crucial role, and this is the reason why, although not immediately used, this nondegeneracy condition has been investigated in the circular case. 
\begin{figure}
	\centering
	\includegraphics[width=.8\linewidth]{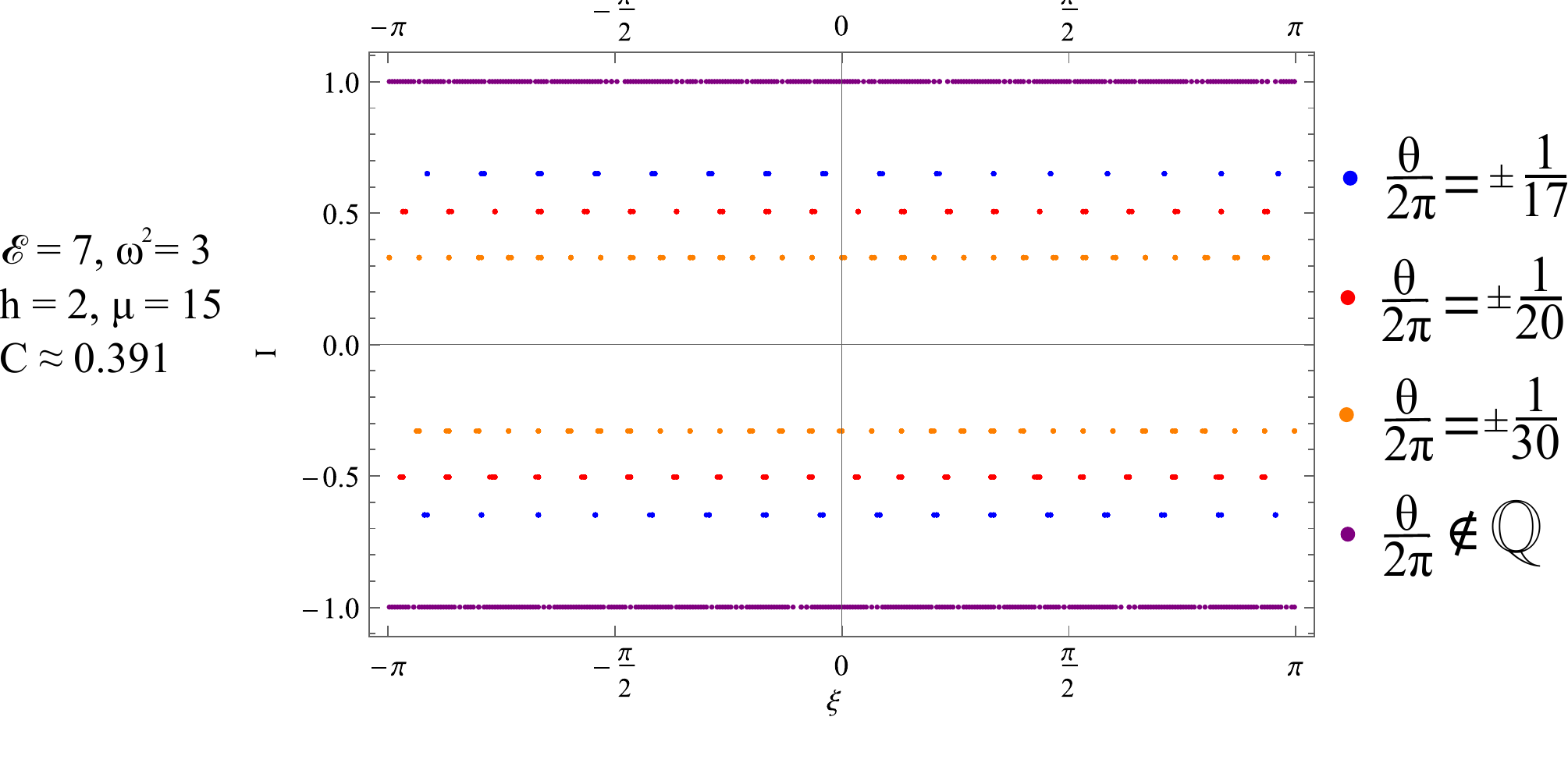}
	\caption{Examples of periodic and non-periodic orbits on the circle in the phase space $\left(\xi, I\right)\in\Rpi\times\mathcal I$.}
	\label{fig:orbite-periodiche}
\end{figure}

Under particular assumptions on the physical parameters, one can prove the existence of a second type of fixed points different from the ones which correspond to the ejection-collision solutions: 
\begin{prop}\label{prop: non homotetic}
	Fixed $\E>\om>0$, let us define 
	\begin{equation*}
		\bar{\mu}=\frac{4\E+\sqrt{8\E^3(4\E-\om)}}{2\E-\om}>2\E-\om,\quad \bar{h}=\frac{2\E-\om}{8\E^2}\mu^2-(\E+\mu).  
	\end{equation*}
	If $(\mu>\bar{\mu}\text{ }\wedge\text{ } h>\bar{h})$ or $(2\E-\om<\mu\leq\bar{\mu}\text{ }\wedge\text{ } h>0)$ there is $\bar{I}^{(1)}\in(0,I_c)$ such that for every $\x0\in\Rpi$ the points $(\x0, \bar{I}^{(1)})$ and $(\x0,-\bar{I}^{(1)})$ are non-homotetic fixed points of $\mathcal{F}$. 
	
\end{prop}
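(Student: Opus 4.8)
The plan is to use that, on the circle, $\F_0$ acts as the pure shift $(\x0,I_0)\mapsto(\x0+\bar\theta(I_0),I_0)$ of \eqref{mappa xi i}, so that $(\x0,I_0)$ is a fixed point exactly when $\bar\theta(I_0)\in 2\pi\mathbb Z$. The first step is to localize which multiples of $2\pi$ are attainable. On $(0,I_c)$ the summand $f$ increases from $0$ to $\pi$ and $g$ decreases from $0$ to $-\Theta$, where $\Theta<2\pi$ is the quantity defined in \eqref{bartheta}; hence $\bar\theta=f+g$ takes values in $(-2\pi,\pi)$, and the only multiple of $2\pi$ in this interval is $0$. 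Since the homothetic (ejection–collision) fixed points are precisely those with $I=0$, where $\bar\theta(0)=0$, finding a \emph{non-homothetic} fixed point amounts to producing a zero $\bar I^{(1)}\in(0,I_c)$ of $\bar\theta$ with $\bar I^{(1)}\neq0$.

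To produce such a zero I would apply the intermediate value theorem to $\bar\theta$ on $[0,I_c)$. At the left end $\bar\theta(0)=0$. At the right end, letting $I\to I_c^-$ (so that the radicand $4\E-2(2I^2+\om)\to0^+$ and the $\arccot$-argument of $f$ diverges to $-\infty$, while the $\arccos$-argument of $g$ tends to $X:=\frac{2\E-\om-\mu}{\sqrt{2(\Eh)(2\E-\om)+\mu^2}}$) one obtains $f(I_c^-)=\pi$ and $g(I_c^-)=-\Theta$, whence $\bar\theta(I_c^-)=\pi-\Theta$. Because $\Theta=2\pi-2\arccos(X)$, the condition $\bar\theta(I_c^-)>0$ is equivalent to $\arccos(X)>\pi/2$, i.e.\ to $X<0$, i.e.\ to $\mu>2\E-\om$. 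Next I would read the germ of $\bar\theta$ at the origin from Remark~\ref{oss cambio twist}, namely $\bar\theta'(0)=\frac{2\sqrt{\E-\om/2}}{\E}-\frac{4\sqrt{\Eh+\mu}}{\mu}$; whenever this is negative, $\bar\theta$ is strictly negative immediately to the right of $0$, so together with $\bar\theta(I_c^-)>0$ the intermediate value theorem yields the desired zero $\bar I^{(1)}\in(0,I_c)$.

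It remains to express $\bar\theta'(0)<0$ through the parameters. Squaring $\frac{2\sqrt{\E-\om/2}}{\E}<\frac{4\sqrt{\Eh+\mu}}{\mu}$ (both sides positive) and clearing denominators gives exactly $h>\frac{2\E-\om}{8\E^2}\mu^2-(\E+\mu)=\bar h$. I would then study the sign of $\bar h$ as a function of $\mu$: it is an upward parabola with value $-\E<0$ at $\mu=0$, hence has a single positive root $\bar\mu$, read off from the discriminant $32\E^3(4\E-\om)$ of the quadratic $(2\E-\om)\mu^2-8\E^2\mu-8\E^3=0$. Consequently, for $2\E-\om<\mu\le\bar\mu$ one has $\bar h\le0$ and the standing hypothesis $h>0$ already forces $h>\bar h$, whereas for $\mu>\bar\mu$ one has $\bar h>0$ and must impose $h>\bar h$ directly. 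In both regimes $\mu>2\E-\om$ and $h>\bar h$ hold simultaneously, so $\bar\theta(I_c^-)>0$ and $\bar\theta'(0)<0$, and $\bar I^{(1)}$ exists.

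Finally, I would note that the $\arccos$-argument in $g$ depends on $I$ only through $I^2$, while the $\arccot$-argument in $f$ is odd in $I$; using $\arccot(-x)=\pi-\arccot(x)$ together with the $-\pi$ and $+\pi$ branch corrections in the definitions of $f$ and $g$ shows that $f$, $g$, and hence $\bar\theta$, are odd on $\mathcal I$. Therefore $\bar\theta(-\bar I^{(1)})=-\bar\theta(\bar I^{(1)})=0$, giving the second non-homothetic fixed point at $-\bar I^{(1)}$. The step I expect to be the most delicate is the boundary analysis at $I_c$ — rigorously justifying $f(I_c^-)=\pi$ and $g(I_c^-)=-\Theta$ in the degenerate limit where the radicands vanish — together with matching the positive root of the quadratic above against the stated threshold $\bar\mu$; the remaining reasoning is an elementary sign analysis once the monotone ranges of $f,g$ and the value of $\bar\theta'(0)$ are in hand.
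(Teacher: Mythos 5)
Your argument is correct and essentially coincides with the paper's proof: both reduce the claim to the two equivalences $\bar\theta'(0)<0\Leftrightarrow h>\bar h$ and $\lim_{I\to I_c^-}\bar\theta(I)>0\Leftrightarrow\mu>2\E-\om$, check that either hypothesis on $(\mu,h)$ guarantees both, and then combine the intermediate value theorem on $(0,I_c)$ with the oddness of $\bar\theta=f+g$ to produce the pair $\pm\bar I^{(1)}$. The discrepancies your boundary analysis turns up are typos in the paper rather than gaps in your reasoning: the limit at $I_c^-$ is $\pi-\bar\theta$ with $\bar\theta$ as in (\ref{bartheta}) (the proof's ``$\bar\theta+\pi$'' tacitly uses the opposite sign convention for $\bar\theta$), and the positive root of $(2\E-\om)\mu^2-8\E^2\mu-8\E^3=0$ is $\bigl(4\E^2+\sqrt{8\E^3(4\E-\om)}\bigr)/(2\E-\om)$, so the numerator in the stated $\bar\mu$ should read $4\E^2$ rather than $4\E$.
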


\begin{proof}
	Recalling that $f(0)+g(0)=0$, $\lim_{I\to I_c^-}f(I)+g(I)=\bar{\theta}+\pi$ and Eq.(\ref{derivata f+g}), from direct computations one has that
	\begin{itemize}
		\item fixed $\E>\om>0$ and $\mu>0$, 
		\begin{equation*}
			f'(0)+g'(0)<0\Longleftrightarrow h>\bar{h}; 
		\end{equation*}
		\item fixed $\E>\om>0$, 
		\begin{equation*}
			\bar{h}>0 \Leftrightarrow \mu>\bar{\mu} \quad\text{ and }\quad\bar{\theta}+\pi>0 \Leftrightarrow \mu>2\E-\om.
		\end{equation*}
	\end{itemize}
	If $(\mu>\bar{\mu}\text{ }\wedge\text{ } h>\bar{h})$ or $(2\E-\om<\mu\leq\bar{\mu}\text{ }\wedge\text{ } h>0)$, we have then that $f'(0)+g'(0)<0$ and $\lim_{I\to I_c^-}f(I)+g(I)>0$: as a consequence, there exists $\bar{I}^{(1)}>0$ such that $f(\bar{I}^{(1)})+g(\bar{I}^{(1)})=0=f(-\bar{I}^{(1)})+g(-\bar{I}^{(1)}),$ and then for every $\x0\in\Rpi$, $(\x0, \pm\bar I^{(1)})$ are fixed points of $\mathcal F$. Given that $\bar I^{(1)}\neq0$, these points are ot homotetic (see Figure \ref{fig:punto-fisso-non-banale}).  
\end{proof}
\begin{figure}
	\centering	
	\includegraphics[width=0.5\linewidth]{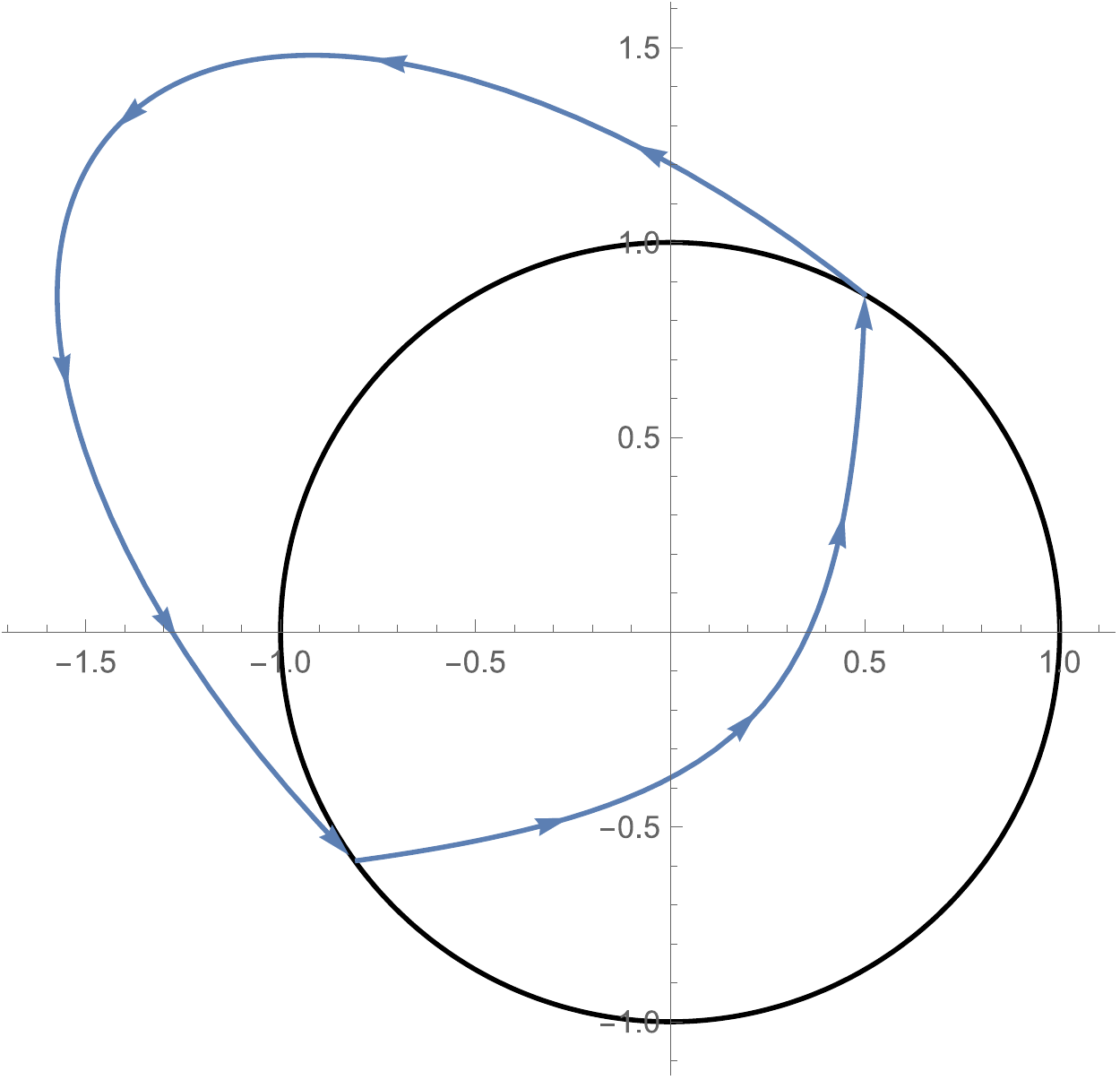}
	\caption{Example of a non-homotetic fixed point for $\mathcal{F}$, with $\E=7, \om=3, h=2, \mu=15$. In this case, with reference to Proposition \ref{prop: non homotetic}, $\bar{\mu}=41.6287$.}
	\label{fig:punto-fisso-non-banale}
\end{figure}

\subsection{Caustics for the unperturbed case}\label{sec: caustiche imperturbate}

A question of great interest in the study of billiards is that of caustics, which plays a key role in the determining the regions of the plane where the orbits can access.  A caustic is a smooth closed curve $\Gamma$ such that every trajectory which is tangent to $\Gamma$ in a point remains tangent to the latter after every passage in and out the domain $D$. The issue of the existence of caustics in standard billiards (\cite{lazutkin1973existence,MR2168892}) and its variants (\cite{gasiorek2019dynamics}) has been widely studied; in particular, in the framework of a standard convex billiard $D$, Lazutkin used the KAM approach to prove that, if $\partial D$ is sufficiently smooth (of class $C^{553}$ in the original paper \cite{lazutkin1973existence}, later improved to $C^6$ by Douady in \cite{douady1982applications}), then there exists a discontinous family of caustics in a small neighborhood of $\partial D$. \\
The aim of this Section is to extend the concept of caustic to our refractive model in the circular case: in view of the presence of two distinct dynamics inside and outside the domain $D$, one shall search for two of such curves, which can be studied separately. Moreover, by the central symmetry typical of the circular case, it is reasonable to foresee that the inner and outer caustics are circles of suitable radii depending on the action $I_0$. 
\begin{thmv}\label{th caustiche}
	For every $\E, h, \omega, \mu>0, $ $\E>\om$, given $I_0\in\left(-I_c, I_c\right)$: 
	\begin{itemize}
		\item the exterior caustic $\Gamma_E(\zeta; I_0)$ is given by the locus of the apocenters of the outer ellipses, namely, 
		\begin{gather*}
			\Gamma_E:[0,2\pi]\to\R^2, \quad \Gamma_E(\zeta; I_0)=R_E(\cos{\zeta}, \sin{\zeta}), \\
			R_E=\frac{\sqrt{\E+\sqrt{\E^2-2 I_0^2\om}}}{\omega}; 
		\end{gather*}
	\item the interior caustic $\Gamma_I(\zeta; I_0)$ is the locus of the pericenters of the inner Keplerian hiperbol\ae. In particular, 
	\begin{gather*}
		\Gamma_I:[0,2\pi]\to\R^2\quad \Gamma_I(\zeta; I_0)=R_I(\cos{\zeta}, \sin{\zeta}), \\
		R_I=\frac{p}{1+e}, 
	\end{gather*} 
where
\begin{equation*}
	p=\frac{2 I_0^2}{\mu}, \quad e=\sqrt{1+\frac{4 I_0^2 (\Eh)}{\mu^2}}. 
\end{equation*}
	\end{itemize}
\end{thmv}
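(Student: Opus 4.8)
The plan is to identify, along an entire orbit of $\mathcal F_0$, a single conserved angular momentum, and then to recognize the two caustics as the loci of the turning points of the associated conics: the apocenters of the outer ellipses and the pericenters of the inner Keplerian branches. By \eqref{mappa xi i} the map $\mathcal F_0$ fixes the action $I_0$, so every outer and every inner arc composing a given orbit carries the same value of $I_0$; the whole point is to convert this into a single fixed angular momentum and then to show that a fixed angular momentum forces tangency to a fixed concentric circle at each passage through $\partial D_0$.

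First I would record the relation between the action and the angular momentum at the boundary. On $\partial D_0$ the radius equals $1$ and the tangent direction coincides with the angular one, so the angular momentum of an outer arc is $L = 1\cdot v_\theta^{E}$ and that of an inner arc is $k = 1\cdot v_\theta^{I}$, where $v_\theta$ is the tangential velocity component. Snell's law \eqref{snell 2} conserves exactly this tangential component across the interface, and by \eqref{I alpha}--\eqref{cons azioni} one has $v_\theta = \sqrt2\,I_0$ on both sides; since a central force preserves angular momentum along each arc, we conclude $L = k = \sqrt2\,I_0$, hence $L^2 = k^2 = 2I_0^2$, throughout the orbit.

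Next I would treat the two caustics separately. For the exterior one, the outer dynamics $z'' = -\om\,z$ is a harmonic oscillator whose bounded orbits are ellipses centered at the origin; the energy relation $\tfrac12|z'|^2 + \tfrac{\om}{2}|z|^2 = \E$ together with $L^2 = 2I_0^2$ yields, at a turning point $\dot r = 0$, the quartic $\om R^4 - 2\E R^2 + L^2 = 0$, whose larger root is $R_E^2 = (\E + \sqrt{\E^2 - 2\om I_0^2})/\om$, giving the stated $R_E$ once $\om = \omega^2$. At the apocenter the velocity is purely tangential, so the arc is tangent to the circle $\Gamma_E$ of radius $R_E$, and since $\alpha_0 \neq \pm\pi/2$ for $I_0 \in (-I_c,I_c)$ every outer arc genuinely reaches this apocenter. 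For the interior caustic I would invoke the pericenter formula \eqref{rp} with $k^2 = 2I_0^2$; rewriting it in the conic form $R_I = p/(1+e)$ identifies the semi-latus rectum $p = k^2/\mu = 2I_0^2/\mu$ and the eccentricity $e = \sqrt{1 + 2(\Eh)k^2/\mu^2} = \sqrt{1 + 4(\Eh)I_0^2/\mu^2}$, matching the claim; the transversality estimate \eqref{transv testo} guarantees the inner arcs actually reach their pericenter inside $D_0$, where again $\dot r = 0$ forces tangency to the circle $\Gamma_I$ of radius $R_I$.

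Finally I would assemble the caustic property. A concentric circle of radius $R$ is tangent to such a conic (centered at, resp. with focus at, the origin) precisely when $R$ is one of its turning radii, while for intermediate radii the circle is a transversal secant; thus tangency of an arc to $\Gamma_E$ (resp. $\Gamma_I$) is equivalent to that arc having angular momentum $\sqrt2\,I_0$. Because $\mathcal F_0$ fixes $I_0$ (Proposition \ref{prop mappa unp}), all successive arcs carry the same angular momentum and hence remain tangent to the same pair $(\Gamma_E,\Gamma_I)$ after every excursion in and out of $D_0$, which is exactly the definition of a caustic. Beyond the routine conic computations, the main obstacle is precisely this last equivalence: one must argue that tangency to the concentric circle is genuinely the turning-point condition and not an accidental secant intersection, and that the invariance of the action propagates the tangency globally rather than only locally. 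The degenerate radial case $I_0 = 0$ should be noted separately, since there both caustics collapse, $\Gamma_I$ to the origin and $\Gamma_E$ to the brake circle of radius $\sqrt{2\E/\om}$.
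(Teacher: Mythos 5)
Your proof is correct, but it follows a genuinely different route from the paper's. You reduce everything to a conserved angular momentum: from \eqref{I alpha} and \eqref{cons azioni} every outer and inner arc of an orbit carries tangential velocity $\sqrt{2}\,I_0$ at the boundary (radius $1$), hence angular momentum $L=k=\sqrt{2}\,I_0$, and then the zero-energy relation at a turning point ($\dot r=0$) yields $\om R^4-2\E R^2+L^2=0$ for the outer ellipse and the pericenter radius $r_p=p/(1+e)$ for the inner hyperbola, with tangency to the concentric circle exactly at these turning radii; invariance of $I_0$ under $\mathcal F_0$ then propagates the tangency to all arcs. The paper instead characterizes the caustic as the \emph{envelope} of the family of conic arcs: it writes each branch implicitly as $G_{E\backslash I}(x,y;\zeta)=0$, with $\zeta$ the polar angle of the apocenter (resp. pericenter), solves the system $G=0$, $\partial_\zeta G=0$ explicitly, discards the spurious solution (the ellipse's pericenter locus), and verifies the transversality condition \eqref{nondeg cond} on $\nabla_{(x,y)}G$ and $\nabla_{(x,y)}\partial_\zeta G$. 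Your argument is shorter and more physical, and it makes the role of the integrability (conserved $I_0$ equals conserved angular momentum up to the factor $\sqrt 2$) transparent; what it does not buy is the machinery the paper actually needs later: the implicit system \eqref{sistema caustica} and its nondegeneracy are precisely what survive under perturbation of the domain (Lemmas \ref{lema caustiche esterne pert} and \ref{lemma caustiche interne pert}, Theorem \ref{teorema caustiche pert}), where the rotational symmetry, and with it your conserved angular momentum, is destroyed, whereas the envelope formulation still makes sense along a KAM invariant curve. Your closing remarks on the secant-versus-tangent dichotomy and on the degenerate case $I_0=0$ (which the paper also sets aside, noting it "can be easily analysed separately") are exactly the right points to flag.
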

In general, following \cite{MR2168892} and \cite{capitanio2004geodesic}, one shall give the following characterization for the caustic: take an orbit for our dynamical system and suppose that one of its (interior or exterior) branches is implicitely defined through the relation 
\begin{equation}\label{impl branch}
	G(x,y; \xi)=0, 
\end{equation}
where $G:\R^2\to \R$ is of class $C^2$ in all the variables and $\xi$ acts as a parameter (for example, it could denote the polar angle of the initial point of the branch, its pericenter or apocenter). The caustic $\Gamma$ can be then seen as th envelope of the family of curves defined by (\ref{impl branch}) varying $\xi\in[0,2\pi]$, that is, the set of points $(x_0(\xi), y_0(\xi))$ satisfying 
\begin{equation}\label{sistema caustica}
	\begin{cases}
	G(x,y;\xi)=0\\
	\partial_\xi G(x,y; \xi)=0 
	\end{cases}.
\end{equation}
By means of the implicit function theorem, it is straightforward that if 
\begin{equation}\label{nondeg cond}
	\nabla_{(x,y)}G(x,y;\xi)\nparallel\nabla_{(x,y)}\partial_\xi G(x,y;\xi) \text{ on the solutions of (\ref{sistema caustica})}, 
\end{equation}
then (\ref{sistema caustica}) defines a regular curve $\Gamma(\xi)=(x_0(\xi), y_0(\xi))$. \\
The proof of Theorem \ref{th caustiche} relies on the evaluation of (\ref{sistema caustica}) in the particular cases of the inner and outer dynamics: in the case of  circular domains, the solutions of such system can be computed explicitely.

\paragraph{\textbf{Outer caustic}}

 Given $p_0=(p_x, p_y)=e^{i\x0}, v_0=(v_x,v_y)\in \R^2$, $|v_0|=\sqrt{2\E-\om}$, from the proof of Theorem \ref{thm ex globale esterna unp} (see Appendix \ref{appA}), one has that the solution of 
\begin{equation}\label{sistema esterna caustica}
	\begin{cases}
		(HS_E)[z(s)] &s\in[0,T_E]\\
		z(0)=p_0, ~z'(0)=v_0
	\end{cases}
\end{equation}
can be parametrized as 
\begin{equation*}
	(x(s), y(s))=\left(p_x \cos(\omega s)+\frac{v_x}{\omega}\sin(\omega s), p_y \cos(\omega s)+\frac{v_y}{\omega}\sin(\omega s)\right). 
\end{equation*}
If, as in \S \ref{sec: first return},  $\alpha\in(-\pi/2, \pi/2)$ denotes the angle between $p_0$ and $v_0$, recalling the definition of canonical action (\ref{I alpha}) one has 
\begin{equation*} 
	v_0=\sqrt{2\E-\om}(\cos\alpha~ p_0+\sin\alpha~ t_0)=\sqrt{2\E -\om -2I_0^2}~p_0+\sqrt{2}I_0~ t_0, 
\end{equation*}
were $t_0=ie^{i\xi}$ is the tangent unit vector to $\partial D_0$ in $p_0$. Since in the circular case  the action $I_0$ is constant along the orbits, it can be treated as a parameter in $\mathcal I$. Additionally, consider the non-homotetic case, that is, suppose $I_0\neq0$ (the case $I_0=0$ can be easily analysed separately, leading to te same result).\\
Taking the function $r^2(s)=x^2(s)+y^2(s)$, by direct computations one has 
\begin{equation*}
	\begin{aligned}
		a^2=\max_{s\in\left[0,\frac{2\pi}{\omega}\right]}r^2(s)=\frac{\E+\sqrt{\E^2-2I_0^2\om}}{\om}>0\\
		b^2=\min_{s\in\left[0,\frac{2\pi}{\omega}\right]}r^2(s)=\frac{\E-\sqrt{\E^2-2I_0^2\om}}{\om}>0. 
	\end{aligned}
\end{equation*}

In the reference frame whose axes coincide with the ellipse's ones, denoted with $R(O, x'', y'')$, the outer arc can be then implicitely defined as a segment of the conic 
\begin{equation*}
	G_{E,0}(x'',y'')=\frac{{x''}^2}{a^2}+\frac{{y''}^2}{b^2}-1=0. 
\end{equation*}
Denoting by $\zeta$ the polar angle of one of the apocenter points, all the solutions of (\ref{sistema esterna caustica}) with $\angle{(p_0, v_0)}=\alpha$ are then implicitely defined by 
\begin{equation*}
	G_E(x,y;\zeta)=\frac{\left(x \cos\zeta+y\sin\zeta\right)^2}{a^2}+\frac{\left(y \cos\zeta-x \sin\zeta\right)^2}{b^2}-1=0
\end{equation*}
where $\zeta\in[0,2\pi]$ is treated as a parameter. \\
As 
\begin{equation*}
	\partial_\zeta G_E(x, y; \zeta)=\frac{\sqrt{\E-2I_0^2\om}}{I_0^2}\left((x^2-y^2)\sin(2\zeta)-2xy\cos(2\zeta)\right), 
\end{equation*}
the explicit formulation of (\ref{sistema caustica}) for the outer arcs is then, for $\zeta\neq k\frac{\pi}{2}$, $k=0,1,2,3$ 
\begin{equation}\label{sistema caustica ext 2}
	\begin{cases}
		\frac{\left(x \cos\zeta+y\sin\zeta\right)^2}{a^2}+\frac{\left(y \cos\zeta-x \sin\zeta\right)^2}{b^2}-1=0\\
		\frac{\sqrt{\E^2-2I_0^2\om}}{I^2}\sin(2\zeta)\left(x+y \cot\zeta\right)\left(x-y \tan\zeta\right); 		
	\end{cases}
\end{equation}
note that, in the degenerate cases $\zeta=k \frac{\pi}{2}$, from $\partial_\zeta G_E(x,y; \zeta)=0$ one obtains $x=0$ or $y=0$. \\
The solutions of (\ref{sistema caustica ext 2}) are the ellipse's apocenters and pericenters: since the outer arc of the considered dynamical systems involves only the first apocenter, the only admissible solution of (\ref{sistema caustica ext 2}) is given by 
\begin{equation*}\label{caustiche sol ext}
	(\bar{x}(\zeta), \bar{y}(\zeta))=\frac{\sqrt{\E+\sqrt{\E^2-2I_0^2\om}}}{\omega}\left(\cos\zeta,\sin\zeta\right), 
\end{equation*} 
which describes, for $\zeta\in[0,2\pi]$, the caustic $\Gamma_E(\zeta; I_0)$ as the circle of raduis $R_E$ of Theorem \ref{th caustiche}. \\
Although the caustics for the circular domain are completely determined, let us investigate the nondegeneracy condition (\ref{nondeg cond}), which will be generalized for small perturbations of $D_0$ in Section \ref{sec: caustiche perturbate}. By direct computations, one has
\begin{equation*}
	\begin{aligned}
		\nabla_{(x,y)}G_E(x,y;\zeta)_{|(\bar{x}, \bar{y})}=2a\frac{\sqrt{\E^2-2I_0^2\om}}{I_0^2}\left(-\cos\zeta, -\sin\zeta\right), \\	\nabla_{(x,y)}\partial_\zeta G_E(x,y;\zeta)_{|(\bar{x}, \bar{y})}=2a\frac{\sqrt{\E^2-2I_0^2\om}}{I_0^2}\left(\sin\zeta, -\cos\zeta\right), 
	\end{aligned}
\end{equation*}
leading to 
\begin{equation}\label{nondeg ex nonpert}
	\nabla_{(x,y)}G_E(x,y;\zeta)_{|_{(\bar{x}, \bar{y})}}\perp \nabla_{(x,y)}\partial_\zeta G_E(x,y;\zeta)_{|_{(\bar{x}, \bar{y})}}. 
\end{equation}

\paragraph{\textbf{Inner caustic}}

Let us now consider the inner problem 
\begin{equation}\label{caustiche inner prob}
	\begin{cases}
		(HS_I)[z(s)] &s\in[0,T_I]\\
		z(0)=p_0, ~z'(0)=v_0, 
	\end{cases}
\end{equation}
and denote with $\alpha=(\pi/2, 3\pi/2)$ the angle between $p_0$ and $v_0$. Recalling (\ref{polar}) and (\ref{p e int}), and given that 
\begin{equation*}
	k=|p_0\wedge v_0|=|p_0||v_0|\sin\alpha=\sqrt{2}I_0, 
\end{equation*}
one has that the polar equation of the Keplerian inner arc is 
\begin{equation*}
	r=\frac{p}{1+e\cos{f}}, 
\end{equation*}
with 
\begin{equation*}
	p=\frac{2I_0^2}{\mu},\quad e=\frac{\sqrt{\mu^2+4I_0^2(\Eh)}}{\mu}.  
\end{equation*}
Choosing the reference frame $R(O, x'', y'')$ where the pericenter is on the positive branch of the $x$-axis, the inner Keplerian arc is expressed by 
\begin{equation}\label{cartesiana iperbole 0}
	G_{I}(x'',y'')=(e^2-1){x''}^2-{y''}^2-2 p e x''+p^2=0, \quad x''\leq \frac{p}{e+1}, 
\end{equation}
where the inequality condition expresses the choice of the branch of the hiperbola whith the concavity in the direction of the central mass.	\\
As in the outer case, denoting with $\zeta$ the polar angle of the pericenter, one has that all the Keplerian hyperbol\ae~ with central mass $\mu$, energy $\Eh$ and angular momentum $k=\sqrt{2}I_0$ are given by 
\begin{gather*}
	G_I(x,y;\zeta)=(e^2-1)(x\cos\zeta+y\sin\zeta)^2-(y\cos\zeta-x\sin\zeta)^2-2pe(x\cos\zeta+y\sin\zeta)+p^2=0,\\
	x\cos\zeta+y\sin\zeta\leq\frac{p}{e+1} , 
\end{gather*}
with $\zeta\in[0,2\pi]$. The system (\ref{sistema caustica}) in the inner case becomes then
\begin{equation}\label{caustiche sistema inner}
	\begin{cases}
		(e^2-1)(x\cos\zeta+y\sin\zeta)^2-(y\cos\zeta-x\sin\zeta)^2-2pe(x\cos\zeta+y\sin\zeta)+p^2=0\\
		2e \left(y \cos\zeta+x \sin\zeta\right)(p-e~ x \cos\zeta+e~ y \sin\zeta)=0
	\end{cases}
\end{equation}
with the additional condition $(x\cos\zeta+y\sin\zeta)\leq p/(1+e)$. Problem (\ref{caustiche inner prob}) admits the unique solution 
\begin{equation*}
	\left(\bar{x}(\zeta), \bar{y}(\zeta)\right)=\frac{p}{1+e}(\cos\zeta, \sin\zeta)
\end{equation*}
which corresponds to the position of the pericenter of the corresponding Keplerian arc, taking $\zeta$ as a parameter. The inner caustic $\Gamma_I(\zeta, I_0)$ is then expressed by a circle with radius $R_I$ as in Theorem \ref{th caustiche}. \\
It is straightforward to verify that the nondegeneracy condition (\ref{nondeg cond}) is verified: from 
\begin{equation*}
	\begin{aligned}
		\nabla_{(x,y)}G_I(x,y; \zeta)_{|_{(\bar{x}, \bar{y})}}=-2p(\cos{\zeta}, \sin\zeta), \quad 
		\nabla_{(x,y)}\partial_\zeta G_I(x,y; \zeta)_{|_{(\bar{x}, \bar{y})}}=\frac{2ep}{1+e}(\sin{\zeta}, -\cos\zeta), 
	\end{aligned}
\end{equation*}
one has
\begin{equation*}
	\nabla_{(x,y)}G_I(x,y; \zeta)_{|(\bar{x}, \bar{y})}\perp \nabla_{(x,y)}\partial_\zeta G_I(x,y; \zeta)_{|(\bar{x}, \bar{y})}.
\end{equation*}
\section{Perturbations of the circle}\label{sec: perturbazioni}

Many of the results obtained in the circular case, although significant in themselves, can be generalized to non-circular smooth domains, provided they are close enough to $D_0$ in a way which will be soon specifed. This extension can be performed by means of classical perturbation theory, as well as of more sophisticated results such as KAM and Aubry-Mather theorems (see \cite{gole2001symplectic, moser1962invariant, jurgen1986recent, mather58084existence, aubry1983discrete}). \\
To this end, let us consider a class of domains $D_\ep$ whose bundary $\partial D_\ep=supp(\gep)$ is given by a radial deformation of the circle of the form 
\begin{equation}\label{def dominio perturbato}
\gep:\Rpi\to \R^2\quad \gep(\xi)=\left(1+\ep f(\xi; \ep)\right)e^{i\xi}, 
\end{equation}
where $f(\xi; \ep)$ is a smooth function of $\Rpi\times [-C_\ep, C_\ep]$, with $C_\ep>0 $ arbitrarily high; note that, from the choice of the parametrization of $\gep$, the variable $\xi$ still represents the polar angle of $\xi$.\\
This Section aims to analyze the generating function $S_\ep$, with particular emphasis to its good definition and nondegeneracy properties, and the associated first return map $\mathcal F_\ep$, whose orbits, when possible, will be studied in terms of their rotation numbers.

\subsection{Global existence of the outer and inner arcs for the perturbed dynamics}\label{sec: esistenza archi perturbati}	
 As for the circular case (see Section \ref{sec: esistenza archi cerchio}), the generating function associated to the so-called \textit{perturbed dynamics}, that is, the dynamics induced by the potential (\ref{potential}) inside and outside the perturbed domain $D_\ep$, is given by 
 \begin{equation*}
 	S(\x0, \x1; \ep)=d_E(\gep(\x0), \gep(\xt))+d_I(\gep(\xt), \gep(\x1)),
 \end{equation*}
where $\gep(\xt)$ is the passage point of $z_{EI}(s)$, as defined in \S \ref{sec: first return}, through $\partial D_\ep$. \\
A preliminary passage to discuss the good definition of $S_\ep$ as a whole is to ensure that the functions $d_E(p_0, p_1)$ and $d_I(p_0, p_1)$ are differentiable as functions of $p_0, p_1\in\partial D_\ep$, namely, that the inner and outer dynamics admit a unique geodesic arc joining $p_0$ and $p_1$. \\
In view of the results of Section \ref{sec: esistenza archi cerchio} and Appendix \ref{appA}, this follows from the continous dependence of the solutions of the fixed ends problems (\ref{problema esterno testo}) and (\ref{PB2}) with respect to $p_0$ and $p_1$. To fix the notation, let us denote with $z_{E\backslash I}(s; p_0, p_1; 0)$ the respective solutions in the unperturbed circular case, where the last variable refers to $\ep=0$. 
 
 \begin{rem}\label{oss extcont}
 	Focusing on the outer problem, from the continous dependence on $p_0$ and $p_1$ of the solution $z_E(s;p_0,p_1; 0)$ defined in Theorem \ref{thm ex globale esterna unp}, along with the invariance of the system under rotations, there exists $\rho_E>0$ such that for every $p_0, p_1, \tilde{p_0},\tilde{p}_1$ satisfying $|p_0-p_1|<2$ and $|\tilde{p}_0-p_0|,|\tilde{p}_1-p_1|<\rho_E$ one finds $T>0$ and a unique solution $z_E(s; \tilde{p}_0, \tilde{p}_1)$ of the problem 
 	\begin{equation}
 		\begin{cases}
 			(HS_E)[z(s)] &s\in[0,T]\\
 			z(0)=\tilde{p}_0, \quad z(T)=\tilde{p}_1. 
 		\end{cases}
 	\end{equation}
 	For computational reasons, we require $\rho_E<1$, ad set
 	\begin{equation}\label{strip}
 		\mathcal{S}_{\rho_E}=\bigcup_{p_0\in\partial D_0}B_{\rho_E}(p_0)=\{p\in\mathbb{R}^2\text{ }|\text{ } dist(p,\partial D_0)<\rho_E\}.
 	\end{equation} 
 \end{rem}
 \begin{prop}\label{extpert}
 	There exists $\delta>0$ such that for every $\tilde{p}_0,\tilde{p}_1\in\mathcal S_{\rho_E}$ with $|\tilde{p}_0-\tilde{p}_1|<\delta$ there is $T>0$ and a unique $z_E(s;\tilde{p}_0,\tilde{p}_1)$ solution of the fixed-ends problem
 	\begin{equation}
 		\begin{cases}
 			(HS_E)[z(s)] &s\in[0,T]\\
 			z(0)=\tilde{p}_0, \quad z(T)=\tilde{p}_1. 
 		\end{cases}
 	\end{equation}
 \end{prop}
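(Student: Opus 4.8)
The plan is to reduce the assertion to the perturbative existence result already recorded in Remark \ref{oss extcont}, exploiting the radial structure of the strip $\mathcal{S}_{\rho_E}$ defined in \eqref{strip}. First I would note that, since the constant $\rho_E$ of Remark \ref{oss extcont} satisfies $\rho_E<1$, the strip $\mathcal{S}_{\rho_E}$ does not contain the origin; hence for every $\tilde{p}\in\mathcal{S}_{\rho_E}$ the radial projection $\pi(\tilde{p})=\tilde{p}/|\tilde{p}|\in\partial D_0$ is well defined, with $|\tilde{p}|\geq 1-\rho_E>0$. By the very definition of the strip one has $dist(\tilde{p},\partial D_0)=\big||\tilde{p}|-1\big|<\rho_E$, and this distance to the unit circle is attained exactly at $\pi(\tilde{p})$; consequently $|\tilde{p}-\pi(\tilde{p})|<\rho_E$.

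Next, given $\tilde{p}_0,\tilde{p}_1\in\mathcal{S}_{\rho_E}$, I would set $p_0=\pi(\tilde{p}_0)$ and $p_1=\pi(\tilde{p}_1)$ on $\partial D_0$ and control their distance by the triangle inequality:
\begin{equation*}
	|p_0-p_1|\leq |p_0-\tilde{p}_0|+|\tilde{p}_0-\tilde{p}_1|+|\tilde{p}_1-p_1|<2\rho_E+|\tilde{p}_0-\tilde{p}_1|.
\end{equation*}
Choosing $\delta:=2(1-\rho_E)>0$ (any smaller positive constant works as well), the hypothesis $|\tilde{p}_0-\tilde{p}_1|<\delta$ then yields $|p_0-p_1|<2\rho_E+\delta=2$, so that the projected base points on the circle are non-antipodal.

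Finally, since $|p_0-p_1|<2$ and $|\tilde{p}_i-p_i|<\rho_E$ for $i=0,1$, I would apply Remark \ref{oss extcont} verbatim: it produces a time $T>0$ and a unique solution $z_E(s;\tilde{p}_0,\tilde{p}_1)$ of the fixed-ends problem $(HS_E)$ joining $\tilde{p}_0$ to $\tilde{p}_1$, which is precisely the claim. In particular no new integration of the outer system is required, since the existence, uniqueness and $C^1$ dependence on the endpoints of the connecting harmonic ellipse have already been established in Theorem \ref{thm ex globale esterna unp} and transported to nearby endpoints in Remark \ref{oss extcont}.

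The only genuinely delicate point I anticipate is the calibration of $\delta$ guaranteeing the strict bound $|p_0-p_1|<2$, i.e. ensuring that the projected endpoints stay uniformly away from the antipodal, degenerate configuration in which the two-point outer problem fails to be uniquely solvable; this is exactly what the smallness of $\delta$ provides, and it hinges on the normalization $\rho_E<1$ built into Remark \ref{oss extcont}. Everything else is a routine application of the triangle inequality together with the elementary geometry of the radial projection onto the unit circle.
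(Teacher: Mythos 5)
Your proposal is correct and follows essentially the same route as the paper's own proof: radially project $\tilde{p}_0,\tilde{p}_1$ onto $\partial D_0$, bound $|p_0-p_1|<2\rho_E+\delta\leq 2$ by the triangle inequality with $\delta\leq 2(1-\rho_E)$, and invoke Remark \ref{oss extcont}. The extra care you take in checking that the projection is well defined and realizes the distance to the circle is a fine (if unneeded in the paper) refinement, not a deviation.
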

 \begin{proof}
 	It is sufficient to set $\delta<2(1-\rho_E)$. Denoting in polar coordinates $\tilde{p}_0=r_0e^{i\theta_0}$ and $\tilde{p}_1=r_1e^{i\theta_1}$, cosider $p_0=e^{i\theta0}$ and $p_1=e^{i\theta_1}$: we have then $|\tilde{p}_0-p_0|, |\tilde{p}_1-p_1|<\rho_E$, and 
 	\begin{equation}
 		|p_0-p_1|\leq|\tilde{p}_0-p_0|+|\tilde{p}_0-\tilde{p}_1|+|\tilde{p}_1-p_1|<\delta+2\rho_E<2, 
 	\end{equation}
 	then, by Remark \ref{oss extcont}, the thesis is proved. 
 \end{proof}
 \begin{thm}\label{thm esistenza ext pert}
 	There are $\bar{\delta}_E>0, \bar{\epsilon}_E>0$ such that for every $\ep\in\R$ and for every $\xi_0,\xi_1\in\Rpi$ with $|\xi_0-\xi_1|<\bar{\delta}_E$ and $|\ep|<\bar\ep_E$ there is $T>0$ and a unique function $z_E(s;\gamma_{\epsilon}(\xi_0),\gamma_{\epsilon}(\xi_1))\equiv z_E(s; \x0, \x1; \ep)$ which is a classical solution of 
 	\begin{equation}\label{ext prob pert}
 		\begin{cases}
 			(HS_E)[z(s)] &s\in[0,T]\\
 			z(0)=\gamma_{\epsilon}(\xi_0), \quad z(T)=\gamma_{\epsilon}(\xi_1). 
 		\end{cases}
 	\end{equation}
 \end{thm}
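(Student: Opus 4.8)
The plan is to reduce the statement entirely to Proposition \ref{extpert}, which already grants existence and uniqueness of the outer arc joining any two points of the strip $\mathcal S_{\rho_E}$ whose mutual distance is below the threshold $\delta$ appearing there. Accordingly, I would only need to verify two facts, both uniform in $\ep$: that for $\ep$ small each perturbed boundary point $\gep(\xi)$ lies inside $\mathcal S_{\rho_E}$, and that the chord $|\gep(\x0)-\gep(\x1)|$ stays below $\delta$ once $|\x0-\x1|$ is small. Note that \eqref{ext prob pert}, like the problem in Proposition \ref{extpert}, is the raw fixed-ends problem for the outer flow, with no $|z|>1$ constraint imposed, so no further geometric check is required beyond locating the endpoints.

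First I would localize the boundary in the strip. Since $f$ is smooth on the compact domain $\Rpi\times[-C,C]$, it is bounded, say by $M$; from the definition \eqref{def dominio perturbato} the radial displacement of $\partial D_\ep$ off the unit circle is exactly $|\ep\,f(\xi;\ep)|\le|\ep|M$, so that $dist(\gep(\xi),\partial D_0)\le|\ep|M$ and every $\gep(\xi)$ enters $\mathcal S_{\rho_E}$ as soon as $|\ep|<\rho_E/M$. Next I would control the chordal distance by differentiating \eqref{def dominio perturbato}: one finds
$$\partial_\xi\gep(\xi)=\big(i+\ep(\partial_\xi f(\xi;\ep)+i\,f(\xi;\ep))\big)e^{i\xi},$$
whose modulus is bounded, for $|\ep|\le1$, by a constant $L$ depending only on the sup-norms of $f$ and $\partial_\xi f$. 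This makes $\gep$ Lipschitz in $\xi$ with an $\ep$-independent constant $L$, whence $|\gep(\x0)-\gep(\x1)|\le L|\x0-\x1|$.

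With these two estimates in hand I would set $\bar{\ep}_E=\min\{1,\rho_E/M\}$ and $\bar{\delta}_E=\delta/L$: for $|\ep|<\bar{\ep}_E$ and $|\x0-\x1|<\bar{\delta}_E$ the endpoints $\gep(\x0),\gep(\x1)$ lie in $\mathcal S_{\rho_E}$ and satisfy $|\gep(\x0)-\gep(\x1)|<\delta$, so Proposition \ref{extpert} delivers the unique classical solution $z_E(s;\x0,\x1;\ep)$ of \eqref{ext prob pert}, and with it the claim.

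I do not expect any serious obstacle here, since the heavy lifting — the genuine existence and uniqueness of outer arcs — has already been carried out for the circle in Theorem \ref{thm ex globale esterna unp} and propagated to the strip in Remark \ref{oss extcont} and Proposition \ref{extpert}. The only point requiring care is the uniformity in $\ep$ of the constants $M$ and $L$, which is exactly what the smoothness of $f$ on the compact parameter range guarantees; once that is noted, the theorem is just a repackaging of the unperturbed result.
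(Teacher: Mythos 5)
Your proof is correct and follows essentially the same route as the paper's: bound $\|f\|_\infty$ to place $\gep(\x0),\gep(\x1)$ in the strip $\mathcal S_{\rho_E}$, bound $|\dot\gamma_\ep|$ uniformly in $\ep$ to control the chord $|\gep(\x0)-\gep(\x1)|$, and then invoke Proposition \ref{extpert}. The only difference is cosmetic normalization of constants (the paper shrinks $\bar\ep_E$ so the Lipschitz constant is $<2$ and takes $\bar\delta_E<\delta/2$, whereas you keep a general $L$ and take $\bar\delta_E=\delta/L$).
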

 \begin{proof}
 	The claim is true if $\bar{\delta}_E<\delta/2$ and $\bar{\epsilon}_E<\min\{ 1/\left(\|f\|_\infty+\|\partial_{\xi} f\|_{\infty}\right), \rho_E/\|f\|_{\infty}\}$. For, fixed $\ep\in\R$ such that $|\ep|<\bar\ep_E$, one has that for every $\xi\in\Rpi$
 	\begin{equation*}
 	|\dot\gamma_\ep(\xi)|=|\ep\partial_\xi f(\xi, \ep)e^{i\xi}+\left(1+\ep f(\xi, \ep)\right)ie^{i\xi}|\leq1+|\ep|\left(\|\partial_\xi f\|_\infty+\|f\|_\infty\right)<2.
 	\end{equation*}
 If $\xi_0,\xi_1\in\Rpi$ are such that $|\xi_0-\xi_1|<\bar{\delta}_E<\delta/2$, defining $\tilde{p}_0=\gamma_{\epsilon}(\xi_0)$, $\tilde{p}_1=\gamma_{\epsilon}(\xi_1)$, $p_0=e^{i\xi_0}$ and $p_1=e^{i\xi_1}$: 
 	\begin{equation}
 		\begin{aligned}
 			&|\tilde{p}_0-p_0|=|\epsilon|\text{ } |f(\xi_0)|\leq|\epsilon|\text{ }\|f\|_{\infty}<\rho_E, \\
 			&|\tilde{p}_1-p_1|<\rho_E, \\
 			&|\tilde{p}_0-\tilde{p}_1|<\|\dot\gamma_\ep\|_\infty|\x0-\x1|<2|\x0-\x1|<\delta,
 		\end{aligned}
 	\end{equation}
 	then the hypotheses of Proposition \ref{extpert} hold and the claim is true. \\ 	
 \end{proof}

Passing to the inner dynamics, let us observe that the definition of windng number given in (\ref{wind unp}) can be extended to regular, simple curves joining points which are in a sufficiently small tubolar neighborhood of $\partial D_0$: if $\mathcal S_\rho$ is defined as in (\ref{strip}),  $\tilde{p}_0=r_0e^{i\theta_0},\tilde{p}_1=r_1e^{i\theta_1}\in \mathcal S$ with $|\theta_0-\theta_1|\neq\pi$ and $\tilde{\alpha}(s)$ is such that $\tilde{\alpha}(a)=\tilde{p}_0$, $\tilde{\alpha}(b)=\tilde{p}_1$, one can construct the closed curve $\Gamma_{\tilde{\alpha}}$ by following $\tilde{\alpha}$ in $[a,b]$, then the shortest arc of $\partial B_{r_1}(0)$ joining  $\tilde{p}_1$ and $\tilde{p}'_0=r_1e^{i\theta_0}$ and finally the ray of $e^{i\theta_0}$ from $\tilde{p}'_{0}$ to $\tilde{p}_0$ (see Figure \ref{fig:avvolgimento-pert}). If $\rho$ is sufficiently small, the definition (\ref{wind unp}) can be straightforwardly extended to $\tilde{\alpha}$. \\
\begin{figure}
	\centering
	\includegraphics[width=0.5\linewidth]{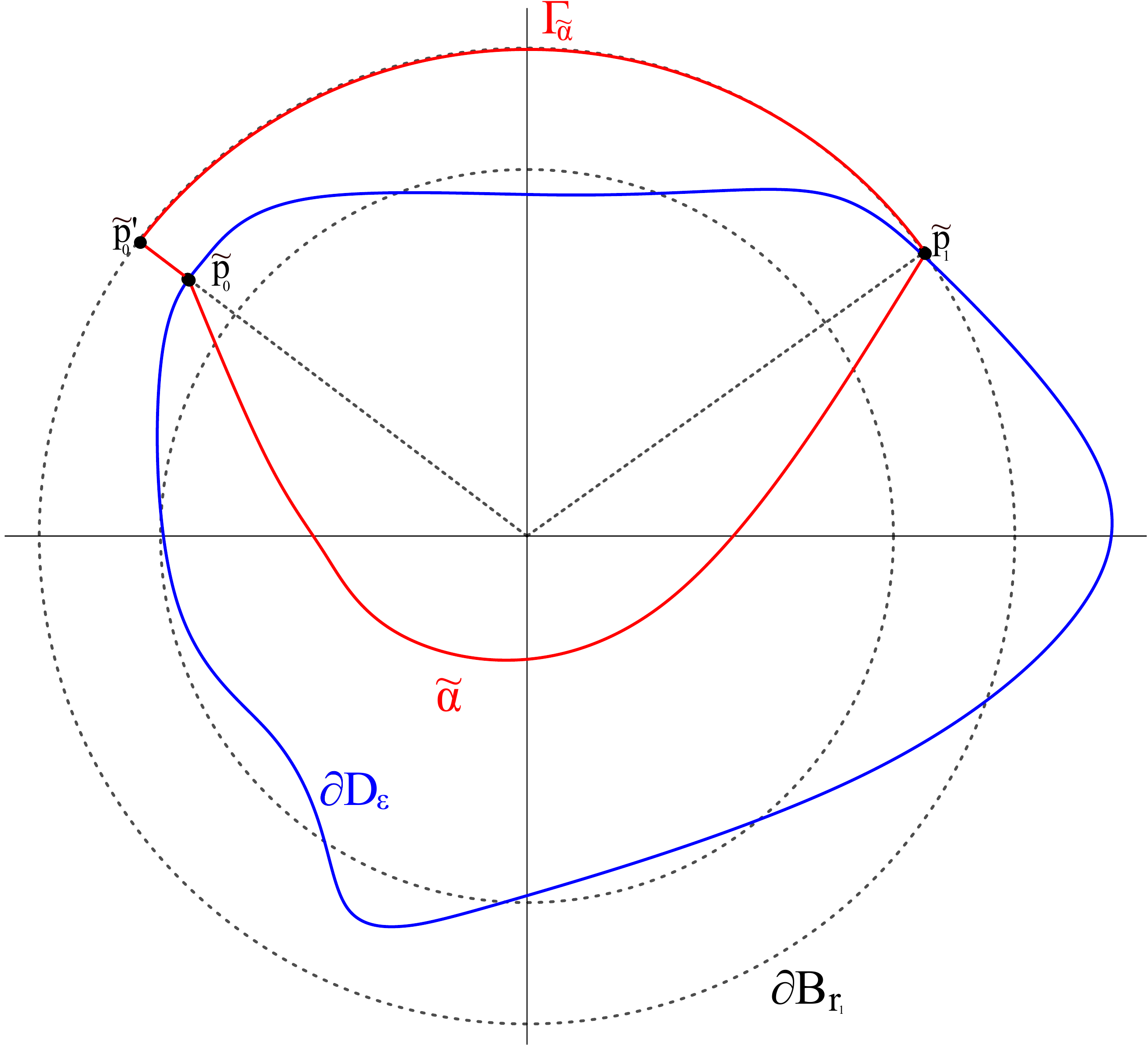}
	\caption{Curve $\Gamma_{\tilde \alpha}$ for the computation of the winding number in the perturbed case. }
	\label{fig:avvolgimento-pert}
\end{figure}

\begin{thmv}\label{thm ex inner pert}
	There exist $\bar{\ep}_I>0$, $\bar{\delta}_I>0$ and $C>0$ such that for every $\ep\in\R$,  $\xi_0,\xi_1\in\Rpi$ satisfying $|\xi_0-\x0|<\bar{\delta}_I$ and $|\ep|<\bar\ep_I$ there exists a unique $T(\x0,\x1)\equiv T>0$ and a unique solution $z_I(s; \x0, \x1; \ep)$ of 
	\begin{equation}\label{PB}
		\begin{cases}
		(HS_I)[z(s)] &s\in[0,T]\\
		z(s)\in D_\ep &s\in(0, T)\\
		z(0)=\gep(\x0),\text{ }z(T)=\gep(\x1) 
		\end{cases}
	\end{equation}	
	with the following properties: 
	\begin{itemize}
		\item if $\x0\neq\x1$, then $z(s; \x0,\x1; \ep)$  is a classical solution of (\ref{PB}) such that $|I(z([0,T]),0)|=1$; 
		\item if $\x0=\x1$, $z(s; \x0,\x0; \ep)$ is an ejection-collision solution.
	\end{itemize}
	In any case, $z(s; \x0,\x1; \ep)$ is of class $C^1$ with respect to variations of $\x0$ and $\x1$ and, if we define 
	\begin{equation}
		\beta_0=\angle(\dot{\gep}(\x0),z'(0; \x0,\x1; \ep)), \text{ }\beta_1=\angle(\dot{\gep}(\x1),z'(T; \x0, \x1; \ep)), 
	\end{equation}
	one has that $|\beta_0|>C$ and $|\beta_1|>C$.
\end{thmv}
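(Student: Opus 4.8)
The plan is to mirror, for the inner Keplerian dynamics, the three-step scheme already carried out for the outer arcs in Remark~\ref{oss extcont}, Proposition~\ref{extpert} and Theorem~\ref{thm esistenza ext pert}: one bootstraps from the unperturbed existence result of Theorem~\ref{thm ex globale interna unp} by continuous dependence on the endpoints and invariance under rotations, taking care, in addition, of the winding-number normalization and of the transversality bounds \eqref{transv testo}.

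First I would prove the inner analogue of Remark~\ref{oss extcont}. Theorem~\ref{thm ex globale interna unp} supplies, for every pair of non-antipodal circle points $p_0,p_1$ (that is $|p_0-p_1|<2$), a unique inner arc of winding number one, of class $C^1$ in the endpoints and obeying \eqref{transv testo}; the $C^1$ dependence, together with the rotational invariance of the inner system, then yields a radius $\rho_I\in(0,1)$ such that for every $\tilde p_0,\tilde p_1$ with $|\tilde p_0-p_0|,|\tilde p_1-p_1|<\rho_I$ the fixed-ends inner problem with endpoints $\tilde p_0,\tilde p_1$ still admits a unique solution. Since the winding number is an integer that varies continuously as long as the arc stays away from the origin, it remains equal to one, while the transversality bounds persist with a slightly smaller constant; I would set $\mathcal S_{\rho_I}$ as in \eqref{strip}. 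The $C^1$ dependence on endpoints lying in the strip (rather than on the circle) follows from the implicit function theorem exactly as in Appendix~\ref{appA}, the relevant nondegeneracy being an open condition preserved near $\partial D_0$.

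Next, following Proposition~\ref{extpert} verbatim, I would fix $\delta_I<2(1-\rho_I)$: for $\tilde p_0,\tilde p_1\in\mathcal S_{\rho_I}$ with $|\tilde p_0-\tilde p_1|<\delta_I$, radial projection onto the circle produces $p_0,p_1$ with $|p_0-p_1|<\delta_I+2\rho_I<2$, so the previous step applies. Then, as in Theorem~\ref{thm esistenza ext pert}, I would choose $\bar\ep_I$ so small that $\gep$ maps $\Rpi$ into $\mathcal S_{\rho_I}$ (i.e. $|\ep|\,\|f\|_\infty<\rho_I$) and satisfies $\|\dot\gep\|_\infty<2$, and $\bar\delta_I<\delta_I/2$, so that $\tilde p_0=\gep(\x0)$, $\tilde p_1=\gep(\x1)$ meet the hypotheses whenever $|\x0-\x1|<\bar\delta_I$. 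This produces the unique $z_I(s;\x0,\x1;\ep)$; its $C^1$ dependence on $\x0,\x1$ follows by composing the $C^1$ dependence on the endpoints with the smoothness of $\gep$. The value $|I|=1$ for $\x0\neq\x1$ is inherited through the extended winding number of the curve $\Gamma_{\tilde\alpha}$ of Figure~\ref{fig:avvolgimento-pert}, and the ejection-collision case $\x0=\x1$ is recovered as the limit $\x1\to\x0$, using the corresponding limiting statement of Theorem~\ref{thm ex globale interna unp}.

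The delicate point, which I expect to be the main obstacle, is the uniform transversality estimate $|\beta_0|,|\beta_1|>C$. The unperturbed bounds \eqref{transv testo} control the angle of the velocity with the circle's radial direction, whereas $\beta_0,\beta_1$ are measured against the perturbed tangent $\dot\gep$, which differs from the circular one by $O(\ep)$; moreover the pericenter of the inner arc degenerates to zero as $\x1\to\x0$, so the arc may pass arbitrarily close to the Keplerian singularity. I would obtain a single constant $C>0$ valid for all admissible $\x0,\x1$ and all $|\ep|<\bar\ep_I$ by exploiting that the unperturbed transversality constant in \eqref{transv testo} tends to one as $|p_0-p_1|\to0$: for $|\x0-\x1|<\bar\delta_I$ with $\bar\delta_I$ small this leaves ample room to absorb both the $O(\rho_I)$ displacement of the endpoints and the $O(\ep)$ rotation of the tangent frame, shrinking $\bar\delta_I$ and $\bar\ep_I$ further if necessary.
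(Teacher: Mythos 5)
Your plan reproduces the paper's proof almost step for step: the same three-stage bootstrap (a strip $\mathcal S_{\rho_I}$ obtained from Theorem \ref{thm ex globale interna unp} by $C^1$ dependence and rotational invariance, then the projection argument of Proposition \ref{extpert}, then the choice of $\bar\ep_I$ in terms of $\|f\|_\infty$ and $\|\partial_\xi f\|_\infty$ as in Theorem \ref{thm esistenza ext pert}), the same continuity argument for preservation of the winding number, and the same mechanism for transversality (unperturbed radial bound, then absorption of the $O(\ep)$ rotation of the tangent frame). The only real difference is in how you make the transversality uniform: you shrink $\bar\delta_I$ and invoke the limit $C\to 1$ as $|p_0-p_1|\to 0$, whereas the paper needs neither — it takes the uniform constant of Proposition \ref{proptra}, degrades it to some $C_1>0$ for endpoints in the strip by differentiability, writes $\beta_1=\angle(\dot{\gep}(\x1),\tilde p_1^{\perp})+\angle(\tilde p_1^{\perp},z'(T))$ with $\tilde p_1^{\perp}=i\tilde p_1$, bounds the second summand below by $\pi/2-\arccos(C_1)>0$, and bounds the first by $\arcsin\bigl(|\ep|\,\|\partial_\xi f\|_\infty/(1-\rho_I)\bigr)$. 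Your variant is also valid, but it proves the statement only for a possibly much smaller $\bar\delta_I$, a restriction the paper's argument avoids.

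One step is missing from your proposal: problem \eqref{PB} also demands $z(s)\in D_\ep$ for $s\in(0,T)$, and both existence and uniqueness are asserted relative to this class. Your construction produces the solution of the free fixed-ends problem with the correct winding number, but you never check that this arc stays inside the perturbed domain (a priori it could exit and re-enter $\partial D_\ep$ near an endpoint, since the endpoints no longer lie on the unit circle). The paper closes this with one sentence: by the transversality just established and the smallness of $\ep$, the arc crosses $\partial D_\ep$ transversally at its endpoints and therefore cannot intersect the boundary twice in a neighborhood of $z(0)$ or $z(T)$, while elsewhere it is uniformly close to the unperturbed arc, which lies strictly inside $D_0$. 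This is an easy fix given what you have already proved, but it is part of the statement and must be said.
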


\begin{proof}
	As in the case of the outer dynamics, in view of Theorem \ref{thm ex globale interna unp}, there exists $0<\rho_I<1$ such that, choosing  $\delta<2(1-\rho_I)$, for every $\tilde p_0=r_0e^{i\theta_0}, \tilde p_1=r_1e^{i\theta_1}\in\mathcal S_{\rho_I}$ the problem 
	\begin{equation*}
		\begin{cases}
			(HS_I)[z(s)]\quad &s\in[0, T]\\
			z(0)=\tilde p_0,\text{ }z(T)=\tilde p_1
		\end{cases}
	\end{equation*}
	admits a unique solution $z_I(s; \tilde p_0, \tilde p_1)$, with $T>0$ depending on the endpoints. Moreover, possibly reducing $\rho_I$, defining $p_0=e^{i\theta_0}$ and $p_1=e^{i\theta_1}$ and considering $z_I(s; p_0, p_1)$ introduced by Theorem \ref{thm ex globale interna unp}, one has that $z_I(s; \tilde p_0, \tilde p_1)$ and $z_I(s; p_0, p_1)$ have the same winding number. Choosing then $\ep_1<\min\left\{\frac{\rho_I}{\|f\|_\infty}, \frac{1}{\|f\|_\infty+\|\partial_\xi f\|_\infty}\right\}$ and $\bar\delta_I=\delta<2(1-\rho_I)$, for any $\x0, \x1\in\Rpi$ with $|\x0-\x1|<\delta$ and  any $|\ep|<\ep_1$,  following the proof of Theorem \ref{thm esistenza ext pert}, the existence and uniqueness of the solution $z_I(s; \x0, \x1; \ep)$ of problem (\ref{PB}) is ensured, with preservation of the winding number with respect to the unperturbed case.   
	
 From the regularity of $\gep$ with respect to $\xi$ and of $z(s;\tilde p_0, \tilde p_1)$ with respect to the endpoints, we have that $z(s;\x0, \x1; \ep)$ is of class $C^1$ in the variables $\x0$ and $\x1$. \\
	To prove the transversality properties of $z_I(s; \x0, \x1; \ep)$, let us observe that by Proposition \ref{transv} and the differentiability of $z(s)\equiv z(s;\x0, \x1; \ep)$, as well as by the invariance under rotations of the system, there is $C_1>0$, possibly lower than $C$, such that for every $\x0,\x1$ satisfying the existence hypotheses, defined $\tilde{p}_0$ and $\tilde{p}_1$ as above, one has 
	\begin{equation}
		1\geq-\frac{\tilde{p}_0}{|\tilde{p}_0|}\cdot\frac{z'(0)}{|z'(0)|}>C_1, \text{ }
		1\geq\frac{\tilde{p}_1}{|\tilde{p}_1|}\cdot\frac{z'(T)}{|z'(T)|}>C_1. 
	\end{equation}
	Let us now consider $\alpha_1=\angle{\left(\tilde{p}_1, z'(T)\right)}$ (if $\alpha_0=\angle(-\tilde{p}_0,z'(0))$ we proceed analogously): setting $C_2=\arccos(C_1)\in(0,\pi/2)$, we have $|\alpha_1|<C_2$, and, taking $\beta_1=\angle(\dot{\gep}(\x1),z'(T))$, one has 
	\begin{equation}
		\beta_1=\angle(\dot{\gep}(\x1), \tilde{p}_1^{\perp})+\angle(\tilde{p}_1^{\perp},z'(T)), 
	\end{equation}
	where $\tilde{p}_1^{\perp}=i\tilde{p}_1$. Then we have that
	\begin{equation}
		\begin{aligned}
			|\beta_1|\geq \Big|\angle(\tilde{p}_1^{\perp},z'(T))\Big|-\Big|\angle(\dot{\gep}(\x1), \tilde{p}_1^{\perp})\Big|, \\
			\Big|\angle(\tilde{p}_1^{\perp},z'(T))\Big|>\frac{\pi}{2}-C_2\equiv C_3\in\left(0,\frac{\pi}{2}\right). 
		\end{aligned}
	\end{equation}
	To estimate $\angle(\dot{\gep}(\x1), \tilde{p}_1^{\perp})$, let us observe that 
	\begin{equation}
		\big|\sin\left(\angle(\dot{\gep}(\x1), \tilde{p}_1^{\perp})\right)\big|=\frac{|\dot{\gep}(\x1)\wedge\tilde{p}_1^\perp|}{|\dot{\gep}(\x1)||\tilde{p}_1^\perp|}, 
	\end{equation}
where $\tilde{p}_1^\perp/|\tilde{p}_1^\perp|=ie^{i\x1}$ and $\dot{\gep}(\x1)=\ep \partial_\xi f(\x1,\ep)e^{i\x1}+(1+\ep f(\x1, \ep))ie^{i\x1}$. If $|\ep|<\ep_1$, 
	\begin{equation}
		\begin{aligned}
			 &|\dot{\gep}(\x1)|=\sqrt{(\ep \partial_\xi f(\x1, \ep))^2+(1+\ep f(\x1, \ep))^2}\geq1-|\ep|\text{ }\|f\|_\infty>1-\rho_I\\
			&\Rightarrow \big|\sin\left(\angle(\dot{\gep}(\x1), \tilde{p}_1^{\perp})\right)\big|=\frac{|\ep| \text{ }|\partial_\xi f(\x1, \ep)|}{|\dot{\gep}(\x1)|}<\frac{|\ep|}{1-\rho_I}|\partial_\xi f(\x1,\ep)|. 
		\end{aligned}
	\end{equation}
	If we consider $C_4>0$ such that $0<\arcsin(C_4)<C_3$, setting $\ep<\min\left\{\ep_1,\frac{C_4(1-\rho_I)}{\|\partial_\xi f\|_{\infty}}\right\}=\bar{\ep}_I$:  
	\begin{equation}
		\big|\sin\left(\angle(\dot{\gep}(\x1), \tilde{p}_1^{\perp})\right)\big|<C_4\Rightarrow|\beta_1|>C_3-\arcsin{(C_4)}\equiv\bar{C}>0. 
	\end{equation}
	Recalling the definitions which lead to $\bar{\ep}$ and $\bar{C}$, it is clear that they do not depend on $\xi_0$ nor $\x1$, but only on $\rho$ and the global properties of $f$. \\
	Finally, the condition $z(s)\in D_\ep$ for $s\in(0,T)$ follows from the smallness of $\ep$ and the transversality of $z(s)$ with respect to the perturbed domain $\partial D_\ep$, which ensures that $z(s)$ do not intersect twice the domain's boundary in a neighborhood of $z(T)$. 
\end{proof}

\begin{rem}
	Using the same transversality argument described in details for the inner dynamics, one can prove that, if $\ep$ is small enough and $\x0, \x1$ sufficiently close, the solution $z_E(s; \x0, \x1; \ep)$ of (\ref{ext prob pert}), whose existence is ensured by Theorem \ref{thm esistenza ext pert}, is such that $z_E(s; \x0, \x1; \ep)\notin\bar D_\ep$ for $s\in(0, T)$. 
\end{rem}

\subsection{Invariant sets for $\mathcal{F}_\ep$}\label{sec: kam e mather}
The good definition of the distances $d_E(p_0, p_1)$ and $d_I(p_0, p_1)$ for $p_0, p_1\in\partial D_\ep$ allows to consider the associated generating function 
\begin{equation}
	S(\x0,\x1; \ep)=S_{E}(\x0,\xt; \ep )+S_{I}(\xt, \x1; \ep)=d_E(\gep(\x0),\gep(\xt))+d_I(\gep(\xt),\gep(\x1)).  
\end{equation} 
When well defined, $S(\x0, \x1; \ep)$ has the same regularity of $f(\xi, \ep)$ as a function of both the angle variables $\x0, \x1$ and the perturbative parameter $\ep$.\\ 
This Section aims to prove that, under suitable assumptions, the results proved for the circle regarding the twist condition and the existence of invariant sets with prescribed rotation numbers (see Section \ref{sec: cerchio orbite}) can be extended to the perturbed dynamics as described in (\ref{def dominio perturbato}).\\
Recalling the notation of Section \ref{sec: cerchio}, $\bar{\mathcal{I}}$ is the finite set in $\mathcal I=\left(-I_c, I_c\right)$ for which $\mathcal F_0$ is not well defined, with $I_c=\sqrt{\E-\om/2}$. To highlight the dependence on $\ep$, from now on we will use the notation $\mathcal F_0(\x0, I_0)\equiv \mathcal F(\x0, I_0; 0)$. 
\begin{prop}\label{prop mappa perturbata}
	Let $[a,b]\subset\mathcal I\backslash\bar{\mathcal{I}}$, and suppose that, in (\ref{def dominio perturbato}), $f\in C^k\left(\Rpi\times\mathcal I\right)$ with $k\geq2$. Then there exists $\bar\ep>0$ such that for every $\ep\in\R$, $|\ep|<\bar\ep$, the perturbed first return map 
	\begin{equation*}
		\mathcal F(\x0, I_0; \ep)=\left(\x1\left(\x0, I_0; \ep\right), I_1\left(\x0, I_0; \ep\right)\right)
	\end{equation*} 
	is well defined and of class $C^{k-2}(\Rpi\times[a,b])$. Moreover, $\mathcal{F}(\cdot, \cdot, \ep)$ is conservative and twist. 
\end{prop}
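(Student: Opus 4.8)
The plan is to regard $\mathcal F(\cdot,\cdot;\ep)$ as a small perturbation of the integrable map $\mathcal F_0=\mathcal F(\cdot,\cdot;0)$ of Proposition \ref{prop mappa unp}, and to propagate the two nondegeneracy conditions \eqref{nondeg S} and \eqref{nondeg I0 2} from $\ep=0$ to small $\ep$ by a compactness-and-continuity argument, assembling the map through two applications of the implicit function theorem. First I would fix the geometric input: by Theorems \ref{thm esistenza ext pert} and \ref{thm ex inner pert}, for $|\ep|$ small the outer and inner fixed-ends arcs exist, are unique and depend in a $C^1$ (indeed $C^k$) fashion on the endpoints and on $\ep$, so that the partial generating functions $S_E(\x0,\xt;\ep)$ and $S_I(\xt,\x1;\ep)$ are well defined and of class $C^k$ jointly in the angle variables and in $\ep$; here the smoothness in the endpoints comes from the analyticity of $V_E,V_I$ together with the $C^k$ regularity of $\gep$ in \eqref{def dominio perturbato}. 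The key point to check is that the arcs realising the return for $I_0\in[a,b]$ are covered by these existence results \emph{uniformly}: since $[a,b]$ is a compact subset of $\mathcal I\setminus\bar{\mathcal I}$ bounded away from the tangency values $\pm I_c$, the unperturbed angular excursions $f(I_0)$ and $g(I_0)$ stay in a fixed compact subinterval of $(-\pi,\pi)$, so the corresponding chords remain non-antipodal uniformly and the continuous-dependence neighbourhood of Remark \ref{oss extcont} can be chosen uniform; moreover the transversality estimate $|\beta_1|>C$ of Theorem \ref{thm ex inner pert} guarantees that the exit refraction, i.e. condition (ii) of Remark \ref{oss buona definizione F}, holds uniformly, so that no total reflection obstructs the first return.

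Next I would transfer the nondegeneracy. At $\ep=0$, Proposition \ref{buona definizione gen unp} shows that \eqref{nondeg S} reduces to $\bigl(f'(I)+g'(I)\bigr)/\bigl(f'(I)g'(I)\bigr)$ and Proposition \ref{prop mappa unp} that the twist equals $f'(I_0)+g'(I_0)$; since $f'>0$, $g'<0$ and $f'+g'\neq0$ off $\bar{\mathcal I}$, both $|f'+g'|$ and $|f'g'|$ are bounded below by a positive constant on the compact band $[a,b]$. The quantities appearing in \eqref{nondeg S} and \eqref{nondeg I0 2} are the corresponding second derivatives of $S_E,S_I$, hence continuous in $\ep$ and uniformly so on the compact cylinder $\Rpi\times[a,b]$; therefore there is $\bar\ep>0$ such that for $|\ep|<\bar\ep$ both \eqref{nondeg S} and \eqref{nondeg I0 2} hold throughout $\Rpi\times[a,b]$.

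With these in hand, I would build the map exactly as in \S\ref{sec: variational general}. Condition \eqref{nondeg S} lets the implicit function theorem solve the Snell equation \eqref{snell equation} for $\xt=\xt(\x0,\x1;\ep)$, losing one derivative and yielding $S=S_E+S_I\in C^{k-1}$; forming the actions through \eqref{def I} costs a further derivative, and condition \eqref{nondeg I0 2} then lets the implicit function theorem invert $I_0=-\partial_{\x0}S$ to produce $\x1=\x1(\x0,I_0;\ep)$ and $I_1=\partial_{\x1}S$, with no additional loss. Tracking these two losses gives $\mathcal F(\cdot,\cdot;\ep)\in C^{k-2}(\Rpi\times[a,b])$. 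Conservativity is then inherited, as in the proof of Proposition \ref{prop mappa unp}, from the exact-symplectic structure $dS=I_1\,d\x1-I_0\,d\x0$, whence $dI_1\wedge d\x1=dI_0\wedge d\x0$; and the twist $\partial_{I_0}\x1=-1/\partial_{\x0\x1}S\neq0$ persists because $\partial_{\x0\x1}S$ is continuous in $\ep$ and nonvanishing at $\ep=0$ on $[a,b]$.

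The main obstacle I anticipate is the \emph{uniformity} of all the estimates over the cylinder $\Rpi\times[a,b]$: one must ensure a single $\bar\ep$ works simultaneously for every $\x0$ and every $I_0\in[a,b]$, which in turn requires that the $O(1)$ (not small) angular excursions $f(I_0),g(I_0)$ of the return arcs stay within the non-degenerate window of the existence theorems, and that the Snell intermediate point $\xt(\x0,\x1;\ep)$ is well defined for the whole relevant range of endpoints rather than for a single pair. Keeping the chords uniformly away from the antipodal configuration and the actions uniformly away from $\bar{\mathcal I}$ and $\pm I_c$ is precisely what the compactness of $[a,b]$ buys, and is the crux of making the perturbative continuity argument rigorous.
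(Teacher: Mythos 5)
Your proposal is correct and follows essentially the same route as the paper's proof: both propagate the unperturbed nondegeneracy conditions \eqref{nondeg S} and \eqref{nondeg I0 2} (which on $[a,b]\subset\mathcal I\backslash\bar{\mathcal I}$ reduce to the nonvanishing of $f'+g'$ and $f'g'$) to small $\ep$ by compactness, apply the implicit function theorem twice --- first to solve the Snell equation for $\xt$, then to invert $I_0=-\partial_{\x0}S$ for $\x1$ --- with the same two-derivative loss giving $\mathcal F(\cdot,\cdot;\ep)\in C^{k-2}$, and conclude conservativity from the generating function and twist from $\partial_{I_0}\x1=-1/\partial_{\x0\x1}S\neq0$. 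The only cosmetic difference is that the paper organizes the uniformity via a finite subcover of the compact set $K=\{(\x0,\x0+\bar\theta(I_0))\}$ of endpoint pairs (and then of $[0,2\pi]\times[a,b]$), whereas you phrase it as uniform continuity in $\ep$ on the compact cylinder; these are the same argument.
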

\begin{proof}
	Let us consider $[a,b]\subset\mathcal I\backslash\mathcal{\bar I}$, and, with reference to  (\ref{mappa xi i}), define
	\begin{equation*}
		K=\left\{(\x0, \x0+\bar\theta(I_0))\text{ }|\text{ }\x0\in\Rpi, I_0\in[a,b]\right\}: 
	\end{equation*}
in view of Propositions \ref{buona definizione gen unp} and \ref{prop mappa unp}, the generating function $S(\x0, \x1; 0)$ is well defined and infinitely many differentiable in $K$, and the same holds for $\mathcal F(\x0, I_0; 0)$ in $\Rpi\times[a,b]$. Moreover, $K$ is a compact subset of the torus $\Rpi\times\Rpi$. In particular, one has that the quantity
\begin{equation}
	\partial^2_bS_E(\x0, \xt(\x0, \x1; 0); 0)+\partial^2_aS_I(\xt(\x0, \x1; 0), \x1; 0), 
\end{equation}
with $\xt(\x0, \x1; 0)$ such that $\partial_bS_E(\x0, \xt(\x0, \x1; 0); 0)+\partial_aS_I(\xt(\x0, \x1; 0), \x1; 0)=0$, 
is different from $0$ and has always the same sign for $(\x0, \x1)\in K$. \\
Let us now fix $(\bar{\xi}_0, \bar{\xi}_1)\in K$: by the implicit function theorem, there are two neighborhoods $A_{\bar{\xi}_0}, A_{\bar{\xi}_1}$ respectively of $\bar{\xi}_0$ and $\bar{\xi}_1$, a quantity $\bar\ep(\bar{\xi}_0, \bar{\xi}_1)>0$ and a unique function  $\xt(\x0, \x1; \ep)$, defined in $A_{\bar{\xi}_0}\times I
A_{\bar{\xi}_1}\times[-\bar\ep(\bar{\xi}_0, \bar{\xi}_1), \bar\ep(\bar{\xi}_0, \bar{\xi}_1)]$, such that the refraction law 
\begin{equation*}
	\partial_bS_E(\x0, \xt(\x0, \x1; \ep);\ep)+\partial_aS_I(\xt(\x0, \x1; \ep), \x1; \ep)=0
\end{equation*}
	holds also in the perturbed case. Moreover, the function $\xt$ is of class $C^{k-1}$ in all its variables. As a consequence, the generating function $S(\x0, \x1; \ep)$ is well defined in $A_{\bar{\xi}_0}\times A_{\bar{\xi}_1}\times[-\bar\ep(\bar{\xi}_0, \bar{\xi}_1), \bar\ep(\bar{\xi}_0, \bar{\xi}_1)]$. Varying $(\bar{\xi}_0, \bar{\xi}_1)\in K $, the family 
	\begin{equation*}
		\left\{A_{\bar{\xi}_0}\times A_{\bar{\xi}_1}\text{ }|\text{ }(\bar{\xi}_0, \bar{\xi}_1)\in K\right\}
	\end{equation*}
is a covering of $K$ such that, if $(A_{\bar{\xi}_0}\times A_{\bar{\xi}_1})\cap(A_{\bar{\xi}_0'}\times A_{\bar{\xi}_1'})\neq\emptyset$, then $\xt(\x0, \x1; \ep)$ coincide in the intersection. Sice $K$ is compact, there exists a finite sequence $(\bar{\xi}_0^{(i)}, \bar{\xi}_1^{(i)})_{i=1}^N$ such that 
\begin{equation*}
	K\subset\bigcup_{i=1}^N A_{\bar{\xi}_0^{(i)}}\times A_{\bar{\xi}_1^{(i)}}.  
\end{equation*} 
Setting $\bar{\ep}'=\min_{i=1, \dots, N}\bar{\ep}(\bar{\xi}_0^{(i)}, \bar{\xi}_1^{(i)})$, one has that for every $(\x0,\x1)\in K $ and every $\ep\in \R$ such that $|\ep|<\bar{\ep}'$, the perturbed generating function $S(\x0, \x1; \ep)$ is well defined and of class $C^{k-1}$. In such set one can define the canonical actions 
\begin{equation*}
	I_0(\x0, \x1; \ep)=-\partial_{\x0}S(\x0, \x1; \ep), \quad I_1(\x0, \x1; \ep)=\partial_{\x1}S(\x0, \x1; \ep), 
\end{equation*} 
and, by the definition of $K$, one has that for every $(\x0, \x1)\in K$, $I_0(\x0, \x1; 0)\in [a, b]$. Fixing $\bar{\xi}_0\in\Rpi$ and $\bar{I}_0\in [a,b]$, set $\bar{\xi}_1=\x1(\bar{\xi}_0, \bar{I}_0; 0)$: from the proof of Proposition \ref{prop mappa unp}, one has that $\partial_{\x1}\left(\bar{I}_0+\partial_{\x0}S(\bar{\xi}_0, \bar{\xi}_1; 0)\right)\neq 0$, and then, varying $(\bar{\xi}_0, \bar{I}_0)$ in the compact rectangle $[0,2\pi]\times[a,b]$, one can apply the same reasoning used before to find $0<\bar\ep<\bar\ep'$ such that for every $\x0\in[0,2\pi]$, $I_0\in[a,b]$ and $|\ep|<\bar\ep$ the function $\x1(\x0, I_0; \ep)$  is well defined and of class $C^{k-2}$. Extending $\x1(\x0, I_0; \ep)$ by periodicity for $\x0\in \Rpi$, one has then that, for every $\ep>0$, $|\ep|<\bar\ep$, the perturbed first return map 
\begin{equation*}
	\mathcal F(\x0, I_0; \ep)=\left(\x1(\x0, I_0; \ep), I_1\left(\x0, I_0; \ep\right)\right), 
\end{equation*}
where $I_1(\x0, I_0; \ep)=I_1(\x0, \x1(\x0, I_0; \ep); \ep)$, satisfies the claim in terms of good definition and regularity. The conservativity is a straightforward consequence of the existence of the perturbed generating function, while the  twist property depends on the existence of $\x1(\x0, \x1; \ep)$, since 
\begin{equation*}
	\frac{\partial \x1}{\partial I_0}=\left(\frac{\partial I_0}{\partial \x1}\right)^{-1}=-\frac{1}{\partial_{\x0\x1}S(\x0,\x1; \ep)}. 
\end{equation*}
\end{proof}
\begin{rem}
	Proposition \ref{prop mappa perturbata} remains valid if we ask weaker regularity hypotheses on $f(\xi, \ep)$. In particular, if $f$ is of class $C^k$ in $\xi$ and is continous, along with all its $k$ $\xi-$derivatives, in $\ep$, one can find $\bar\ep>0$ such that, for $|\ep|<\bar\ep$, the map $\mathcal F(\x0, I_0; \ep)$ is of class $C^{k-2}$ in $\Rpi\times[a,b]$ and continous in $\ep$, and the same holds for all its $k-2$ derivatives. 
\end{rem}

The map $\mathcal F(\x0 ,I_0;\ep )$, whose existence under suitable conditions and for subsets of $\Rpi\times\mathcal I$ is ensured by Proposition \ref{prop mappa perturbata}, can be expressed in the form 
\begin{equation}\label{pert func}
	\mathcal F(\x0, I_0; \ep)=
	\begin{cases}
		\x1=\x0+\bar\theta(I_0)+F(\x0, I_0; \ep)\\
		I_1=I_0+G(\x0, I_0; \ep)
	\end{cases}
\end{equation}
where $F$ and $G$ are of class $C^{k-2}$ in all the variables and 
\begin{equation*}
	\|F\|_{C^{k+2}}\xrightarrow{\ep\to0}0, \quad \|G\|_{C^{k-2}}\xrightarrow{\ep\to0}0. 
\end{equation*}

We can now prove the existence of particular orbits with prescribed rotation number for $\F_\ep$. We will make use of \textit{KAM Theorem} in the finitely differentiable version of Moser (cfr \cite{moser1962invariant}); before stating the Theorem, let us now give some preliminary definitions.

\begin{defns}\label{defKAM}
Let $s	\geq 1$ and  $f(\xi, I)$ of class $C^s$ in $\Rpi\times[a,b]$. The $s$-th derivative norm of $f$ is given by 
\begin{equation*}
	|f|_s=\sup\Bigg|\left(\frac{\partial}{\partial I}\right)^{m_1}\left(\frac{\partial}{\partial \xi}\right)^{m_2}f(\xi, I)\Bigg|, \quad m_1+m_2\leq s. 
\end{equation*}
Let us now consider $\mathcal F(\x0, I_0)=\left(\x1(\x0, I_0), I_1(\x0, I_0)\right)$ a given map on the annulus $\Rpi\times[a,b]$. We say that $\mathcal F$ has the \textit{intersection property}  if for any closed curve $\alpha$ near the circle, that is, of the form
\begin{equation*}
	\alpha(\x1)=\left(\x1, f(\x1)\right)
\end{equation*} 
	where $f$ is $2\pi-$periodic and with $f'$ small, one has 
	\begin{equation*}
		supp(\alpha)\cap supp\left(\mathcal F(\alpha)\right)\neq\emptyset. 
	\end{equation*}
If $\mathcal F$ is area-preserving, the intersection property is straightforwardly verified (see \cite{birkhoff1927dynamical}). \\
Finally, given $\sigma>0$, define
\begin{equation*}
	\mathcal D(\sigma)=\left\{\omega\in\R\text{ }|\text{ }\forall n, m\in\mathbb{Z}, \text{ }n>0, \text{ }|n \omega-m2\pi|\geq\sigma n^{-3/2}\right\}
\end{equation*}
the set of Diophantine numbers with respect to the constant $\sigma/2\pi$ and the exponent $5/2$. According to \cite{gole2001symplectic}, given 
\begin{equation*}
	\mathcal D=\bigcup_{\sigma>0}\mathcal D(\sigma), 
\end{equation*}
one has that $\mathcal D$ is dense in $[0,1]$, and by extension in every closed interval of $\R$. As a consequence, for every $[c,d]\subset\R$ and every $\rho\in(c,d)\cap\mathcal D$ there exists $\sigma_\rho>0$ such that 
\begin{equation*}
	\forall\sigma<\sigma_\rho\quad \rho\in\left(c+\sigma, d-\sigma\right)\cap D(\sigma). 
\end{equation*}
Let us point out that, although we consider $\mathcal D$ in accordance to the original statement by Moser in \cite{moser1962invariant}, in all the following results such set can be replaced by the set of all Diophantine numbers of exponent $\tau>2$, that is, 
\begin{equation*}
	\tilde{\mathcal{D}}=\bigcup_{\substack{\sigma>0\\ \tau>2}}\left\{\omega\in\R\text{ }|\text{ }\forall n, m\in\mathbb{Z}, \text{ }n>0, \text{ }\Big|\frac{\omega}{2\pi} -\frac{m}{n}\bigg|\geq\frac{\sigma}{ n^{\tau}}\right\}
\end{equation*}   
\end{defns}
\begin{thm}(KAM Theorem, \cite{moser1962invariant})\label{teorema KAM}
	Let $a,b\in\R$ such that $0<a<b$ and $b-a\geq 1$, and let 
	\begin{equation*}
		\mathcal F_0 (\x0, I_0)=\begin{cases}
			\x1=\x0+\bar\theta(I_0)\\
			I_1=I_0
		\end{cases}
	\end{equation*}
be a map on the annulus $\Rpi\times[a,b]$; suppose that there is $c_0\geq1$ such that 
\begin{equation*}
	c_0^{-1}\leq \frac{\partial \bar\theta}{\partial I_0}(I_0)\leq c_0. 
\end{equation*}
Moreover, let 
\begin{equation*}
	\mathcal{F}(\x0, I_0)=\begin{cases}
		\x1=\x0+\bar\theta(I_0)+F(\x0, I_0)\\
		I_1=I_0+G(\x0, I_0)
	\end{cases}
\end{equation*}
a perturbation of $\mathcal F_0$ that satisfies the intersection property. \\
Fixed $\sigma>0$ and  $s\geq1$, there are $\delta_0=\delta_0(c_0, \sigma, s)>0$ and an integer $l=l(s)>0$ such  that, if
\begin{enumerate}
	\item $|F|_0+|G|_0<\delta_0$, 
	\item $F$ and $G$ are of class $C^l(\Rpi\times[a,b])$ and $|\bar\theta|_l+|F|_l+|G|_l<c_0$, 
\end{enumerate}
then $\mathcal F$ admits a closed invariant curve 
	\begin{equation}\label{KAM 1}
		\begin{cases}
			\xi=u+p(u)\\
			I=\bar{I}+q(u), 
		\end{cases}
	\end{equation}
with $\bar I\in[a,b]$, which induces a mapping 
\begin{equation}\label{KAM 3}
	u_1=u_0+\bar\theta(\bar I)
\end{equation} 
and such that $p$ and $q$ are $2\pi-$periodic functions in the parameter $u$ with $s$ continous derivatives and 
\begin{equation}\label{localizzazione}
	|p|_s+|q|_s<\sigma. 
\end{equation}
Moreover, for every $\omega\in\left(\bar\theta(a)+\sigma, 	\bar\theta(b)-\sigma \right)\cap \mathcal D(\sigma)$ there exists an invariant curve of the form (\ref{KAM 1}) with rotation number $\bar\theta(\bar I)=\omega$. 
\end{thm}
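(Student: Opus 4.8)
The plan is to recognize this as Moser's invariant-curve theorem for twist maps of the annulus possessing the intersection property, and to reproduce the structure of its proof through a Newton-type KAM iteration. The first step is to rephrase the existence of the invariant curve~\eqref{KAM 1} as a conjugacy problem: I would seek $2\pi$-periodic functions $p,q$ and a value $\bar I\in[a,b]$ such that the curve $u\mapsto(u+p(u),\bar I+q(u))$ is carried by $\mathcal F$ onto itself while the induced dynamics is the rigid rotation~\eqref{KAM 3}, $u_1=u_0+\bar\theta(\bar I)$. Inserting this parametrization into $\mathcal F$ and imposing invariance produces a pair of functional equations for $(p,q,\bar I)$; using the twist bound $c_0^{-1}\le\bar\theta'(I_0)\le c_0$ one inverts $\bar\theta$ and takes the frequency $\omega=\bar\theta(\bar I)$ as the free parameter, so that in the unperturbed case $F=G=0$ the equations are solved exactly by $p=q=0$.

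The heart of the argument is a quadratically convergent iteration. At each step I would linearize the invariance equations about the current approximate curve; exploiting the conservative structure, the linear system reduces to a \emph{homological equation} $g(u+\omega)-g(u)=h(u)$ for the correction, where $h$ collects the present error. Passing to Fourier series, the $n$-th coefficient of $g$ is obtained by dividing that of $h$ by $e^{in\omega}-1$, and here the Diophantine condition $\omega\in\mathcal D(\sigma)$, i.e.\ $|n\omega-2\pi m|\ge\sigma n^{-3/2}$, controls the small divisors at the price of a fixed loss of derivatives. Solvability requires the mean of $h$ to vanish, and this is exactly where the \textbf{intersection property} intervenes: it prevents the curve from drifting in the $I$-direction and so allows the obstruction to be cancelled (by adjusting $\bar I$ via the twist), precisely the mechanism that area preservation supplies through~\cite{birkhoff1927dynamical}.

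Since $\mathcal F$ is only finitely differentiable---hypotheses (1)--(2) merely bound $|F|_l+|G|_l$ for a suitable $l=l(s)$---the naive scheme would lose infinitely many derivatives in the limit. The decisive device, following Moser, is \emph{analytic smoothing}: at the $j$-th step one replaces the data by an analytic approximation on a complex strip whose width shrinks with $j$, performs the Newton correction there, and balances the smoothing error against the superlinear gain of the scheme. Both the Diophantine loss $n^{3/2}$ and the smoothing error are controlled by powers of the step size, whereas Newton's quadratic convergence is superexponential, so for $\delta_0=\delta_0(c_0,\sigma,s)$ small and $l=l(s)$ large the iteration converges; a concluding approximation argument shows the limiting $p,q$ are of class $C^s$ with $|p|_s+|q|_s<\sigma$, yielding~\eqref{localizzazione}. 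The assertion for the whole band of frequencies $\omega\in(\bar\theta(a)+\sigma,\bar\theta(b)-\sigma)\cap\mathcal D(\sigma)$ then follows by repeating the construction with $\bar I=\bar\theta^{-1}(\omega)$. The main obstacle is exactly this tension between the small-divisor loss of regularity and the mere finite smoothness of the map: reconciling the two is what forces both the analytic smoothing and the quadratic scheme, and is why the quantitative thresholds $\delta_0$ and $l$ must be tracked explicitly rather than obtained by soft compactness arguments.
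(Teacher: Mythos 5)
This statement is not proved in the paper at all: it is Moser's invariant curve theorem, imported verbatim (in its finitely differentiable version) from \cite{moser1962invariant}, and the paper's only further comment on it is the subsequent remark recalling that the smoothness threshold $l$ was later lowered to $l=5$ and then to $l>3$ by R\"ussmann \cite{russmann1970kleine} and Herman \cite{herman1983courbes}. So there is no internal proof to compare yours against; what you have written is a blind reconstruction of Moser's own argument, and as such it is faithful: the reformulation of invariance as a conjugacy problem with the frequency $\omega=\bar\theta(\bar I)$ selected through the twist bound, the Newton scheme whose linearization reduces to the homological equation $g(u+\omega)-g(u)=h(u)$, the use of the Diophantine condition $|n\omega-2\pi m|\ge\sigma n^{-3/2}$ to control the small divisors $e^{in\omega}-1$ at the cost of a fixed loss of derivatives, the role of the intersection property in killing the mean-value obstruction (the point where area preservation enters, via \cite{birkhoff1927dynamical}), and the analytic-smoothing device that reconciles the quadratic scheme with merely finite differentiability are exactly the ingredients and exactly their functions in Moser's proof. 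The only caveat is that your text is an outline rather than a proof: the entire quantitative content of the theorem --- the inductive choice of strip widths and approximation scales, the bookkeeping that produces explicit $\delta_0(c_0,\sigma,s)$ and $l(s)$, and the convergence and regularity estimates yielding $|p|_s+|q|_s<\sigma$ --- is asserted to ``balance'' without being carried out, and that balancing is where essentially all the difficulty of \cite{moser1962invariant} resides. For the purposes of this paper, which uses the theorem as a black box (rescaling its setting in Theorem \ref{teorema curve invarianti} so that the hypotheses $b-a\ge1$ and the twist bounds hold), citing Moser as the authors do is the appropriate route, and your sketch should be read as an accurate summary of the cited proof rather than a substitute for it.
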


\begin{rem}
	The rotation number of the invariant curve (\ref{KAM 1}) can be derived from the mapping (\ref{KAM 3}) as follows: let us take $u_0\in\R$ and consider the sequence $\{u_n\}_{n\in\mathbb{N}}$ produced by (\ref{KAM 3}), which is trivially given by $u_n=u_0+n \bar\theta(\bar I)$. The orbit of $\mathcal F$ generated by (\ref{KAM 3}) and lying in the invariant curve (\ref{KAM 1}) is then $\{\left(\xi_n, I_n\right)\}_{n\in\mathbb{N}}$, with
	\begin{equation}
		\begin{cases}
			\xi_n=u_0+n \bar\theta(\bar I)+p\left(u_0+n\bar\theta(\bar I)\right)\\
			I_n= \bar I+q\left(u_0+n\bar\theta(\bar I)\right). 
		\end{cases}
	\end{equation}
From the definition (\ref{def rotation number}) and given that $p$ is bounded, one can easily compute the rotation number associated to the initial condition $(\x0, I_0)=(\xi(u_0), I(u_0))$ through the map $\mathcal F$ as 
\begin{equation*}
	\rho(\x0, I_0)=\lim_{n\to\infty}\frac{\xi_n-\x0}{n}=\lim_{n\to\infty}\bar\theta(\bar I)+\frac{p(u_0+n \bar\theta(\bar I))-p(u_0)}{n}=\bar\theta(\bar I). 
\end{equation*} 

\end{rem}
\begin{rem}
	Although in the original paper \cite{moser1962invariant} for $s=1$ the minimal number of continous derivatives required for the application of Theorem (\ref{teorema KAM}) is $l=333$, R\"ussman and Hermann reduced this number to $l=5$ and then to $l>3$ (see \cite{russmann1970kleine} and \cite{herman1983courbes}). For this reason, and in view of Proposition \ref{prop mappa perturbata}, we require the normal perturbation $f(\xi, \ep)$ to be of class $C^{k}\left(\Rpi\times[-\bar\ep, \bar\ep]\right)$, with $k>5$: as a consequence, $\mathcal F(\x0, I_0; \ep)\in C^{k'}\left(\Rpi\times[a,b]\times[-\bar\ep, \bar\ep]\right)$, with $k'>3$, and the invariant curves, if existing, are of class $C^1(\Rpi)$. \\
\end{rem}

\begin{thm}\label{teorema curve invarianti}
	Let us suppose that $\bar\theta'(I_0)>0$ in $[a,b]$, and take $\rho_0, \rho_1\in\left(\bar\theta(a),\bar\theta(b)\right)\cap\mathcal D$. Then there exists $\bar\ep_{\rho_0\rho_1}$ such that for every $\ep\in\R$, $|\ep|<\bar\ep_{\rho_0\rho_1}$ the map $\mathcal F(\x0, I_0; \ep)$ defined in (\ref{pert func}) admits two closed invariant curves of class $C^1$ with rotation numbers $\rho_0$ and $\rho_1$. 
\end{thm}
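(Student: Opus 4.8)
The statement is essentially an application of Moser's finitely differentiable KAM theorem (Theorem \ref{teorema KAM}) to the perturbed return map in the normal form (\ref{pert func}). The plan is to verify its three structural hypotheses on the annulus $\Rpi\times[a,b]$ --- the two-sided twist bound, the smallness of the perturbation, and the Diophantine condition --- and then to invoke it once for each prescribed frequency $\rho_0$ and $\rho_1$. Proposition \ref{prop mappa perturbata} already provides the starting point: for $f\in C^k$ with $k>5$ and $[a,b]\subset\mathcal I\setminus\bar{\mathcal{I}}$, there is $\bar\ep'>0$ such that for $|\ep|<\bar\ep'$ the map $\mathcal F(\x0,I_0;\ep)$ is well defined, conservative, and of the form (\ref{pert func}) with $\|F\|_{C^{k-2}},\|G\|_{C^{k-2}}\to0$ as $\ep\to0$. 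Since $k-2>3$, this covers the threshold $l>3$ furnished by the R\"ussman--Herman sharpening of Moser's theorem (cfr. \cite{russmann1970kleine,herman1983courbes}), with the $C^l$ norms of $F,G$ staying uniformly bounded as $\ep\to0$.

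For the twist I would use that $\bar\theta=f+g$ is real analytic on $(-I_c,I_c)$, hence $C^\infty$ with finite $C^l$ norm on the compact $[a,b]$, while the hypothesis $\bar\theta'>0$ together with compactness yields constants $0<m\le M$ with $m\le\bar\theta'(I_0)\le M$ there --- precisely the bound $c_0^{-1}\le\bar\theta'\le c_0$ for a suitable $c_0\ge1$. One normalization must still be arranged, since Moser's statement assumes an annulus of width $b-a\ge1$, which need not hold for our parameter-dependent interval. This I would remedy by the affine rescaling $J=\lambda I$ with $\lambda=2/(b-a)$: a direct check shows it preserves the unit Jacobian (hence the area-preserving and intersection properties), sends $\bar\theta$ to $\tilde\theta(J)=\bar\theta(J/\lambda)$ with $\tilde\theta'(J)=\lambda^{-1}\bar\theta'(J/\lambda)$ still trapped between two positive constants, and only multiplies $G$ by the fixed factor $\lambda$, so that $\|\tilde F\|_{C^0}+\|\tilde G\|_{C^0}\to0$ persists. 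After rescaling we may thus assume the width hypothesis while keeping uniform twist and a vanishing perturbation; note that a rescaling of the action leaves the rotation numbers and the range $(\bar\theta(a),\bar\theta(b))$ unchanged.

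It then remains to fix the Diophantine constant. Since $\rho_0,\rho_1\in\mathcal D=\bigcup_{\sigma>0}\mathcal D(\sigma)$, there are $\sigma_0,\sigma_1>0$ with $\rho_i\in\mathcal D(\sigma_i)$; as $\mathcal D(\sigma)\subseteq\mathcal D(\sigma')$ whenever $\sigma'\le\sigma$, the choice $\sigma=\min\{\sigma_0,\sigma_1\}$ places both frequencies in $\mathcal D(\sigma)$, and a further shrinking of $\sigma$ (which only enlarges $\mathcal D(\sigma)$) secures $\rho_0,\rho_1\in(\bar\theta(a)+\sigma,\bar\theta(b)-\sigma)$. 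For this $\sigma$ and $s=1$ the theorem supplies $\delta_0>0$ and $l>3$, and by the smallness above I would fix $\bar\ep_{\rho_0\rho_1}\le\bar\ep'$ so that $|F|_0+|G|_0<\delta_0$ and $|\bar\theta|_l+|F|_l+|G|_l<c_0$ for $|\ep|<\bar\ep_{\rho_0\rho_1}$. Applying Theorem \ref{teorema KAM} twice, at $\omega=\rho_0$ and at $\omega=\rho_1$, yields for each a closed invariant curve of the form (\ref{KAM 1}) with $\bar\theta(\bar I_i)=\rho_i$; with $s=1$ these are of class $C^1$, and undoing the rescaling gives the two curves claimed.

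I do not expect a deep analytic obstacle here, as the hard KAM estimates are packaged inside Theorem \ref{teorema KAM}; the real care lies in the bookkeeping of the middle steps --- matching Moser's normalization (the width condition and the two-sided twist) to the concrete interval $[a,b]$ through the rescaling, and checking that the $C^0$ smallness and $C^l$ boundedness survive both that rescaling and the simultaneous choice of $\sigma$ for the two frequencies. The twist hypothesis $\bar\theta'>0$ is indispensable, and its failure at the critical points of $\bar{\mathcal{I}}$ is exactly what will later necessitate the Aubry--Mather treatment for the non-twist regime.
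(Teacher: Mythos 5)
Your proof is correct and follows the paper's overall strategy---verify the hypotheses of Moser's Theorem \ref{teorema KAM} on $\Rpi\times[a,b]$ starting from Proposition \ref{prop mappa perturbata}, rescale to meet the width normalization $b-a\geq1$, then apply the theorem once for each frequency---but your rescaling step is genuinely different, and the difference is the crux. The paper uses the \emph{canonical} change $\xi'=\xi/C$, $I'=CI$ with $C>(b-a)^{-1}$; because the angle is rescaled, the rotation numbers become $\rho_i'=\rho_i/C$, so the paper must additionally choose $C$ so that $\rho_0/C$ and $\rho_1/C$ are still Diophantine, a point it justifies by appealing to the ``density'' of $\mathcal D$ (really a measure-theoretic fact: almost every $C$ works since the complement of $\mathcal D$ is null), and this is the most delicate step of its proof. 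Your rescaling $J=\lambda I$, $\lambda=2/(b-a)$, touches only the action: rotation numbers and the frequency window $\left(\bar\theta(a),\bar\theta(b)\right)$ are unchanged, so the hypotheses $\rho_0,\rho_1\in\mathcal D$ are used directly, with no number-theoretic re-verification. Your supporting checks are sound: conjugation by the linear map $(\xi,I)\mapsto(\xi,\lambda I)$ preserves the unit Jacobian (so area preservation and hence the intersection property survive, and graphs over the $\xi$-axis go to graphs), the twist $\tilde\theta'(J)=\lambda^{-1}\bar\theta'(J/\lambda)$ stays pinched between positive constants, and $\tilde F=F$, $\tilde G=\lambda G$ retain the $C^0$-smallness and $C^l$-boundedness as $\ep\to0$. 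So your route buys a cleaner argument; the paper's buys adherence to canonical transformations at the price of the extra choice of $C$. One bookkeeping caution in your write-up: since $\delta_0=\delta_0(c_0,\sigma,s)$, you must fix $c_0$ so that it dominates \emph{both} the twist bounds and $|\tilde\theta|_l$ before extracting $\delta_0$ (as the paper does with ``eventually taking a higher $c_0$''); choosing $c_0$ only from the twist bound and then demanding $|\tilde\theta|_l+|\tilde F|_l+|\tilde G|_l<c_0$ afterwards could be vacuous. Also, both you and the paper tacitly ignore Moser's requirement $0<a$; a translation in the action fixes this at no cost, but it deserves a word.
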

\begin{proof}
	To verify the hypotheses of Theorem \ref{teorema KAM}, let us choose $C>\left(b-a\right)^{-1}$ such that $\rho_0'=\rho_0/C, \rho_1'=\rho_1/C\in \mathcal{D}$ (such $C$ exists for the density of $\mathcal D$ in $\R$), and consider the canonical change of coordinates 
	\begin{equation*}
		\begin{cases}
		\xi'=\frac{\xi}{C}, \quad I'=C\text{ } I. 
		\end{cases}
	\end{equation*}
	Expressing $\mathcal F_0$ and $\mathcal F_\ep$ in the new variables, one obtains the rescaled problem 
\begin{equation*}
	\tilde{\mathcal{F}}_0(\x0', I'_0)=
	\begin{cases}
		\x1'=\x0'+\Theta(I_0')\\
		I_1'=I_0'
	\end{cases}
\quad
\tilde{\mathcal{F}}(\x0', I_0'; \ep)=
\begin{cases}
	\x1'=\x0'+\Theta(I_0')+\tilde{F}(\x0', I_0'; \ep)\\
	I_1'=I_0'+\tilde{G}(\x0', I_0'; \ep), 
\end{cases}
\end{equation*}
where $I'=C I\in[a', b']=C[a,b]$, $b'-a'>1$, and 
\begin{equation*}
	\begin{split}
	\Theta(I'_0)=\frac{\bar\theta\left(\frac{I'_0}{C}\right)}{C}&=\frac{\bar\theta(I_0)}{C}, \quad\tilde F(\x0', I_0'; \ep)=\frac{1}{C}F\left(C\x0', \frac{I_0'}{C}; \ep\right)=\frac{1}{C}F(\x0, I_0; \ep),\\ &\tilde G(\x0', I_0'; \ep)=C\text{ }G\left(C\x0', \frac{I_0'}{C}; \ep\right)=C\text{ }G(\x0, I_0; \ep)
	\end{split}
\end{equation*}
are defined for $\x0'\in\Rpi$, $I'_0\in[a', b']$ and $|\ep|<\bar\ep$ (see Proposition \ref{prop mappa perturbata}). Note that the monotonicity and convergence properties 
	\begin{equation}\label{KAM 2}
		\partial_{I_0'}\Theta\left(I_0'\right)>0, \text{ }\|\tilde F\|_{C^{k-2}}\xrightarrow{\ep\to0}0, \text{ }\|\tilde G\|_{C^{k-2}}\xrightarrow{\ep\to0}0
	\end{equation}
hold also for the rescaled map, as well as the conservativity. Moreover, as $\Theta(a')=\bar\theta(a)/C$ and $\Theta(b')=\bar\theta(b)/C$, one finds $\sigma>0$ such that 
$\rho_0', \rho_1'\in(\Theta(a')+\sigma, \Theta(b')-\sigma)\cap\mathcal D(\sigma)$. \\
Since $\tilde{\mathcal F}$ is conservative, it satisfies the intersection property, and, given that $\Theta\in C^1([a', b'])$, there is $c_0>1$ such that 
\begin{equation*}
	\forall I_0'\in[a', b']\quad c_0^{-1}\leq \Theta'(I'_0)\leq c_0. 
\end{equation*}
Fixed $s=1$, let us consider $l=l(s)$ as in Theorem \ref{teorema KAM}, and, eventually taking a higher $c_0$,  suppose $c_0>|\Theta|_l$. By Theorem \ref{teorema KAM}, there exists $\delta_0=\delta_0(c_0, \sigma, s)>0$ such that, if $(1)$ and $(2)$ hold for $\tilde{\mathcal F}$, then the existence of the two invariant orbits for the rescaled problem is ensured. From (\ref{KAM 2}), one can choose  $0<\bar\ep_{\rho_0\rho_1}<\bar\ep$, such that for every $\ep\in[-\bar\ep_{\rho_0\rho_1}, \bar\ep_{\rho_0\rho_1}]$
\begin{equation*}
	|\tilde F|_{0}+|\tilde G|_0<\delta_0\quad\text{and}\quad |\tilde F|_l+|\tilde G|_l<c_0-|\Theta|_l, 
\end{equation*}
then the hypotheses of Theorem \ref{teorema KAM} hold and the invariant curves obtained for $\tilde{\mathcal F}$ can be reparametrized to be invariant curves for $\mathcal F$. In particlar, such curves have rotation number $\rho_0$ and $\rho_1$: for example, let us consider the invariant curve for the rescaled problem with rotation number $\rho_0'$, which, in view of Theorem \ref{teorema KAM}, can be expressed as 
\begin{equation*}
	\begin{cases}
		\x0'=u'+\tilde p(u')\\
		I_0'=\bar I'+\tilde q(u')
	\end{cases}
\text{ with mapping }u'_1=u'_0+\Theta(\bar I')=u_0'+\rho_0'. 
\end{equation*}
Returnig to the original coordinates and setting $u=C\text{ }u'$, one gets the rescaled invariant curve
\begin{equation*}
		\begin{cases}
		\x0=u+\ p(u)\\
		I_0=\bar I+ q(u)
	\end{cases}
	\text{ with mapping }u_1=u_0+\bar\theta(\bar I)=u_0+\rho_0,  
\end{equation*}
with $p(u)=C\tilde p\left(u/C\right)$ and $q(u)=C^{-1}\tilde q\left(u/C\right)$. 
\end{proof}
In the phase space $(\xi, I)$, the curves obtained in Theorem \ref{teorema curve invarianti} can be identified as the graphs of functions of the form $I_\rho(\xi; \ep)\in C^1(\Rpi)$: fixing $\ep\in(-\bar\ep_{\rho_0\rho_1}, \bar\ep_{\rho_0\rho_1})$, let us consider for example the closed invariant curve of $\mathcal F$ of rotation number $\rho_0$, which can be expressed, according to (\ref{KAM 1}) and (\ref{KAM 3}), as
\begin{equation}\label{KAM 4}
	\begin{cases}
		\xi_{\rho_0}(u; \ep)=u+p(u; \ep)\\
		I_{\rho_0}(u; \ep)=\bar I+q(u; \ep)
	\end{cases}
\quad
\text{with}\quad \bar\theta(\bar I)=\rho_0. 
\end{equation}
From the boundedness of $p$ asserted in (\ref{localizzazione}), if $\sigma$ is small enough (e.g. $\sigma<1$) the quantity
\begin{equation}
	\partial_u \xi(u; \ep)=1+\partial_up(u; \ep)
\end{equation}
is always positive: one can then invert the first equation in (\ref{KAM 4}) obtaining $u(\xi)$, which is differentiable. As a consequence, one can parametrize the curve (\ref{KAM 4}) as the graph of the $C^1$ function 
\begin{equation}
	I_{\rho_0}: \Rpi\to \R, \quad I_{\rho_0}(\xi; \ep)=I_{\rho_0}(u(\xi); \ep). 
\end{equation}
\begin{rem}
	Taking $\sigma$ sufficiently small and a suitable $\bar\ep_{\rho_0\rho_1}$, one can find invariant curves of $\mathcal F$ which are arbitrarily close to the unperturbed orbits $\Rpi\times\{\bar{I}\}$ in the plane $(\xi, I)$. Then, as $\ep\to 0$, the functions $I_\rho(\xi; \ep)$ which define the invariant curves in the perturbed phase space tend in norm $C^1(\Rpi)$ to the constant functions $\bar I_\rho$ with $\bar\theta(\bar I_\rho)=\rho$. \\
	\textcolor{black}{Moreover, Theorem \ref{teorema KAM} can be extended to negative twist maps, leading to the existence result for invariant curves as stated in Theorem \ref{teorema curve invarianti intro}}
\end{rem}
\begin{figure}
	\centering
	\includegraphics[width=0.7\linewidth]{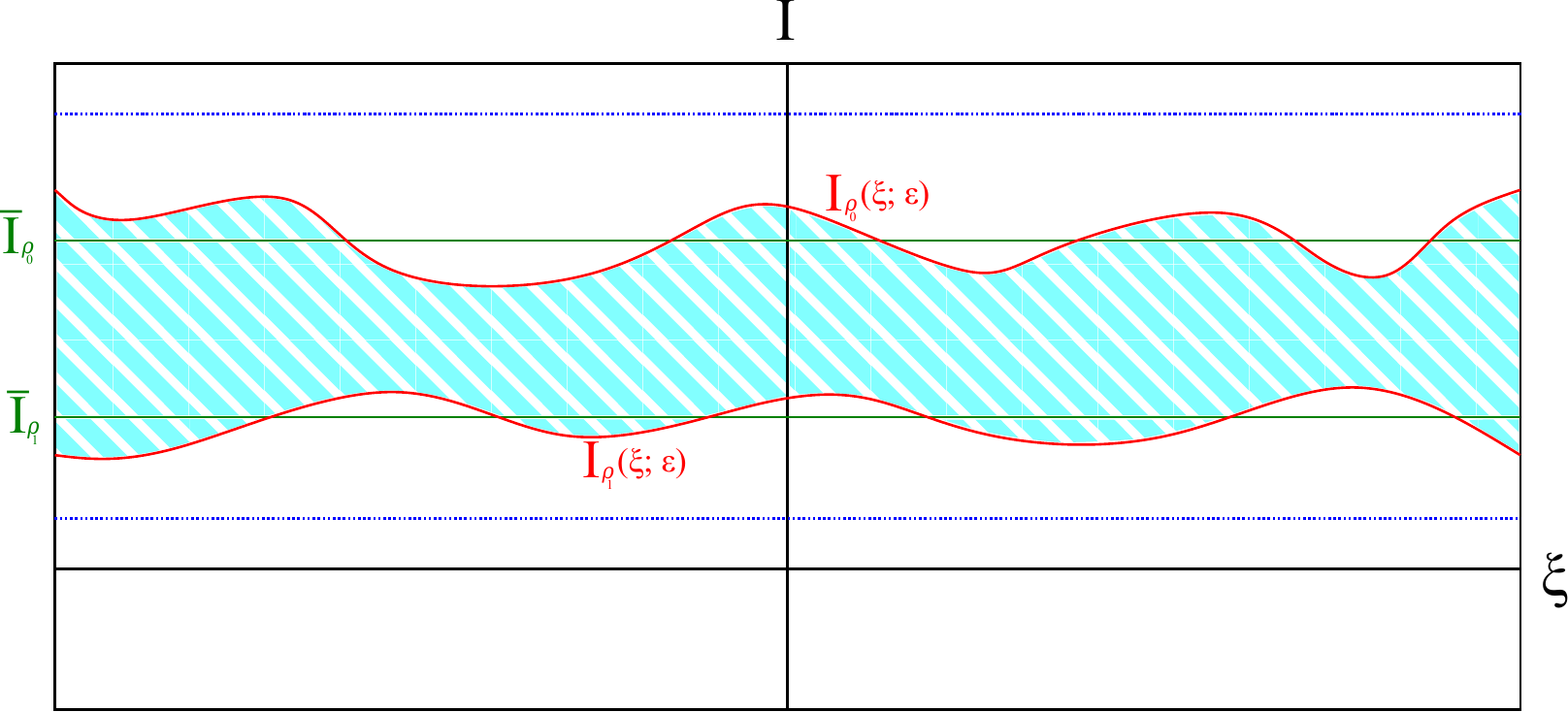}
	\caption{Sketch of the perturbed dynamics in the region described by Proposition \ref{prop mappa perturbata} and Theorem \ref{teorema curve invarianti} in the phase plane $(\xi, I)$. Red: the invariant curves of Diophantine rotation numbers $\rho_0$ and $\rho_1$, which are deformations of the unperturbed invariant straight lines $I=\bar I_{\rho_0}$, $I=\bar I_{\rho_1}$ (green) such that $\bar\theta\left(\bar I_{\rho_0}\right)=\rho_0$ and $\bar\theta\left(\bar I_{\rho_1}\right)=\rho_1$. In the striped region the map $\mathcal F(\x0, I_0; \ep)$ is area-preserving and twist. The blue dashed lines denote two singular action values for the unperturbed dynamics (i.e. $I\in\bar{\mathcal{I}}$). }
	\label{fig:orbite-perturbate}
\end{figure}

As sketched in Figure \ref{fig:orbite-perturbate}, the presence of two invariant curves with irrational rotation number leads to a  \textit{confinement} in the dynamics of $\mathcal F(\x0, I_0; \ep)$, where, in view of Proposition (\ref{prop mappa perturbata}), the map is conservative and twist. More precisely, the set 
\begin{equation}\label{def regione invariante}
	\mathcal A=\left\{(\xi, I)\in\Rpi\times\R\text{ }|\text{ }I_{\rho_0}(\xi; \ep)\leq I\leq I_{\rho_1}(\xi; \ep)\right\}
\end{equation}
is invariant under $\mathcal F_\ep$, as well as its boundaries.\\
In the  unperturbed dynamics, the existence of periodic orbits of any rotation number in a suitable interval is the simple consequence of the continuity of the total shift $\bar\theta(I)$; when $\ep\neq0$, one can not take advantage of the explicit formulation of the perturbed map, then this strategy is no longer suitable. Nevertheless, the broad properties of the map, such as its conservativity and the existence of the invariant curves ensure Theorem \ref{teorema curve invarianti}, enable the use of more sophisticated topological results, where the existence of orbits with prescribed rotation number is ensured under more general assumptions: this is the case of the Poincar\'e-Birkhoff theorem, here presented in the version of \cite{gole2001symplectic}. \\

\begin{thm}[Poincar\'e-Birkhoff]\label{thm Poincaré-B}
	Let $\mathcal F$ an area preserving map on the annulus  $\Rpi\times[c,d]$ \textcolor{black}{which preserves the boundaries} and $\mathcal{F_{\R}}$ its lift on $\R\times[c,d]$. Suppose that $\mathcal F_\R$ satisfies the boundary twist condition, that is, the restrictions of $\mathcal F_\R$ to each boundary component $u_-=\R\times\{c\}$ and $u_+=\R\times\{d\}$ have rotation numbers $\rho_\pm$ with $\rho_-<\rho_+$ \textcolor{black}{(the case $\rho_+<\rho_-$ is analogous). } If $2\pi\frac{m}{n}\in[\rho_-, \rho_+]$ and $m, n$ are coprime, then $\mathcal F$ has at least two $(m, n)-$orbits.   
\end{thm}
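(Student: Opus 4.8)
The plan is to recast the search for $(m,n)$-orbits as a fixed-point problem for a single area-preserving map and then invoke the annulus fixed-point theorem of Poincar\'e and Birkhoff. First I would introduce the angular translation $\tau_k(\xi, I) = (\xi + k, I)$ and set
\[
\Phi := \tau_{-2\pi m}\circ \mathcal F_\R^{\,n} \colon \R\times[c,d]\to\R\times[c,d].
\]
A point $(\xi_0, I_0)$ is fixed by $\Phi$ if and only if $\mathcal F_\R^{\,n}(\xi_0, I_0) = (\xi_0 + 2\pi m, I_0)$, which is precisely the condition that the $\mathcal F$-orbit issued from $(\xi_0, I_0)$ closes up after $n$ steps with total angular increment $2\pi m$; by coprimality of $m$ and $n$ such an orbit has exact period $n$ and hence is a genuine $(m,n)$-orbit. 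Since $\mathcal F_\R$ is area preserving and $\tau_{-2\pi m}$ is a rigid translation, $\Phi$ is area preserving, and it maps each boundary circle into itself because $\mathcal F$ preserves the boundaries of the annulus.

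Next I would verify the boundary twist of $\Phi$. The rotation number of $\mathcal F_\R$ restricted to $u_-=\R\times\{c\}$ is $\rho_-$, so the rotation number of $\Phi$ on that component is $n\rho_- - 2\pi m$, and likewise it is $n\rho_+ - 2\pi m$ on $u_+=\R\times\{d\}$. The hypothesis $2\pi\frac{m}{n}\in[\rho_-,\rho_+]$ gives
\[
n\rho_- - 2\pi m \;\le\; 0 \;\le\; n\rho_+ - 2\pi m,
\]
so the two boundaries are rotated by $\Phi$ in opposite senses. When both inequalities are strict this is exactly the hypothesis of the Poincar\'e-Birkhoff fixed-point theorem for the area-preserving homeomorphism $\Phi$; the degenerate endpoint cases $2\pi m/n=\rho_\pm$ either produce a periodic orbit sitting on a boundary directly, or are recovered by composing with an arbitrarily small rigid rotation and letting it tend to zero.

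It then remains to show that such a $\Phi$ has at least two fixed points, and here lies the crux of the argument. I would follow Birkhoff's topological scheme, whose essential input is that area preservation forces the \emph{intersection property}: an essential simple closed curve cannot be pushed entirely off itself. Writing $\Phi=(\Phi_1,\Phi_2)$ and considering the angular displacement $\delta(\xi,I)=\Phi_1(\xi,I)-\xi$, which is $\le 0$ on $u_-$ and $\ge 0$ on $u_+$, a plain intermediate-value argument along radial segments only yields points where the \emph{radial} component returns; the genuinely delicate step --- and the main obstacle of the whole proof --- is to promote this to a common zero of both the radial and the angular displacement, ruling out the scenario in which $\Phi$ rotates a full subannulus around the cylinder without fixing any point. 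Making this rigorous requires a careful study of the region $\{\delta\le 0\}$ and of its image under $\Phi$, in the spirit of the Brown--Neumann clarification of Birkhoff's original proof, and it is precisely here that area preservation is indispensable.

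For the second fixed point I would invoke an index count. The closed annulus has Euler characteristic zero, hence Lefschetz number zero for a map isotopic to the identity, so the sum of the Poincar\'e indices of the (isolated) fixed points of $\Phi$ must vanish; since the fixed point produced above arises as an extremal zero of the displacement field and carries nonzero index, a single isolated fixed point is impossible, and at least a second one must exist. Projecting both back under $\R\times[c,d]\to\Rpi\times[c,d]$ yields the two announced $(m,n)$-orbits. I would finally remark that, in the monotone-twist regime actually in force in the applications of this paper (where $\bar\theta$ has no critical values on the relevant range, so a generating function is available on the region at hand), the same two orbits admit a more transparent variational description as critical points of the discrete action $W(\xi_0,\dots,\xi_{n-1})=\sum_{k} S(\xi_k,\xi_{k+1})$ subject to $\xi_n=\xi_0+2\pi m$: the minimum and a mountain-pass (Lusternik--Schnirelmann) critical point furnish the count of two directly, with the boundary twist guaranteeing the coercivity needed to keep minimizing configurations away from the boundary.
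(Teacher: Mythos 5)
First, a point of comparison: the paper does not prove this theorem at all. It is quoted as a classical result ``in the version of \cite{gole2001symplectic}'' and used as a black box (the paper's own contribution in that neighbourhood is only the subsequent remark extending its use to invariant strips bounded by KAM curves). So there is no proof in the paper to match your attempt against; it has to be judged against the classical literature. Your reduction step is correct and standard: setting $\Phi=\tau_{-2\pi m}\circ\mathcal{F}_{\R}^{\,n}$, fixed points of $\Phi$ are exactly the $(m,n)$-orbits (coprimality giving exact period $n$), $\Phi$ is area-preserving and boundary-preserving, $\rho(\mathcal{F}_{\R}^{\,n}|_{u_\pm})=n\rho_\pm$, and the deck translation shifts the rotation number by exactly $-2\pi m$. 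For the strict case $\rho_-<2\pi\frac{m}{n}<\rho_+$, passing from rotation numbers of opposite sign to pointwise displacements of opposite sign is legitimate, though it deserves the one-line argument you omit: a lift of an orientation-preserving circle homeomorphism with a point moving weakly forward has nonnegative rotation number, so strictly negative rotation number forces $\Phi_1(\xi,c)<\xi$ for \emph{every} $\xi$, and symmetrically on $u_+$.

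The gaps are in what remains, which is unfortunately the entire content of the theorem. (i) The existence of one fixed point for an area-preserving, boundary-twisting homeomorphism of the annulus is not proved but deferred ``in the spirit of Brown--Neumann''; as written, your text is a reduction of the periodic-orbit statement to Poincar\'e's last geometric theorem, i.e.\ it cites the crux exactly as the paper cites \cite{gole2001symplectic}. (ii) The index argument for the \emph{second} fixed point fails as stated: the Lefschetz number of an annulus map isotopic to the identity is zero, which is perfectly consistent with a single isolated fixed point of index zero, and your claim that the fixed point produced ``carries nonzero index'' is unsupported --- this is precisely the historically delicate half of the theorem (Birkhoff's own index count was the disputed part), and rigorous treatments obtain the second point by other means. (iii) The endpoint cases $2\pi\frac{m}{n}=\rho_\pm$ are handled too quickly: rotation numbers are monotone but not additive under composition with a rigid rotation, so a small rotation need not restore strict boundary twist; and even when a perturbation argument applies, the two fixed points of the perturbed maps may coalesce in the limit, so you cannot conclude \emph{two} distinct $(m,n)$-orbits this way. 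Finally, the closing variational remark (minimum plus mountain pass for the discrete action) requires a global twist hypothesis and a globally defined generating function, which the theorem as stated does not assume --- only boundary twist --- so it can serve the paper's applications but cannot substitute for the topological argument in the generality claimed.
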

\textcolor{black}{
\begin{rem}
	Theorem \ref{thm Poincaré-B} can be extended to conservative maps which preserve invariant strips in $\Rpi\times\R$ whose boundaries are fixed by $\mathcal F$ and are graphs of $C^1$ functions over the $\xi-$axis. Let us take the set $\mathcal A$ defined in \ref{def regione invariante} and consider $\Phi_0(\xi; \ep), \Phi_1(\xi; \ep), \bar I_0, \bar I_1$ such that 
	\begin{equation*}
		\partial_\xi\Phi_0(\xi;\ep)=I_{\rho_0}(\xi; \ep), \quad \partial_\xi\Phi_1(\xi; \ep)=I_{\rho_1}(\xi; \ep), \quad \bar\theta(\bar I_{\rho_0})=\rho_0, \quad \bar\theta(\bar I_{\rho_1})=\rho_1, 
	\end{equation*} 
choosing $\Phi_1$ and $\Phi_0$ such that $\Phi_1(\xi; 0)=\bar I_1\xi$ and $\Phi_0(\xi; 0)=\bar I_0\xi$. 
For $\ep$ sufficiently small, consider the quantity
\begin{equation*}
	A(\ep)=\int_0^{2\pi}I_{\rho_1}(\xi; \ep)-I_{\rho_0}(\xi; \ep)d\xi=\Phi_1(2\pi; \ep)-\Phi_0(2\pi; \ep)-\left(\Phi_1(0; \ep)-\Phi_0(0; \ep)\right)>0,
\end{equation*}
	and, noted that $A(0)=\left(\bar I_1-\bar I_0\right)2\pi$, define the change of coordinates
	\begin{equation*}
		\Psi(\xi, I; \ep)=
 		\begin{cases}
			\xi'=\frac{2\pi}{A(\ep)}\left(\Phi_1(\xi; \ep)-\Phi_0(\xi; \ep)\right)\\
			I'=\frac{A(\ep)}{2\pi}\left(\frac{I-I_{\rho_0}(\xi; \ep)}{I_{\rho_1}(\xi; \ep)-I_{\rho_0}(\xi; \ep)}+\frac{\bar I_0}{\bar I_1-\bar I_0}\right).
		\end{cases}
	\end{equation*}  
From direct computations, one has that:
\begin{itemize}
	\item[(i)] $\Psi(\xi, I; \ep)$ is $C^1$ in all its variables and for every fixed $\ep\in\R$ $\det\left(D_{(\xi; I)}\Psi(\xi, I; \ep)\right)=1$: hence, $\Psi$ defines a canonical change of variables; 
	\item[(ii)] for $\ep=0,$ $\Psi(\xi, I; 0)=Id$;
	\item[(iii)]  $\Psi$ maps the horizontal boundaries of $\mathcal A$, that is, $\{(\xi, I_{\rho_0}(\xi; \ep))\text{ }|\text{ }\xi\in\Rpi\}$ and  \\$\{(\xi, I_{\rho_1}(\xi; \ep))\text{ }|\text{ }\xi\in\Rpi\}$ respectively into the straght lines 
\begin{equation*}
	I=I_0'=\frac{A(\ep)}{2\pi}\frac{\bar I_0}{\bar I_1-\bar I_0}>0 \quad \text{and}\quad I=I_1'=\frac{A(\ep)}{2\pi}\frac{\bar I_1}{\bar I_1-\bar I_0}>I_0'; 
\end{equation*} 
	\item[(iv)] for every $\xi\in\Rpi$ and every fixed $\ep$, one has $\Phi_1(\xi+2\pi; \ep)-\Phi_0(\xi+2\pi; \ep)=\Phi_1(\xi; \ep)-\Phi_0(\xi; \ep)+A(\ep)$, and then 
	\begin{equation*}
		\begin{aligned}
		\xi'(\xi+2\pi)&=\frac{2\pi}{A(\ep)}\left(\Phi_1(\xi+2\pi; \ep)-\Phi_0(\xi+2\pi; \ep)\right)=\\
	&=\frac{2\pi}{A(\ep)}\left(\Phi_1(\xi; \ep)-\Phi_0(\xi; \ep)+A(\ep)\right)=\xi'(\xi)+2\pi; 
		\end{aligned}
	\end{equation*}
\item[(v)] $\xi'$ is strictly increasing in $\xi$, while $I'$ is $2\pi-$periodic in $\xi$. 
\end{itemize} 
Globally, $\Psi$ maps the $\mathcal F_\ep$invariant set $\mathcal A$ into the straight line $\mathcal B=\Rpi\times[I_0', I'_1]$ preserving the orientation and the boundaries. One can then consider the map $\bar{\mathcal F}_\ep: \mathcal B\to\mathcal B$ such that $\bar{\mathcal F}_\ep\circ \Psi=\Psi\circ\mathcal F_\ep$, namely, such that $\bar{\mathcal F}_\ep=\Psi\circ{\mathcal F}_\ep\circ \Psi^{-1}$. It can be proved that $\bar{\mathcal F}_\ep$ preserves the rotation number of the corresponding orbits of $\mathcal F$: for a $(m,n)-$periodic orbit, it is a simple consequence of $(iv)$, as, taken $\{(\xi_k, I_k)\}_{k\in \mathbb N}$ $(m,n)-$periodic for $\mathcal F_\ep$ and defined for every $k\in\mathbb N$ $(\xi'_k, I_k')=\Psi(\xi_k, I_k)$, one has 
\begin{equation*}
	(\xi_{k+n}', I'_{k+n})=\Psi(\xi_{k+n}, I_{k+n})=\Psi(\xi_{k}+2\pi m, I_k)=(\xi'_k+2\pi m, I'_k). 
\end{equation*}
Let us now take a $\mathcal F_\ep-$orbit with rotation number $\rho\in\R$ parametrized, according to Mather's definition in \cite{mather58084existence}, by $(\xi_k, I_k)=(\psi_1(t_k), \psi_2(t_k))$ such that 
\begin{gather*}
	t_{k+1}=t_k+\rho, \quad F(\xi_k, I_k)=(\xi_{k+1}, I_{k+1})=\left(\psi_1(t_k+\rho), \psi_2(t_k+\rho)\right), \\
	\left(\psi_1(t+2\pi), \psi_2(t+2\pi)\right)=\left(\psi_1(t)+2\pi, \psi_2(t)\right),
\end{gather*}
with $\psi_1:\R\to\R$ a weakly order preserving map (not necessarily continous). Now, setting $(\xi'_k, I'_k)=\Psi(\xi_k, I_k)=\Psi(\psi_1(t_k), \psi_2(t_k))$, one has
\begin{gather*}
	\bar{\mathcal F}_\ep(\xi'_k, I'_k)=\Psi\circ F(\xi_k, I_k)=\Psi(\xi_{k+1}, I_{k+1})=(\xi_{k+1}', I_{k+1}'), 
\end{gather*} 
and, defined $(\tilde\psi_1(t), \tilde\psi_2(t))=\Psi(\psi_1(t), \psi_2(t))$, 
\begin{equation*}
	(\tilde\psi_1(t+2\pi), \tilde\psi_2(t+2\pi))=(\tilde\psi_1(t)+2\pi, \tilde\psi_2(t)),
\end{equation*}
leading to the conclusion that the $\bar{\mathcal F}_\ep-$orbit $\{(\xi_k',I_k')\}_{k\in\mathbb N}$ has period $\rho$.
 Note that for $\ep$ sufficiently small $\bar{\mathcal F}_\ep$ satisfied the hypotheses of Theorem \ref{thm Poincaré-B}, as for $\ep=0$ the identity map is trivially twist. As the preservation of the rotation number holds also for $\Psi^{-1}$, given a twist map on invariant sets of the type $\mathcal A$ which preserves the horizontal boundaries one can pass to the strip $\mathcal B$ and use Theorem \ref{thm Poincaré-B} to prove the existence of $(m,n)-$periodic orbits for $\bar{\mathcal F}_\ep$; returning then to the map $\mathcal F_\ep$, this translates to the existence of $(m,n)-$periodic orbits for the original map. 
\end{rem}
}
Making use of Theorem \ref{thm Poincaré-B}, one can prove, under suitable conditions on the perturbation, the existence of periodic orbits for the dynamics induced by the map $\mathcal F(\x0, I_0; \ep)$ with $\ep\neq0$. We recall that in \S \ref{sec: cerchio} we denoted with $\mathcal I\backslash\bar{\mathcal I} $ the set of well defnition of the unperturbed map $\mathcal F(\x0, I_0; 0)$ and we proved that it is the finite union of open intervals in $\R$. In particular, the set of the singular points $\bar{\mathcal I}$ is composed by the critical points of the $C^1$ function $\bar\theta(I)$ (see Proposition \ref{prop mappa unp}), and one can set 
\begin{equation*}
	\mathcal I\backslash\bar{\mathcal I}=\bigcup_{i=1}^N A_i
\end{equation*} 
with $N>0$ (possibly $N=1$) and $A_i$ open intervals in $\mathcal I$. In the following, to ensure the good definition of the perturbed map in a compact set, a finit union of closed intervals in $\mathcal I\backslash\bar{\mathcal I}$ will be fixed, : in particular, we fix $a_i, b_i\in\mathcal I$ such that $\forall i\in\{1, \dots, N\}$
\begin{equation}\label{def ai, bi}
	\begin{aligned}
	 &[a_i, b_i]\subset A_i \text{ and, if } \bar\theta_-=\min_i\left\{\bar\theta(a_i), \bar\theta(b_i)\right\}, \text{ }\bar\theta_+=\max_i\left\{\bar\theta(a_i), \bar\theta(b_i)\right\},\\
	& \bar\theta_-^{(i)}=\min\{\bar\theta(a_i), \bar\theta(b_i)\}, \text{ and  }\bar\theta_+^{(i)}=\max\{\bar\theta(a_i), \bar\theta(b_i)\}, \text{ one has }\\
	& \bigcup_{i=1}^N\left[\bar\theta_-^{(i)}, \bar\theta_+^{(i)}\right]=[\bar\theta_-, \bar\theta_+].  	
	\end{aligned}
\end{equation}
Note that, by the continuity of $\bar \theta, $ such sets $\{a_i\}_{i=1}^N,$ $\{b_i\}_{i=1}^N$ exist. 
\begin{prop}\label{prop orbite pert ep uniforme}
	Let $a_i, b_i\in\mathcal I$ as in (\ref{def ai, bi}), and fix $\rho_{\pm}^{(i)}\in\mathcal D$ such that for every $i=1, \dots, N$ one has
	$\bar\theta^{(i)}_-<\rho_-^{(i)}<\rho_+^{(i)}<\bar\theta_+^{(i)}$. 
Then there exists $\bar\ep>0$ such that for every $\ep\in\R$, $|\ep|<\bar\ep$, and for every $m, n\in\mathbb{Z}$ coprime, $n>0$, with  $2\pi\frac{m}{n}\in(\rho_-^{(i)}, \rho_+^{(i)})$ for some $i\in\{1,\dots, N\}$, the map $\mathcal F(\x0, I_0; \ep)$ admits at least  $2k$ $(m,n)$-orbits, where $k$ is the number of the pairs $(\rho_-^{(i)}, \rho_+^{(i)})$ such that $\rho^{(i)}_-<2\pi\frac{m}{n}<\rho_+^{(i)}$. 
\end{prop}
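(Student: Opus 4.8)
The plan is to localize the argument to each of the $N$ action-bands $[a_i,b_i]$ separately, using KAM to fence off an invariant region and then applying Poincar\'e--Birkhoff inside it. First I would fix $i\in\{1,\dots,N\}$. Since $[a_i,b_i]\subset A_i\subset\mathcal I\backslash\bar{\mathcal I}$, on this interval $\bar\theta'=f'+g'$ never vanishes and hence keeps a constant sign, so $\mathcal F(\cdot,\cdot;0)$ restricted to $\Rpi\times[a_i,b_i]$ is a monotone twist map. The chosen Diophantine numbers satisfy $\bar\theta_-^{(i)}<\rho_-^{(i)}<\rho_+^{(i)}<\bar\theta_+^{(i)}$, so each is the $\bar\theta$-image of an interior action value in $[a_i,b_i]$; applying Theorem~\ref{teorema curve invarianti} in the positive-twist case, or its negative-twist analogue noted after Theorem~\ref{teorema KAM} when $\bar\theta'<0$ on $A_i$, produces, for all $|\ep|<\bar\ep^{(i)}$, two closed invariant $C^1$ curves of $\mathcal F(\cdot,\cdot;\ep)$ with rotation numbers $\rho_-^{(i)}$ and $\rho_+^{(i)}$, realized as graphs $I=I_{\rho_\pm^{(i)}}(\xi;\ep)$.

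Next I would build the invariant strip. The two graphs bound a region $\mathcal A_i$ of the form \eqref{def regione invariante}, invariant for $\mathcal F_\ep$ together with its boundaries; by Proposition~\ref{prop mappa perturbata}, shrinking $\bar\ep^{(i)}$ if necessary so that $[a_i,b_i]$ stays inside the domain of good definition, the map is conservative and twist throughout $\mathcal A_i$. The restriction of $\mathcal F_\ep$ to the two boundary components has rotation numbers $\rho_-^{(i)}$ and $\rho_+^{(i)}$, which provides the boundary twist condition with $\rho_-^{(i)}<\rho_+^{(i)}$. I would then invoke the area-preserving change of coordinates $\Psi$ constructed in the Remark following Theorem~\ref{thm Poincaré-B}, which straightens the $C^1$ boundaries of $\mathcal A_i$ into the horizontal sides of a standard strip $\mathcal B_i$, is the identity at $\ep=0$, and preserves rotation numbers. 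On $\mathcal B_i$ the conjugated map $\bar{\mathcal F}_\ep$ is area-preserving, preserves its boundaries, and satisfies the hypotheses of Theorem~\ref{thm Poincaré-B}.

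With this in place, whenever $m,n$ are coprime, $n>0$, and $2\pi\tfrac mn\in(\rho_-^{(i)},\rho_+^{(i)})$, the boundary twist condition $\rho_-^{(i)}<2\pi\tfrac mn<\rho_+^{(i)}$ holds, so Theorem~\ref{thm Poincaré-B} yields at least two $(m,n)$-orbits of $\bar{\mathcal F}_\ep$ in $\mathcal B_i$; pulling back by $\Psi^{-1}$, and using that $\Psi$ preserves rotation numbers, gives at least two $(m,n)$-orbits of $\mathcal F_\ep$ inside $\mathcal A_i$. Finally I would set $\bar\ep=\min_{i}\bar\ep^{(i)}$, which is a positive uniform threshold because $N$ is finite and, crucially, depends only on the fixed finite family $\{\rho_\pm^{(i)}\}$ and not on the pair $(m,n)$: once the bounding curves exist, Poincar\'e--Birkhoff applies to every admissible $(m,n)$ without further smallness. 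Summing over the $k$ indices $i$ with $2\pi\tfrac mn\in(\rho_-^{(i)},\rho_+^{(i)})$ then produces $2k$ orbits.

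The one point requiring care, and the main obstacle, is the distinctness of these $2k$ orbits, since the rotation-number windows $(\rho_-^{(i)},\rho_+^{(i)})$ are allowed to overlap; indeed \eqref{def ai, bi} forces $\bigcup_i[\bar\theta_-^{(i)},\bar\theta_+^{(i)}]=[\bar\theta_-,\bar\theta_+]$, so a single value $2\pi m/n$ may be shared by several bands. The resolution is geometric rather than dynamical: the action intervals $[a_i,b_i]$ are pairwise disjoint, and by the localization of the KAM curves, where $I_{\rho_\pm^{(i)}}(\cdot;\ep)\to\bar I_{\rho_\pm^{(i)}}$ in $C^1$ as $\ep\to0$, the regions $\mathcal A_i$ are contained in arbitrarily thin neighborhoods of disjoint horizontal strips. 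Hence, after a possible further reduction of $\bar\ep$, the $\mathcal A_i$ are pairwise disjoint, so orbits arising from distinct bands cannot coincide, while the two orbits within a single band are distinct by Poincar\'e--Birkhoff. This yields the asserted lower bound of $2k$ distinct $(m,n)$-orbits.
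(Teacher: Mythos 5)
Your proof is correct and follows essentially the same route as the paper's: KAM invariant curves (Theorem~\ref{teorema curve invarianti}) with rotation numbers $\rho_\pm^{(i)}$ bound invariant regions on which Poincar\'e--Birkhoff (Theorem~\ref{thm Poincaré-B}) is applied, with $\bar\ep$ taken as the minimum of the finitely many thresholds, uniformly in $(m,n)$. Your extra care --- invoking the straightening map $\Psi$ explicitly, handling the negative-twist bands, and checking that the regions $\mathcal A_i$ are pairwise disjoint so the $2k$ orbits are genuinely distinct --- only makes explicit points the paper's shorter proof leaves implicit.
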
  
\begin{proof}
	According to Theorem \ref{teorema curve invarianti}, for every pair $\rho_-^{(i)}, \rho_+^{(i)}$ there is $\bar\ep^{(i)}_{\rho_\pm}$ such that for every $|\ep|<\bar\ep^{(i)}_{\rho_\pm}$ the map $\mathcal{F}(\x0, I_0; \ep)$ admits two orbits of rotation numbers $\rho^{(i)}_-$ and $\rho_+^{(i)}$. Moreover, the perturbed map is conservative and twist between these two orbits. Setting $\bar\ep=\min_{i\in\{1,\dots, N\}}$, one has that if $\ep$ is such that $|\ep|<\bar\ep$ all the orbits of rotation numbers $\rho_\pm^{(i)}$ are preserved and the perturbed map in between is well defined and conservative. \\
	Fixing $\ep\in\R$, $|\ep|<\bar\ep$, if $m, n$ are such that $2\pi\frac{m}{n}\in\left(\rho_-^{(i)}, \rho_+^{(i)}\right)$, then by Theorem \ref{thm Poincaré-B} the perturbed map $\mathcal F(\x0,I_0; \ep)$ admits at least $2$ $(m,n)$-orbits. As this reasoning can be repeated whenever a pair $(\rho_-^{(i)}, \rho_+^{(i)})$ is such that $\rho^{(i)}_-<2\pi\frac{m}{n}<\rho_+^{(i)}$, the claim is true. 
\end{proof}
Proposition \ref{prop orbite pert ep uniforme} claims the existence of a unique treshold value of $\ep$ under which the presence of periodic orbits of prescribed rotation numbers in a certain set is guaranteed. Another slightly different approach is proposed in Proposition \ref{prop orbite pert ep dipendente da rho}, where, fixed $m, n$ such that $2\pi\frac{m}{n}$ lies in a suitable interval which does not depend on prefixed boundary rotation numbers $\rho^{(i)}_\pm$, one can find a treshold $\bar\ep_{mn}$, depending on $m,n$, such that for every $|\ep|<\bar\ep_{mn}$ the presence of the corresponding $(m,n)$-orbit is ensured.

\begin{prop}\label{prop orbite pert ep dipendente da rho}
	Given $\{a_i\}_{i=1}^N$, $\{b_i\}_{i=1}^N$, $\{\bar\theta_+^{(i)}\}_{i=1}^N, \{\bar\theta_-^{(i)}\}_{i=1}^N$ as in (\ref{def ai, bi}), let $m,n\in\mathbb Z$ coprime, $n>0$, such that $2\pi\frac{m}{n}\in\left(\bar\theta^{(i)}_-, \bar\theta_+^{(i)}\right)$ for some $i\in\left\{1,\dots,N\right\}$. Then $\exists\bar\ep_{mn}>0$ such that for every $\ep\in\R$, $|\ep|<\bar\ep_{mn}$ the map $\mathcal F(\x0, I_0; \ep)$ admits at least 2$k$ $(m,n)$-orbits, where $k$ is the number of intervals $(a_i,b_i)$ such that $2\pi\frac{m}{n}$ is between $\bar\theta_-^{(i)}$ and $\bar\theta_+^{(i)}$. 
\end{prop}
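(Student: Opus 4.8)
The plan is to mirror the argument of Proposition~\ref{prop orbite pert ep uniforme}, merely reversing the order in which the rational rotation number and the bracketing Diophantine numbers are chosen. The essential difference is that here $m,n$ are fixed \emph{first}, so the auxiliary Diophantine numbers must be tailored so as to straddle $2\pi\frac{m}{n}$; consequently the threshold on $\ep$ inherits a dependence on $m,n$, whereas in Proposition~\ref{prop orbite pert ep uniforme} fixing $\rho_\pm^{(i)}$ in advance bought a uniform threshold.

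First I would isolate the relevant intervals. Let $i$ range over the (exactly $k$) indices in $\{1,\dots,N\}$ for which $2\pi\frac{m}{n}$ lies strictly between $\bar\theta_-^{(i)}$ and $\bar\theta_+^{(i)}$. For each such $i$, exploiting the density of $\mathcal D$ in $\R$ (Definition~\ref{defKAM}), I would select two Diophantine numbers $\rho_-^{(i)},\rho_+^{(i)}\in\mathcal D$ with
\[
\bar\theta_-^{(i)}<\rho_-^{(i)}<2\pi\tfrac{m}{n}<\rho_+^{(i)}<\bar\theta_+^{(i)}.
\]
Since $\bar\theta$ is $C^1$ and strictly monotone on $[a_i,b_i]\subset\mathcal I\setminus\bar{\mathcal I}$, it maps $[a_i,b_i]$ onto $[\bar\theta_-^{(i)},\bar\theta_+^{(i)}]$, so these rotation numbers are attained at interior points, i.e.\ by unperturbed invariant lines $I=\bar I^{(i)}_\pm$ inside $[a_i,b_i]$.

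Next I would invoke Theorem~\ref{teorema curve invarianti} on each such interval (using the negative-twist version recorded in the remark following Theorem~\ref{teorema KAM} whenever $\bar\theta'<0$ there): it furnishes a threshold $\bar\ep^{(i)}_{\rho_\pm}>0$ such that, for $|\ep|<\bar\ep^{(i)}_{\rho_\pm}$, the map $\mathcal F(\x0,I_0;\ep)$ possesses two $C^1$ closed invariant curves of rotation numbers $\rho_-^{(i)}$ and $\rho_+^{(i)}$. By Proposition~\ref{prop mappa perturbata}, in the strip bounded by these curves the map is area preserving and twist; this is precisely an invariant region of the type $\mathcal A$ in~\eqref{def regione invariante}. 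Setting $\bar\ep_{mn}=\min_i\bar\ep^{(i)}_{\rho_\pm}$ over the $k$ relevant indices guarantees that, for $|\ep|<\bar\ep_{mn}$, all $k$ invariant strips coexist. On each of them I would then apply the Poincar\'e--Birkhoff theorem in the form of Theorem~\ref{thm Poincaré-B}, via the area-preserving straightening $\Psi$ of the accompanying remark, which conjugates $\mathcal F_\ep|_{\mathcal A}$ to a boundary-twist map on $\Rpi\times[I_0',I_1']$ and preserves rotation numbers. As the boundary curves have rotation numbers $\rho_-^{(i)}<2\pi\frac{m}{n}<\rho_+^{(i)}$, the boundary twist condition holds and yields at least two $(m,n)$-orbits per strip; the $k$ disjoint strips thus produce the claimed $2k$ distinct $(m,n)$-orbits.

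The main obstacle is conceptual rather than computational: one must accept that $\bar\ep_{mn}$ genuinely depends on $(m,n)$ and cannot be taken uniform over all admissible rationals. Indeed, as $2\pi\frac{m}{n}$ approaches an endpoint $\bar\theta_\pm^{(i)}$, the room available for the Diophantine brackets $\rho_\pm^{(i)}$ shrinks, the Diophantine constant $\sigma$ entering $\mathcal D(\sigma)$ degrades, and with it the KAM threshold $\bar\ep^{(i)}_{\rho_\pm}$ supplied by Theorem~\ref{teorema curve invarianti}. This degradation is exactly the trade-off distinguishing the present statement from Proposition~\ref{prop orbite pert ep uniforme}: there, prefixing $\rho_\pm^{(i)}$ excludes rationals too close to the endpoints but secures a single uniform threshold, whereas here every interior rational is reached at the cost of an $(m,n)$-dependent bound.
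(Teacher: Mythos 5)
Your proposal is correct and follows essentially the same route as the paper's proof: bracket $2\pi\frac{m}{n}$ by Diophantine numbers $\rho_\pm^{(i)}$ using the density of $\mathcal D$, invoke Theorem \ref{teorema curve invarianti} to obtain the invariant curves (and hence the $(m,n)$-dependent threshold $\bar\ep_{mn}$), and apply Poincar\'e--Birkhoff in each of the $k$ resulting invariant strips to produce the $2k$ orbits. Your explicit minimization over the $k$ relevant indices and the closing remark on why the threshold cannot be made uniform are merely more detailed renderings of what the paper leaves implicit.
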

\begin{proof}
By the density of $\mathcal D$ in every bounded interval, one can find $\rho_\pm\in \mathcal D$ with $\bar\theta_-^{(i)}<\rho_-<2\pi\frac{m}{n}<\rho_+<\bar\theta_+^{(i)}$. From Theorem \ref{teorema curve invarianti}, one can find $\bar\ep_{mn}=\bar\ep_{\rho_\pm}$ such that, if $\ep$ is such that $|\ep|<\bar\ep_{mn}$, the map $\mathcal F(\x0, I_0; \ep)$ admits two orbit with rotation numbers $\rho_\pm$, and it is conservative between them. Applying again Theorem \ref{thm Poincaré-B}, the claim follows. 		
\end{proof}
 Extending the discussion beyond perodic orbits, one may search for more general class of invariant sets. KAM theory allowed us to claim the persistence of orbits with Diophantine rotation numbers within certain ranges, while Poincar\'e-Birkhoff theorem extended the existence result to periodic number with $2\pi$-rational numbers between them. The Aubry-Mather theory allows to move further, providing the existence of orbits of the perturbed map of every prescribed rotation number in suitable subsets of $\R$. 
\begin{thm}[Aubry-Mather on the compact annulus]\label{teorema Aubry-Mather}
Let $\mathcal F$ an area and orientation- preserving twist homeomorphism of the annulus $\Rpi\times[a,b]$ which preserves $\Rpi\times\{a\}$ and  $\Rpi\times\{b\}$, and define $\rho_a$ and $\rho_b$ as the rotation numbers of the two boundary components. Then for every $\rho\in[\rho_a, \rho_b]$ there exists at least an orbit for $\mathcal F$ with rotation number $\rho$. In particular: 
\begin{itemize}
	\item if $\rho=m/n\in\mathbb Q$, such orbit is periodic of period $n$; 
	\item if $\rho\notin\mathbb Q$, the orbit rotates either on a closed continous curve or on a Cantor set.  
\end{itemize}
In any case, the orbits with the same rotation number belong to a common invariant set $\Gamma_\rho$, called Mather set, which is a subset of the graph of a Lipschitz-continous function over the $\xi-$axis. 
\end{thm}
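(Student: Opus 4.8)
The plan is to follow the variational approach of Aubry and Mather, exploiting the generating-function structure already set up in \S\ref{sec: variational general}. Since $\mathcal F$ is an area- and orientation-preserving monotone twist homeomorphism that fixes the two boundary circles, its flux vanishes and it is exact; hence its lift $\mathcal F_{\mathbb R}$ to $\R\times[a,b]$ admits a generating function $h(\xi_0,\xi_1)$, periodic in the sense $h(\xi_0+2\pi,\xi_1+2\pi)=h(\xi_0,\xi_1)$, with $\partial_{\xi_0}h=-I_0$, $\partial_{\xi_1}h=I_1$, and with the twist condition encoded by $\partial^2_{\xi_0\xi_1}h$ of one fixed sign. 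Orbits of $\mathcal F$ are then in bijection with \emph{stationary configurations} $\{\xi_k\}_{k\in\mathbb Z}$ of the formal action $W(\{\xi_k\})=\sum_k h(\xi_k,\xi_{k+1})$, the stationarity equation $\partial_{\xi_1}h(\xi_{k-1},\xi_k)+\partial_{\xi_0}h(\xi_k,\xi_{k+1})=0$ being exactly the matching of actions $I_k=I_k$. The theorem is thus reduced to producing, for each $\rho\in[\rho_a,\rho_b]$, a stationary configuration of rotation number $\rho$ with the required geometric properties.

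First I would treat the rational case $\rho=2\pi m/n$. I restrict $W$ to the finite-dimensional space of $(m,n)$-periodic configurations, i.e. those with $\xi_{k+n}=\xi_k+2\pi m$, so that the reduced action $W_{m,n}(\xi_0,\dots,\xi_{n-1})=\sum_{k=0}^{n-1}h(\xi_k,\xi_{k+1})$ depends on finitely many variables modulo the diagonal $2\pi$-shift. Coercivity follows from the superlinear growth of $h$ in $\xi_1-\xi_0$ forced by the twist together with the boundary confinement $I\in[a,b]$, so a minimizer exists; its stationarity gives an $(m,n)$-periodic orbit, and since the boundary twist condition places $2\pi m/n$ between the boundary rotation numbers $\rho_a<\rho_b$, the minimizer genuinely has rotation number $\rho$. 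This yields the periodic orbits of the first bullet.

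The key structural step, and the one I expect to be the main obstacle, is Aubry's \emph{fundamental lemma} (the non-crossing property, see \cite{aubry1983discrete, mather58084existence}): two distinct minimal configurations cross at most once. The argument is a cut-and-paste one — writing $\phi_k=\min(\xi_k,\eta_k)$ and $\psi_k=\max(\xi_k,\eta_k)$ for two minimizers, one shows $W(\phi)+W(\psi)<W(\xi)+W(\eta)$ whenever a genuine double crossing occurs, the strict inequality being precisely where the twist sign $\partial^2_{\xi_0\xi_1}h\neq0$ enters, contradicting minimality. Non-crossing forces the set of minimal configurations of a fixed rotation number to be totally ordered and, together with the uniform twist bounds on the compact annulus, yields a uniform Lipschitz estimate: the map $k\mapsto\xi_k$ extends to a monotone circle map and the points $(\xi_k,I_k)$ lie on the graph of a single Lipschitz function of $\xi$. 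This produces the Mather set $\Gamma_\rho$ and its graph property.

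Finally, for irrational $\rho$ I would approximate by rationals $\rho_j\to\rho$ and pass to the limit on the associated minimal periodic configurations. The ordering and Lipschitz bounds give equicontinuity and the confinement $I\in[a,b]$ gives compactness, so a subsequence converges to a minimal configuration of rotation number $\rho$, minimality being preserved in the limit. A Denjoy-type alternative for the induced monotone circle homeomorphism then distinguishes the two cases of the second bullet: either the orbit closure is a topological circle, on which $\mathcal F$ is conjugate to the rotation by $\rho$, or it is a Cantor set arising as the unique minimal set. In all cases the limit set remains on the Lipschitz graph inherited from the finite-$n$ estimates, completing the proof.
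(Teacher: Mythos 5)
The paper gives no proof of this theorem: it is stated as a classical result, and immediately after the statement the reader is referred to \cite{aubry1983discrete, jurgen1986recent, mather58084existence} for the theory. Your sketch is precisely the standard variational proof contained in those references --- exact generating function, minimal configurations, Aubry's non-crossing lemma, periodic minimizers for rational $\rho$, rational approximation plus the Denjoy alternative for irrational $\rho$, and Mather's Lipschitz graph theorem --- so it coincides with the approach the paper implicitly relies on. The one caveat worth recording is that the theorem is stated for twist \emph{homeomorphisms}, so your appeals to $\partial^2_{\xi_0\xi_1}h$ having a fixed sign and to smoothness-based coercivity are not literally available; in the literature these are replaced by monotonicity axioms on $h$ (Bangert's conditions) and by an extension of the map to the full cylinder to obtain coercivity of the periodic action, after which your outline goes through essentially verbatim.
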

We refer to \cite{aubry1983discrete, jurgen1986recent, mather58084existence} for the definition of Mather set and for a thorough discussion on the Aubry-Mather theory. 
\begin{rem}
	As in the case of Poincar\'e-Birkhoff Theorem \ref{thm Poincaré-B}, with the same reasoning also Aubry-Mather Theorem can be extended to maps on invariant sets of the type $\mathcal A$ defined in \ref{def regione invariante}. 
\end{rem}
Making use of the same arguments used in the proofs of Propositions \ref{prop orbite pert ep uniforme} and \ref{prop orbite pert ep dipendente da rho}, one can take advantage of Theorem \ref{teorema Aubry-Mather} to state these existence results in a more general way. 
\begin{thmv}\label{thm:korbits}
	Let $\{a_i\}_{i=1}^N$, $\{b_i\}_{i=1}^N$, $\{\bar\theta_-^{(i)}\}_{i=1}^N$, $\{\bar\theta_+\}_{i=1}^N$ as in (\ref{def ai, bi}). Then: 
	\begin{itemize}
		\item letting $\rho_\pm^{(i)}\in \mathcal D $ as in Proposition \ref{prop orbite pert ep uniforme}, there exists $\bar\ep>0$ such that for every $\ep\in\R$ with $|\ep|<\bar\ep$ and for every $\rho\in\R$ such that $\rho\in[\bar\rho_-^{(i)}, \bar\rho_+^{(i)}]$ for some $i\in\{1,\dots, N\}$ the map $\mathcal F(\x0, I_0; \ep)$ admits $k$ orbits with rotation number $\rho$, with $k$ defined as in Proposition \ref{prop orbite pert ep uniforme};
		\item for every $\rho\in\R$ such that $\rho\in[\bar\theta_-^{(i)},\bar\theta_+^{(i)}]$ for some $i\in\{1,\dots, N\}$  there is $\bar\ep_\rho>0$ such that for every $\ep$ with $|\ep|<\bar\ep_\rho$ the map $\mathcal F(\x0, I_0; \ep)$ admits $k$ orbits with rotation number $\rho, $ where $k$ is defined as in Proposition \ref{prop orbite pert ep dipendente da rho}. 
	\end{itemize}
In both cases, if $\rho=2\pi\frac{m}{n}$ then for $\ep$ sufficiently small there are at least $2k$ $(m,n)$-orbits, where $k$ is defined suitably according to the cases. 
\end{thmv}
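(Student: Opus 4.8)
The plan is to mirror the arguments of Propositions \ref{prop orbite pert ep uniforme} and \ref{prop orbite pert ep dipendente da rho}, replacing the appeal to the Poincar\'e--Birkhoff Theorem \ref{thm Poincaré-B} by the Aubry--Mather Theorem \ref{teorema Aubry-Mather}, which delivers orbits of \emph{every} prescribed rotation number -- not only the rational ones -- inside a twist invariant annulus. Recall that $\mathcal I\setminus\bar{\mathcal I}=\bigcup_{i=1}^N A_i$, each $A_i$ being an open interval on which $\bar\theta$ is strictly monotone (since $\bar{\mathcal I}$ is exactly the critical set of $\bar\theta$), and the closed intervals $[a_i,b_i]\subset A_i$ are chosen as in (\ref{def ai, bi}).

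For the first assertion, fix the Diophantine boundary numbers $\rho_\pm^{(i)}$ as in Proposition \ref{prop orbite pert ep uniforme}. For each $i$, Theorem \ref{teorema curve invarianti} -- applied in its increasing or decreasing twist form according to the sign of $\bar\theta'$ on $A_i$ -- produces $\bar\ep^{(i)}>0$ so that for $|\ep|<\bar\ep^{(i)}$ the map $\mathcal F(\cdot,\cdot;\ep)$ carries two $C^1$ invariant curves of rotation numbers $\rho_\pm^{(i)}$, bounding a confinement region $\mathcal A^{(i)}$ of the form (\ref{def regione invariante}) on which, by Proposition \ref{prop mappa perturbata}, the map is area- and orientation-preserving and twist. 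Setting $\bar\ep=\min_i\bar\ep^{(i)}$, I would then straighten each $\mathcal A^{(i)}$ into a standard annulus by the canonical change of variables $\Psi$ built in the Remark after Theorem \ref{thm Poincaré-B}, obtaining the conjugate map $\bar{\mathcal F}_\ep=\Psi\circ\mathcal F_\ep\circ\Psi^{-1}$, which preserves area, orientation, the horizontal boundaries and the rotation numbers of orbits. Theorem \ref{teorema Aubry-Mather} applied to $\bar{\mathcal F}_\ep$ then yields, for every $\rho\in[\rho_-^{(i)},\rho_+^{(i)}]$, at least one orbit of rotation number $\rho$, which pulls back through $\Psi^{-1}$ to an orbit of $\mathcal F_\ep$ in $\mathcal A^{(i)}$. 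Because the $A_i$ are pairwise disjoint and, for $\ep$ small, the curves (hence the $\mathcal A^{(i)}$) remain $C^1$-close to the disjoint unperturbed lines $\Rpi\times\{\bar I_{\rho_\pm^{(i)}}\}$, the regions $\mathcal A^{(i)}$ are themselves disjoint, so the orbits found in distinct regions are distinct; this gives at least $k$ orbits for each $\rho$ lying in $k$ of the intervals $[\rho_-^{(i)},\rho_+^{(i)}]$. When $\rho=2\pi m/n$, each such region carries the two $(m,n)$-orbits of Poincar\'e--Birkhoff exactly as in Proposition \ref{prop orbite pert ep uniforme}, for a total of $2k$.

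The second assertion is obtained by the $\rho$-dependent variant: for fixed $\rho\in[\bar\theta_-^{(i)},\bar\theta_+^{(i)}]$, the density of $\mathcal D$ (Definitions \ref{defKAM}) lets me sandwich $\rho$ between Diophantine numbers $\bar\theta_-^{(i)}<\rho_-<\rho<\rho_+<\bar\theta_+^{(i)}$; Theorem \ref{teorema curve invarianti} supplies the threshold $\bar\ep_\rho=\bar\ep_{\rho_\pm}$ and the two bounding curves, and the straightening together with the Aubry--Mather step proceeds verbatim. The only genuinely delicate point throughout is the book-keeping of the \emph{sign of the twist}: on the intervals $A_i$ where $\bar\theta'<0$ the boundary rotation numbers are ordered in the opposite sense, so both Theorem \ref{teorema curve invarianti} (in the negative-twist extension mentioned after its statement) and Theorem \ref{teorema Aubry-Mather} (stated for $\rho_a<\rho_b$) must be invoked in their orientation-reversed forms, and one must verify that $\Psi$ is orientation-preserving on each $\mathcal A^{(i)}$ so that the Aubry--Mather hypotheses are genuinely met. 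Once this case distinction is handled uniformly over the finitely many intervals, the counting and the limit $\ep\to0$ are routine.
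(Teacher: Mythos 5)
Your proposal is correct and follows essentially the same route as the paper, which proves this theorem precisely by repeating the arguments of Propositions \ref{prop orbite pert ep uniforme} and \ref{prop orbite pert ep dipendente da rho} (KAM curves bounding invariant regions, the straightening map $\Psi$, and Poincar\'e--Birkhoff for the rational case) with the Aubry--Mather Theorem \ref{teorema Aubry-Mather} supplying orbits of arbitrary rotation number in each confined annulus. Your additional care about disjointness of the regions $\mathcal A^{(i)}$ and the sign of the twist on each $A_i$ only makes explicit what the paper leaves implicit.
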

\subsection{Caustics for the perturbed case}\label{sec: caustiche perturbate}

The persistence of invariant curves with Diophantine rotation numbers ensured by the KAM theorem has important consequences for the existence of caustics in the perturbed dynamics. As a matter of fact, for such invariant tori (which are dense in the phase space) it is possible to find, although not explicitely, the inner and outer caustics also for small perturbations of the circular domain $D_0$. 
\begin{thmv}\label{teorema caustiche pert}
	Let $\x0\in[0,2\pi]$, $I_0\in\mathcal I\backslash\bar{\mathcal{I}}$ such that $\theta(I_0)\in\mathcal D $. Then there exists $\bar{\ep}>0$ such that for every $|\ep|<\bar{\ep}$ there are $\Gamma_E(\xi; \ep, \theta(I_0))$, $\Gamma_I(\xi; \ep, \theta(I_0))$ respectively outer and inner caustics related to the perturbed orbit of rotation number $\theta(I_0)$. 
\end{thmv}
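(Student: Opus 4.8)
The plan is to tie the perturbed caustics to the persistent KAM invariant curve and then realise each caustic as the envelope of the one-parameter family of arcs carried by that curve, in exact analogy with the unperturbed characterisation \eqref{sistema caustica}--\eqref{nondeg cond}. First I would apply Theorem \ref{teorema curve invarianti} (in the two-sided-twist form of Theorem \ref{teorema curve invarianti intro}): setting $\rho_0:=\bar\theta(I_0)=\theta(I_0)$, the hypotheses $I_0\in\mathcal I\setminus\bar{\mathcal I}$ and $\theta(I_0)\in\mathcal D$ are precisely $\bar\theta'(I_0)\neq0$ together with a Diophantine rotation number, so for $|\ep|$ small there is a closed $C^1$ invariant curve of $\mathcal F(\cdot,\cdot;\ep)$ with rotation number $\rho_0$, given as a graph $I=I_{\rho_0}(\xi;\ep)$ that tends in $C^1(\Rpi)$ to the constant $\bar I$, $\bar\theta(\bar I)=\rho_0$, as $\ep\to0$. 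Since the rotation number is irrational, one orbit is dense on this curve, so the arcs along that orbit are dense in the family indexed by the whole curve; passing to the closure it suffices to build the envelope of the latter.

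Next I would construct the two arc families. To each $\xi$ the point $(\xi,I_{\rho_0}(\xi;\ep))$ assigns, through the boundary point $\gep(\xi)$ and the incidence angle fixed by the action via \eqref{I alpha}, an outer harmonic arc and an inner Keplerian arc; by Theorems \ref{thm esistenza ext pert} and \ref{thm ex inner pert} these exist, are unique and depend in a $C^1$ way on the endpoints, while their dependence on the Cauchy data is analytic through the two flows. Writing each arc implicitly as the conic that carries it, $G_E(x,y;\xi;\ep)=0$ and $G_I(x,y;\xi;\ep)=0$ with coefficients depending on $(\xi,\ep)$, the families reduce at $\ep=0$ to those of \S\ref{sec: caustiche imperturbate}, whose envelopes are the circles $\Gamma_E(\cdot;I_0)$, $\Gamma_I(\cdot;I_0)$ of Theorem \ref{th caustiche}. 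The candidate perturbed caustics are then defined by the envelope system \eqref{sistema caustica} with $G=G_{E/I}$.

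I would then produce the caustics by persistence of a transverse envelope. At $\ep=0$ the nondegeneracy \eqref{nondeg cond} holds in the sharp form \eqref{nondeg ex nonpert} (the two gradients are orthogonal, hence transverse) along the unperturbed envelope, and likewise for the inner family; because the intrinsic apocenter/pericenter parameter $\zeta$ used there is related to $\xi$ by a $C^1$ diffeomorphism precisely when $\bar\theta'(I_0)\neq0$, the transversality is unaffected by the reparametrisation. As $G_{E/I}$ depends continuously on $\ep$ and reduces to the unperturbed conic at $\ep=0$, transversality persists for $|\ep|$ small, and the implicit function theorem applied to \eqref{sistema caustica} yields, for each $\xi$, a unique solution $(x_0(\xi;\ep),y_0(\xi;\ep))$ near the unperturbed circle, tracing a regular closed curve $\Gamma_{E/I}(\xi;\ep,\theta(I_0))$. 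Being an envelope it is tangent to every member of the family, and since the generating curve is invariant under $\mathcal F(\cdot,\cdot;\ep)$ this tangency is propagated along the entire orbit, so $\Gamma_{E/I}$ is a genuine caustic.

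The step I expect to be hardest is the regularity bookkeeping for the envelope. The KAM curve is only $C^1$ in $\xi$, hence $G_{E/I}$ is only $C^1$ in the envelope parameter, whereas the naive implicit function argument for $\{G=0,\ \partial_\xi G=0\}$ would like $\partial_\xi G$ to be $C^1$, i.e.\ $G\in C^2$ in $\xi$. I would separate the analytic dependence of $G$ on $(x,y)$ and on the Cauchy data from the merely $C^1$ dependence of the data on $\xi$: transversality is a condition in the $(x,y)$ variables alone, so for each fixed $\xi$ the system has a unique transverse solution, and continuity of the data in $\xi$ already gives a continuous caustic; taking $f$ of higher class $C^k$ and exploiting the smoothing of Proposition \ref{prop mappa perturbata} then upgrades this to a $C^1$ curve, matching the regularity of the invariant curve as in the Lazutkin picture. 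A secondary check is that the implicit representation $G_{E/I}$ selects the correct arc globally around the curve with no branch ambiguity, which follows from the uniform transversality estimates \eqref{transv testo} and Theorem \ref{thm ex inner pert} keeping the arcs transverse to $\partial D_\ep$.
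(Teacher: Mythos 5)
Your proposal follows essentially the same route as the paper's proof: obtain the KAM invariant curve of rotation number $\theta(I_0)$, write the outer and inner arcs through its points as implicit conics $G_{E/I}(x,y;\xi,\ep)=0$, and produce each caustic from the envelope system \eqref{sistema caustica}, using the unperturbed orthogonality \eqref{nondeg ex nonpert}, continuity in $\ep$, the implicit function theorem and a compactness argument in $\xi$ to get a globally defined closed curve. The regularity bookkeeping you single out as the hardest step is exactly what the paper carries out in Lemmas \ref{lema caustiche esterne pert} and \ref{lemma caustiche interne pert}, which check by explicit computation that the coefficients of $G_{E/I}$ (through $\rho$, $\rho'$ and the KAM graph $I(\xi;\ep)$) are differentiable in $\xi$ and continuous in $\ep$, with sign conditions (e.g.\ on $v_y'$ and $\sin(\theta_v-\xi)$, both proportional to $I_0\neq 0$ at $\ep=0$) resolving the branch ambiguities you mention.
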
 

The proof of Theorem \ref{teorema caustiche pert} relies on showing that, for $\ep$ small enough, system (\ref{sistema caustica}) evaluated both for the outer and inner dynamics admits a unique solution for each $\xi\in[0,2\pi]$, which defines a regular and closed curve. To prove that, it is worthwile to derive the form of $G_{E\backslash I}(x,y; \xi)$ for a perturbed domain. 

\paragraph{\textbf{Outer dynamics}} 

Let us consider $\xi\in[0,2\pi]$, $p_0=\gamma_\ep(\xi)$ and $v_0\in\R^2$ such that $|v_0|=\sqrt{2 V_E(p_0)}$ and $\alpha=\angle{(p_0, v_0)}\in(-\pi/2, \pi/2)$. \\
To fix the notation, recall the definition of $\gamma_\ep(\xi)=(1+\ep f(\xi, \ep))e^{i\xi}=\rho(\xi; \ep)e^{i\xi}:$ 
as the perturbation of the circle is only in the normal direction, the curve's parameter $\xi$ still represents the polar angle of the point $\gamma_\ep(\xi)$. We want to find the Cartesian equation of the outer elliptic arc of initial conditions $p_0$ and $v_0$. \\
Following the same reasoning of Appendix \ref{appA} and denoting with $(x(s), y(s))$ the parametrization of such ellipse, 
its maximal and minimal distances from the origin can be then computed as 
\begin{equation}\label{a2 b2 pert}
	\begin{aligned}
	a^2=\max_{s\in[0, 2\pi/\omega]}r^2(s)=A+\frac{\E}{\om}=\frac{\E+\sqrt{(\E-\om|p_0|^2)^2+\om(p_0\cdot v_0)^2}}{\om}\\
	b^2=\min_{s\in[0, 2\pi/\omega]}r^2(s)=-A+\frac{\E}{\om}=\frac{\E-\sqrt{(\E-\om|p_0|^2)^2+\om(p_0\cdot v_0)^2}}{\om}: 
	\end{aligned}
\end{equation}
in the reference frame $R(O, x'', y'')$ whose axes coincide with the ellipse's ones the latter is then implicitely defined by the equation
\begin{equation*}
	\frac{{x''}^2}{a^2}+\frac{{y''}^2}{b^2}=1. 
\end{equation*}
Let us now search for the angle $\bar\beta$ such that the rotated ellipse 
\begin{equation*}
	\frac{(x'\cos\bar\beta+y'\sin\bar\beta)^2}{a^2}+\frac{(y'\cos\bar\beta-x'\sin\bar\beta)^2}{b^2}=1
\end{equation*}
passes from $p_0=|p_0|(1,0)$ in $R(O,x', y')$: one has to solve the equation 
\begin{equation*}
	\frac{|p_0|^2\cos^2\bar\beta}{a^2}+\frac{|p_0|^2\sin^2\bar\beta}{b^2}-1=0\Rightarrow \sin^2\bar\beta=\frac{b^2}{a^2-b^2}\left(\frac{a^2}{|p_0|^2}-1\right)\geq0. 
\end{equation*}
Denoting by $(v'_x, v'_y)$ the components of $v_0$ in $R(O, x',y')$, one has that 
\begin{equation}\label{sinbeta cosbeta pert}
	\sin\bar\beta=
	\begin{cases}
		-\frac{b}{\sqrt{a^2-b^2}}\sqrt{\frac{a^2}{|p_0|^2}-1}\quad &\text{if }v'_y<0\\
		\frac{b}{\sqrt{a^2-b^2}}\sqrt{\frac{a^2}{|p_0|^2}-1}\quad &\text{if }v'_y>0\\
	\end{cases}
\Rightarrow \cos\bar\beta=\frac{a}{\sqrt{a^2-b^2}}\sqrt{1-\frac{b^2}{|p_0|^2}}. 
\end{equation}
Returning to the original frame $R(O,x,y)$, one can then retrieve the Cartesian equation of the outer arc as 
\begin{equation}\label{caustica esterna pert}
	G_{E}(x,y;\xi,\ep)=\frac{\left(x\cos(\xi+\bar\beta)+y\sin(\xi+\bar\beta)\right)^2}{a^2}+\frac{\left((y\cos(\xi+\bar\beta)-x\sin(\xi+\bar\beta)\right)^2}{b^2}-1=0
\end{equation}
Once obtained the general expression for an ellipse of initial conditions $p_0$ and $v_0$, we shall return to the framework of our perturbed problem. Let us then consider $I_0\in\mathcal I\backslash\bar{\mathcal{I}}$ such that $\theta(I_0)$ is Diophantine: from Theorem \ref{teorema KAM} there exists $\bar{\ep}^{(1)}>0$ such that, if $|\ep|<\bar\ep^{(1)}$, we can define $I(\xi; \ep)$ invariant curve in the plane $(\xi, I)$ for the perturbed map $\mathcal\F_\ep$ such that $I(\xi; 0)\equiv I_0$ and with rotation number $\theta(I_0)$. Moreover, $I(\xi;\ep)$ is continous in $\ep$ and differentiable in $\xi$, with $\partial_\xi I(\xi; \ep)$ continous in $\ep$: as a consequence, since $\theta(I_0)\in\mathcal D$ implies $I_0\neq0$, possibly reducing $\bar\ep^{(1)}$ one can assume that $I(\xi;\ep)$ has always the same sign of $I_0$. \\
For the caustic of the orbit associated to $(\xi, I(\xi; \ep))$ to be well defined, it is necessary that the system 
\begin{equation}\label{sistema caustiche pert}
	\begin{cases}
		G_E(x, y; \xi, \ep)=0\\
		\partial_\xi G_E(x, y; \xi, \ep)=0
	\end{cases}
\end{equation}
defines implicitely a unique curve $\Gamma_E(\xi; \ep)$ for $\xi\in[0,2\pi]$, that is, that $x$ and $y$ can be expressed as functions of $(\xi, \ep)$ globally defined for $\xi\in[0,2\pi]$. As already pointed out in Section \ref{sec: caustiche imperturbate}, from the implicit function theorem the local existence of $\Gamma_E(\xi; \ep)$ is then ensured by requiring the nondegeneracy condition 
\begin{equation}\label{nondeg cond pert}
	\nabla_{(x,y)}G_E(x,y;\xi, \ep)\nparallel\nabla_{(x,y)}\partial_\xi G_E(x,y; \xi, \ep)
\end{equation}
on the solutions of (\ref{sistema caustiche pert}). 
\begin{lemmav}\label{lema caustiche esterne pert}
	If $I_0\in\mathcal I\backslash\bar{\mathcal{I}}$ is such that $\theta(I_0)$ is Diophantine, then there is $\bar\ep^{(2)}>0$ such that, if $|\ep|<\bar\ep^{(2)}$, then $G_E(x, y; \xi, \ep)$ is continous in $\ep$, differentiable in $\xi$ and such that $\partial _\xi G_E(x, y; \xi, \ep)$ is continous in $\ep$. 
\end{lemmav}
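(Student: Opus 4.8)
The plan is to verify that each quantity entering the closed form (\ref{caustica esterna pert}) of $G_E$ is continuous in $\ep$ and differentiable in $\xi$, with $\xi$-derivative continuous in $\ep$, and that none of the denominators or radicands occurring in (\ref{a2 b2 pert})--(\ref{sinbeta cosbeta pert}) degenerates once $|\ep|$ is small. First I would record the regularity of the boundary data: since $f$ is $C^k$ in $\xi$ and continuous, together with its $\xi$-derivatives, in $\ep$, the point $p_0=\gep(\xi)=\rho(\xi;\ep)e^{i\xi}$ and the tangent $\dot\gep(\xi)$ are continuous in $\ep$ and differentiable in $\xi$, with $\xi$-derivatives continuous in $\ep$; hence the same holds for $|p_0|^2=\rho(\xi;\ep)^2$ and for $V_E(p_0)=\E-\tfrac{\om}{2}\rho(\xi;\ep)^2$. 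For $|\ep|$ small, $|p_0|$ stays close to $1$ (so bounded away from $0$) and $V_E(p_0)$ stays close to $I_c^2>0$.

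Next I would reconstruct the velocity. On the KAM invariant curve of Theorem (\ref{teorema KAM}), defined for $|\ep|<\bar\ep^{(1)}$, the launching angle $\xi$ carries the action $I(\xi;\ep)$, which is continuous in $\ep$, differentiable in $\xi$ with $\partial_\xi I$ continuous in $\ep$, and---because $\theta(I_0)\in\mathcal D$ forces $I_0\ne 0$---keeps the constant sign of $I_0$ and stays uniformly away from $0$ and from $\pm I_c$ for $|\ep|$ small. By the action relation (\ref{I alpha}), $v_0$ is the unique outward-pointing vector with $|v_0|=\sqrt{2V_E(p_0)}$ whose tangential component along $\dot\gep(\xi)$ equals $\sqrt{2}\,I(\xi;\ep)$; since $|I(\xi;\ep)|<\sqrt{V_E(p_0)}$ for $|\ep|$ small, its normal component does not vanish, so $v_0=v_0(\xi;\ep)$ is well defined and inherits the regularity of $p_0$, $\dot\gep$ and $I(\xi;\ep)$. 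In particular $p_0\cdot v_0$ is continuous in $\ep$ and differentiable in $\xi$, with $\xi$-derivative continuous in $\ep$.

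I would then read off $a^2,b^2$ and $\bar\beta$. At $\ep=0$ the radicand $(\E-\om|p_0|^2)^2+\om(p_0\cdot v_0)^2$ in (\ref{a2 b2 pert}) equals $\E^2-2I_0^2\om\ge(\E-\om)^2>0$ (as $I_0\in\mathcal I$ and $\E>\om$), so by continuity it stays strictly positive for $|\ep|$ small; this yields $a^2>b^2>0$ uniformly, so $a$, $b$ and $a^2-b^2$ are regular and bounded away from $0$. The radicand $\tfrac{a^2}{|p_0|^2}-1$ in (\ref{sinbeta cosbeta pert}) is nonnegative because $a$ is the apocenter distance and $p_0$ lies on the ellipse, while the branch of $\bar\beta$ is fixed by the sign of $v'_y$, i.e. the sign of the tangential velocity, which is the constant sign of $I(\xi;\ep)$; hence $\sin\bar\beta$ and $\cos\bar\beta$ are continuous in $\ep$ and differentiable in $\xi$, with $\xi$-derivatives continuous in $\ep$. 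Substituting these into (\ref{caustica esterna pert})---a smooth function of $a^2$, $b^2$ and $\xi+\bar\beta$ with nonvanishing denominators---shows that $G_E$ has the claimed regularity, and $\partial_\xi G_E$ involves only first $\xi$-derivatives of $a^2,b^2,\bar\beta$, each continuous in $\ep$. Setting $\bar\ep^{(2)}\le\bar\ep^{(1)}$ small enough to secure all these non-degeneracies at once completes the argument.

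The main obstacle is exactly this control of the degenerations buried in (\ref{a2 b2 pert}) and (\ref{sinbeta cosbeta pert}): one must rule out the circular limit $a^2=b^2$, keep the radicand defining $\bar\beta$ nonnegative, and above all prevent the sign of $v'_y$ (the branch of $\bar\beta$) from flipping. All three reduce to keeping $I(\xi;\ep)$ uniformly away from $0$ and $\pm I_c$, which is precisely what the Diophantine hypothesis $\theta(I_0)\in\mathcal D$ (forcing $I_0\ne 0$) together with the $\ep$-continuity of the KAM curve guarantees.
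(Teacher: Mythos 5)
Your proposal is correct and follows essentially the same route as the paper's proof: both reduce the claim to the regularity and non-degeneracy of $a^2$, $b^2$, $\cos\bar\beta$, $\sin\bar\beta$, reconstruct $v_0$ (hence $p_0\cdot v_0$) from the KAM action $I(\xi;\ep)$ via (\ref{I alpha}), verify the non-degeneracies at $\ep=0$ (where $I_0\neq 0$, forced by the Diophantine hypothesis, is what keeps $b^2>0$ and fixes the sign of $v'_y$), and propagate them to small $|\ep|$ by continuity, the paper merely writing $p_0\cdot v_0$ and $v'_y$ out explicitly in terms of $\rho,\rho',I$ where you argue by inheritance of regularity. One small imprecision: positivity of the radicand in (\ref{a2 b2 pert}) only gives $a^2>b^2$, while $b^2>0$ requires that radicand to stay below $\E^2$, which is exactly the condition $I\neq 0$ — an ingredient you do invoke elsewhere, so the argument stands.
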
 
\begin{proof}Recalling (\ref{caustica esterna pert}), the proof of the Lemma relies on showing that all the quantities involved in the definition of $G_E(x,y; \xi, \ep)$, namely, $a^{-2},b^{-2}, \cos\bar\beta$ and $\sin\bar\beta$ are continous in $\ep$, differentiable in $\xi$ and with derivative continous in $\ep$, provided the latter is small enough.\\
Starting from $a^{-2}$ and $b^{-2}$, from (\ref{a2 b2 pert}) it is clear that the expression of $p_0\cdot v_0$ as a function of $\xi$ and $\ep$ is needed. 
	 Recalling the definition (\ref{def dominio perturbato}), denoted with $t(\xi; \ep)$ and $n_e(\xi; \ep)$ the tangent and the outward-pointing normal unit vectors to $\gamma_\ep$ in $p_0$, one has that
	\begin{equation*}
		v_0=\sqrt{2\E-\om \rho(\xi;\ep)^2}\left(\cos\alpha~ n_e(\xi; \ep)+\sin\alpha ~t(\xi; \ep)\right). 
	\end{equation*}
	Expliciting $\cos\alpha, \sin\alpha, t(\xi; \ep), n_e(\xi; \ep)$ and setting for simplicity $\rho\equiv\rho(\xi; \ep)$, $\rho'\equiv d\rho(\xi; \ep)/d\xi$ and $I(\xi; \ep)\equiv I$, one obtains 
	\begin{equation}\label{vx vy}
		\begin{aligned}
			&	v_0=\begin{pmatrix}
				v_x\\v_y
			\end{pmatrix}=\frac{1}{\sqrt{\rho^2+\rho'^2}}
			\begin{pmatrix}
				\sqrt{2}I\left(\rho'\cos\xi-\rho\sin\xi\right)+\sqrt{2\E-\om\rho^2-2I^2}\left(\rho'\sin\xi+\rho\cos\xi\right)\\
				\sqrt{2}I\left(\rho'\sin\xi+\rho\cos\xi\right)+\sqrt{2\E-\om\rho^2-2I^2}\left(\rho\sin\xi-\rho'\cos\xi\right)
			\end{pmatrix}\\
			&
			\Rightarrow p_0\cdot v_0=\frac{\rho(\xi;\ep)}{\sqrt{\rho^2(\xi; \ep)+{\rho'}^2(\xi;\ep)}}\left(\rho(\xi; \ep)\sqrt{2\E-\om \rho^2(\xi; \ep)-2I^2(\xi; \ep)}+\sqrt{2}I(\xi, \ep)\rho'(\xi; \ep)\right), 
		\end{aligned}
	\end{equation}
which has the desired continuity and differentiability properties provided $\ep$ is small enough. This implies that $a^2$ and $b^2$ have the same properties. Moreover, it is trivial that $a^2>0$ and, since for $\ep=0$ 
\begin{equation*}
	b^2_{|\ep=0}=\frac{\E-\sqrt{\E^2-2\om I_0^2}}{\om}>0, 
\end{equation*}
by the continuity of $b$ with respect to $\ep$ we have also $b^2>0$ for $\ep$ small enough. Applying the same reasoning, we can infer $\sqrt{a^2-b^2}>0$. \\
Going back to (\ref{sinbeta cosbeta pert}), $\cos\bar\beta$ is then continous and differentiable, and the same conclusion holds for $\sin\bar\beta$ if one can ensure that $v_y'$ has the same sign for all the points of the orbit $(\xi, I(\xi; \ep))$. From (\ref{vx vy}), in the plane $R(O,x',y')$ one has 
\begin{equation*}
	v_y'=\frac{1}{\sqrt{\rho^2(\xi; \ep)+{\rho'}^2(\xi; \ep)}}\left(\rho(\xi;\ep)\sqrt{2}I(\xi; \ep)-\rho'(\xi;\ep)\sqrt{2\E-\om\rho^2(\xi;\ep)-2I^2(\xi;\ep)}\right), 
\end{equation*}
which for $\ep=0$ translates in
\begin{equation*}
	{v'_y}_{|\ep=0}=\sqrt{2}I_0\neq0. 
\end{equation*}
Taking again advantage of the continuity of $v'_y$ with respect $\ep$, we can finally ensure that for $\ep$ small enough the thesis is proved.
\end{proof}
\begin{prop}\label{prop caustiche esterne pert}
		If $I_0\in\mathcal I\backslash\bar{\mathcal{I}}$ is such that $\theta(I_0)\in\mathcal D$, then there exists $\bar\ep_E$ such that for $|\ep|<\bar\ep_E$ the caustic $\Gamma_E(\xi; \ep, \theta(I_0))$ is globally well defined. 
\end{prop}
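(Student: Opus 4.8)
The plan is to realise the global outer caustic as the continuation in the perturbation parameter $\ep$ of the unperturbed apocenter locus found in \S\ref{sec: caustiche imperturbate}, applying the implicit function theorem fibrewise in the polar parameter $\xi$ and then gluing the local branches by a compactness argument on the parameter circle $\Rpi$.

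First I would package the envelope system (\ref{sistema caustiche pert}) into the single map
\begin{equation*}
	H(x,y,\xi,\ep)=\bigl(G_E(x,y;\xi,\ep),\ \partial_\xi G_E(x,y;\xi,\ep)\bigr),
\end{equation*}
so that the caustic points are exactly the zeros of $H$. By Lemma \ref{lema caustiche esterne pert} the components of $H$ are continuous in $\ep$ and possess the $\xi$-regularity needed below for $|\ep|<\bar\ep^{(2)}$, while from the explicit form (\ref{caustica esterna pert}) the function $G_E$ is a quadratic polynomial in $(x,y)$, hence smooth in those variables. Evaluating at $\ep=0$ along the Diophantine invariant line $I(\xi;0)\equiv I_0$ reproduces exactly the unperturbed picture of \S\ref{sec: caustiche imperturbate}: for every $\xi$ the equation $H(x,y,\xi,0)=0$ is solved at the apocenter $(\bar x(\xi),\bar y(\xi))=R_E(\cos\xi,\sin\xi)$, and the orthogonality (\ref{nondeg ex nonpert}) shows that the two rows $\nabla_{(x,y)}G_E$ and $\nabla_{(x,y)}\partial_\xi G_E$ of the Jacobian $\partial_{(x,y)}H$ are nonzero (here $\theta(I_0)\in\mathcal D$ forces $I_0\neq0$, so the prefactor $\sqrt{\E^2-2I_0^2\om}/I_0^2$ is finite and positive) and mutually perpendicular, whence $\partial_{(x,y)}H(\bar x(\xi),\bar y(\xi),\xi,0)$ is invertible.

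Next I would invoke the implicit function theorem at each $\xi_\ast\in\Rpi$: invertibility of $\partial_{(x,y)}H$ at $(\bar x(\xi_\ast),\bar y(\xi_\ast),\xi_\ast,0)$ yields a neighbourhood $U_{\xi_\ast}\times(-\delta_{\xi_\ast},\delta_{\xi_\ast})$ and a unique solution $(x(\xi,\ep),y(\xi,\ep))$ of $H=0$ agreeing at $\ep=0$ with the apocenter and inheriting the regularity of $H$ from Lemma \ref{lema caustiche esterne pert}. Because the unperturbed eccentricity stays strictly bounded, $a^2-b^2=2\sqrt{\E^2-2I_0^2\om}/\om>0$ (again using $I_0\neq0$, and $\E^2-2I_0^2\om>(\E-\om)^2>0$ on $\mathcal I$), the apocenter branch is separated from the pericenter branch, so these local solutions single out the correct component of the envelope. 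Covering the compact circle $\Rpi$ by finitely many $U_{\xi_\ast^{(j)}}$ and setting $\bar\ep_E=\min\{\bar\ep^{(1)},\bar\ep^{(2)},\min_j\delta_{\xi_\ast^{(j)}}\}$, local uniqueness forces the branches to coincide on overlaps; they therefore glue into a single map $\xi\mapsto(x(\xi,\ep),y(\xi,\ep))$ defined for all $\xi\in\Rpi$ and all $|\ep|<\bar\ep_E$, which by $2\pi$-periodicity is a closed curve. Regularity of $\Gamma_E(\xi;\ep,\theta(I_0)):=(x(\xi,\ep),y(\xi,\ep))$ then follows from persistence of the nondegeneracy (\ref{nondeg cond pert}): the two gradients depend continuously on $\ep$ and are strictly non-parallel on the compact unperturbed solution set, hence remain non-parallel for $|\ep|<\bar\ep_E$ after possibly shrinking the threshold.

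I expect the principal difficulty to lie not in the local analysis but in the global branch selection. The envelope system (\ref{sistema caustiche pert}) generically carries both an apocenter and a spurious pericenter solution for each $\xi$, and the implicit function theorem only guarantees local uniqueness; one must verify that following the perturbed apocenter continuously around $\Rpi$ closes up consistently and never collides with the pericenter branch. The strict separation $a^2>b^2>0$ inherited from $I_0\neq0$, together with the uniform threshold $\bar\ep_E$ produced by compactness, is precisely what legitimises the gluing, and keeping track of the (limited, $C^1$) regularity of $\xi\mapsto I(\xi;\ep)$ coming from the KAM curve of Theorem \ref{teorema KAM} is the remaining technical point to be handled with care.
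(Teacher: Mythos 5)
Your proposal is correct and follows essentially the same route as the paper's own proof: verify the nondegeneracy condition (\ref{nondeg cond pert}) at $\ep=0$ via the orthogonality (\ref{nondeg ex nonpert}) together with the regularity supplied by Lemma \ref{lema caustiche esterne pert}, apply the implicit function theorem fibrewise in $\xi$ to the system (\ref{sistema caustiche pert}), and then use compactness of $[0,2\pi]$ plus local uniqueness to glue the local branches under a uniform threshold $\bar\ep_E$. Your additional remarks (invertibility of $\partial_{(x,y)}H$ from the two perpendicular nonzero gradients, $I_0\neq 0$ from the Diophantine hypothesis, and the apocenter/pericenter separation $a^2-b^2>0$ that justifies consistent branch selection) are refinements of points the paper treats implicitly, not a different argument.
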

\begin{proof}
	As the nondegeneracy condition (\ref{nondeg cond pert}) holds for $\ep=0$ (cfr. (\ref{nondeg ex nonpert})), from Lemma \ref{lema caustiche esterne pert}, for every $\bar\xi\in[0,2\pi]$ there exists $\bar\ep^{(2)}(\bar\xi)$ such that for every $|\ep|<\bar\ep^{(2)}(\bar\xi)$ condition (\ref{nondeg cond pert}) is satisfied. By the implicit function theorem, there are $\lambda_\xi(\bar\xi), \lambda_\ep(\bar\xi)>0$ such that the curve $(x(\xi; \ep), y(\xi, \ep))$ solution of (\ref{sistema caustiche pert}) is well defined in $R(\bar\xi)=\left(\bar\xi-\lambda_\xi(\bar\xi),\bar\xi+\lambda_\xi(\bar\xi)\right)\times\left(-\lambda_\ep(\bar\xi), \lambda_\ep(\bar\xi)\right)$. For the uniqueness of the solution, if $\bar\xi_1$ and $\bar\xi_2$ are such that $R(\bar\xi_1)\cap R(\bar\xi_2)\neq\emptyset$, the curve coincides in such intersection. As $[0, 2\pi]$ is compact, it is possible to find $N>0$, $\{\bar\xi_1, \dots\bar\xi_N\}\subset[0,2\pi]$ such that 
	\begin{equation*}
		[0,2\pi]\subset\bigcup_{i=1}^N\left(\bar\xi_i-\lambda_\xi(\bar\xi_i),\bar\xi_i+\lambda_\xi(\bar\xi_i)\right), 
	\end{equation*}
then, setting 
\begin{equation*}
	\bar\ep_E=\min_{i\in\{1, \dots, N\}}\lambda_\ep(\bar\xi_i), 
\end{equation*}
for every $\ep>0$ such that $|\ep|<\bar\ep_E$ the curve $\Gamma_E(\xi; \ep, I_0)=(x(\xi; \ep), y(\xi; \ep) )$ is globally well defined in $[0, 2\pi]$.
\end{proof}

\paragraph{\textbf{Inner caustics}} 

Following the same reasoning applied for the outer caustic, let us consider the inner problem 
\begin{equation*}
\begin{cases}
	z''(s)=-\frac{\mu}{|z(s)|^3}z(s), \quad &s\in[0,T_I]\\
	\frac{1}{2}|z'(s)|^2-\E-h-\frac{\mu}{|z(s)|}=0 &s\in[0,T_I]\\
	z(0)=p_0, \quad z'(0)=v_0
\end{cases}	
\end{equation*}
by fixing $p_0=|p_0|e^{i\xi}$, $v_0=\sqrt{2(\Eh+\mu/|p_0|)}e^{i\theta_v}$ such that $\theta_v-\xi\in(\pi/2, 3\pi/2)$. This last assumption, which is done to guarantee that the hyperbola points inward a circle of radius $|p_0|$, can be ensured for $\ep$ small enough and suitable bounds on $I(\xi)$. Rotating again the reference frame $R(O, x,y)$ by an angle $-\xi$, we obtain $R(O, x', y')$ such that $p_0=|p_0|(1,0)$. \\
Recalling (\ref{cartesiana iperbole 0}), in the reference frame $R(O, x'', y'')$ were the hiperbola's pericenter lies on the positive half of the $x$-axis, its Cartesian equation is given by: 
\begin{equation*}
(e^2-1){x''}^2-{y''}^2-2pex''+p^2=0\text{ with }x\leq\frac{p}{e+1}, 	
\end{equation*}
where 
\begin{equation*}
	p=\frac{k^2}{\mu}, e=\frac{\sqrt{\mu^2+2(\Eh)k^2}}{\mu}, \quad k=|p_0\wedge v_0|. 
\end{equation*}
To find the corresponding equation in the reference frame $R(O, x', y')$, one can search again for the angle $\bar\delta$ such that the arc defined by
\begin{equation}\label{sistema bardelta }
	\begin{aligned}
	&(e^2-1)(x'\cos\bar\delta+y'\sin\bar\delta)^2-(y'\cos\bar\delta-x'\sin\bar\delta)^2-2ep(x'\cos\bar\delta+y'\sin\bar\delta)+p^2=0, \\
	&x'\cos\bar\delta+y'\sin\bar\delta\leq \frac{p}{e+1}
	\end{aligned}
\end{equation}
passes from $p_0=|p_0|(1,0)$. Solving (\ref{sistema bardelta }) with $x'=|p_0|$ and $y'=0$, one obtains 
\begin{equation*}
\cos\bar\delta=\frac{p-|p_0|}{e|p_0|}, 	
\end{equation*}
which is in $[-1,1]$ if we take non-degenerate hyperbol\ae. Referring to $v_y'$ as the vertical component of  $v_0$ in $R(O, x',y')$, one has then 
\begin{equation*}
	\sin\bar\delta=\begin{cases}
		\frac{(e^2-1)|p_0|^2+2 p |p_0|-p^2}{e|p_0|}\quad \text{if }v'_y>0\\
		-\frac{(e^2-1)|p_0|^2+2 p |p_0|-p^2}{e|p_0|}\quad \text{if }v'_y<0
	\end{cases}.
\end{equation*}
Returning to the original reference frame $R(O, x,y)$, one obtains then the Cartesian equation for the inner Keplerian arc
\begin{equation}\label{equazione cartesiana interna}
	\begin{aligned}
G_I(x,y;\xi\ep)=&(e^2-1)(x\cos(\bar\delta+\xi)+y\sin(\bar\delta+\xi))^2-(y\cos(\bar\delta+\xi)-x\sin(\bar\delta+\xi))^2+\\&-2 p e (x\cos(\bar\delta+\xi)+y\sin(\bar\delta+\xi))+p^2=0\\
&x\cos(\bar\delta+\xi)+y\sin(\bar\delta+\xi)\leq\frac{p}{e+1}. 
\end{aligned}
\end{equation}
Note that, with reference to the polar angles $\xi$ and $\theta_v$, the angle $\bar\delta$ can be also expressed as 
\begin{equation}\label{bardelta int}
	\bar\delta=\frac{\sin(\theta_v-\xi)}{|\sin(\theta_v-\xi)|}\arccos\left(\frac{p-|p_0|}{e|p_0|}\right). 
\end{equation}
As in the case of the outer dynamics, the global good definition of the inner caustic $\Gamma_I(\xi; \ep, \theta(I_0))$ depends on proving that $G_I(x, y; \xi, \ep)$  differentiable in $\xi$ and that both $G_I$ and $\partial_\xi G_I$ are continous in $\ep$. 
\begin{lemmav}\label{lemma caustiche interne pert}
	If $I_0\in\mathcal I\backslash\bar{\mathcal{I}}$ is such that $\theta(I_0)\in\mathcal D$, then there is $\bar\ep^{(3)}>0$ such that, if $|\ep|<\bar\ep^{(3)}$, then $G_I(x, y; \xi, \ep)$ is continous in $\ep$, differentiable in $\xi$ and such that $\partial _\xi G(x, y; \xi, \ep)$ is continous in $\ep$. 
\end{lemmav}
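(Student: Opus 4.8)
The plan is to mirror the argument of Lemma~\ref{lema caustiche esterne pert}: since $G_I$ in \eqref{equazione cartesiana interna} is a polynomial in $x,y$ whose coefficients are built from the finitely many scalars $p$, $e$ (hence $e^2-1$, $pe$, $p^2$) and from the rotation angle $\bar\delta+\xi$ through $\cos(\bar\delta+\xi)$, $\sin(\bar\delta+\xi)$, it suffices to show that each of $p$, $e$, $\cos\bar\delta$, $\sin\bar\delta$ is continuous in $\ep$, differentiable in $\xi$, and has $\xi$-derivative continuous in $\ep$, once $|\ep|$ is small. All of these are functions of the angular momentum $k=|p_0\wedge v_0|$ and of $|p_0|=\rho(\xi;\ep)$, so the whole statement reduces to the regularity and nonvanishing of $k$.

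The first step is to express $k$ as a function of $(\xi,\ep)$. Whereas the outer Lemma needed the projection $p_0\cdot v_0$ computed in \eqref{vx vy}, here the relevant scalar is the wedge $p_0\wedge v_0$. I would parametrise the inner velocity on the KAM curve exactly as there: its tangential component along $\dot\gep(\xi)$ is $\sqrt2\,I(\xi;\ep)$ (this is the meaning of the action, cf. \eqref{I alpha}), while its normal component is fixed, up to a sign, by the energy relation $|v_0|^2=2V_I(\gep(\xi))$. Writing $\gep(\xi)=\rho e^{i\xi}$ and $\dot\gep(\xi)=(\rho'+i\rho)e^{i\xi}$ with $\rho=\rho(\xi;\ep)$, a computation wholly analogous to \eqref{vx vy} yields
\begin{equation*}
  k=\frac{1}{\sqrt{\rho^2+\rho'^2}}\left(\sqrt2\,I(\xi;\ep)\,\rho^2\pm \rho\rho'\sqrt{2V_I(\gep(\xi))-2I^2(\xi;\ep)}\right),
\end{equation*}
a smooth combination of $\rho$, $\rho'$, $V_I(\gep(\xi))=\Eh+\mu/\rho$ and $I(\xi;\ep)$. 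Since $\rho=1+\ep f(\xi;\ep)$ inherits the regularity of $f$, and $I(\xi;\ep)$ is, by Theorem~\ref{teorema KAM}, continuous in $\ep$ and $C^1$ in $\xi$ with $\partial_\xi I$ continuous in $\ep$, the same holds for $k$, hence for $p=k^2/\mu$ and $e=\sqrt{\mu^2+2\Eh\, k^2}/\mu$.

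It remains to secure the nondegeneracies, and this is where the argument is most delicate. At $\ep=0$ one has $\rho\equiv1$, $\rho'\equiv0$, so $k|_{\ep=0}=\sqrt2\,I_0$; because $\theta(I_0)\in\mathcal D$ forces $I_0\neq0$, the quantity $k$ stays bounded away from $0$ for small $\ep$, so $p>0$ and $e>1$, i.e. the hyperbola remains nondegenerate and $\cos\bar\delta=(p-|p_0|)/(e|p_0|)$ stays in $[-1,1]$ by continuity from its circular value. For $\sin\bar\delta$ I would use that its sign is the sign of the component $v_y'=k/|p_0|$ (equivalently of $\sin(\theta_v-\xi)$ in \eqref{bardelta int}), which at $\ep=0$ equals $\sqrt2\,I_0\neq0$ with constant sign in $\xi$. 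The main obstacle is precisely to keep this sign constant along the entire curve: one cannot argue pointwise in $\xi$, but must invoke the compactness of $[0,2\pi]$ together with the $C^1$-closeness of $I(\cdot;\ep)$ to the constant $I_0$ as $\ep\to0$, exactly as for $v_y'$ in the outer case. Taking $\bar\ep^{(3)}$ to be the minimum of the thresholds guaranteeing $k\neq0$, $e>1$, $|\cos\bar\delta|\le1$ and the uniform sign of $v_y'$ yields the claim.
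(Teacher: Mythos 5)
Your proposal is correct and follows essentially the same route as the paper's proof: both reduce the statement to the regularity and nonvanishing of the angular momentum $k(\xi;\ep)$, expressed through $\rho$, $\rho'$ and the KAM action $I(\xi;\ep)$, and then use $k|_{\ep=0}=\sqrt{2}\,I_0\neq 0$ (forced by $\theta(I_0)\in\mathcal D$) together with continuity in $\ep$ to keep the sign condition that makes $\sin\bar\delta$ well defined along the whole curve. The only cosmetic difference is that the paper verifies this sign condition by computing $\sin(\theta_v-\xi)$ explicitly, whereas you phrase it through $v_y'=k/|p_0|$; these are equivalent, as you note.
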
 
\begin{proof}
 As in the case of Lemma \ref{lema caustiche esterne pert}, one needs to prove the desired regularity properties on the quantities $p$ and $e$, as well as $\sin\bar\delta$ and $\cos\bar\delta$. As all these quantities depend on $k=|p_0\wedge v_0|$, let us find the expression of the angular momentum as a function of $\xi$. As already done in \S \ref{sec: first return}, let us now denote with $\alpha$ the angle between $v_0$ and the \textit{inward-}pointing normal unit vector to $\gamma_\ep$ in $p_0$, which we indicate with $n_i(\xi)$; then referring to  (\ref{I alpha}) and using the same notation of Lemma \ref{lema caustiche esterne pert}, we have 
 \begin{equation*}
 	\begin{aligned}
 		v_0&=\sqrt{2\left(\Eh+\frac{\mu}{\rho}\right)}\left(\sin\alpha~t(\xi)+\cos\alpha~n_i(\xi)\right)=\\
 		&=\frac{1}{\sqrt{\rho^2+\rho'^2}}\begin{pmatrix}
 			\sqrt{2}I\left(\rho' \cos\xi-\rho\sin\xi\right)-\sqrt{2(\Eh+\mu/\rho-I^2)}\left(\rho'\sin\xi+\rho\cos\xi\right)\\
 			\sqrt{2}I\left(\rho' \sin\xi+\rho\cos\xi\right)+\sqrt{2(\Eh+\mu/\rho-I^2)}\left(\rho'\cos\xi-\rho\sin\xi\right). 
 		\end{pmatrix}
 	\end{aligned}
 \end{equation*}
 And, since $p_0=\rho e^{i\xi}$,  
 \begin{equation*}
 	\begin{aligned}
 		k=|p_0\wedge v_0|=\frac{\sqrt{2}\rho(\xi; \ep)}{\sqrt{\rho^2(\xi; \ep)+{\rho'}^2(\xi; \ep)}}\left(I(\xi; \ep)\rho(\xi; \ep)+\rho'(\xi; \ep)\sqrt{\Eh+\frac{\mu}{\rho(\xi; \ep)}-I^2(\xi;\ep)}\right)\\
 		p=\frac{k^2}{\mu}=\frac{2\rho^2(\xi; \ep)}{\mu(\rho^2(\xi; \ep)+{\rho'}^2(\xi; \ep))}\left(I(\xi; \ep)\rho(\xi; \ep)+\rho'(\xi; \ep)\sqrt{\Eh+\frac{\mu}{\rho(\xi; \ep)}-I^2(\xi;\ep)}\right)^2\\
 		e=\sqrt{1+\frac{4(\Eh)\rho^2(\xi; \ep)}{(\rho^2(\xi; \ep)+{{\rho'}^2(\xi; \ep)})\mu^2}\left(I(\xi; \ep)\rho(\xi; \ep)+\rho'(\xi; \ep)\sqrt{\Eh+\mu/\rho(\xi; \ep)-I^2(\xi; \ep)}\right)^2}
 	\end{aligned}
 \end{equation*}
The regularity of $p$ and $e$ is then ensured whenever $\rho^2(\xi;\ep)+{\rho'}^2(\xi; \ep)\neq0$, which is true for $\ep$ small enough.  
As for $\sin\bar\delta$ and $\cos\bar\delta$, from (\ref{bardelta int}) one can infer that the requested regularity is ensured if $\sin(\theta_v-\xi)$ has always the same sign on the orbit $(\xi,I(\xi; \ep))$. As in the case of the outer orbit, this is a consequence of the continuity of $\rho(\xi; \ep)$, $\rho'(\xi; \ep)$ and $I(\xi;\ep)$ with respect to $\ep$. Denoting with $\theta_{n_i}$ the polar angle of $n_i(\xi)$, from the definition of  $\alpha$ one has $\theta_v-\xi=\theta_{n_i}-\xi-\alpha$, and then 
\begin{equation*}
	\begin{aligned}
	\sin(\theta_v-\xi)&=\sin(\theta_{n_i}-\xi)\cos\alpha+\cos(\theta_{n_i}-\xi)\sin\alpha=\\&=\frac{1}{\rho(\xi; \ep)\sqrt{\Eh+\frac{\mu}{\rho(\xi; \ep)}}}\left(\sqrt{\Eh+\frac{\mu}{\rho(\xi; \ep)}-I^2(\xi; \ep)}|n_I(\xi)\wedge\gamma_\ep(\xi)|-I(\xi; \ep)\gamma_\ep(\xi)\cdot n_i(\xi)\right)= \\
	&=\frac{\rho'(\xi; \ep)\sqrt{\Eh+\frac{\mu}{\rho(\xi; \ep)}-I^2(\xi; \ep)}+\rho(\xi; \ep)I(\xi; \ep)}{\sqrt{\rho^2(\xi; \ep)+{\rho'}^2(\xi; \ep)}\sqrt{\Eh+\frac{\mu}{\rho(\xi; \ep)}}}. 
	\end{aligned}
\end{equation*} 
For $\ep=0$, $\sin(\theta_v-\xi)_{|\ep=0}=I_0/\sqrt{\Eh+\mu}\neq0$, then, if $\ep$ is small enough, $\sin(\theta_v-\xi)$ has always the same sign of $I_0$, and $\sin\bar\delta$, $\cos\bar\delta$ are differentible in $\xi$ and continous in $\ep$, with derivative continous in $\ep$. 
\end{proof}
Making use of Lemma \ref{lemma caustiche interne pert} and following the same reasoning used in the proof of Proposition \ref{prop caustiche esterne pert}, it is possible to prove the existence of a well-defined inner caustic $\Gamma_I(\xi; \ep, \theta(I_0))$ related to the invariant curve for the map $\mathcal\F_\ep$ with rotation number $\theta(I_0)$. 
\begin{prop}\label{prop caustiche interne pert}
	If $I_0\in\mathcal I\backslash\bar{\mathcal{I}}$ is such that $\theta(I_0)\in\mathcal D$, then there exists $\bar\ep_I$ such that for $|\ep|<\bar\ep_I$ the caustic $\Gamma_I(\xi; \ep, \theta(I_0))$ is globally well defined. 
\end{prop}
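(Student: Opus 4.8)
The plan is to mirror, almost verbatim, the argument already carried out for the outer caustic in Proposition \ref{prop caustiche esterne pert}, replacing the outer implicit function $G_E$ by the inner one $G_I(x,y;\xi,\ep)$ of (\ref{equazione cartesiana interna}) and invoking Lemma \ref{lemma caustiche interne pert} in place of the outer regularity lemma. Concretely, I would show that the envelope system (\ref{sistema caustica}) evaluated on the inner dynamics defines, for $\ep$ small, a unique regular curve locally in $\xi$, and then patch these local curves into a single globally defined caustic by a compactness argument on $[0,2\pi]$.

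First I would recall that at $\ep=0$ the inner caustic was computed explicitly in \S\ref{sec: caustiche imperturbate} as the locus of pericenters, and that there the nondegeneracy condition (\ref{nondeg cond}) holds in its sharpest form, namely $\nabla_{(x,y)}G_I\perp\nabla_{(x,y)}\partial_\xi G_I$ on the solutions of (\ref{sistema caustica}). By Lemma \ref{lemma caustiche interne pert}, for $|\ep|<\bar\ep^{(3)}$ the function $G_I$ is continuous in $\ep$ and differentiable in $\xi$, with $\partial_\xi G_I$ continuous in $\ep$; hence both gradients $\nabla_{(x,y)}G_I$ and $\nabla_{(x,y)}\partial_\xi G_I$ depend continuously on $\ep$. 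Since they are linearly independent at $\ep=0$, for every fixed $\bar\xi\in[0,2\pi]$ there is $\bar\ep^{(3)}(\bar\xi)>0$ such that the perturbed nondegeneracy condition (\ref{nondeg cond pert}), with $G_E$ replaced by $G_I$, persists for $|\ep|<\bar\ep^{(3)}(\bar\xi)$.

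Next I would apply the implicit function theorem to the inner envelope system: the nondegeneracy just established yields $\lambda_\xi(\bar\xi),\lambda_\ep(\bar\xi)>0$ and a unique $C^1$ solution curve $(x(\xi;\ep),y(\xi;\ep))$ of (\ref{sistema caustica}) (for $G_I$) on the rectangle $R(\bar\xi)=(\bar\xi-\lambda_\xi(\bar\xi),\bar\xi+\lambda_\xi(\bar\xi))\times(-\lambda_\ep(\bar\xi),\lambda_\ep(\bar\xi))$. Local uniqueness forces these curves to agree on overlaps, so that compactness of $[0,2\pi]$ produces a finite subcover $\{R(\bar\xi_i)\}_{i=1}^N$; setting $\bar\ep_I=\min_i\lambda_\ep(\bar\xi_i)$ then gives, for every $|\ep|<\bar\ep_I$, a globally defined closed curve $\Gamma_I(\xi;\ep,\theta(I_0))$.

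The delicate point, specific to the inner dynamics, is to ensure that the branch selected by the implicit function theorem is the admissible one, i.e. that its points satisfy the pericenter constraint $x\cos(\bar\delta+\xi)+y\sin(\bar\delta+\xi)\le p/(e+1)$ of (\ref{equazione cartesiana interna}), so that $\Gamma_I$ is genuinely the locus of pericenters and not a spurious root of the algebraic envelope equations. This is exactly the pericenter/apocenter dichotomy already met in the circular case, and I expect it to be the main obstacle; it is resolved by continuity, since at $\ep=0$ the unique admissible solution is the pericenter circle of Theorem \ref{th caustiche}, where the inequality holds strictly, and the $\ep$-continuity furnished by Lemma \ref{lemma caustiche interne pert} preserves it for $|\ep|$ small, after a further reduction of $\bar\ep_I$ if needed. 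Together with the uniformity of the threshold over $[0,2\pi]$ guaranteed by compactness, this completes the scheme; the remaining steps are routine applications of the implicit function theorem.
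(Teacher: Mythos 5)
Your proposal follows exactly the paper's route: the paper proves this proposition precisely by invoking Lemma \ref{lemma caustiche interne pert} and repeating, essentially verbatim, the covering argument of Proposition \ref{prop caustiche esterne pert} (nondegeneracy at $\ep=0$ from the unperturbed computation of \S\ref{sec: caustiche imperturbate}, persistence of the nondegeneracy condition by continuity in $\ep$, local solvability of the envelope system with $G_E$ replaced by $G_I$ via the implicit function theorem, agreement of local solutions on overlaps, compactness of $[0,2\pi]$ and $\bar\ep_I=\min_i\lambda_\ep(\bar\xi_i)$). The core of your argument is therefore correct and is the intended proof.

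One caveat on the supplementary point you flag as ``the main obstacle'': your resolution rests on a false premise. At $\ep=0$ the caustic point \emph{is} the pericenter, i.e.\ the vertex of the admissible branch, where the constraint of (\ref{equazione cartesiana interna}) holds with \emph{equality}, not strictly: on the circle of Theorem \ref{th caustiche} one has $\bar x\cos\zeta+\bar y\sin\zeta=p/(1+e)$ exactly. Hence ``strict inequality at $\ep=0$ plus continuity'' is not a valid argument. What actually saves the claim is that any solution of the envelope system lies on the conic $G_I=0$, whose two branches are separated by the gap $p/(e-1)-p/(e+1)=2p/(e^2-1)>0$, bounded away from zero uniformly in $\xi$ for $|\ep|$ small; since the curve produced by the implicit function theorem is continuous in $\ep$ and lies on the admissible branch at $\ep=0$, it cannot jump to the other branch for $|\ep|$ small. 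With this correction (or by simply omitting the point, as the paper implicitly does, since the implicit function theorem already continues the unperturbed pericenter solution uniquely), your proof is complete.
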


\appendix

\section{Proofs of Theorems \ref{thm ex globale esterna unp} and \ref{thm ex globale interna unp}}\label{appA}

\subsection{Outer arcs}
In the case of the inner arcs, the proof of the existence and uniqueness Theorem \ref{thm ex globale esterna unp} lies on the direct computation of the solutions of problem \ref{problema esterno testo} for fixed $p_0$ and $p_1$. Moreover, the results obtained in the following proof are used in Section \ref{sec: mappa cerchio esplicita} to give the explicit expression of the outer shift in the unperturbed case. 
\begin{proof}[Proof of Theorem \ref{thm ex globale esterna unp}]
	Fix $p_0=e^{i\theta_0}\in\partial D_0$, and, given $\alpha\in(-\pi/2,\pi/2)$, consider the Cauchy problem 
	
	\begin{equation}\label{extP}
		\begin{cases}
			z''(s)=-\om z(s),\\
			z(0)=p_0, z'(0)=v_0=\sqrt{2\E-\om}e^{i\theta_0+\alpha},
		\end{cases}
	\end{equation}
	whose solution $z(s;p_0, v_0)$ is an ellipse whose parameters depend on the initial conditions and can be decoupled as
	\begin{equation}
		\begin{aligned}
			z(s)=(x(s),y(s))&=({p}_{0,x}\cos{\omega s }+\frac{v_{0,x}}{\omega}\sin{\omega s},{p}_{0,y}\cos{\omega s }+\frac{v_{0,y}}{\omega}\sin{\omega s})  
		\end{aligned}
	\end{equation}
	Since  $\alpha\in(-\pi/2,\pi/2)$,  the orbit is exterior to $D$ in a neigborhood of $s=0$. Let $s_1>0$ the first positive instant for which $z(s_1; 		p_0, v_0)\in\partial D_0$ again, and define $p_1=e^{i\theta_1}=p_1(p_0, \alpha)=z(s_1; p_0, v_0)$. As the system is invariant under rotations, the shift $\theta_E$ from $\theta_0$ to $\theta_1$ and $s_1$ depend only on the direction of $v_0$ with respect to the radial direction, i.e. on $\alpha$. We can then fix $p_0=\bar{p}_0=(1,0)$, and we have $\theta_E(\alpha)=\theta_1$.  
	The solution $z(s; \bar{p}_0,v_0)$ simplifies as 
	\begin{equation}
		\begin{aligned}
			z(t)=(x(s),y(s))=\left(\cos{\omega s }+\frac{v_x}{\omega}\sin{\omega s},\frac{v_y}{\omega}\sin{\omega s}\right), 
		\end{aligned}
	\end{equation}
	
	from which  one has 
	\begin{equation}
		r^2(s)=x^2(s)+y^2(s)=\frac{\om-\E}{\om}\cos{(2\omega s)}+\frac{v_x}{\omega}\sin{(2\omega s)}+\frac{\E}{\om}=A\cos{(2\omega s+\bar{\alpha})}+\frac{\E}{\om}, 
	\end{equation}
	with $A\in\mathbb{R}$ and $\bar{\alpha}\in[0,2\pi)$ such that 
	\begin{equation}
		\begin{cases}
			A\cos{\bar{\alpha}}=\frac{\om-\E}{\om},\quad A\sin{\bar{\alpha}}=-\frac{v_x}{\omega},  
		\end{cases}
	\end{equation}
	and, since $v_x>0$,  
	\begin{equation}\label{baralpha1}
		\begin{aligned}
			\cot{\bar{\alpha}}=\frac{\E-\om}{\omega v_x}\Rightarrow \bar{\alpha}=\text{arccot}\left(\frac{\E-\om}{\omega v_x}\right)\in\left(0,\frac{\pi}{2}\right),\\
			\cos{\bar{\alpha}}=\frac{\E-\om}{\sqrt{\om v_x^2+(\E-\om)^2}}, \quad \sin{\bar{\alpha}}=\frac{\omega v_x}{\sqrt{\om v_x^2+(\E-\om)^2}}\\
			A=-\frac{v_x}{\omega \sin{\bar{\alpha}}}=-\frac{\sqrt{\om v_x^2+(\E-\om)^2}}{\om}<0.
		\end{aligned}
	\end{equation}
	The time $s_1>0$ is such that $\rho(s_1)=1$ and is given by  $s_1=(\pi-\bar{\alpha})/\omega$: if $y(s_1)\neq 0$ (namely, $\alpha\neq0$), the polar angle $\theta_1$ of the point $p_1$ is given by
	\begin{equation}
		\theta_1=
		\begin{cases}
			\text{arccot}\left(\frac{x(s_1)}{y(s_1)}\right)&\text{ if }\alpha>0,  \\
			\text{arccot}\left(\frac{x(s_1)}{y(s_1)}\right)-\pi&\text{ if }\alpha<0, 
		\end{cases}
	\end{equation}
	where we took into account that, for $\alpha<0$, $\bar{\theta}_E\in[\pi,2\pi]$, then one has to take the second determination of arccot. \\
	Direct computations of the homotetic solution (corresponding to $\alpha=0$) and equation  (\ref{baralpha1}), along with the definition of $\theta_E$, lead finally to 
	\begin{equation}
		\theta_E(\alpha)=
		\begin{cases}
			\theta_E^+(\alpha)=\text{arccot}\left(\frac{\om}{(2\E-\om)\sin{(2\alpha)}}+\cot{(2\alpha)}\right) &\text{ if }\alpha>0, \\
			0 &\text{ if }\alpha=0, \\
			\theta_E^-(\alpha)=\text{arccot}\left(\frac{\om}{(2\E-\om)\sin{(2\alpha)}}+\cot{(2\alpha)}\right)-	\pi &\text{ if }\alpha<0.  
		\end{cases}
	\end{equation}
	If $\E>\om$, the function $\theta_E(\alpha)$ is of class $C^1$ in $(-\pi/2, \pi/2)$ and assumes all the values in $(-\pi,\pi)$. Moreover, 
	\begin{equation}
		\frac{d \theta_E}{d\alpha}(\alpha)=\frac{(2\E-\om)(2\E-\om+\om\cos{(2\alpha)})}{2\E(\E-2\om)-(2\E-\om)\om\cos{(2\alpha)}}>0\text{ for all }\alpha\in\left(-\frac{\pi}{2},\frac{\pi}{2}\right). 
	\end{equation}
	From the inverse function theorem, there exist a unique function $\alpha: (-pi,\pi)\to(-\pi/2,\pi/2)$, $\theta_1\mapsto\alpha(\theta_1)$ such that for every $\theta_1$ we have
	\begin{equation}
		p_1=e^{i\theta_1}=z\left(s_1(\alpha(\theta_1)); \bar{p}_0, \sqrt{2\E-\om}e^{i\alpha(\theta_1)}\right).
	\end{equation} 
	Moreover, $\alpha(\theta_1)\in C^1(-\pi,\pi)$. \\
	Fixing now $p_0, p_1\in D_0$ such that $|p_0-p_1|<2$, we have that $|\theta_0-\theta_1|<\pi$, then problem 
	\begin{equation}
		\begin{cases}
			z''(s)=-\om z(s),\\
			z(0)=p_0, z'(0)=v_0=\sqrt{2\E-\frac{\om}{2}}e^{i\left(\theta_0+\alpha(\theta_1-\theta_0)\right)},
		\end{cases}
	\end{equation}
	admits the unique solution $z(s; p_0,p_1)$. If we define $T=s_1$ as above, we have that $z(T; p_0,p_1)=p_1$ and $|z(s)|>1$ for every $s\in(0,T)$, while the energy conservation law is ensured by the choice of $v_0$.
	Moreover, by the differentiable dependence on the initial contitions of the Cauchy problem and the fact that $\alpha(\theta_1)$ is of class $C^1$, one can conclude that $z(s; p_0, p_1)$ is differentiabl as a function of its endpoints.

\end{proof}

\subsection{Inner arcs}
Unlike the outer case, the inner Kepler problem presents a singularity in the origin, which should be treated with more sophisticated strategies. To this end, techniques such as the Levi-Civita regularisation and more general results from Riemannian Geometry are used. \\
The Levi-Civita regularization technique consists in a change both in the temporal parameter and the spatial coordinates, in order to remove the singularity of Kepler-type potentials.
\begin{lemmav}\label{lem levi civita}
	Let $p_0, p_1\in\R^2\backslash\{0\}$. The fixed end problem
	\begin{equation} \label{PF}
		\begin{cases}
			(HS_I)[z(s)], &s\in[0,T]\\
			z(0)=p_0, \text{ }z(T)=p_1
		\end{cases}
	\end{equation}
	is conjugated to the Levi-Civita problem 
	\begin{equation}\label{PLC}
		\begin{cases}
			\ddot w(\tau)=\Omega^2 w(\tau) \quad &\tau\in[0,\tilde{T}]\\
			\frac{1}{2}|\dot{w}(\tau)|^2-\frac{\Omega^2}{2}|w(\tau)|^2-E=0, &\tau\in[0,\tilde{T}]\\
			w(0)=w_0, \text{ }w(\tilde{T})=w_1
		\end{cases}
	\end{equation}
	with $\Omega^2=2(\Eh)$, $E=\mu$, $w_0^2=p_0$, $w_1^2=p_1$ and $\tau=\tau(s)$ such that $\frac{d\tau}{ds}=\frac{1}{2|z(s)|}$. 
\end{lemmav}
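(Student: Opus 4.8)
The plan is to establish the conjugacy by a direct computation in complex coordinates, identifying $\R^2\cong\mathbb C$ and combining the conformal branched cover $z=w^2$ with the Sundman-type reparametrization $d\tau/ds=1/(2|z|)$. First I would rewrite the inner problem explicitly: in complex notation $(HS_I)$ reads $z''=-\mu z/|z|^3$ together with the energy relation $\tfrac12|z'|^2=(\Eh)+\mu/|z|$. Since $p_0,p_1\neq\boldsymbol{0}$, each admits a (nonzero) square root, and I regard $z=w^2$ with $w_0^2=p_0$, $w_1^2=p_1$; I would record at once that this map is two-to-one, so $w_0,w_1$ are determined only up to sign, a harmless ambiguity inherent to the Levi-Civita lift.

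Writing $'=d/ds$ and $\dot{ }=d/d\tau$, the core of the argument is the change of variables. The time change gives $z'=\frac{1}{2|w|^2}\dot z$, and since $\dot z=2w\dot w$ one obtains the clean identity $z'=\frac{w\dot w}{|w|^2}=\dot w/\bar w$, whence $|z'|^2=|\dot w|^2/|w|^2$. Substituting this into the Kepler energy relation and multiplying through by $|w|^2=|z|$ immediately yields $\tfrac12|\dot w|^2-(\Eh)|w|^2=\mu$, that is, the energy integral of \eqref{PLC} with $\Omega^2=2(\Eh)$ and $E=\mu$.

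For the equation of motion I would differentiate once more, $z''=\frac{1}{2|w|^2}\,\frac{d}{d\tau}\bigl(\dot w/\bar w\bigr)=\frac{\ddot w\,\bar w-\dot w\,\dot{\bar w}}{2|w|^2\bar w^2}$, and compare with $-\mu z/|z|^3=-\mu/(w\bar w^3)$. After clearing denominators the Kepler equation reduces to the scalar identity $\ddot w\,\bar w-\dot w\,\dot{\bar w}=-2\mu$. The point I want to stress is that this is \emph{not} yet linear: only after inserting the energy integral in the form $\dot w\,\dot{\bar w}=\Omega^2 w\bar w+2\mu$ do the $2\mu$ terms cancel, leaving $\ddot w\,\bar w=\Omega^2 w\bar w$, i.e. $\ddot w=\Omega^2 w$ upon dividing by $\bar w\neq0$ (the repulsive oscillator, consistent with the positive-energy, hyperbolic Keplerian arcs). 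Reading the same chain of equalities backwards — from $\ddot w=\Omega^2 w$ and the Levi-Civita energy back to $z''=-\mu z/|z|^3$ and the Kepler energy — gives the converse implication, so the two boundary-value problems are genuinely conjugate, with $\tilde T=\tau(T)$ and the endpoints matched through $w(0)^2=p_0$, $w(\tilde T)^2=p_1$.

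The step I expect to be the main obstacle is not the algebra but the justification that $\tau$ is a legitimate reparametrization precisely where regularization is needed: when the arc reaches the center, $|z|\to0$ forces $d\tau/ds\to\infty$, yet $\tau(s)=\int_0^s ds'/(2|z(s')|)$ stays finite (near a collision $|z|\sim c\,s^{2/3}$, so the integrand is integrable), while the $w$-motion remains smooth across the collision since it solves the regular linear equation $\ddot w=\Omega^2 w$. I would therefore argue that $s\mapsto\tau(s)$ is a strictly increasing continuous bijection of $[0,T]$ onto $[0,\tilde T]$, smooth off collisions, whose inverse $s(\tau)$ is smooth with $ds/d\tau=2|w|^2$ (vanishing exactly at collision); this is the very feature that makes the Levi-Civita formulation advantageous and brings the ejection-collision case of Theorem~\ref{thm ex globale interna unp} within reach of the same computation.
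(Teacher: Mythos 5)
Your proposal is correct and follows essentially the same route as the paper's proof: a direct computation combining the reparametrization $d\tau/ds=1/(2|z|)$ with the complex square $z=w^2$, where the crucial step in both arguments is inserting the energy integral to cancel the $2\mu$ terms and linearize the equation of motion into $\ddot w=\Omega^2 w$ (your identity $\ddot w\,\bar w-\dot w\,\dot{\bar w}=-2\mu$ is exactly the paper's intermediate equation $r\ddot w+w\left(2E-|\dot w|^2\right)=0$ in conjugate notation). The only difference is organizational — the paper changes time first via the operator identity $\frac{d^2}{ds^2}=\frac14\left(-\frac{\dot r}{r^3}\frac{d}{d\tau}+\frac{1}{r^2}\frac{d^2}{d\tau^2}\right)$ and then substitutes $z=w^2$, while you fold both into the first-order relation $z'=\dot w/\bar w$ — plus your closing discussion of integrability of $\tau(s)$ across collisions, which the paper defers to the remark on ejection-collision solutions.
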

\begin{proof}
	Let us consider the reparametrisation $s=s(\tau)$ such that $\frac{d}{ds}=\frac{1}{2r}\frac{d}{d\tau}$ with $r=|z(s)|$. Then, denoting with the dot the derivation with respect to $\tau$, 
	\begin{equation}
		\frac{d^2}{ds^2}=\frac{1}{4}\left(-\frac{1}{r^3}\dot{r}\frac{d}{d\tau}+\frac{1}{r^2}\frac{d^2}{d\tau^2}\right):
	\end{equation}
	the first and second equations in (\ref{PF}) can be expressed with respect to $\tau$ as
	\begin{equation}\label{lem1}
		r\ddot{z}-\dot{r}\dot{z}+4\mu z=0, \quad \frac{1}{8r^2}|\dot{z}|^2=\Eh+\frac{\mu}{r}. 
	\end{equation} 
	Considering now the new spatial variable in the complex plane $\mathbf{C}\simeq\R^2$ given by $z=w^2$, we have from (\ref{lem1}) :
	\begin{equation}
		\frac{1}{2}|\dot{w}|^2-\frac{\Omega^2}{2}|w|^2-E=0, \quad r\ddot{w}+w\left(2E-|\dot{w}|^2\right)=0\Rightarrow \ddot{w}=\Omega^2w. 
	\end{equation}
\end{proof}
We will refer to the time variable $\tau$ as the Levi-Civita time, and to the new reference system as the Levi-Civita plane; the original time and coordinate space will be called \textit{physical} time and plane. Moreover, following up on Notation \ref{notazione}, we will denote the first two lines of system (\ref{PLC}) with the abbreviation $(HS_{LC})[w]$. \\
\begin{rem}\label{ossLC}
	From Lemma \ref{lem levi civita} we have that, if $w(\tau; w_0,w_1)$ is a solution of (\ref{PLC}), then $z(s;p_0, p_1)=\left(w(\tau;w_0,w_1)_{|\tau=\tau(s)}\right)^2$ is a solution of (\ref{PF}) with endpoints $p_0=w_0^2$ and $p_1=w_1^2$. On the other hand, as the complex square determines a double covering of $\R^2\backslash\{0\}$, if we fix $p_0=r_0e^{i\theta_0},p_1=r_1e^{i\theta_1}\in\R^2\backslash\{0\}$, with $p_0\neq p_1$, and $z(z,p_0,p_1)$ is a solution of (\ref{PF}), we can find two distinct solutions of (\ref{PLC}) such that $\left(w(\tau;w_0,w_1)_{|\tau=\tau(s)}\right)^2=z(s;p_0, p_1)$. For, there are two pairs of points in the Levi-Civita plane, namely, $w_0^{\pm}=\pm\sqrt{r_0}e^{i\theta_0/2}$ and $w_1^{\pm}=\pm\sqrt{r_1}e^{i\theta_1/2}$, such that $w_{0\backslash1}^{\pm}=p_{0\backslash1}$. Supposing that problem (\ref{PLC})
	admits the solutions $w(\tau;w_0^-, w_1^-)$ and $w(\tau; w_0^-,w_1^+)$, it is straightforward that $w(\tau;w_0^+, w_1^+)=-w(\tau;w_0^-, w_1^-)$ and $w(\tau; w_0^+,w_1^-)=-w(\tau; w_0^-,w_1^+)$: passing to the physical plane, we have then two distinct solutions of (\ref{PF}), given by $\left(w(\tau; w_0^-,w_1^+)_{\tau=\tau(s)}\right)^2$ and $\left(w(\tau; w_0^-,w_1^-)_{\tau=\tau(s)}\right)^2$.	
	\\If instead $p_0=p_1=r_0e^{i\theta_0}$, the solution given by $w(\tau, w_0^-, w_1^-)$ collapses into a single point: there is only one solution of (\ref{PLC}) conjugated to $z(s; p_0,p_1)$, and it can be computed explicitely by choosing $w_0=-\sqrt{r_0}e^{i\theta_0/2}$ and $w_1=\sqrt{r_0}e^{i\theta_0/2}$: 
	\begin{equation}
		w(\tau, w_0, w_1)=\sqrt{r_0}\sqrt{\frac{2E}{\Omega}}\sinh{(\Omega(\tau-\tau_0))}e^{i\theta_0/2}, \quad \tau_0=\frac{1}{\Omega}\arcsinh\left(\sqrt{\frac{\Omega}{2E}}\right)=\frac{T}{2}, 
	\end{equation}
	which corresponds to an ejection-collision solution $z(s; p_0,p_0)$ parallel to the direction $e^{i\theta_0}$. 
\end{rem}
To find solutions of (\ref{PF}) we can then search for solutions of $(\ref{PLC})$ with suitable endpoints. We will prove that the fixed ends problem in the Levi-Civita plane admits a unique solution for every pair of points in $\R^2$: to this purpose, we need to introduce some known results from Riemannian Geometry. \\
The solutions of (\ref{PLC}) can be seen as reparametrizations of geodesic curves which connect $w_0$ and $w_1$ in the Riemannian manifold $(\R^2,\tilde{g})$, with the metric $\tilde{g}$ given by the metric tensor 
\begin{equation}\label{metrica conforme}
	\tilde{g}_{ij}=e^{\sigma(w)}\delta_{ij}, \text{ }\sigma(w)=\ln{\left(\frac{\Omega^2}{2}|w|^2+E\right)}>0 
\end{equation}
with $i,j\in\{1,2\}$: if we prove the existence of a unique geodesic in $(\R,\tilde{g})$ with connects $w_0$ and $w_1$, our claim follows straightforwardly. Once we have verified that its hypotheses hold, this will follow from the Cartan-Hadamard theorem: to retrieve its statement, along with the definition of all the involved quantities, we refer to  \cite{lee2006riemannian} and  \cite{carmo1992riemannian}. 
Taking into account (\ref{metrica conforme}), one can prove that, given $w\in\R^2$, the sectional curvature of $T_w\R^2$ at $w$ is given by 
\begin{equation}\label{curv sezionale}
	K(w)=-\frac{\Delta\sigma(w) }{e^{\sigma(w)}}=-\frac{\Omega^2 E}{\left(\frac{\Omega^2}{2}|w|^2+E\right)}<0. 
\end{equation}
As $\R^2$ is simply connected, to apply the Cartan-Hadamard theorem one has then to verify that the conformal metric $(M, \tilde g)$ is complete. In view of Hopf-Rinow Theorem, this is equivalent to show that $(M, d_{\tilde g})$ is complete as a metric space, where the distance $d_g(w_0, w_1)$ is defined as the infimum of all the lengths in $(M; g)$ of piecewise $C^1$ curves which connect $w_0$ and $w_1$. To prove this last assertion, we shall take advantage on the following Lemma, stated as part of Theorem $1$ in \cite{dirmeier2012growth, dirmeier2013392}.  

\begin{lemmav}
	Let $(M, g)$ be a non-compact and complete Rieannian manifold and $A: M\to(0,\infty)$ a positive function. We denote a conformally transformed metric on $M$ by $\tilde g=g/A^2$. If $A$ grows at most linerly towards $g$-infinity on $M$, that is, 
	\begin{equation}\label{cond cresc}
		\forall x_0\in M\text{ } \exists c_1, c_2>0 \text{ such that }\forall x\in M  \quad A(x)\leq c_1 d_g(x_0, x)+c_2, 
	\end{equation}
	 then $(M, \tilde g)$ is complete. 
\end{lemmav}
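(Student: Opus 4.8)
The plan is to invoke the classical characterisation that a Riemannian manifold $(M,h)$ is complete if and only if every \emph{divergent} curve—one that eventually leaves every compact subset of $M$—has infinite $h$-length (see e.g. \cite{carmo1992riemannian}). First I would record a topological preliminary: since $A$ is continuous and strictly positive, on every compact set it is bounded between two positive constants, so the conformal factor $A^{-2}$ makes $\tilde g$ and $g$ locally bi-Lipschitz. In particular $(M,g)$ and $(M,\tilde g)$ induce the same topology and therefore share the same compact sets and the same divergent curves. Thus it suffices to show that every divergent curve has infinite $\tilde g$-length.

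Next I would take an arbitrary divergent curve $\gamma$ and reparametrise it by $g$-arc length. Because $(M,g)$ is complete and $\gamma$ is $g$-divergent, the same criterion applied to $g$ forces $L_g(\gamma)=\infty$: a curve of finite $g$-length is $g$-Cauchy at its endpoint and, by completeness, converges to a point of $M$, contradicting divergence. Hence after reparametrisation $\gamma$ is defined on $[0,\infty)$ with $|\dot\gamma(t)|_g=1$ for almost every $t$.

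The core of the argument is then an elementary length estimate. Fixing the base point $x_0$ of the growth hypothesis \eqref{cond cresc}, the triangle inequality together with the unit-speed parametrisation gives
\begin{equation*}
	d_g(x_0,\gamma(t))\le d_g(x_0,\gamma(0))+L_g\bigl(\gamma|_{[0,t]}\bigr)=d_g(x_0,\gamma(0))+t,
\end{equation*}
so the linear growth bound yields $A(\gamma(t))\le c_1 t + C$ with $C:=c_1 d_g(x_0,\gamma(0))+c_2$. Consequently
\begin{equation*}
	L_{\tilde g}(\gamma)=\int_0^{\infty}\frac{|\dot\gamma(t)|_g}{A(\gamma(t))}\,dt=\int_0^{\infty}\frac{dt}{A(\gamma(t))}\ge\int_0^{\infty}\frac{dt}{c_1 t+C}=+\infty,
\end{equation*}
the last integral diverging logarithmically. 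Since $\gamma$ was an arbitrary divergent curve, every divergent curve has infinite $\tilde g$-length, and the criterion yields the completeness of $(M,\tilde g)$.

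The only genuinely delicate point—and hence the main obstacle—is the clean justification of the divergent-curve characterisation of completeness and the verification that a curve divergent for $\tilde g$ is divergent for $g$; both rest on the conformal invariance of the topology, which I would establish through the local bi-Lipschitz bounds of the first paragraph. Once this is in place the estimate is routine, and its decisive feature is that only \emph{linear} growth of $A$ is permitted: this matches exactly the borderline divergence of $\int^{\infty} dt/(c_1 t + C)$, whereas a superlinear bound $A(x)\le c_1 d_g(x_0,x)^{1+\delta}+c_2$ with $\delta>0$ would render the integral convergent and the conclusion false.
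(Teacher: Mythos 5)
Your proof is correct. One thing to be aware of: the paper itself gives no proof of this lemma — it is quoted as part of Theorem 1 of \cite{dirmeier2012growth, dirmeier2013392}, and the paper merely verifies the growth hypothesis \eqref{cond cresc} for the Levi-Civita conformal factor. Your argument is the standard self-contained one, and it is essentially the argument of the cited source: the divergent-curve characterisation of completeness (a Riemannian manifold is complete if and only if every piecewise $C^1$ curve that eventually leaves every compact set has infinite length, cf. \cite{carmo1992riemannian}), the remark that $g$ and $\tilde g = g/A^2$ induce the same topology and hence the same compact sets and the same divergent curves, and the key estimate $A(\gamma(t))\le c_1 t + C$ along a $g$-unit-speed divergent curve, which forces $L_{\tilde g}(\gamma)\ge \int_0^\infty (c_1 t + C)^{-1}\,dt = +\infty$. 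Two minor points merit attention. First, the statement only calls $A$ a positive function, while your bi-Lipschitz/topology step uses its continuity; this is implicit (otherwise $\tilde g$ would not be a Riemannian metric), and in the paper's application $A$ is smooth, but it is worth saying explicitly. Second, you invoke both directions of the divergent-curve criterion — the easy direction for $(M,g)$ (completeness plus finite length implies convergence of the curve, contradicting divergence) and the harder converse for $(M,\tilde g)$ (infinite length of all divergent curves implies completeness) — so the citation must cover the converse as well; it does, but the two uses are logically distinct and should be flagged. Your closing observation that linear growth is exactly the borderline, since a superlinear bound would make the comparison integral converge, is accurate and is precisely the sharpness phenomenon discussed in \cite{dirmeier2012growth}.
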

In our regularised system, $g$ is the usual Euclidean metric on $\R^2$, then $A=\left(\sqrt{\frac{\Omega^2}{2}|w|^2+E}\right)^{-1}$, and (\ref{cond cresc}) is trivially verified with $c_1=1$ and $c_2=1/\sqrt{\mu}$, as
\begin{equation*}
	A(w)=\frac{1}{\sqrt{\frac{\Omega^2}{2}|w|^2+E}}\leq\frac{1}{\sqrt{\mu}}\leq |w_0-w|+\frac{1}{\sqrt{\mu}}
\end{equation*}
for every $w_0, w\in\R^2$. The Cauchy-Hadamard theorem can be then applied, and as a consequence the existence and uniqueness of an orbit for any pairs of points in the Levi-Civita plane can be proved. 
\begin{prop}\label{propLC} 
	For every $w_0,w_1\in\R^2$ $\exists|w(\tau; w_0,w_1)$ solution of (\ref{PLC}) for some $\tilde{T}>0$. Moreover, $w(\tau;w_0,w_1)$ is of class $C^1$ with respect to variations of $w_0$ and $w_1$. 
\end{prop}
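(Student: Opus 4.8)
The plan is to extract existence and uniqueness directly from the Cartan--Hadamard theorem, whose hypotheses have just been verified for the conformal manifold $(\R^2,\tilde g)$ with $\tilde g_{ij}=e^{\sigma(w)}\delta_{ij}$, and then to transfer the conclusion from geodesics to solutions of \eqref{PLC} via the Maupertuis--Jacobi correspondence of Section \ref{sec: preliminaries}. Since $\tilde g$ is complete, $\R^2$ is simply connected, and the sectional curvature \eqref{curv sezionale} is negative, Cartan--Hadamard yields that for every $w_0$ the exponential map $\exp_{w_0}\colon T_{w_0}\R^2\to\R^2$ is a diffeomorphism. Consequently, for every pair of \emph{distinct} points $w_0,w_1$ there is exactly one geodesic of $(\R^2,\tilde g)$ joining them, namely $t\mapsto\exp_{w_0}\!\big(t\,v(w_0,w_1)\big)$, $t\in[0,1]$, with $v(w_0,w_1)=\exp_{w_0}^{-1}(w_1)$.

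First I would observe that $\tilde g$ is precisely the Jacobi metric $V_{LC}(w)\,\delta_{ij}$ attached to the regularised energy constraint, with $V_{LC}(w)=\tfrac{\Omega^2}{2}|w|^2+E=e^{\sigma(w)}$ by \eqref{metrica conforme}. Hence, by the equivalence between critical points of the Jacobi length and reparametrised classical solutions recalled in Section \ref{sec: preliminaries}, the trajectories of $(HS_{LC})$ coincide, as unparametrised curves, with the geodesics of $\tilde g$. The unique geodesic between $w_0$ and $w_1$ therefore determines a unique unparametrised solution of \eqref{PLC}; the Levi--Civita time parametrisation is then fixed uniquely by the energy relation, which prescribes the speed $|\dot w|=\sqrt{\Omega^2|w|^2+2E}$ along the curve (cf. \eqref{times}). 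This pins down
\[
\tilde T=\int_0^{L}\frac{d\ell}{\sqrt{\Omega^2|w(\ell)|^2+2E}},
\]
where $\ell$ is Euclidean arclength and $L$ is the Euclidean length of the geodesic; $L<\infty$ because $e^{\sigma/2}\ge\sqrt{E}$ bounds the $\tilde g$-length from below by $\sqrt E\,L$, while the integrand is bounded above by $(2E)^{-1/2}=(2\mu)^{-1/2}$, so $0<\tilde T<\infty$. This establishes existence and uniqueness.

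For the $C^1$ dependence I would exploit that $\exp_{w_0}$ is a diffeomorphism: the map $(w_0,w_1)\mapsto v(w_0,w_1)=\exp_{w_0}^{-1}(w_1)$ is smooth (indeed real-analytic, since $\sigma$ is) on the open set $w_0\neq w_1$. The solution of the linear system $\ddot w=\Omega^2 w$ with $w(0)=w_0$ and initial velocity $\dot w(0)=\sqrt{\Omega^2|w_0|^2+2E}\;v/|v|$ --- the direction being that of the geodesic and the magnitude fixed by the energy constraint --- is explicit ($\cosh$, $\sinh$) and smooth in $(w_0,\dot w(0),\tau)$. Composing with the smooth dependence of $v$ and of $\tilde T$ on the endpoints (both manifest from the formula above) shows that $w(\tau;w_0,w_1)$ is $C^1$, in fact smooth, in $(w_0,w_1)$, as claimed.

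The step I expect to demand the most care is the passage from the unparametrised geodesic supplied by Cartan--Hadamard to the time-parametrised solution of \eqref{PLC}: one must check that the reparametrisation dictated by \eqref{times} is a genuine $C^1$ change of variable --- its derivative is strictly positive because the speed stays bounded away from $0$ --- and that the resulting $\tilde T$ and velocity field inherit the smooth dependence on $w_0,w_1$. A second point to flag, rather than a real difficulty, is the degenerate endpoint $w_0=w_1$, where the connecting geodesic has zero length and does not lift to a positive-energy solution; this case is genuinely excluded here and is handled separately in the physical plane through the antipodal choice of the ejection--collision construction in Remark \ref{ossLC}. Everything else --- negativity of the curvature, completeness, simple connectedness --- has already been secured in the preceding discussion, so the remaining work is essentially the bookkeeping of these regularity transfers.
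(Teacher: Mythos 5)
Your proposal follows essentially the same route as the paper: the paper's proof of Proposition \ref{propLC} consists precisely of the preceding verification of the Cartan--Hadamard hypotheses (negative sectional curvature \eqref{curv sezionale}, simple connectedness, and completeness of the conformal metric via the linear-growth lemma), after which the unique geodesic of $(\R^2,\tilde g)$ joining $w_0$ and $w_1$ is converted into the solution of \eqref{PLC} through the Jacobi-metric correspondence. Your additional bookkeeping --- the energy-prescribed reparametrisation, the finiteness of $\tilde T$, the $C^1$ dependence via smoothness of the exponential map, and the caveat at $w_0=w_1$ (which the paper indeed only uses through the antipodal lift of Remark \ref{ossLC}) --- correctly fills in details the paper leaves implicit.
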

Taking together Proposition \ref{propLC} and Remark $\ref{ossLC}$, we can then pass to the physical plane, obtaining the below existence theorem. 
\begin{thmv}\label{thmunp}
	For every $p_0,p_1\in \partial D_0$ with $p_0\neq p_1$ there are exactly two classical solutions $z_0(s;p_0,p_1)$ and $z_1(s;p_0,p_1)$ of problem (\ref{PF})  for some $T_0,T_1>0$, which are of class $C^1$ with respect to $p_0$ and $p_1$.\\If instead $p_0=p_1$, there is a unique solution of (\ref{PF}), which is ejection-collision.
\end{thmv}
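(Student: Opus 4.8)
The plan is to read off the statement from the Levi--Civita conjugacy of Lemma~\ref{lem levi civita}, combined with the unconditional well-posedness of the regularized fixed-ends problem (Proposition~\ref{propLC}) and the branch bookkeeping of Remark~\ref{ossLC}. First I would write $p_0=e^{i\theta_0}$, $p_1=e^{i\theta_1}\in\partial D_0$ and record their four preimages under the complex square, $w_0^{\pm}=\pm e^{i\theta_0/2}$ and $w_1^{\pm}=\pm e^{i\theta_1/2}$, all of which lie on the unit circle of the Levi--Civita plane and are therefore nonzero. For each of the four ordered pairs $(w_0^{\sigma},w_1^{\rho})$, $\sigma,\rho\in\{+,-\}$, Proposition~\ref{propLC} provides a unique solution $w(\cdot;w_0^{\sigma},w_1^{\rho})$ of \eqref{PLC} on some interval $[0,\tilde T_{\sigma\rho}]$, of class $C^1$ in its endpoints. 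Passing to $z=w^2$ and undoing the time change via $ds/d\tau=2|w|^2$, each such $w$ yields a classical solution of \eqref{PF} joining $p_0$ to $p_1$; hence \eqref{PF} has at least one, and a priori at most four, solutions.

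Next I would collapse the four candidates to two using the $\mathbb{Z}_2$ invariance of \eqref{PLC} under $w\mapsto -w$. By uniqueness in Proposition~\ref{propLC} this gives $w(\tau;-w_0,-w_1)=-w(\tau;w_0,w_1)$, and since $(-w)^2=w^2$ the pairs $(w_0^{+},w_1^{+})$ and $(w_0^{-},w_1^{-})$ project to the same arc $z_0$, while $(w_0^{+},w_1^{-})$ and $(w_0^{-},w_1^{+})$ project to the same arc $z_1$. The decisive step, when $p_0\neq p_1$, is to prove $z_0\neq z_1$. Both lifts $w(\cdot;w_0^{-},w_1^{-})$ and $w(\cdot;w_0^{-},w_1^{+})$ start at the same nonzero point $w_0^{-}$, so near $\tau=0$ the map $w\mapsto w^2$ is a local diffeomorphism; if $z_0\equiv z_1$ the two lifts would have equal projections and equal value at $\tau=0$, hence coincide on a neighborhood of $0$, and then on their whole interval by uniqueness of the Cauchy problem for $\ddot w=\Omega^2 w$. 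This would force $w_1^{-}=w_1^{+}$, impossible since $w_1^{-}\neq 0$. Therefore \eqref{PF} has exactly two distinct solutions, which inherit $C^1$ dependence on $(p_0,p_1)$.

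Finally I would treat the degenerate case $p_0=p_1$ and the regularity. When $\theta_0=\theta_1$ the preimages merge, $w_0^{\pm}=w_1^{\pm}$: the pair $(w_0^{-},w_1^{-})$ has coincident endpoints and, as noted in Remark~\ref{ossLC}, its Levi--Civita solution degenerates to a point (equivalently $\tilde T\to 0$) and carries no genuine trajectory, whereas $(w_0^{-},w_1^{+})=(w_0^{-},-w_0^{-})$ joins antipodal points, passes through the origin, and projects to the ejection--collision orbit along $e^{i\theta_0}$ computed explicitly in Remark~\ref{ossLC}; the symmetry identifies the remaining pair with it, leaving a single solution. For the $C^1$ statement I would combine the $C^1$ endpoint-dependence of Proposition~\ref{propLC}, the smooth (local) dependence of $w_0^{\pm},w_1^{\pm}=\pm e^{i\theta/2}$ on a consistently chosen branch of the half-angle, the smoothness of $z=w^2$, and the fact that the time reparametrization $s(\tau)=\int_0^{\tau}2|w|^2\,d\tau$ is a $C^1$ diffeomorphism as long as $w\neq 0$. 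The main obstacle is the counting argument of the second paragraph: making the passage from the four Levi--Civita solutions to exactly two physical ones rigorous, and correctly isolating the single surviving (ejection--collision) branch in the degenerate limit, is where the covering structure must be handled with care.
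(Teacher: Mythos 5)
Your proposal is correct and follows essentially the same route as the paper, which obtains Theorem \ref{thmunp} precisely by combining Proposition \ref{propLC} with the double-covering bookkeeping of Remark \ref{ossLC} (the four Levi--Civita lifts collapsing under $w\mapsto -w$ to two physical arcs, and the degenerate pair yielding the ejection--collision solution when $p_0=p_1$). Your second paragraph in fact supplies a detail the paper only asserts in Remark \ref{ossLC}, namely that the two projected arcs $z_0$ and $z_1$ are genuinely distinct.
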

The previous results ensure the existence and differentiability of exactly two orbits (one in the case of ejection-collision solutions) which join two different points in $\partial D_0$; on the other hand, to gain transversality properties an explicit expression of the two solutions is needed. 
Classical arguments of Celestial Mechanics (see \cite{celletti2010stability}) allow us to obtain their Cartesian equations: let us consider the first equation of Problem (\ref{PF}), namely,
$z''(s)+\frac{\mu}{|z(s)|^3}z(s)=0$, and express it in polar coordinates, obtaining
\begin{equation}\label{polarprob}
	\begin{cases}
		r''(s)-r(s)\theta'(s)=-\frac{\mu}{r^2(s)},\\
		r(s)\theta''(s)-2r'(s)\theta'(s)=0. 
	\end{cases}
\end{equation}
The second equation express the conservation of the modulus of the angular momentum 
\begin{equation}
	k=r^2(s)\theta'(s)=|z(s)\wedge z'(s)|=|z(0)\wedge z'(0)|. 
\end{equation}
Taking into account the uniqueness of the ejection-collision solution for $p_0=p_1$, we can state that $k=0\Leftrightarrow p_0=p_1$: assuming $p_0\neq p_1$, we have then $k\neq0$. \\
The polar equation associated to a solution of  $z''(s)+\frac{\mu}{|z(s)|^3}z(s)=0$ having energy $\Eh$ and angular momentum $k$ is
\begin{equation}\label{polar}
	r(f)=\frac{p}{1+e\cos{f}}, 
\end{equation}
where $f\in[0,2\pi)$ is traditionally called \textit{true anomaly} and 
\begin{equation}\label{p e int}
	p=\frac{k^2}{\mu}, \quad e=\sqrt{1+\frac{2k^2(\Eh)}{\mu^2}}>1. 
\end{equation}
With a suitable choice of a reference frame denoted with $\mathcal{R}(0,x',y')$, we can see that equation (\ref{polar}) represents the branch of particular hyperbol\ae~ with the concavity facing the origin. More precisely, if $a=\mu/\left[2(\Eh)\right]$: 
\begin{itemize}
	\item if we choose $(x',y')=(r\cos{f},r\sin{f})$, we obtain that for every $s\in\R$
	\begin{equation}
		\left(x'(s),y'(s)\right)\in\mathcal{H}_1=\left\{(x,y)\in\R^2\text{ }|\text{ }(x-ae)^2(e^2-1)-y^2=a^2(e^2-1), \text{ }x\leq a(e-1) \right\}
	\end{equation}
	\item if instead $(x',y')=(-r\cos{f},r\sin{f})$, we have that $\forall s\in\R$
	\begin{equation}
		(x'(s),y'(s))\in\mathcal{H}_0=\left\{(x,y)\in\R^2\text{ }|\text{ }(x+ae)^2(e^2-1)-y^2=a^2(e^2-1), \text{ }x\geq a(1-e) \right\}. 
	\end{equation}
\end{itemize}
\begin{prop}\label{hip}
	For every $p_0=e^{i\theta_0},p_1=e^{i\theta_1}\in \partial D_0$, $p_0\neq p_1$, the two solutions $z_0(s;p_0,p_1)$ and $z_1(s;p_0,p_1)$ of (\ref{PF}) are such that:
	\begin{itemize}
		\item for every $s\in[0,T_0]$ 
		\begin{equation*}
			\begin{aligned}
				z_0(s; p_0,p_1)\in\mathcal{H}_0&(p_0,p_1)=\\&=\left\{(x',y')\in\R^2\text{ }|\text{ }(x'+ae_0)^2(e_0^2-1)-y'^2=a^2(e_0^2-1), \text{ }x'\geq a(1-e_0) \right\},
			\end{aligned}
		\end{equation*}
		\item for every $s\in[0,T_1]$ 
		\begin{equation*}
			\begin{aligned}
				z_1(s; p_0,p_1)\in\mathcal{H}_1&(p_0,p_1)=\\&=\left\{(x',y')\in\R^2\text{ }|\text{ }(x'-ae_1)^2(e_1^2-1)-y'^2=a^2(e_1^2-1), \text{ }x'\leq a(e_1-1) \right\}, 
			\end{aligned}
		\end{equation*}
	\end{itemize}
	where 
	\begin{equation}
		\begin{pmatrix}
			x'\\y'
		\end{pmatrix}
		=
		\begin{pmatrix}
			\cos{\alpha_0}&\sin{\alpha_0}\\
			-\sin{\alpha_0}&\cos{\alpha_0}
		\end{pmatrix}\begin{pmatrix}
			x\\y
		\end{pmatrix}, 
		\quad \alpha_0=\frac{\theta_1+\theta_0}{2}, 
	\end{equation}
	\begin{equation}\label{ecc}
		e_0=\frac{-x_0+\sqrt{4a^2+4a+x_0^2}}{2a}, \text{ }e_1=\frac{x_0+\sqrt{4a^2+4a+x_0^2}}{2a}, \text{ }x_0=\cos{\left(\frac{\theta_1-\theta_0}{2}\right)}. 
	\end{equation}
\end{prop}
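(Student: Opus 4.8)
The plan is to identify explicitly the two Keplerian arcs produced by Theorem~\ref{thmunp}, using the fact that each is a branch of a conic with focus at the origin. From the polar representation \eqref{polar}--\eqref{p e int}, every solution of \eqref{PF} lies on a hyperbola of energy $\Eh$, semi-major axis $a=\mu/[2(\Eh)]$ and eccentricity $e=\sqrt{1+2k^2(\Eh)/\mu^2}>1$, with focus at $\boldsymbol 0$; only the apsidal direction (the pericenter axis) and the value of $k$ remain to be fixed by the boundary data.

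First I would determine the orientation of the orbit by a symmetry argument. Since $p_0,p_1\in\partial D_0$ share the same modulus $|p_0|=|p_1|=1$, and a conic is symmetric with respect to its major axis, the two endpoints — being at equal focal distance $r=1$ — are reflections of one another across the apsidal line. Hence the apsidal line bisects $\angle(p_0,O,p_1)$, i.e. it forms the angle $\alpha_0=(\theta_0+\theta_1)/2$ with the horizontal axis. Passing to the rotated frame $\mathcal R(0,x',y')$ of the statement (rotation by the angle $\alpha_0$), the symmetry axis becomes the $x'$-axis and the two endpoints acquire the common abscissa $x_0=\cos((\theta_1-\theta_0)/2)$ and opposite ordinates $\pm\sin((\theta_1-\theta_0)/2)$; note that $|\theta_1-\theta_0|<\pi$ (because $|p_0-p_1|<2$), so $x_0\in(0,1)$.

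The two solutions of Theorem~\ref{thmunp} are then distinguished by the side toward which the pericenter points: along $+x'$ (vertex at $x'=a(e-1)$, branch $\mathcal H_1$) or along $-x'$ (vertex at $x'=-a(e-1)$, branch $\mathcal H_0$). To obtain the eccentricities I would impose passage through an endpoint. Using $r=1$ in the focal equation $r=a(e^2-1)/(1+e\cos f)$ gives $e\cos f=a(e^2-1)-1$; relating the true anomaly $f$ to the polar angle $\pm\psi$, with $\psi=(\theta_1-\theta_0)/2$, in the rotated frame yields $\cos f=x_0$ for $\mathcal H_1$ and $\cos f=-x_0$ for $\mathcal H_0$, so that $e$ solves the quadratics
\begin{equation*}
 a e^2 - x_0 e - (a+1) = 0 \quad (\mathcal H_1), \qquad a e^2 + x_0 e - (a+1) = 0 \quad (\mathcal H_0).
\end{equation*}
Each quadratic opens upward and is negative at $e=1$ — its value there being $-(1+x_0)$, respectively $x_0-1$, both negative since $x_0\in(0,1)$ — hence each has a unique root exceeding $1$; these are precisely $e_1$ and $e_0$ as in \eqref{ecc}. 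Substituting the selected eccentricity and the symmetry axis back into the Cartesian form \eqref{cartesiana iperbole 0} rotated by $\alpha_0$ produces the branches $\mathcal H_0(p_0,p_1)$ and $\mathcal H_1(p_0,p_1)$ of the statement.

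I expect the main obstacle to be the bookkeeping in relating the true anomaly to the geometric polar angle consistently across the two branches — in particular the sign flip of $\cos f$ between $\mathcal H_0$ and $\mathcal H_1$, together with matching the sign of the endpoint ordinate $y'$ with the two distinct Levi-Civita preimages $w(\tau;w_0^-,w_1^{\pm})$ of Remark~\ref{ossLC}. One must also check that the branch inequalities ($x'\le a(e_1-1)$ for $\mathcal H_1$, $x'\ge a(1-e_0)$ for $\mathcal H_0$) are compatible with the arc remaining inside the unit disk; this holds because the pericenter distance $a(e-1)=p/(1+e)$ is $<1$, as guaranteed by the constraint $|z(s)|<1$ for $s\in(0,T)$.
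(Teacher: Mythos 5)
Your proposal is correct and follows essentially the same route as the paper's proof: reduce by rotation/symmetry to the frame where the apsidal line is the bisector of the endpoints' polar angles, pin down the two admissible eccentricities by imposing passage through the endpoints, and conclude via the uniqueness of the two solutions from Theorem \ref{thmunp}. Your quadratics $ae^2 \mp x_0 e-(a+1)=0$ and the root analysis simply make explicit what the paper compresses into ``direct computations,'' and your symmetry argument (equal focal distances force reflection across the apsidal line) justifies the choice of frame that the paper takes for granted.
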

\begin{proof}
	As the system is invariant under rotations, it is sufficient to prove the claims for $p_0,p_1$ symmetric with respect to the $x-$axis, namely, $\bar{p}_0=e^{-i\beta}, \bar{p}_1=e^{i\beta}$, $\beta\in(0,\pi)$: any other cases can be treated as this one after a rotation of angle $\alpha_0$. \\
	For $p_0=\bar{p}_0$ and $p_1=\bar{p}_1$, we have $\alpha_0=0$ and $x_0=\cos{\beta}$: direct computations shows that $e_0$ and $e_1$ as defined in (\ref{ecc}) are the only values of eccentricity such that $\bar{p}_0, \bar{p}_1\in\mathcal{H}_0$ and $\bar{p}_0,\bar{p}_1\in\mathcal{H}_1$. The thesis follows from the uniqueness of the two solutions of (\ref{PF}). 
\end{proof}
\begin{rems}\label{osship}
	\begin{itemize}
		\item One can easily verify that, according to Definition \ref{wind unp}, for every $p_0,p_1\in\partial B_1(0)$, $p_0\neq p_1$, $|p_0-p_1|<2$,
		\begin{equation}
			Ind(z_0[0,T_0], 0)=0 \text{ and }|Ind(z_1[0,T_1],0)|=1. 
		\end{equation}
		\item For $p_1\to p_0$, the solution $z_1(s; p_0,p_1)$ tends to the ejection-collision solution $z(s;p_0,p_0)$ in the direction of $p_0$. For, $p_1\to p_0$ implies $e_1\to 1$ and $k_1\to0$.
		\item It is clear that for every $s\in(0,T_1)$ we have that $|z(s;p_0,p_1)|<1$. 
	\end{itemize}
\end{rems}
The analytic expressions of the traces of $z_0(s;p_0, p_1)$ and $z_1(s;p_0,p_1)$ allow to derive the transversality properties of the latter; as we are interested to the solution which converges to the ejection-collision one for $p_1\to p_0$, we will focus on the transversality of $z_1(s; p_0,p_1)$ (an analogous result can be obtained for $z_0(s;p_0,p_1)$). 
\begin{prop}\label{proptra}
	There is $0<C<1$ such that for every $p_0, p_1\in\partial B_1(0)$, $|p_0-p_1|<2$, we have 
	\begin{equation}\label{transv}
		-p_0\cdot\frac{z_1'(0;p_0, p_1)}{|z_1'(0;p_0, p_1)|}>C\quad\text{ and }\quad
		p_1\cdot\frac{z_1'(T_1;p_0, p_1)}{|z_1'(T_1;p_0, p_1)|}>C. 
	\end{equation}
\end{prop}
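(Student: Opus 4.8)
The plan is to reduce the whole statement to a single uniform upper bound on the angular momentum $k$ of the inner arc. By the rotational invariance already used in Proposition \ref{hip}, I may place the endpoints symmetrically as $p_0=e^{-i\beta}$, $p_1=e^{i\beta}$ with $\beta\in(0,\pi)$ and $\beta\neq\pi/2$ (the excluded value $\beta=\pi/2$ being precisely $|p_0-p_1|=2$). In this frame the hyperbola carrying $z_1$ is symmetric about the $x$-axis, which is its major axis, so $z_1(T_1-s)$ is the reflection of $z_1(s)$; this gives $z_1'(T_1)=-R\,z_1'(0)$ and $p_1=Rp_0$ for the reflection $R$, whence $p_1\cdot z_1'(T_1)=-p_0\cdot z_1'(0)$. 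Thus the two estimates are identical and it suffices to treat one endpoint.

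Next I would split $z_1'(0)$ into radial and tangential parts at $p_0$. Since $|p_0|=1$, energy conservation yields $|z_1'(0)|^2=2(\Eh+\mu)$, while the conservation of angular momentum from \eqref{polar}--\eqref{p e int} shows that the tangential component has modulus $|z_1(0)\wedge z_1'(0)|/|z_1(0)|=|k|$. Because $|z_1(s)|<1$ for $s\in(0,T_1)$ by Remarks \ref{osship}, the arc enters the disk at $s=0$, so $p_0\cdot z_1'(0)<0$ and
\begin{equation*}
	-p_0\cdot\frac{z_1'(0)}{|z_1'(0)|}=\frac{\sqrt{|z_1'(0)|^2-k^2}}{|z_1'(0)|}=\sqrt{1-\frac{k^2}{2(\Eh+\mu)}}.
\end{equation*}
The entire Proposition is therefore equivalent to a bound of the form $k^2\le\mu<2(\Eh+\mu)$, uniform over all admissible pairs.

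To control $k^2$ I would pass through the pericenter radius $r_p$ of $z_1$. Evaluating the energy law at the pericenter, where the velocity is purely tangential of modulus $k/r_p$, gives $k^2=2r_p\bigl((\Eh)r_p+\mu\bigr)$, which is strictly increasing in $r_p$; since by Remarks \ref{osship} the minimum of $|z_1|$ is attained at the interior time $T_1/2\in(0,T_1)$, one has $r_p<1$, and already the pointwise inequality $k^2<2(\Eh+\mu)$. The substantive step is to make this uniform, i.e. to prove $\sup r_p<1$. Combining $p=k^2/\mu$, $e=\sqrt{1+2k^2(\Eh)/\mu^2}$ and $r_p=p/(1+e)$ produces the clean identity $r_p=a(e-1)$ with $a=\mu/(2(\Eh))$, so bounding $r_p$ is equivalent to bounding the eccentricity $e$ of $z_1$, which is one of the two expressions in \eqref{ecc}. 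Reading \eqref{ecc} off as functions of $x_0=\cos\beta$, $e_0$ is strictly decreasing and $e_1$ strictly increasing; since $z_1$ degenerates to the ejection--collision solution as the endpoints merge (Remarks \ref{osship}), so that its eccentricity tends to $1$, tracking the correct branch over $\beta\in(0,\pi)$ shows that the eccentricity of $z_1$ is a continuous, even function of $x_0\in(-1,1)$ maximized as $x_0\to0$, with limiting value $\sqrt{1+1/a}$. Hence $k^2\le\mu$, equality being only approached in the antipodal limit $|p_0-p_1|\to2$, and the final constant is
\begin{equation*}
	C=\sqrt{1-\frac{\mu}{2(\Eh+\mu)}}=\sqrt{\frac{2(\Eh)+\mu}{2(\Eh+\mu)}}\in(0,1),
\end{equation*}
which depends only on $\Eh,\mu$ and tends to $1$ as $\Eh\to\infty$, consistently with Theorem \ref{thm ex globale interna unp}.

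I expect the genuine difficulty to be exactly this passage from the pointwise bound $r_p<1$ to the uniform one $\sup r_p<1$: the pericenter argument alone does not exclude $r_p\to1$ (equivalently $k^2\to2(\Eh+\mu)$, i.e. tangency at total reflection), and ruling it out forces the monotonicity analysis of the eccentricity branches together with the correct identification of which branch of \eqref{ecc} carries the winding $\pm1$ solution $z_1$ on each subinterval of $\beta$. The cleanest way to bypass the case bookkeeping is to invoke the continuity of $k^2$ on the reduced one-parameter family $x_0\in[-1,1]$, where $k^2$ vanishes at $x_0=\pm1$ and takes the explicit value $\mu$ at the antipodal configuration $x_0=0$, so that its supremum over the admissible open set is the boundary value $\mu$, not attained.
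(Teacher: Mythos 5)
Your argument is correct, and it lands on exactly the paper's constant, but by a genuinely different mechanism. The paper, after the same reduction to the symmetric configuration $p_0=e^{-i\beta}$, $p_1=e^{i\beta}$ (restricted there to $\beta\in(0,\pi/2)$, the other half being implicit by central symmetry), stays entirely Cartesian: it parametrizes the relevant hyperbola branch as a graph $y(x)$, forms the tangent vector $v_1=(1,\partial_x y(x_0))$ at the endpoint, and verifies by direct computation that $c(x_0)=p_1\cdot v_1/|v_1|$ is strictly increasing on $[0,1]$ with $c(0)=\sqrt{(1+a)/(1+2a)}$ and $c(1)=1$, the minimum $c(0)$ being the constant $C$. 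You instead convert transversality into a statement about conserved quantities: energy conservation at $|p_0|=1$ and the angular momentum give $-p_0\cdot z_1'(0)/|z_1'(0)|=\sqrt{1-k^2/\left(2(\Eh+\mu)\right)}$, reducing the whole proposition to the uniform bound $k^2\le\mu$, which you then prove via $r_p=a(e-1)$ and the monotonicity of the two roots in \eqref{ecc}, together with the identification of which root carries the winding-$\pm1$ arc on each side of $x_0=0$. Since $\sqrt{(1+a)/(1+2a)}=\sqrt{\left(2(\Eh)+\mu\right)/\left(2(\Eh+\mu)\right)}$ for $a=\mu/(2(\Eh))$, the two constants agree. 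Your route buys a structural explanation of why the extremal configuration is the antipodal one (it is exactly where $k^2\uparrow\mu$, i.e.\ the approach to tangency), treats all $\beta\in(0,\pi)$ explicitly, and, by pinning the branch through the ejection--collision limit rather than through subscripts, quietly resolves a labelling inconsistency in the appendix (Remarks \ref{osship} assert $e_1\to1$ as $p_1\to p_0$, while with the conventions of Proposition \ref{hip} and \eqref{ecc} it is $e_0\to1$; the paper's own proof inherits this, writing $\mathcal{H}_1$ in the $(x+ae_1)$ normal form and $c(x_0)$ with a prefactor $e_0$). What the paper's route buys is economy: the single explicit formula for $c(x_0)$ makes the monotonicity a one-line check and needs no branch bookkeeping at all.

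One thing you should delete is the closing ``bypass''. Continuity of $k^2$ on $[-1,1]$ together with the values $k^2(\pm1)=0$ and $k^2(0)=\mu$ does not force $\sup_{x_0\neq0}k^2=\mu$: those data alone do not exclude an interior maximum exceeding $\mu$. What excludes it is precisely the monotonicity of the eccentricity (equivalently of $k^2$) on each half-interval, i.e.\ the analysis you already carried out; as a replacement for that analysis the shortcut is circular. Since your main argument is complete without it, this is a removable blemish rather than a gap. For completeness, also note that the case $p_0=p_1$, which the statement allows, falls outside your reduction $\beta\in(0,\pi)$, but it is trivial: there $k=0$ and the ejection--collision solution meets $\partial D_0$ radially, so the left-hand sides of \eqref{transv} equal $1>C$.
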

\begin{proof}
	If $p_0=p_1$, the ejection-collision solution is orthogonal to $\partial D_0$, then the claims are trivially true. Let us assume $p_0\neq p_1$. 	
	As in Proposition \ref{hip}, it is sufficient to prove the claims for $p_0=e^{-i\beta}=(x_0,-\sqrt{1-x_0^2}), p_1=e^{i\beta}=(x_0,\sqrt{1-x_0^2})$, with $\beta\in(0,\pi/2)$. The positive branch of $\mathcal{H}_1$ can be parametrized as $y(x)=\sqrt{e_1^2-1}\sqrt{(x+ae_1)^2-a^2}$, with $e_1$ as in (\ref{ecc}). Writing $p_1=p_1(x_0)=(x_0,y(x_0))$ and $v_1=(1,\partial_x y(x_0))$, we can express the cosine of the angle between $p_1$ and $v_1$ as a function
	\begin{equation}
		c(x_0)\equiv p_1\cdot\frac{v_1}{|v_1|}=e_0\sqrt{\frac{2a+x_0(x_0+\sqrt{4a^2+4a+x_0^2})}{2+4a}}, 
	\end{equation}
	which is strictly increasing for $x_0\in[0,1]$ and such that $c(0)=\sqrt{\frac{1+a}{1+2a}}=C<1$ and $c(1)=1$: this prove the claim for $p_1$ and $s=T_1$. The same estimate holds for $-p_0$ and $s=0$, taking into account that $z_1'(0,p_0,p_1)$ points inward the domain $D$.     
\end{proof}
\begin{rem}
	The value of $C$ depends on the physical parameters of the problem: in particular, with reference to the proof of Proposition \ref{proptra}, one has 
	\begin{equation*}
		c(0)=\sqrt{\frac{1+a}{1+2a}}=\sqrt{\frac{\Eh+\mu/2}{\Eh+\mu}}, 
	\end{equation*}
which tends to $1$ when $\Eh\to\infty$. As a consequence, one can control the transversality of $z_1(s; p_0, p_1)$ by acting on the value of the total inner energy. This fact is of particular importance in view of Remark \ref{oss buona definizione F}, since for the first return $F$ to be well defined one needs that the angle $\beta_1=\angle{(p_1, v_1)}$ is such that 
\begin{equation}\label{cond riflessione totale}
	|\sin{\beta_1}|\leq\sqrt{\frac{V_E(p_1)}{V_I(p_1)}}=\sqrt{\frac{\E-\om/2}{\Eh+\mu}}. 
\end{equation} 
If $\Eh$ is such that $\sqrt{1-C^2}<\sqrt{(\E-\om/2)/(\Eh+\mu)}$, and this is true if $\sqrt{\mu/2}<\sqrt{\E-\om/2}$, Eq.(\ref{cond riflessione totale}) is satisfied. \\
Moreover, since for $p_0\to p_1$ the arc $z_1(s; p_0, p_1)$ tends to the ejection-collision solution, the lower bound $C$ can be controlled also by choosing the endpoint to be close enough. 
\end{rem}
Taking together Theorem \ref{thmunp}, Propositions \ref{hip} and \ref{proptra} and Remark \ref{osship}, we can finally state the final existence and uniqueness theorem for the unperturbed boundary.

\bibliography{bibliog}
\bibliographystyle{acm}
\end{document}